\def\divv{{\rm div}\,}
\newcommand{\duality}[1]{\fourIdx{}{\rV^\prime}{}{\rV}{\Big\langle #1 \Big\rangle }}
\newcommand{\dual}[1]{\fourIdx{}{\rV^\prime}{}{\rV}{#1}}
\newcommand{\UT}{\Upsilon_T(\rV)}
\newcommand{\MT}{M_T}
\newcommand{\MTb}{\mathbb{M}_T} 
\newcommand{\Q}{\mathbb{Q}}
\newcommand{\G}{\mathcal{G}}
\newcommand{\Gb}{\mathbb{G}}
\newcommand{\Pc}{\mathcal{P}}
\newcommand{\Nbar}{\mathrm{N}}
\newcommand{\mbar}{\mathrm{m}}
\newcommand{\psij}{\psi} 
\newcommand{\zsmall}{J}
\newcommand{\zbig}{\mathbf{J}}
\newcommand{\usmall}{u}
\newcommand{\ysmall}{y}
\newcommand{\xsmall}{x}
\newcommand{\Xbig}{\xi}
\newcommand{\Zbig}{v}
\newcommand{\phieps}{\varphi_{\eps}}
\newcommand{\phiepsold}{\varphi_{\eps}}
\def\tank#1{\protected@xdef\@thanks{\@thanks
    \protect\footnotetext[0]{#1}}}
\def\bigfoot{

  \@footnotetext}
\newcommand{\ea}{\end{array}}
\numberwithin{equation}{section}
\newtheorem{theorem}{Theorem}[section]
\newtheorem{thm}{Theorem}[section]
\newtheorem{proposition}{Proposition}[section]
\newtheorem{corollary}{Corollary}[section]
\newtheorem{lemma}{Lemma}[section]
\newtheorem{definition}{Definition}[section]
\newtheorem{remark}{Remark}[section]
\newtheorem{con}{Assumption}[section]
\renewcommand{\theequation}{\arabic{section}.\arabic{equation}}
\newcommand{\eps}{\ensuremath{\varepsilon}}
\newcommand{\rZ}{\mathrm{Z}}
\newcommand{\rV}{\mathrm{V}}
\newcommand{\rH}{\mathrm{H}}
\newcommand{\rA}{\mathrm{A}}
\newcommand{\rB}{\mathrm{B}}
\newcommand{\rY}{\mathrm{Y}}
\newcommand{\Leb}{\mathrm{Leb}}
\newcommand{\embed}{\hookrightarrow}
\newcommand\adda[1]{{\color{blue} #1}}
\newcommand\old[1]{}
\newcommand\addjz[1]{\textcolor[rgb]{0.50,0.00,1.00} {#1}}
\newcommand\addjzok[1]{{#1}}
\newcommand\addaok[1]{{#1}}
\newcommand\newok[1]{{#1}}
\newcommand{\todozbdel}[1]{}
\newcommand\dela[1]{}
\newcommand\deljz[1]{}
\newcommand\deln[1]{}
\let\mathcal \undefined
\def\mathcal{\mathscr}
\newcommand\toup{\nearrow}
\newcommand\tof{\rightarrow}
\newcommand{\Cphi}{C_1}
\newcommand{\xit}{L^2([0,t];\rV)}
\newcommand{\xis}{L^2([0,s];\rV)}
\newcommand{\DS}{\displaystyle}
\begin{document}
\title{\Large\bf Well-posedness and large deviations for 2D stochastic Navier-Stokes equations with jumps
\thanks{Zdzis{\l}aw Brze{\'z}niak was partially supported by the National Science Foundation of China (No. 11501509) and by a Royal Society grant ``Stochastic Landau-Lifshitz-Gilbert equation with L\'evy noise and ferromagnetism".
Xuhui Peng is supported by National Natural Science Foundation of China(NSFC)(No.12071123), the Scientific Research Fund of Hunan Provincial Education Department (No. 20A329) and  the Construct Program of the Key Discipline in Hunan Province.
  Jianliang Zhai is supported by the National Natural Science Foundation of China (Nos. 11971456, 11721101, 11671372, 11431014).
  }}

\author{
   {Zdzis{\l}aw Brze{\'z}niak}$^a$\footnote{E-mail: zdzislaw.brzezniak@york.ac.uk}\;
\ \ \ {Xuhui Peng}$^b$\footnote{E-mail: xhpeng@hunnu.edu.cn}\;
\ \ \
{Jianliang Zhai}$^c$\footnote{E-mail: zhaijl@ustc.edu.cn
}
\\
 \small a. Department of Mathematics, The University of York, Heslington, York, YO10 5DD, UK  \\
 \small b. MOE-LCSM, School of Mathematics and Statistics,\\
  \small Hunan Normal University, Changsha, Hunan 410081, P. R. China\\
\small c. Key Laboratory of Wu Wen-Tsun Mathematics CAS, School of Mathematical Science,\\
 \small University of Science and Technology of China, Hefei, Anhui 230026, P.R. China
 }\,
\date{29 September, 2021}
\maketitle
\tableofcontents

\begin{center}
\begin{minipage}{160mm}
{\bf Abstract.}
\newok{The aim of this paper is threefold.
Firstly, we prove the existence and the uniqueness of a global strong (in both the probabilistic and the PDE senses) $\mathrm{H}^{1}_2$-valued solution to the 2D stochastic Navier-Stokes equations (SNSEs) driven by a multiplicative L\'evy noise under the natural Lipschitz on balls and 
linear growth assumptions on the jump coefficient.
Secondly, we prove a Girsanov-type  theorem for Poisson random measures and apply this result to a study of the well-posedness of the corresponding stochastic controlled problem for these SNSEs.
Thirdly, we apply these results to establish a  Freidlin-Wentzell-type large deviation principle for the solutions of these  SNSEs by employing the weak convergence method introduced in papers \cite{Budhiraja-Chen-Dupuis, Budhiraja-Dupuis-Maroulas}.}

\vspace{3mm} {\bf Keywords:} 2D stochastic Navier-Stokes equations, L\'evy processes, the Girsanov Theorem, strong solutions in the probabilistic
and PDE senses, Freidlin-Wentzell-type large deviation principle.
\vspace{3mm}

\noindent {\bf AMS subject classification.} {Primary: 60H15, 60F10; Secondary: 76M35, 76D06}

\end{minipage}
\end{center}

\section{Introduction}\label{sec-Introduction}

\newok{In this paper, we concentrate on  stochastic
Navier-Stokes equations (SNSEs), but we believe that our results can be generalized to other types of stochastic
 partial differential equations (SPDEs). Since the seminal work \cite{BT} by Bensoussan and Temam, a great number of papers have been written on the subject of SNSEs driven by Gaussian noise. The questions of the existence and uniqueness of solutions to such equations have been investigated in many papers; see, for example, \cite{Brzezniak+Motyl_2013_NSE}, \cite{DZ}, \cite{Flandoli+Gatarek_1995}, \cite{GHZ2009}, \cite{Liu R}, and \cite{MR}. The Freidlin-Wentzell-type large deviation principle for 2D SNSEs has been proved in \cite{CM} and \cite{SS}. In a recent paper \cite{WZZ}, the authors established the moderate deviation principle for these equations. The ergodic properties of invariant measures of the Markov semigroups generated by SNSEs (in the Gaussian case) and related questions have been studied in papers such as \cite{Brz+Cerrai_2015}, \cite{Flandoli},  and \cite{HM}.}

\vskip 0.1cm

\newok{However, some real-world models of financial, economic, physical, chemical, and biological phenomena cannot
be described well by Gaussian noise. For example, in some circumstances, some large moves and unpredictable events
can be captured by jump type noises. In recent years, SPDEs driven by jump type L\'evy noise  have become extremely popular in modelling these phenomena.}
\vskip 0.1cm

\newok{Many researchers put much  effort into understanding various properties of SPDEs driven by general L\'evy noise. Compared with the Gaussian case, SPDEs driven by pure jump L\'evy process  behave in a drastically different manner because of the appearance of jumps. Examples of such are provided by
\begin{inparaenum}\item[(i)] the Burkholder-Davis-Gundy inequality -- see, for instance, \cite{Ichikawa 1986} and \cite{ZBL};
\item[(ii)] the Girsanov Theorem -- see, for example,
\cite[Theorem III.3.24]{Jacod-Shiryaev} for semimartingales,
\cite[Theorem 3.10.21]{BICHTELER} for Poisson point processes,
\cite[Theorem 3.9.19]{BICHTELER} for finite-dimension Wiener processes,
and
\cite[Appendix A.1.]{DFPR 2013} for infinite dimensions with respect to a cylindrical
Wiener process;
\item[(iii)] the time regularity of solutions -- see, for instance, \cite{IMMTZ} for OU processes driven by a cylindrical Wiener process, \cite{BR} on the uniform convergence of random series in Skorohod space and representations of c{\`a}dl{\`a}g infinitely divisible processes, and
\cite{BGIPPZ,Liu Zhai 2012,Liu Zhai 2016,PZ,Priola Zabczyk} for OU processes driven by cylindrical pure jump processes;
\item[(iv)] ergodicity -- see, for example, \cite{BHR 2016,FHR, Priola Zabczyk} for the pure jump case;
\item[(v)] irreducibility -- see, for example,  \cite{DWX 2020,FHR,HP 2015, Priola Zabczyk,WXX 2017} for the pure jump case;
 and
\item[(vi)]
 other long-time properties of the solutions to SPDEs driven by jump processes -- see \cite{CK 2019, CK 2020}.
 \end{inparaenum}

In general, the methods and techniques available for SPDEs driven by Gaussian noise are not suitable for investigating  SPDEs driven by jump type noise, and therefore, new and sophisticated tools are needed. We refer to the  above-mentioned references and references therein for more details.
}

\newok{As an example let us  consider SNSEs. Under the classical Lipschitz on balls and  linear growth assumptions on the noise coefficients, one can prove the existence and uniqueness of a strong solution in both the probabilistic and PDE senses for 2D SNSEs driven by Gaussian noise; see, for example, \cite{GHZ2009,Liu R,Mikulevicius_2009}. However, in order to prove a similar result for  the pure jump case,  {in the existing literature} one is required to
introduce additional conditions on the jump coefficient $G$; see Remark \ref{Rem1} in Section \ref{subsection 2.1}. The reason for this difference is that the proof of the existence of solutions relies on the use of the Burkholder-Davis-Gundy inequality for the compensated Poisson random measures for the exponent $p\neq2$ (see \cite{BHZ}, \cite{BLZ}, and \cite{ZBL}). Similar problems arise in the study of  martingale solutions; see, for instance, \cite{Dong-Zhai,Flandoli+Gatarek_1995,EM_2014} and many more recent papers.}

\newok{
A natural approach to proving the well-posedness of SPDEs driven by jump type L\'evy process  is to approximate the Poisson random measure $N$ by a sequence of Poisson random measures $N_n$ with finite intensity measures. Dong and Xie  used this way in \cite{Dong+Xie_2009} to establish the well-posedness of the strong solutions in the probabilistic sense for 2D SNSEs driven by L\'evy noise.
However, to make  this method work, one needs to impose additional assumptions to control the ``small jumps''; see Remark \ref{Rem1} in Section \ref{subsection 2.1}.
Another basic idea used to prove the well-posedness of SPDEs is based on introducing an appropriate cut-off and then applying the Banach fixed point theorem for the approximated problems. This method has been exploited in a recent paper \cite{BHR15} to establish
the existence of strong solutions in the PDE sense for 2D SNSEs driven by L\'evy processes of jump type. However, because they relied on  the Burkholder-Davis-Gundy inequality with exponent $p\neq2$ for the compensated Poisson random measure, in addition to the natural Lipschitz and linear growth assumptions, the authors of \cite{BHR15} had to impose additional and unnatural assumptions on the noise coefficient $G$; see Remark \ref{Rem2} in Section \ref{subsection 2.2}.}
\vskip 0.1cm

\newok{The first aim of this paper is to remove these unnecessary assumptions imposed in   \cite{BHR15} and other papers. For this purpose we employ different ideas and techniques. We use the cut-off approximation method and the Banach fixed point theorem, used recently by the first author and Millet in \cite{Brzezniak Millet} and later in \cite{BHR15} to prove the existence of strong solutions in the PDE sense to a class of stochastic hydrodynamical systems driven by L\'evy process. Earlier, a similar idea had been used by De Bouard and Debussche \cite{Bouard Debussche01,Bouard Debussche02}.
However, our auxiliary equations are different from the equations introduced in the papers cited above.

Using these auxiliary equations, we are able to remove  the  atypical assumptions described above. Our method strongly depends on the cut-off function $\theta_m$ introduced in  \eqref{eqn-theta_m}. To achieve our goals,
it is crucial to establish new \emph{a priori} estimates.
We believe that this method can also be used for other systems driven by L\'evy noise to weaken the assumptions and, in particular, eliminate  those that are not necessary.
}
\vskip 0.2cm

\newok{Our second, and in fact, the main aim of this paper is to establish a Freidlin-Wentzell-type large deviation principle (LDP) for the strong solutions in the PDE sense (obtained in the first part) of 2D SNSEs driven by L\'evy processes of jump type.
}

{Large deviation principle for finite dimensional stochastic differential equations (SDEs) with a Poisson
noise term has been studied by several authors; see \cite{A1,Bao-Yuan,Budhiraja-Dupuis-Ganguly}. There is not much study on the topic of the LDP for infinite dimensional SDEs driven by L\'evy processes of jump type.
The first paper was \cite{Rockner-Zhang} by $\rm R\ddot{o}ckner$ and Zhang, where stochastic evolution equations with additive noise was considered. The case of multiplicative noise was studied in \cite{Budhiraja-Chen-Dupuis,Budhiraja-Dupuis-Ganguly,Swiech-Zabczyk, Yang-Zhai-Zhang}. The study of the LDP for SPDEs with highly nonlinear terms
 has been carried out as well, see, e.g., \cite{Dong-Xiong-Zhai-Zhang, Xiong Zhai, Xu-Zhang,Zhai-Zhang,Zheng Zhai Zhang}.
 Concerning the  2D SNSEs, it is important to mention
  that Xu and Zhang \cite{Xu-Zhang} studied the LDP for these equations driven by additive L\'evy noise, while the recent papers by the third author  and collaborators deal with such SNSEs driven by multiplicative L\'evy noise; see  \cite{Dong-Xiong-Zhai-Zhang}, \cite{Xiong Zhai}, and \cite{Zhai-Zhang}. In all these results, the authors consider strong solutions in the probabilistic sense.
}

To prove our results, we use the weak convergence approach introduced by Budhiraja, Dupuis, and others in \cite{Budhiraja-Chen-Dupuis,Budhiraja-Dupuis-Maroulas} for the case of Poisson random measures, which has been proved to be very effective to study the LDP for finite/infinite dimensional SDEs driven by L\'evy processes; see \cite{Bao-Yuan,Budhiraja-Chen-Dupuis,Budhiraja-Dupuis-Ganguly,Dong-Xiong-Zhai-Zhang,Xiong Zhai, Yang-Zhai-Zhang,Zhai-Zhang,Zheng Zhai Zhang}.  Compared with the existing results, our main object  are  the strong solutions in the PDE sense, and hence we need to find new \emph{a priori} estimates to establish the tightness of the solutions of the perturbed equations; see Lemmata \ref{lem3 LDP deter 1}, \ref{lem LDP stochastic 01}--\ref{lem LDP stochastic 05}. We believe that this is nontrivial.

{Finally, let us mention the third aim of our article: the well-posedness of the controlled SPDEs (\ref{eq3 prop2 00}). Such a result is
a basic step in applying the weak convergence approach. During our study, it became apparent that although such a result has been used in previous literature, e.g., \cite{Budhiraja-Chen-Dupuis,Budhiraja-Dupuis-Ganguly, Dong-Xiong-Zhai-Zhang, Xiong Zhai, Yang-Zhai-Zhang,Zhai-Zhang}, it has never been rigorously formulated nor proven.  Thus, we fill this gap by formulating Lemma \ref{lem LDP stochastic 00} and providing a rigorous proof of it. The proof  of Lemma \ref{lem LDP stochastic 00} heavily depends on a Girsanov-type theorem for Poisson random measures, i.e., Theorem \ref{thm-Girsanov}. Although, this is apparently a ``standard result", see  for instance
\cite[Theorem III.3.24]{Jacod-Shiryaev} for the semimartingales and
\cite[Theorem 3.10.21]{BICHTELER} for the Poisson point processes,  it seems hard to find an accessible reference in the literature which would work under our conditions. Therefore in Section \ref{sec-Girsanov} we include a complete proof of the version of the Girsanov-type theorem  we need.
The Girsanov Theorem for the Wiener process states that the shifted and the original Wiener measures are equivalent  if and only if the shift function belongs to the  corresponding Cameron-Martin space. However, in contrast to the Wiener space case, the Girsanov Theorem for Poisson random measures is related to invertible and predictable nonlinear
transformations, see \cite[Theorem III.3.24]{Jacod-Shiryaev} and  \cite[Theorem 3.10.21]{BICHTELER}. These differences
lead to many difficulties in proving the variational representation for the Poisson functionals, and therefore  in applying
the weak convergence method for the case of Poisson random measures and the Freidlin-Wentzell-type LDP for SPDEs driven by L\'evy processes of jump type; see, for example, \cite{Budhiraja-Chen-Dupuis,Budhiraja-Dupuis-Ganguly, Budhiraja-Dupuis-Maroulas,DE,Zhang XC}. This is also one of the main difficulties
this paper had to deal with. Let us mention that another application of the Girsanov Theorem is in its use, in combination with the Yamada-Watanabe Theorem,  in proving the well-posedness of  SPDEs. For instance, see \cite{Gat+Goldys_1992}  for the case of SPDEs driven by Wiener process. However, for applications of the Girsanov Theorem in framework of SPDEs defined in terms of  Poisson random measures, the literature contains only few results; see for instance, the paper \cite{HP 2015} in which, however, no proofs are provided.
}

\vskip 0.2cm

\deln{\newok{The approach of this paper is currently being applied by the first and third authors, joined by Utpal Manna, to study the LDP of stochastic ferromagnetic wires, i.e., solutions of 1D stochastic Landau-Lifshitz equations; see \cite{Brz+Manna+Zhai_2018}. In other direction, the first and third authors, together with Martin Ondrej\'at, are preparing a publication about the LDP for stochastic PDEs driven by infinite-dimensional L\'evy processes; see \cite{Brz+Ondrejat+Zhai_2018}.}
\vskip 0.2cm}

\newok{The organization of this paper is as follows. Section \ref{sec-SNSEs 2} is to introduce 2D SNSEs.
In Sections \ref{subsection 2.1} and \ref{subsection 2.2}, we apply a cut-off and the Banach fixed point theorem to establish the existence and uniqueness of strong (in the probabilistic sense and PDE sense, respectively) solutions for 2D SNSEs with L\'evy noise, under the Lipschitz on balls and linear growth assumptions. We do this for initial data from the space $\rH$
(see Theorems \ref{thm-initial data in H} and \ref{thm-initial data in H local})
and for initial data from the space $\rV$
(see Theorems \ref{thm-initial data in V} and \ref{thm-initial data in V local}).
Section \ref{sec-LDP} is devoted to the formulation of the LDP, see
Theorem \ref{thm-LDP}. This section  contains also a proof Theorem \ref{thm-LDP}  provided some auxilairy results hold true. Moreover, we prove the first one of
auxilairy results, the so called  first continuity lemma, i.e., Proposition \ref{prop-1st continuity lemma}. The remaining  auxiliary results are proven in the following sections. Thus,
Section \ref{sec-Girsanov} contains  a formulation and a proof of a
Girsanov-type theorem for Poisson random measures; see Theorem \ref{thm-Girsanov}. The last section \ref{sec-LDP-verification} is devoted to  a proof of the
 second continuity lemma, i.e., Proposition \ref{prop3 02}. The paper also contains also two appendices. Appendix \ref{sec-A-PRM} contains necessary definitions related to Poission random measures.
Appendix \ref{sec-B} is devoted to the last auxiliary result, i.e.,  Lemma \ref{lem3 LDP deter 1}.
}

\textbf{Acknowledgement.}
The first named author would like to thank Utpal Manna for discussions. The third named author acknowledges funding by
K.C. Wong Foundation for a year-long fellowship at King's College London, during which this work was started. He also
would like to thank Markus Riedle for his kind help in London. All authors would like to thank Alexei Daletski, Carl Chalk, John Herman and Tomasz Zastawniak, for careful reading of the manuscript and many useful comments.

\section{The stochastic Navier-Stokes equations (SNSEs)}\label{sec-SNSEs 2}

 We assume that $D$ \addjzok{is }a bounded open domain in $\mathbb{R}^2$, with smooth boundary $\partial D$. Let us define the following fundamental functional space:
\begin{eqnarray*}
\rV=\{u\in W^{1,2}_0(D,\mathbb{R}^2):\,\divv\, u=0 \mbox{ {weakly} in}\ D\},\ \ \ \ \|u\|^2_{\rV}:=\int_D|\nabla u(x)|^2\,dx.
\end{eqnarray*}
Let  $\rH$ be the closure of $\rV$ in the space $\mathbb{L}^2(D):=L^{2}(D,\mathbb{R}^2)$. The space $\rH$ is a separable Hilbert space endowed with the following norm
$$
|u|^2_{\rH}:=\int_D|u(x)|^2\,dx.
$$

Let $\Pi:\, \mathbb{L}^2(D)\to \rH$ is the orthogonal projection, which is called the Leray-Helmholtz projection. Let us define the Stokes operator $\rA$ in $\rH$ by
\begin{eqnarray*}
{\rA}f=-\Pi\Delta f,\;\; f\in \mathcal{D}({\rA}),\;\; \mathcal{D}({\rA}):=W^{2,2}(D,\mathbb{R}^2)\cap \rV.
\end{eqnarray*}
It is well known (e.g., Cattabriga \cite{Cattabriga_1961}) that ${\rA}$ is positive self-adjoint with compact resolvent. Hence, there is an orthonormal basis
$\{e_i,\ i\in\mathbb{N}\}$\footnote{ We use $\mathbb{N}$ to denote the set of natural numbers starting from $1$ .}
of $\rH$, consisting of eigenvectors of ${\rA}$, with corresponding eigenvalues
$\{\lambda_i:\ i\in\mathbb{N}\}$, i.e., satisfying
$
{\rA}e_i=\lambda_i e_i$, $i \in \mathbb{N}$,
such that $0<\lambda_i $ for all $i$ and $\lambda_i\toup\infty$. In this paper, the space $\mathcal{D}({\rA})$ is  endowed with the following norm
$$
\|u\|_{\mathcal{D}({\rA})}:=\vert \rA u \vert_{\rH},\ \ u\in \mathcal{D}({\rA}).
$$

It is also well known that
\[
\rV=\mathcal{D}(\rA^{\frac{1}{2}}) \mbox{ and }
\Vert u \Vert_{\rV}^2= \vert \rA^{\frac{1}{2}} u \vert_{\rH}^2,\;\; u \in \rV.
\]

Let $\rB :\mathcal{D}(\rB ) \to \rH$, where $\mathcal{D}(\rB ) \subset \rH\times \rV$ is the bilinear operator defined as
$$
\rB (u,v)=\Pi[(u\cdot\nabla)v].
$$
{Without danger of ambiguity, by $\rB $ we also denote the quadratic function corresponding to the bilinear map $\rB $:
\[
\ \ \ \ \rB (u):=\rB (u,u).
\]
}

It is well known (\cite{Temam_2001}) that the Navier-Stokes equations can be formulated in the following abstract form:
\begin{eqnarray}\label{eq NS}
du(t)+{\rA}u(t)\,dt+\rB (u(t))\,dt=f(t)\,dt,\ \ u(0)=u_0,
\end{eqnarray}
where $u_0\in \rH$ and $f\in L^2_{loc}([0,\infty),\rV^\prime)$ denote respectively the initial data and the  external force.

Let $\rH^\prime$ and $\rV^\prime$ denote the dual spaces of $\rH$ and $\rV$ respectively. Considering the framework of the following Gelfand triple
$$
\rV\subset \rH \cong \rH^\prime\subset \rV^\prime,
$$
one can show that there exist unique extensions $\mathscr{A}$ and $\mathscr{B}$, of, respectively, ${\rA}$ and $\rB $, such that
$$
\mathscr{A}:\rV\to \rV^\prime,\ \ \ \mathscr{B}:\rV\times \rV\to \rV^\prime
$$
are bounded, respectively, linear and bilinear, maps. In what follows, in agreement with the practice of almost all papers on NSEs, these extensions are denoted by the original symbols $\rA$
and $\rB $ respectively.

\noindent

In the following lemma, we list some useful and well-known inequalities about the bilinear map $\rB $. Some of these are only true because we assume that  $D \subset \mathbb{R}^2$. In this list, $C$ denotes a generic constant.

\begin{lemma}\label{lem B baisc prop}
If $u,v,z\in \rV$, then
\begin{align*}
 \dual{\langle \rB (u,v),z\rangle}&=-\;\dual{\langle \rB (u,z),v\rangle},
 \;\;\; \dual{\langle \rB (u,v),v\rangle}=0,\\
|\dual{\langle \rB (u,v),z\rangle}|
  &\leq
  2 \|u\|_{\rV}^{\frac{1}{2}}|u|^{\frac{1}{2}}_{\rH}\|v\|^{\frac{1}{2}}_{\rV}|v|^{\frac{1}{2}}_{\rH}\|z\|_{\rV},\\
  |\dual{\langle \rB (u)-\rB (v),u-v\rangle}|
  &=
  |\dual{\langle \rB (u-v),v\rangle}|
  \leq
  \frac{1}{2} \|u-v\|^2_{\rV}+\|v\|^4_{L^4(D,\mathbb{R}^2)}|u-v|^2_{\rH},
  \\
|\rB (u,v)|^2_{\rH}
&\leq
C|u|_{\rH}\|u\|_{\rV}\|v\|_{\mathcal{D}({\rA})}\|v\|_{\rV},
\\
\|v\|^4_{L^4(D,\mathbb{R}^2)}&\leq 2|v|^2_{\rH}\|v\|^2_{\rV}.
\end{align*}

\end{lemma}
The last inequality, see \cite{Temam_2001}, is often called the Ladyzhenskya inequality.

In this paper, we consider SNSEs driven by multiplicative L\'evy noise in the following abstract form:
\begin{eqnarray}\label{eq SNS 01}
&& du(t)+{\rA}u(t)\,dt+\rB (u(t))\,dt=f(t)\,dt+\int_{\rZ}G(u(t-),z)\widetilde{\eta}(dz,dt),\nonumber\\
&& u_0\in \rH.
\end{eqnarray}
Here we make the following assumptions.
\begin{con}\label{ass-Z}
We assume that ${\rZ}$ is a locally compact Polish space, and $\nu$ is a $\sigma$-finite measure on $(\rZ\addaok{,\mathcal{B}(\rZ))}$, \addaok{where $\mathcal{B}(\rZ)$ denotes the Borel $\sigma$-field on $\rZ$}.\\
We assume that $(\Omega, \mathcal{F}, \mathbb{F}, \mathbb{P})$, where $\mathbb{F}=\{\mathcal{F}_t\}_{t\geq0}$, is a {filtered} probability space  satisfying the usual conditions{, i.e., the family $\mathbb{F}$ is right continuous, and every set $A$ belonging to the $\mathbb{P}$-completion of the $\sigma$-field  $\mathcal{F}_\infty$ with $\mathbb{P}(A)=0$ belongs to every $\mathcal{F}_t$, $t\geq 0$. }\\
We also assume that $\eta$ is a \addaok{time-homogenous} Poisson random measure on $[0,\infty)\times{\rZ}$ with the intensity measure $\Leb\otimes\nu$ on $(\Omega, \mathcal{F}, \mathbb{F}, \mathbb{P})$, where  $\Leb$ is the Lebesgue measure on $[0,\infty)$.\\
We define the compensated Poisson random measure $\widetilde{\eta}$ by
\begin{align}\label{eqn-eta tilde}\widetilde{\eta}([0,t]\times O)=\eta([0,t]\times O)-t\nu(O),\;\;\; t\geq 0,
\end{align}
whenever $O\in\mathcal{B}({\rZ})$ is such that $ \nu(O)<\infty$.
\end{con}

\addaok{Let us point out that the measure $\Leb\otimes\nu$ is a $\sigma$-finite measure on $([0,\infty)\times \rZ, \mathcal{B}([0,\infty))\otimes \mathcal{B}(\rZ))$.}

\vskip 0.2cm
In the following, if $X$ is a metric space and $I \subset \mathbb{R}$ is a time interval, we denote by $D(I,X)$ the space of all c\`adl\`ag paths from $I$ to $X$.

\section{
Solutions to SNSEs with initial data in the space $\rH$}
\label{subsection 2.1}

Our treatment of SNSEs \eqref{eq SNS 01} consists of two steps. In the first, we assume that the coefficient $G$ is globally Lipschitz. In the second, we
assume that $G$ is Lipschitz on balls and has  linear growth.

 Below, we present our standing assumptions on the coefficient  $G$ in the first step.
\begin{con}\label{con G} We assume that
$G:\rH\times {\rZ}\to \rH$ is a measurable map such that  there exist positive constants $C_1$ and $C_2$ such that
\begin{itemize}
\item[(G-H1)](Global Lipschitz)
\begin{equation}\label{eqn-Lipschitz-H}
       \int_{\rZ}|G(v_1,z)-G(v_2,z)|_{\rH}^2\nu(dz)\leq C_1|v_1-v_2|_{\rH}^2,\;\;\; v_1,v_2\in \rH,
\end{equation}

\item[(G-H2)](Linear growth)
\begin{equation}\label{eqn-linear growth-H}
       \int_{{\rZ}}|G(v,z)|^2_{\rH}\nu(dz)\leq C_2(1+|v|^2_{\rH}),\;\;\; v\in \rH.
      \end{equation}

\end{itemize}
\end{con}
\begin{remark}\label{rem-growth}
We note that the linear growth condition \eqref{eqn-linear growth-H} follows from the global Lipschitz condition \eqref{eqn-Lipschitz-H} and the following one, with
$C_2=2 \max\{C_1, \int_{{\rZ}}|G(0,z)|^2_{\rH}\nu(dz)\}$.

\begin{equation}\label{eqn-H3}
       \int_{{\rZ}}|G(0,z)|^2_{\rH}\nu(dz)<\infty.
      \end{equation}

\end{remark}

First, we prove  the following existence result in the \newok{natural}  setting.
\begin{thm}\label{thm-initial data in H}
Assume that \textbf{Assumption \ref{con G}} holds. Then, for all $u_0\in \rH$ and $f\in L^2_{loc}([0,\infty),\rV^\prime)$, there exists a unique $\mathbb{F}$-{progressively measurable} process $u$ such that
\begin{itemize}
\item[(1)] $u\in D([0,\infty),\rH)\cap L^2_{loc}([0,\infty),\rV)$, $\mathbb{P}$-a.s.,

\item[(2)] the following equality holds, for all $t\in[0,\infty)$, $\mathbb{P}$-a.s., in $\rV^\prime$,
      \begin{eqnarray}\label{eq star page 3}
      \hspace{-1truecm}u(t)=u_0-\int_0^t{\rA}u(s)\,ds - \int_0^t\rB(u(s))\,ds+\int_0^t f(s)\,ds+\int_0^t\int_{{\rZ}}G(u({s-}),z)\widetilde{\eta}(dz,ds).
      \end{eqnarray}
\end{itemize}
Moreover, the solution $u$ satisfies the following estimate:
For any $T>0$, 
\begin{eqnarray*}\label{eqn-apriori estimates-H}
\mathbb{E}\Big(\sup_{t\in[0,T]}|u(t)|^2_{\rH}\Big)
    +
   \mathbb{E}\Big(\int_0^{T}\|u(s)\|^2_{\rV}\,ds\Big)
   \leq
   C_T(1+|u_0|^2_{\rH}+ \int_0^T\|f(s)\|^2_{\rV^\prime}\,ds).
\end{eqnarray*}
\end{thm}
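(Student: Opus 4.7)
The plan is to implement the cut-off plus Banach Fixed Point Theorem strategy that the authors announce, with the truncation $\theta_m$ placed only on the nonlinear drift so that the stochastic integral can be estimated by the $L^2$ isometry alone (no BDG with $p\neq 2$ is ever invoked). Concretely, I would fix a smooth $\theta_m:[0,\infty)\to[0,1]$ with $\theta_m\equiv 1$ on $[0,m]$ and $\theta_m\equiv 0$ on $[m+1,\infty)$ and consider the truncated equation
\[
du_m(t)+\rA u_m(t)\,dt+\theta_m(\|u_m(t)\|_{\rH}^{2})\,B(u_m(t))\,dt=f(t)\,dt+\int_{\rZ}G(u_m(t-),z)\,\widetilde{\eta}(dz,dt),
\]
with $u_m(0)=u_0$. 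Once each $u_m$ is constructed, the global solution $u$ is obtained by a localization argument, and the final energy estimate follows from It\^o's formula in the Gelfand triple $\rV\subset\rH\subset\rV^\prime$ together with assumption \textbf{(G-H2)}.

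For the fixed point step, I would work on a finite horizon $[0,T_0]$ in the Banach space
\[
\mathcal{X}_{T_0}:=L^{2}(\Omega;D([0,T_0];\rH))\cap L^{2}(\Omega\times[0,T_0];\rV),
\]
and define the map $\Phi$ that sends $v\in\mathcal{X}_{T_0}$ to the unique solution $u=\Phi(v)$ of the linear-affine SPDE obtained by freezing the nonlinear drift and the diffusion at $v$. Existence of $\Phi(v)$ follows from classical linear SPDE theory for monotone operators. The contraction property $\|\Phi(v_1)-\Phi(v_2)\|_{\mathcal{X}_{T_0}}\le \kappa \|v_1-v_2\|_{\mathcal{X}_{T_0}}$ with $\kappa<1$ is checked by applying It\^o's formula to $\|\Phi(v_1)-\Phi(v_2)\|_{\rH}^{2}$: the jump part contributes $\int_0^{T_0}\!\int_{\rZ}\|G(v_1,z)-G(v_2,z)\|_{\rH}^{2}\,\nu(dz)\,ds$ which is absorbed by \textbf{(G-H1)}, while the drift difference is controlled using the estimate $|\dual{\langle B(u_1)-B(u_2),u_1-u_2\rangle}|\le \tfrac12\|u_1-u_2\|_{\rV}^{2}+32\|v\|^{4}_{L^{4}}\|u_1-u_2\|_{\rH}^{2}$ from Lemma \ref{lem B baisc prop}; the $L^{4}$-norm is bounded on the support of $\theta_m$ via $\|v\|_{L^{4}}^{4}\le 2\|v\|_{\rH}^{2}\|v\|_{\rV}^{2}$. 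Taking $T_0$ sufficiently small (depending on $m$) gives $\kappa<1$; one then iterates to produce $u_m$ on the whole of $[0,T]$.

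Next I would establish \emph{a priori} estimates uniform in $m$ by applying It\^o's formula to $\|u_m(t)\|_{\rH}^{2}$. The trilinear term vanishes because $\dual{\langle B(u_m),u_m\rangle}=0$ regardless of the cut-off, so the equation reads
\[
\|u_m(t)\|_{\rH}^{2}+2\int_0^{t}\|u_m(s)\|_{\rV}^{2}\,ds=\|u_0\|_{\rH}^{2}+2\int_0^{t}\dual{\langle f(s),u_m(s)\rangle}\,ds+M_t+J_t,
\]
where $M_t$ is a local martingale from the compensated Poisson integral and $J_t=\int_0^{t}\!\int_{\rZ}\|G(u_m(s-),z)\|_{\rH}^{2}\,\eta(dz,ds)$ has compensator controlled by \textbf{(G-H2)}. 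Taking expectation and applying Gronwall yields the bound on $\mathbb{E}\!\int_0^{T}\|u_m\|_{\rV}^{2}\,ds$ and on $\mathbb{E}\|u_m(T)\|_{\rH}^{2}$; the $\mathbb{E}\sup_{t\le T}$ estimate is then obtained by the BDG inequality with exponent $p=2$ for the compensated Poisson integral. Having these estimates, I introduce the stopping times $\tau_m:=\inf\{t\ge 0: \|u_m(t)\|_{\rH}^{2}\ge m\}\wedge T$, use the uniqueness of the truncated problem to see that $u_m\equiv u_{m'}$ on $[0,\tau_m\wedge\tau_{m'}]$ for $m'\ge m$, and deduce from Chebyshev that $\mathbb{P}(\tau_m<T)\to 0$. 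Patching gives the global $u$; letting $T\nearrow\infty$ yields the solution on $[0,\infty)$.

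The main obstacle I expect is the contraction step: the $L^{4}$-type term coming from $B(u_1)-B(u_2)$ is not automatically small, and one must simultaneously absorb $\tfrac12\|u_1-u_2\|_{\rV}^{2}$ into the dissipation and tame the $\|v\|_{L^{4}}^{4}$ factor via the truncation $\theta_m$; getting a genuine contraction constant strictly below $1$ on $\mathcal{X}_{T_0}$ requires a careful balance between the horizon $T_0$, the Lipschitz constant from \textbf{(G-H1)}, and the cut-off level $m$. Uniqueness of the original unbounded equation is comparatively standard: once two solutions $u,\tilde u$ are given, It\^o's formula for $\|u-\tilde u\|_{\rH}^{2}$, the same $B$-estimate from Lemma \ref{lem B baisc prop}, assumption \textbf{(G-H1)}, and a localized Gronwall argument (with stopping times at which $\|v\|_{L^{4}}^{4}$ becomes integrable) force $u=\tilde u$ up to indistinguishability.
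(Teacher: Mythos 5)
Your overall architecture (truncate the drift, Banach fixed point, stopping times, uniform energy estimate, patching) is the same as the paper's, but the specific cut-off you chose creates a genuine gap at exactly the step you flag as the "main obstacle", and with that cut-off the obstacle cannot be overcome. You truncate with the pointwise $\rH$-norm, $\theta_m(\|u_m(t)\|^2_{\rH})$, and then claim the $L^4$-factor is "bounded on the support of $\theta_m$" via $\|v\|^4_{L^4}\leq 2\|v\|^2_{\rH}\|v\|^2_{\rV}$. This is false: the truncation bounds only the $\rH$-factor, while the $\rV$-factor remains uncontrolled. Consequently, in the contraction estimate for $\Phi(v_1)-\Phi(v_2)$ (whether you freeze $B$ at $v$ as you propose, or keep $B$ in the unknown), the Gronwall/absorption factor is of the form $\exp\bigl(C\int_0^{T_0}\|v_i(s)\|^2_{\rH}\|v_i(s)\|^2_{\rV}\,ds\bigr)$ (or a product with $\bigl(\int_0^{T_0}\|v_i\|^2_{\rV}\,ds\bigr)^{1/2}$), which is a random variable with no deterministic bound and, in general, no exponential moments. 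Since your fixed-point space $\mathcal{X}_{T_0}$ is a mean-square space, you cannot split the expectation of the product of this random factor with $\|v_1-v_2\|^2$, so no contraction constant $\kappa<1$ independent of $\omega$ can be extracted, no matter how $T_0$, $m$ and the Lipschitz constant are balanced. (A Cauchy--Schwarz attempt would require fourth moments, which are not part of the $\mathcal{X}_{T_0}$ norm and are precisely the kind of extra assumption on $G$ the theorem is designed to avoid.)

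The paper's proof removes this obstruction by truncating with the \emph{path} norm $\|u\|_{\Upsilon^H_t}=\sup_{s\in[0,t]}\|u(s)\|_{\rH}+\bigl(\int_0^t\|u(s)\|^2_{\rV}\,ds\bigr)^{1/2}$, so that on the set where the cut-off is active both $\sup_s\|u\|^2_{\rH}$ and $\int_0^t\|u\|^2_{\rV}\,ds$ are bounded by $(m+1)^2$; this makes the Gronwall exponential deterministic, $\leq C_m$, which is exactly what yields the Lipschitz bound $\|\Phi^{y_1}-\Phi^{y_2}\|^2_{\Lambda_T(\rH)}\leq C_m T\|y_1-y_2\|^2_{\Lambda_T(\rH)}$ of Lemma \ref{lem 2} (and the authors explicitly remark that this estimate fails without that $\theta_m$). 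A second structural difference: the paper's fixed-point map freezes \emph{only} the noise coefficient $G(y(t-),z)$, not $B$; the resulting equation with the truncated nonlinearity is then solved pathwise, after subtracting the stochastic convolution, by the purely deterministic Lemma \ref{lem 1}, whose Picard iteration needs an additional auxiliary cut-off $\phi_\delta$ on the $L^2([0,t];\rV)$-norm. Your fully frozen (affine) equation is indeed solvable by linear theory, so that part is fine, and your localization/uniqueness discussion at the end is essentially standard; but without replacing the pointwise $\rH$-truncation by the $\Upsilon^H_t$-truncation (and redoing the contraction estimate accordingly), the local existence step of your proposal does not go through.
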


\begin{remark}\label{Rem1}
Assumption \ref{con G} is a fairly standard assumption when one considers the existence and uniqueness of solutions to SPDEs driven by multiplicative Gaussian noise.
However, for the case of L\'evy noise, the literature results always require additional assumptions on $G$ besides Assumption \ref{con G}. For example, in \cite{Dong+Xie_2009}, the authors assume that there exists a sequence $(\rZ_m)_{ m\in\mathbb{N}}$ of measurable subsets of $\rZ$ with $\rZ_m\toup \rZ$ and $\nu(\rZ_m)<\infty$ such that, for any $k>0$,
\begin{equation}\label{eqn_Dong+Xie_2009}
\sup_{|v|_{\rH}\leq k}\int_{\rZ^c_m}|G(v,z)|^2_{\rH}\nu(dz)\to 0\text{ as }m\to\infty,
\end{equation}
while in \cite{BHZ} and \cite{BLZ}, it is assumed that
$K>0$ exists such that
\begin{equation}\label{eqn_BHZ}
\int_{{\rZ}}|G(v,z)|^4_{\rH}\nu(dz)\leq K(1+|v|_{\rH}^4),\;\; v\in \rH.
\end{equation}
Similarly, Motyl, in a paper (\cite{EM_2014}), assumed that for each $p\in\{1,2,2+\gamma,4,4+2\gamma\}$, where $\gamma$ is some positive constant, there exists a constant
$c_p>0$ such that
$$
\int_{{\rZ}}|G(v,z)|_{\rH}^p\nu(dz)\leq c_p(1+|v|_{\rH}^p),\;\; v\in \rH.
$$
Hence, our Theorem \ref{thm-initial data in H} improves the existing results in the
literature.

\dela{
\textcolor[rgb]{1.00,0.00,0.00}{Let us mention that in a forthcoming paper \cite{BPZ_2021-local Lipschitz}, we are generalizing the results from the present paper to the case when the coefficient $G$ is Lipschitz on balls, i.e., it satisfies the following condition.
For every $\hbar>0$ there exists $C_\hbar>0$ such that
\begin{equation}\label{eqn-Lipschitz-H-local}
       \int_{\rZ}|G(v_1,z)-G(v_2,z)|_{\rH}^2\nu(dz)\leq C_\hbar|v_1-v_2|_{\rH}^2,\;\;\; v_1,v_2\in B_\hbar(\rH),
\end{equation}
where $B_\hbar(\rH)$ denotes the ball of radius $\hbar$ in $\rH$.
The case when $G$ is only locally Lipschitz is open.}\textcolor[rgb]{0.44,0.00,0.94}{Should be delete? from Zhai.}
}
\end{remark}

In the second step, we relax the global Lipschitz condition in \textbf{Assumption \ref{eqn-Lipschitz-H}} and consider the following assumptions.

\begin{con}\label{con G Local}
 We assume that $G:\rH\times {\rZ}\rightarrow \rH$ is a measurable map such that
\begin{itemize}
\item[(G-H1-local)](Lipschitz on balls) for every $\hbar>0$, there exists a constant $C_\hbar>0$ such that, for all $v_1,v_2\in\rH$ with $|v_1|_\rH\vee|v_2|_\rH\leq \hbar$,
\begin{equation}\label{eqn-Lipschitz-H-local}
       \int_{\rZ}|G(v_1,z)-G(v_2,z)|^2_{\rH}\nu(dz)\leq C_\hbar|v_1-v_2|^2_{\rH},
\end{equation}
and it satisfies the assumption {(G-H2)(Linear growth)}, i.e., that \eqref{eqn-linear growth-H} holds.
\end{itemize}
\end{con}

Let us now formulate our main theorem in this relaxed framework.
\begin{thm}\label{thm-initial data in H local}
Assume that \textbf{Assumption \ref{con G Local}} holds.
Then,
for every $u_0\in \rH$ and $f\in L^2_{loc}([0,\infty),\rV^\prime)$,
there exists  a unique $\mathbb{F}$-{progressively measurable} process $u$ such that
\begin{itemize}
\item[(1)] $u\in D([0,\infty),\rH)\cap L^2_{loc}([0,\infty),\rV)$, $\mathbb{P}$-a.s.,

\item[(2)] the following equality holds, for all $t\in[0,\infty)$, $\mathbb{P}$-a.s., in $\rV^\prime$,
      \begin{eqnarray*}\label{eq star page 3}
      u(t)&=&u_0-\int_0^t{\rA}u(s)\,ds - \int_0^t\rB(u(s))\,ds+\int_0^t f(s)\,ds\nonumber\\
      &&+
      \int_0^t\int_{{\rZ}}G(u({s-}),z)\widetilde{\eta}(dz,ds).
      \end{eqnarray*}
\end{itemize}
\end{thm}
\begin{proof}[Proof of Theorem \ref{thm-initial data in H local}]
The proof of Theorem \ref{thm-initial data in H local} is based on the proof of Theorem \ref{thm-initial data in H} and the standard truncation procedure, and it is essentially the same as the proof of Theorem
3.1 in \cite{Bao Truman Yuan 2011}, keeping in mind that for any $u,v\in \rV$,
$
 \dual{\langle \rB(u,v),v\rangle}=0.
$
The proof proceeds as follows.
\dela{\textcolor[rgb]{0.44,0.00,0.94}{From Zhai: the details:}}

For any natural number $k\in \mathbb{N}$, we define a map $G_k$ by
$$
G_k: \rH\times {\rZ} \ni (y,z) \mapsto G\Big(\frac{|y|_\rH\wedge k}{|y|_\rH}y,z\Big) \in \rH,
$$
where we put $\frac{|y|_\rH\wedge k}{|y|_\rH}=1$ when $y=0$. Since $G$ satisfies \textbf{Assumption \ref{con G Local}}, we observe that
the map $G_k$ satisfies \textbf{Assumption \ref{con G}}.
Hence, for every $k>|u_0|_\rH$, there exists by Theorem \ref{thm-initial data in H}
a unique $\mathbb{F}$-{progressively measurable} process $X^k$ such that
\begin{itemize}
 \item $X^k\in D([0,\infty),\rH)\cap L^2_{loc}([0,\infty),\rV)$, $\mathbb{P}$-a.s.,
 \item the following equality holds, for all $t\in[0,\infty)$, $\mathbb{P}$-a.s., in $\rV^\prime$,
\end{itemize}
      \begin{equation*}\label{Eq th local H 01}
      X^k(t)=u_0-\int_0^t{\rA}X^k(s)\,ds - \int_0^t\rB(X^k(s))\,ds+\int_0^t f(s)\,ds+\int_0^t\int_{{\rZ}}G_k(X^k({s-}),z)\widetilde{\eta}(dz,ds).
      \end{equation*}
Define {a random time $\sigma_k$ by }
{\begin{equation}\label{eqn-sigma_k}
\sigma_k:=\inf\{t\geq 0:|X^k(t)|_\rH>k\},
\end{equation}}
where{, for the whole paper, we adopt a convention that }  $\inf \emptyset= \infty$. {By Theorem 2.1.6 from \cite{Ethier+Kurtz_1986} it follows that $\sigma_k$ is a stopping time.} It is not difficult to see that $\sigma_k$ is increasing in $k$, and $X^{k+1}(t)=X^k(t),\, t\in[0,\sigma_k)$.
This enables us to define a stopping time $\sigma:=\lim_{k\rightarrow\infty}\sigma_k$ and a process
$u=\{u(t), \;\;t\in[0,\sigma)\}$ as follows
$$
u(t):=X^k(t),\, t\in[0,\sigma_k).
$$
It is easy to see that $u(t),\, t\in[0,\sigma)$ is a local solution of Problem (\ref{eq SNS 01}). To complete the proof, we need only show that
$\mathbb{P}(\sigma=\infty)=1$.

By {the It\^o} formula, see, e.g., \cite{GK_1982} and \cite{BHZ}, we have
\begin{eqnarray*}
&&\hspace{-0.3truecm}|u(t\wedge\sigma_k)|^2_{\rH}+2\int_0^{t\wedge\sigma_k}\|u(s)\|^2_{\rV}\,ds\\
&&=
  |u_0|^2_{\rH}
 +
  2\int_0^{t\wedge\sigma_k}\dual{\langle f(s),u(s)\rangle}\,ds
  +
  2\int_0^{t\wedge\sigma_k}\int_{{\rZ}}\langle G(u(s-),z),u(s-)\rangle_{\rH}\widetilde{\eta}(dz,ds)\\
  &&\ \ +
  \int_0^{t\wedge\sigma_k}\int_{{\rZ}}|G(u(s-),z)|^2_{\rH} \eta(dz,ds),\;\;\mathbb{P}\mbox{-a.s., for }t\geq 0.\nonumber
\end{eqnarray*}
Noting that the process $\int_0^{t\wedge\sigma_k}\int_{{\rZ}}\langle G(u(s-),z),u(s-)\rangle_{\rH}\widetilde{\eta}(dz,ds)$, $t\geq 0$, is a martingale, we infer that
$$
\mathbb{E}\int_0^{t\wedge\sigma_k}\int_{{\rZ}}\langle G(u(s-),z),u(s-)\rangle_{\rH}\widetilde{\eta}(dz,ds)=0.
$$
Thus, it follows by the linear growth condition \eqref{eqn-linear growth-H}, which is a part of \textbf{Assumption \ref{con G Local}}, that
there exists $C>0$ such that for all $k$,
\begin{eqnarray*}
&&\hspace{-1truecm}\mathbb{E}|u(t\wedge\sigma_k)|^2_{\rH}+\mathbb{E}\int_0^{t\wedge\sigma_k}\|u(s)\|^2_{\rV}\,ds\\
&\leq&
  |u_0|^2_{\rH}
  +
  \int_0^{t}\|f(s)\|_{\rV^\prime}^2ds
  +
  C\mathbb{E}\int_0^{t}(1+|u(s\wedge\sigma_k)|^2_{\rH})ds,\;\;t\geq 0.\nonumber
\end{eqnarray*}
Therefore, by applying Gronwall's lemma, we deduce that
\begin{eqnarray*}
\mathbb{E}|u(t\wedge\sigma_k)|^2_{\rH}
\leq
  \Big(|u_0|^2_{\rH}
  +
  \int_0^{t}\|f(s)\|_{\rV^\prime}^2ds
  +
  Ct\Big)e^{Ct},\ \ t\geq 0,
\end{eqnarray*}
which further gives
$$
\mathbb{P}(\sigma_k\leq t)\leq \frac{\mathbb{E}(|u(t\wedge\sigma_k)|^2_{\rH}\mathds{1}_{\sigma_k\leq t})}{k^2}
\leq
\frac{\Big(|u_0|^2_{\rH}
  +
  \int_0^{t}\|f(s)\|_{\rV^\prime}^2ds
  +
  Ct\Big)e^{Ct}}{k^2},\;\;t\geq 0.
$$
Letting $k\rightarrow\infty$, we obtain
$$
\mathbb{P}(\sigma\leq t)=0,\;\;t\geq 0.
$$
Since $t\geq 0$ is arbitrary, we must have
$$
\mathbb{P}(\sigma=\infty)=1.
$$

The proof of Theorem \ref{thm-initial data in H local} is complete.
\end{proof}

To prove Theorem \ref{thm-initial data in H}, we first introduce the following notation (used throughout the paper)  and state three preliminary and auxiliary results: Lemmata \ref{lem 1} and \ref{lem 2}, and Corollary \ref{cor 1}.

The following notation is useful.
For $T\geq 0$,
\begin{equation}
\label{eqn-Upsilon_T-H}
\Upsilon_T(\rH)= D([0,T],\rH)\cap L^2([0,T],\rV).
\end{equation}
It is standard that the space $\Upsilon_T(\rH)$ is endowed  with the norm $\|\cdot \|_{\Upsilon_T(\rH)}$ defined by
\begin{equation}
\label{eqn-Upsilon_T-H-norm}
\|y\|_{\Upsilon_T(\rH)}=\sup_{s\in[0,T]}|y(s)|_{\rH} + \Big(\int_0^T\|y(s)\|^2_{\rV} \, ds\Big)^{\frac{1}{2}}
\end{equation}
is a Banach space.
\todozbdel{
It would more natural to use the norm below.
\begin{equation*}
\|y\|_{\Upsilon_T(\rH)}^2=\sup_{s\in[0,T]}|y(s)|_{\rH}^2 + \int_0^T\|y(s)\|^2_{\rV} \, ds.
\end{equation*}
Is it safe to make such a change?
}

Let $\Lambda_T(\rH)$ be the space of all
$\rH$-valued {c\`adl\`ag} $\mathbb{F}$-{progressively measurable} processes {$y:[0,T]\times \Omega \tof \rV$ such that $\mathbb{P}$-a.s. its trajectories belong to the space $\Upsilon_T(\rH)$ and}
\begin{equation}
\label{eqn-Lambda_T-H}
\|y\|_{\Lambda_T(\rH)}^2:=\mathbb{E}\Big(\sup_{s\in[0,T]}|y(s)|_{\rH}^2 + \int_0^T\|y(s)\|^2_{\rV}\,ds\Big)<\infty.
\end{equation}

For every $m\in \mathbb{N}$, let us fix a function $\theta_m:[0,\infty)\to [0,1]$ satisfying

\begin{equation}
\label{eqn-theta_m}
\left\{ \begin{split}
\theta_m\in C^2[0,\infty), & \sup_{t\in[0,\infty)}|\theta_m'(t)|\leq {C_1}<\infty, \\
\mathds{1}_{[0,m]} &\leq \theta_m \leq \mathds{1}_{[0,m+1]},
\end{split}
\right.\end{equation}
for some constant $C_1>0$ which is independent of $m$. We also  set \[\phi =\theta_1.
\]
Let us also define, for every $\delta>0$, a function $\phi_\delta:[0,\infty)\to[0,1]$ by
 \[\phi_\delta(r)=\phi(\delta r), \;\; r \in [0,\infty).\]
It can be easily seen that every function $\phi_\delta$ satisfies the following conditions:

\begin{equation}
\label{eqn-phi_delta}
\left\{ \begin{split}
\phi_\delta\in C^2[0,\infty), & \sup_{t\in[0,\infty)}|\phi_\delta^\prime(t)|\leq {\Cphi}\delta, \\
\mathds{1}_{[0,\frac{1}{\delta}]} &\leq \phi_\delta \leq \mathds{1}_{[0,\frac{2}{\delta}]}.
\end{split}
\right.\end{equation}

We are now ready to state the first of the three promised auxiliary results.

\begin{lemma}\label{lem 1}
Assume that  $T>0$, $m\in \mathbb{N}$, $M\in\Upsilon_T(\rH)$, $u_0\in \rH$ and $f\in L^2 ([0,T],\rV^\prime)$. Then, there exists a function $Y\in C([0,T],\rH)\cap L^2([0,T],\rV)$ satisfying
\begin{eqnarray}\label{eq lemma 1 01}
&& dY(t)+{\rA}Y(t)\,dt+\theta_m(\|Y+M\|_{\Upsilon_t(\rH)})\rB(Y(t)+M(t))\,dt=f(t)\,dt,\nonumber\\
&& Y(0)=u_0.
\end{eqnarray}
\todozbdel{This lemma does not involve Assumption \eqref{eqn-Lipschitz-H} at all and hence it is sufficient to assume that \eqref{eqn-Lipschitz-H-local}. }

\end{lemma}

\begin{proof}[Proof of Lemma \ref{lem 1}]
 The proof is divided into three steps.

\begin{proof}[{\bf Step 1.}]
Let us fix $T>0$, $m\in \mathbb{N}$, $M\in\Upsilon_T(\rH)$, $u_0\in \rH$, and $f\in L^2 ([0,T],\rV^\prime)$.\\
We will use the Picard iteration method to prove that there exists a number $\delta_0>0$ depending only on $m$, and there exists $X\in C([0,T],\rH)\cap L^2([0,T],\rV)$ which solves the following auxiliary deterministic evolution equation with $\delta=\delta_0$.
\todozbdel{Here we do not use \eqref{eqn-Lipschitz-H} at all and hence it is sufficient to assume that \eqref{eqn-Lipschitz-H-local}. }
\begin{eqnarray}\label{App auxi spde 01}
&& X^\prime(t)+{\rA}X(t)+\theta_m(\|X+M\|_{\Upsilon_t(\rH)})\phi_\delta(\|X+M\|_{\xit})\rB(X(t)+M(t))=f(t),\nonumber\\
&& X(0)=u_0.
\end{eqnarray}

\vskip 0.2cm
Let us choose and fix\footnote{For instance $y(t)=e^{-t\rA}u_0$, $t \in [0,T]$.}
$y_0\in C([0,T],\rH)\cap L^2([0,T],\rV)$ with $y_0(0)=u_0$. Suppose that for $n\in \mathbb{N}$ a function $y_{n}\in C([0,T],\rH)\cap L^2([0,T],\rV)$ such that $y_{n}(0)=u_0$ is given. Let us observe that
it is not difficult to prove that there exists a unique $y_{n+1}\in C([0,T],\rH)\cap L^2([0,T],\rV)$ solving the following linear
evolution equation
\todozbdel{Here we do not use \eqref{eqn-Lipschitz-H} at all and hence it is sufficient to assume that \eqref{eqn-Lipschitz-H-local}. }
\begin{eqnarray}\label{App auxi spde 02}
\left\{
 \begin{array}{rcl} \DS
 y^\prime_{n+1}(t)&\!\!+\!\!&{\rA}y_{n+1}(t)+\theta_m(\|y_n+M\|_{\Upsilon_t(\rH)})\phi_\delta(\|y_n+M\|_{\xit})\\
 &&\\
 \DS && \rB(y_n(t)+M(t),y_{n+1}(t)+M(t))=f(t), \, \, t\in [0,T];\\
 &&\\
\DS y_{n+1}(0)&\!\!=\!\!&u_0.
 \end{array}
\right.
\end{eqnarray}

Our aim is to show that the sequence $\{y_{n}:n\in\mathbb{N}\}$ is a Cauchy sequence in $C([0,T],\rH)\cap L^2([0,T],\rV)$.

We now estimate the norm of the difference $y_{n+1}-y_n$ {for $n\geq 1$.} {We cannot do this for $n=0$.}

Given four functions $x_i\in C([0,T],\rH)\cap L^2([0,T],\rV)$, $i=1,\cdots,4$, we set , for $ t\in [0,T]$,
$$
\Pi(x_1,x_2,x_3,x_4)(t)=\theta_m(\|x_1+M\|_{\Upsilon_t(\rH)})\phi_\delta(\|x_2+M\|_{\xit})\rB(x_3(t)+M(t),x_4(t)+M(t))
$$
and
$$
\Xi(x_1,x_2)(t)=\theta_m(\|x_1+M\|_{\Upsilon_t(\rH)})\phi_\delta(\|x_2+M\|_{\xit}).
$$
By \cite[Lemma III.1.2]{Temam_2001} we have 
\begin{eqnarray}\label{App auxi esti 01}
&& \hspace{-1truecm}|y_{n+1}(t)-y_n(t)|^2_{\rH}+2\int_0^t\|y_{n+1}(s)-y_n(s)\|^2_{\rV}\,ds\\
&=&
-2\int_0^t \duality{ \Pi(y_n,y_n,y_n,y_{n+1})(s)-\Pi(y_{n-1},y_{n-1},y_{n-1},y_{n})(s),y_{n+1}(s)-y_n(s)}\,ds\nonumber\\
&=&
-2\int_0^t I(s)\,ds,\;\;t \in [0,T],\nonumber
\end{eqnarray}
where, with the processes $I_1$ and $I_2$ defined, for $s \in [0,T]$, by
\begin{eqnarray*}
I_1(s)&=&\Xi(y_n,y_n)(s)\dual{\Big\langle \rB(y_n(s)-y_{n-1}(s),y_n(s)+M(s)),y_{n+1}(s)-y_n(s)\Big\rangle},
\\
I_2(s)&=&\Big(\Xi(y_n,y_n)(s)-\Xi(y_{n-1},y_{n-1})(s)\Big)\\
&&\;\;\;\;\; \vspace{2truecm}\lefteqn{\times \dual{\Big\langle \rB(y_{n-1}(s)+M(s),y_n(s)+M(s)),y_{n+1}(s)-y_n(s)\Big\rangle},}
\end{eqnarray*}
we have
\begin{eqnarray}\label{app I 01}
I(s)
&=&
\dual{\Big\langle \Pi(y_n,y_n,y_n,y_{n+1})(s)-\Pi(y_{n},y_{n},y_{n},y_{n})(s),y_{n+1}(s)-y_n(s)\Big\rangle}\nonumber\\
&&+
\dual{\Big\langle \Pi(y_n,y_n,y_n,y_{n})(s)-\Pi(y_{n},y_{n},y_{n-1},y_{n})(s),y_{n+1}(s)-y_n(s)\Big\rangle}\nonumber\\
&&+
\dual{\Big\langle \Pi(y_n,y_n,y_{n-1},y_{n})(s)-\Pi(y_{n-1},y_{n-1},y_{n-1},y_{n})(s),y_{n+1}(s)-y_n(s)\Big\rangle}\nonumber\\
&=&
0+I_1(s)+I_2(s), \;\;\;s \in [0,T].
\end{eqnarray}

To estimate $I(s)$, for a fixed $s \in [0,T]$, we will consider three cases, with \textbf{Case 1} being divided into
three subcases.  Each case will contain a calculation of a certain
``partial" integral $\int_0^t|I(s)|ds$.

\begin{proof}[{\bf Case 1.}] Assume that $\|y_n+M\|_{\xis}\leq \frac{3}{\delta}$ and $\|y_{n-1}+M\|_{\xis}\leq \frac{3}{\delta}$.
\vskip 0.2cm
\noindent{\bf Subcase 1.1}  Assume further that $\|y_n+M\|_{\Upsilon_s(\rH)}\leq m+2$ and $\|y_{n-1}+M\|_{\Upsilon_s(\rH)}> m+2$.


The definition of $\theta_m$ implies that in this subcase
$$
I(s)=\dual{\langle \Pi(y_n,y_n,y_n,y_{n+1})(s),y_{n+1}(s)-y_n(s)\Big\rangle}
$$
and
\begin{align}\label{eq subcase 1.1}
&\hspace{-0.6truecm}\int_0^t|I(s)|\mathds{1}_{\{\|y_n+M\|_{\xis}\vee\|y_{n-1}+M\|_{\xis}\leq \frac{3}{\delta}\}}
    \mathds{1}_{\{\|y_n+M\|_{\Upsilon_s(\rH)}\leq m+2\}}
    \mathds{1}_{\{\|y_{n-1}+M\|_{\Upsilon_s(\rH)}> m+2\}}\,ds\\
   &\hspace{-0.6truecm}=
\int_0^t|I(s)|\mathds{1}_{\{\|y_n+M\|_{\xis}\vee\|y_{n-1}+M\|_{\xis}\leq \frac{3}{\delta}\}}
  \mathds{1}_{\{\|y_n+M\|_{\Upsilon_s(\rH)}\leq m+1\}}
     \mathds{1}_{\{\|y_{n-1}+M\|_{\Upsilon_s(\rH)}> m+2\}}\,ds.
    \nonumber
\end{align}
For any $s\in[0,t]$ such that
$$
\mathds{1}_{\{\|y_n+M\|_{\xis}\vee\|y_{n-1}+M\|_{\xis}\leq \frac{3}{\delta}\}}
    \mathds{1}_{\{\|y_n+M\|_{\Upsilon_s(\rH)}\leq m+1\}}\mathds{1}_{\{\|y_{n-1}+M\|_{\Upsilon_s(\rH)}> m+2\}}=1,
$$
we have
\begin{eqnarray}\label{eq subcase 1.1 01}
\|y_n-y_{n-1}\|_{\Upsilon_s(\rH)}=\|(y_n+M)-(y_{n-1}+M)\|_{\Upsilon_s(\rH)}\geq 1,
\end{eqnarray}
and for any ${\eps}>0$,
\begin{eqnarray}\label{app case 1.2 01}
 \Big|I(s)\Big|
&\leq&
   \Big| \dual{\Big\langle \rB(y_n(s)+M(s),y_{n+1}(s)+M(s)),-y_n(s)-M(s)\Big\rangle}
   \Big|\nonumber\\
&=&
   \Big| \dual{\Big\langle \rB(y_n(s)+M(s),y_{n+1}(s)-y_n(s)),-y_n(s)-M(s)\Big\rangle}
   \Big|\nonumber\\
&\leq&
   2|y_n(s)+M(s)|^{\frac{1}{2}}_{\rH}\|y_n(s)+M(s)\|^{\frac{1}{2}}_{\rV}|y_{n+1}(s)-y_n(s)|^{\frac{1}{2}}_{\rH}\nonumber\\
   &&\cdot\|y_{n+1}(s)-y_n(s)\|^{\frac{1}{2}}_{\rV}\|y_n(s)+M(s)\|_{\rV}\cdot \|y_n-y_{n-1}\|_{\Upsilon_s(\rH)}\nonumber\\
&\leq&
   \frac{3}{2}{\eps}^{4/3}\|y_{n+1}(s)-y_n(s)\|^{2/3}_{\rV}\|y_n(s)+M(s)\|^{4/3}_{\rV}\|y_n-y_{n-1}\|^{4/3}_{\Upsilon_s(\rH)}\nonumber\\
   &&+
    \frac{(m+2)^2}{2{\eps}^4}\|y_n(s)+M(s)\|^{2}_{\rV}|y_{n+1}(s)-y_n(s)|^{2}_{\rH}\nonumber\\
&\leq&
   \frac{1}{2}{\eps}^{4/3}\|y_{n+1}(s)-y_n(s)\|^{2}_{\rV}+{\eps}^{4/3}\|y_n(s)+M(s)\|^{2}_{\rV}\|y_n-y_{n-1}\|^{2}_{\Upsilon_s(\rH)}\nonumber\\
   &&+
    \frac{(m+2)^2}{2{\eps}^4}\|y_n(s)+M(s)\|^{2}_{\rV}|y_{n+1}(s)-y_n(s)|^{2}_{\rH}.
\end{eqnarray}
In the second ``$\leq$" of (\ref{app case 1.2 01}), we have used (\ref{eq subcase 1.1 01}).
\vskip 0.2cm

By inequalities (\ref{eq subcase 1.1}) and (\ref{app case 1.2 01}), we get
\begin{eqnarray}\label{Eq Claim 1.1}
&&\hspace{-0.7truecm}
  \int_0^t|I(s)|\mathds{1}_{\{\|y_n+M\|_{\xis}\vee\|y_{n-1}+M\|_{\xis}\leq \frac{3}{\delta}\}}
    \mathds{1}_{\{\|y_n+M\|_{\Upsilon_s(\rH)}\leq m+2\}}
     \mathds{1}_{\{\|y_{n-1}+M\|_{\Upsilon_s(\rH)}> m+2\}}\,ds\nonumber\\
&\leq&
  \frac{1}{2}{\eps}^{4/3}\int_0^t\|y_{n+1}(s)-y_n(s)\|^{2}_{\rV}\,ds
  \nonumber\\
  &&+
  {\eps}^{4/3}\|y_n-y_{n-1}\|^{2}_{\Upsilon_t(\rH)}\int_0^t\|y_n(s)+M(s)\|^{2}_{\rV}\mathds{1}_{\{\|y_n+M\|_{\xis}\leq \frac{3}{\delta}\}}\,ds\nonumber\\
   &&+
    \frac{(m+2)^2}{2{\eps}^4}\int_0^t\|y_n(s)+M(s)\|^{2}_{\rV}\mathds{1}_{\{\|y_n+M\|_{\xis}\leq \frac{3}{\delta}\}}(s)\,ds\sup_{s\in[0,t]}|y_{n+1}(s)-y_n(s)|^{2}_{\rH}\nonumber\\
&\leq&
  \frac{1}{2}{\eps}^{4/3}\int_0^t\|y_{n+1}(s)-y_n(s)\|^{2}_{\rV}\,ds
  +
  {\eps}^{4/3}\|y_n-y_{n-1}\|^{2}_{\Upsilon_t(\rH)}\cdot \frac{9}{\delta^2}\nonumber\\
   &&+
    \frac{(m+2)^2}{2{\eps}^4}\cdot \frac{9}{\delta^2}\sup_{s\in[0,t]}|y_{n+1}(s)-y_n(s)|^{2}_{\rH}\nonumber\\
&\leq&
  \Big(\frac{1}{2}{\eps}^{4/3}+\frac{(m+2)^2}{2{\eps}^4}\cdot \frac{9}{\delta^2}\Big)\|y_{n+1}-y_{n}\|^{2}_{\Upsilon_t(\rH)}
  +
  \frac{9}{\delta^2}{\eps}^{4/3}\|y_n-y_{n-1}\|^{2}_{\Upsilon_t(\rH)}.
\end{eqnarray}

\noindent{\bf Subcase 1.2} Assume further that $\|y_n+M\|_{\Upsilon_s(\rH)}> m+2$ and $\|y_{n-1}+M\|_{\Upsilon_s(\rH)}\leq m+2$.

Similar to \textbf{Subcase 1.1}, in this subcase, we have
$$
I(s)=-\dual{\Big\langle \Pi(y_{n-1},y_{n-1},y_{n-1},y_{n})(s),y_{n+1}(s)-y_n(s)\Big\rangle},
$$
\begin{eqnarray}\label{Eq Subcase 1.2}
&&\hspace{-1.1truecm}\lefteqn{\int_0^t\!|I(s)|\mathds{1}_{\{\|y_n+M\|_{\xis}\vee\|y_{n-1}+M\|_{\xis}\leq \frac{3}{\delta}\}}
    \!\mathds{1}_{\{\|y_n+M\|_{\Upsilon_s(\rH)}> m+2\}}\!\mathds{1}_{\{\|y_{n-1}+M\|_{\Upsilon_s(\rH)}\leq m+2\}}\!\,ds}\\
&&\hspace{-1.1truecm}=
 {\int_0^t|I(s)|\mathds{1}_{\{\|y_n+M\|_{\xis}\vee\|y_{n-1}+M\|_{\xis}\leq \frac{3}{\delta}\}}
    \mathds{1}_{\{\|y_n+M\|_{\Upsilon_s(\rH)}> m+2\}}\mathds{1}_{\{\|y_{n-1}+M\|_{\Upsilon_s(\rH)}\leq m+1\}}\,ds},\nonumber
\end{eqnarray}
and for any $s\in[0,t]$ such that
$$
\mathds{1}_{\{\|y_n+M\|_{\xis}\vee\|y_{n-1}+M\|_{\xis}\leq \frac{3}{\delta}\}}
    \mathds{1}_{\{\|y_n+M\|_{\Upsilon_s(\rH)}> m+2\}}\mathds{1}_{\{\|y_{n-1}+M\|_{\Upsilon_s(\rH)}\leq m+1\}}=1,
$$
we have
\begin{eqnarray}\label{eq subcase 1.2 01}
\|y_n-y_{n-1}\|_{\Upsilon_s(\rH)}=\|(y_n+M)-(y_{n-1}+M)\|_{\Upsilon_s(\rH)}\geq 1,
\end{eqnarray}
and, for any ${\eps}>0$,
\begin{eqnarray}\label{app case 1.3 01}
 \Big|I(s)\Big|
&\leq&
   \Big|\dual{ \Big\langle \rB(y_{n-1}(s)+M(s),y_{n}(s)+M(s)),y_{n+1}(s)-y_n(s)\Big\rangle}
   \Big|\nonumber\\
&\leq&
   2|y_{n-1}(s)+M(s)|^{\frac{1}{2}}_{\rH}\|y_{n-1}(s)+M(s)\|^{\frac{1}{2}}_{\rV}|y_{n+1}(s)-y_n(s)|^{\frac{1}{2}}_{\rH}\nonumber\\
   &&\cdot\|y_{n+1}(s)-y_n(s)\|^{\frac{1}{2}}_{\rV}\|y_n(s)+M(s)\|_{\rV}\nonumber\\
&\leq&
   \frac{1}{2{\eps}^4}|y_{n+1}(s)-y_n(s)|^{2}_{\rH}|y_{n-1}(s)+M(s)|^{2}_{\rH}\|y_{n-1}(s)+M(s)\|^{2}_{\rV}\nonumber\\
   &&+
   \frac{3}{2}{\eps}^{4/3}\|y_{n+1}(s)-y_n(s)\|^{2/3}_{\rV}\|y_{n}(s)+M(s)\|^{4/3}_{\rV}\nonumber\\
&\leq&
   \frac{1}{2{\eps}^4}(m+2)^2|y_{n+1}(s)-y_n(s)|^{2}_{\rH}\|y_{n-1}(s)+M(s)\|^{2}_{\rV}\nonumber\\
   &&+
   \frac{1}{2}{\eps}^{4/3}\|y_{n+1}(s)-y_n(s)\|^{2}_{\rV}+{\eps}^{4/3}\|y_{n}(s)+M(s)\|^{2}_{\rV}\nonumber\\
&\leq&
   \frac{1}{2{\eps}^4}(m+2)^2|y_{n+1}(s)-y_n(s)|^{2}_{\rH}\|y_{n-1}(s)+M(s)\|^{2}_{\rV}\nonumber\\
   &&+
   \frac{1}{2}{\eps}^{4/3}\|y_{n+1}(s)-y_n(s)\|^{2}_{\rV}+2{\eps}^{4/3}\|y_{n-1}(s)+M(s)\|^{2}_{\rV}+2{\eps}^{4/3}\|y_n(s)-y_{n-1}(s)\|^{2}_{\rV}\nonumber\\
&\leq&
   \frac{1}{2{\eps}^4}(m+2)^2|y_{n+1}(s)-y_n(s)|^{2}_{\rH}\|y_{n-1}(s)+M(s)\|^{2}_{\rV}
   +
   \frac{1}{2}{\eps}^{4/3}\|y_{n+1}(s)-y_n(s)\|^{2}_{\rV}\nonumber\\
   &&
   +
   2{\eps}^{4/3}\|y_{n-1}(s)+M(s)\|^{2}_{\rV}\cdot \|y_n-y_{n-1}\|^2_{\Upsilon_s(\rH)}+2{\eps}^{4/3}\|y_n(s)-y_{n-1}(s)\|^{2}_{\rV}.
\end{eqnarray}
In the last inequality ``$\leq$" of (\ref{app case 1.3 01}), we used inequality (\ref{eq subcase 1.2 01}).

\vskip 0.2cm
By inequalities (\ref{Eq Subcase 1.2}) and (\ref{app case 1.3 01}), we get, similar to inequality (\ref{Eq Claim 1.1}), the following:
\begin{align}\label{Eq Claim 1.2}
&\hspace{-1.3truecm}\lefteqn{\int_0^t|I(s)|\mathds{1}_{\{\|y_n+M\|_{\xis}\vee\|y_{n-1}+M\|_{\xis}\leq \frac{3}{\delta}\}}
    \mathds{1}_{\{\|y_n+M\|_{\Upsilon_s(\rH)}> m+2\}}\mathds{1}_{\{\|y_{n-1}+M\|_{\Upsilon_s(\rH)}\leq m+2\}}\,ds}\nonumber\\
&\hspace{-1.3truecm}\leq
  \Big(\frac{{\eps}^{4/3}}{2}+\frac{9(m+2)^2}{2{\eps}^4\delta^2} \Big)\|y_{n+1}-y_{n}\|^{2}_{\Upsilon_t(\rH)}
  +
  2{\eps}^{4/3}\Big(1+\frac{9}{\delta^2}\Big)\|y_n-y_{n-1}\|^{2}_{\Upsilon_t(\rH)}.
\end{align}

\noindent{\bf Subcase 1.3} Assume further that $\|y_n+M\|_{\Upsilon_s(\rH)}\leq m+2$ and $\|y_{n-1}+M\|_{\Upsilon_s(\rH)}\leq m+2$.

Under the assumptions of {\bf Subcase 1.3}, we infer that  for any ${\eps}>0$,
\begin{eqnarray}\label{app I I1 1.1}
   \Big|I_1(s)\Big|
&\leq&
   \Big| \dual{\Big\langle \rB(y_n(s)-y_{n-1}(s),y_n(s)+M(s)),y_{n+1}(s)-y_n(s)\Big\rangle}
   \Big|\nonumber\\
&\leq&
   2\|y_n(s)-y_{n-1}(s)\|^{\frac{1}{2}}_{\rV}|y_n(s)-y_{n-1}(s)|^{\frac{1}{2}}_{\rH}\|y_{n+1}(s)-y_{n}(s)\|^{\frac{1}{2}}_{\rV}\nonumber\\
   &&\cdot|y_{n+1}(s)-y_{n}(s)|^{\frac{1}{2}}_{\rH}\|y_n(s)+M(s)\|_{\rV}\nonumber\\
&\leq&
   {\eps}\|y_n(s)-y_{n-1}(s)\|_{\rV}\|y_{n+1}(s)-y_{n}(s)\|_{\rV}\nonumber\\
   &&+
   \frac{2}{{\eps}}|y_n(s)-y_{n-1}(s)|_{\rH}|y_{n+1}(s)-y_{n}(s)|_{\rH}\|y_n(s)+M(s)\|^2_{\rV}\nonumber\\
&\leq&
   {\eps}\|y_n(s)-y_{n-1}(s)\|^2_{\rV}+{\eps}\|y_{n+1}(s)-y_{n}(s)\|^2_{\rV}\nonumber\\
   &&+
   {\eps} \|y_n(s)+M(s)\|^2_{\rV}|y_n(s)-y_{n-1}(s)|^2_{\rH}\nonumber\\
   &&+
   \frac{4}{{\eps}^3}|y_{n+1}(s)-y_{n}(s)|^2_{\rH}\|y_n(s)+M(s)\|^2_{\rV}.
\end{eqnarray}

For $I_2(s)=I_{2,1}(s)+I_{2,1}(s)$, where
\begin{eqnarray*}
I_{2,1}(s)&=&\Big(\Xi(y_n,y_n)(s)-\Xi(y_{n-1},y_{n})(s)\Big)
\\
&& \dual{\Big\langle \rB(y_{n-1}(s)+M(s),y_n(s)+M(s)),y_{n+1}(s)-y_n(s)\Big\rangle}
\end{eqnarray*}
and
\begin{eqnarray*}
I_{2,2}(s)&=&\Big(\Xi(y_{n-1},y_n)(s)-\Xi(y_{n-1},y_{n-1})(s)\Big)
\\&&
\dual{\Big\langle \rB(y_{n-1}(s)+M(s),y_n(s)+M(s)),y_{n+1}(s)-y_n(s)\Big\rangle}
,
\end{eqnarray*}
we have, for any ${\eps}>0$,
\begin{eqnarray}\label{App I I2 1 01}
 \hspace{-1.2truecm}\Big|I_{2,1}(s)\Big|
&\leq&
   C_1\|y_n-y_{n-1}\|_{\Upsilon_s(\rH)}|y_{n-1}(s)+M(s)|^{\frac{1}{2}}_{\rH}\|y_{n-1}(s)+M(s)\|^{\frac{1}{2}}_{\rV}\nonumber\\
                  &&\cdot |y_{n+1}(s)-y_n(s)|^{\frac{1}{2}}_{\rH}\|y_{n+1}(s)-y_n(s)\|^{\frac{1}{2}}_{\rV}
                   \|y_{n}(s)+M(s)\|_{\rV}\nonumber\\
&\leq&
   3/4{\eps}^{4/3}\|y_n-y_{n-1}\|^{4/3}_{\Upsilon_s(\rH)}\|y_{n}(s)+M(s)\|^{4/3}_{\rV}\|y_{n+1}(s)-y_n(s)\|^{2/3}_{\rV}\nonumber\\
   &&+
   \frac{C_1^4}{4{\eps}^4}|y_{n-1}(s)+M(s)|^{2}_{\rH}\|y_{n-1}(s)+M(s)\|^{2}_{\rV}|y_{n+1}(s)-y_n(s)|^{2}_{\rH}\nonumber\\
&\leq&
   \frac{1}{4}{\eps}^{4/3}\Big[2\|y_n-y_{n-1}\|^{2}_{\Upsilon_s(\rH)}\|y_{n}(s)+M(s)\|^{2}_{\rV}+\|y_{n+1}(s)-y_n(s)\|^{2}_{\rV}\Big]\nonumber\\
   &&+
   \frac{C_1^4}{4{\eps}^4}(m+2)^2\|y_{n-1}(s)+M(s)\|^{2}_{\rV}|y_{n+1}(s)-y_n(s)|^{2}_{\rH},
\end{eqnarray}
and
\begin{eqnarray}\label{App I I2 2 01}
  \Big|I_{2,2}(s)\Big|
&\leq&
   {\Cphi}\delta\|y_n-y_{n-1}\|_{\xis}|y_{n-1}(s)+M(s)|^{\frac{1}{2}}_{\rH}\|y_{n-1}(s)+M(s)\|^{\frac{1}{2}}_{\rV}\nonumber\\
                  &&\cdot |y_{n+1}(s)-y_n(s)|^{\frac{1}{2}}_{\rH}\|y_{n+1}(s)-y_n(s)\|^{\frac{1}{2}}_{\rV}
                   \|y_{n}(s)+M(s)\|_{\rV}\nonumber\\
&\leq& \frac{3\delta}{4}{\eps}^{4/3}\|y_n-y_{n-1}\|^{4/3}_{\xis}\|y_{n}(s)+M(s)\|^{4/3}_{\rV}\|y_{n+1}(s)-y_n(s)\|^{2/3}_{\rV}\nonumber\\
    &&+
    \frac{{\Cphi}^4\delta}{4{\eps}^4}|y_{n-1}(s)+M(s)|^{2}_{\rH}\|y_{n-1}(s)+M(s)\|^{2}_{\rV}|y_{n+1}(s)-y_n(s)|^{2}_{\rH}\nonumber\\
&\leq&
   \frac{1}{2}\delta^{\frac{3}{2}}{\eps}^{4/3}\|y_n-y_{n-1}\|^{2}_{\xis}\|y_{n}(s)+M(s)\|^{2}_{\rV}+\frac{1}{4}{\eps}^{4/3}\|y_{n+1}(s)-y_n(s)\|^{2}_{\rV}\nonumber\\
    &&+
    \frac{{\Cphi}^4\delta}{4{\eps}^4}(m+2)^2\|y_{n-1}(s)+M(s)\|^{2}_{\rV}|y_{n+1}(s)-y_n(s)|^{2}_{\rH}.
\end{eqnarray}
\vskip 0.2cm

Similar to inequality (\ref{Eq Claim 1.1}), by inequalities (\ref{app I I1 1.1})-(\ref{App I I2 2 01}), we infer that
\begin{align}\label{Eq Claim 1.3}
&\hspace{-0.2truecm}\int_0^t|I(s)|\mathds{1}_{\{\|y_n+M\|_{\xis}\vee\|y_{n-1}+M\|_{\xis}\leq \frac{3}{\delta}\}}
    \mathds{1}_{\{\|y_n+M\|_{\Upsilon_s(\rH)}\leq m+2\}}\mathds{1}_{\{\|y_{n-1}+M\|_{\Upsilon_s(\rH)}\leq m+2\}}\,ds\nonumber\\
&\hspace{-0.1truecm}\leq
  \Big({\eps}+\frac{36}{{\eps}^3\delta^2}+\frac{1}{2}{\eps}^{4/3}+\frac{C_1^4}{4{\eps}^4}(m+2)^2\frac{9}{\delta^2}
     +\frac{9}{4}\frac{(m+2)^2}{{\eps}^4\delta}
     \Big)\|y_{n+1}-y_{n}\|^{2}_{\Upsilon_t(\rH)}\nonumber\\
  &\ \ +
  \Big({\eps}+\frac{9{\eps}}{\delta^{2}}+\frac{9 {\eps}^{4/3}}{2\delta^2}+\frac{9{\eps}^{4/3}}{2\delta^{\frac{1}{2}}}\Big)\|y_n-y_{n-1}\|^{2}_{\Upsilon_t(\rH)}.
\end{align}

The proof of \textbf{Case 1} is complete.
\end{proof}

\begin{proof}[{\bf Case 2.}] Assume that $\|y_n+M\|_{\xis}\leq \frac{3}{\delta}$ and $\|y_{n-1}+M\|_{\xis}> \frac{3}{\delta}$.

In this case, by the definitions of functions $\theta_m$ and $\phi_\delta$, we have
$$
I(s)=\dual{\big\langle \Pi(y_n,y_n,y_n,y_{n+1})(s),y_{n+1}(s)-y_n(s)\big\rangle}
,
$$
and
\begin{eqnarray}\label{Eq Case 2}
&&\hspace{-1truecm}\lefteqn{\int_0^t|I(s)|\mathds{1}_{\{\|y_n+M\|_{\xis}\leq \frac{3}{\delta}\}}(s)
    \mathds{1}_{\{\|y_{n-1}+M\|_{\xis}> \frac{3}{\delta}\}}(s)\,ds}\\
&=&
 {\int_0^t|I(s)|\mathds{1}_{\{\|y_n+M\|_{\xis}\leq \frac{2}{\delta}\}}(s)
    \mathds{1}_{\{\|y_{n-1}+M\|_{\xis}> \frac{3}{\delta}\}}(s)\mathds{1}_{\{\|y_n+M\|_{\Upsilon_s(\rH)}\leq m+1\}}\,ds}\nonumber.
\end{eqnarray}

For any $s\in[0,t]$ such that
$$
\mathds{1}_{\{\|y_n+M\|_{\xis}\leq \frac{2}{\delta}\}}(s)
    \mathds{1}_{\{\|y_{n-1}+M\|_{\xis}> \frac{3}{\delta}\}}(s)\mathds{1}_{\{\|y_n+M\|_{\Upsilon_s(\rH)}\leq m+1\}}=1,
$$
 we have
 \begin{eqnarray}\label{Eq Case 2 00}
 \delta\|y_n-y_{n-1}\|_{\Upsilon_s(\rH)}\geq \delta\|(y_n+M)-(y_{n-1}+M)\|_{\xis}\geq 1,
 \end{eqnarray}
 and for any $p\in(0,\frac{1}{2})$ and $\epsilon>0$,
\begin{eqnarray}\label{app case 2}
  \Big|I(s)\Big|
&\leq&
   \Big|
   \dual{\Big\langle \rB(y_n(s)+M(s),y_{n+1}(s)+M(s)),-y_n(s)-M(s)\Big\rangle}
   \Big|\nonumber\\
&=&
   \Big|\dual{\Big\langle \rB(y_n(s)+M(s),y_{n+1}(s)-y_n(s)),y_n(s)+M(s)\Big\rangle}
   \Big|\nonumber\\
&\leq&
   2|y_n(s)+M(s)|^{\frac{1}{2}}_{\rH}\|y_n(s)+M(s)\|^{\frac{1}{2}}_{\rV}|y_{n+1}(s)-y_n(s)|^{\frac{1}{2}}_{\rH}\nonumber\\
   &&\cdot\|y_{n+1}(s)-y_n(s)\|^{\frac{1}{2}}_{\rV}\|y_n(s)+M(s)\|_{\rV}\cdot \delta\|y_n-y_{n-1}\|_{\Upsilon_s(\rH)}\nonumber\\
&\leq&
   \frac{3}{2}{\eps}^{4/3}\delta^{\frac{4(1-p)}{3}}\|y_{n+1}(s)-y_n(s)\|^{2/3}_{\rV}\|y_n(s)+M(s)\|^{4/3}_{\rV}\|y_n-y_{n-1}\|^{4/3}_{\Upsilon_s(\rH)}\nonumber\\
   &&+
   \frac{1}{2{\eps}^4}\delta^{4p}\|y_n(s)+M(s)\|^{2}_{\rV}|y_n(s)+M(s)|^{2}_{\rH}|y_{n+1}(s)-y_n(s)|^{2}_{\rH}\nonumber\\
&\leq&
   \frac{1}{2}{\eps}^{4/3}\|y_{n+1}(s)-y_n(s)\|^{2}_{\rV}+{\eps}^{4/3}\delta^{2(1-p)}\|y_n(s)+M(s)\|^{2}_{\rV}\|y_n-y_{n-1}\|^{2}_{\Upsilon_s(\rH)}\nonumber\\
   &&+
   \frac{1}{2{\eps}^4}\delta^{4p}(m+1)^2\|y_n(s)+M(s)\|^{2}_{\rV}|y_{n+1}(s)-y_n(s)|^{2}_{\rH}.
\end{eqnarray}
In the second ``$\leq$" of (\ref{app case 2}), we have used (\ref{Eq Case 2 00}).


\vskip 0.2cm
 Similar to (\ref{Eq Claim 1.1}), by (\ref{Eq Case 2}) and (\ref{app case 2}),
\begin{eqnarray}\label{Eq Claim 2}
&&\hspace{-1truecm}\lefteqn{\int_0^t|I(s)|\mathds{1}_{\{\|y_n+M\|_{\xis}\leq \frac{3}{\delta}\}}(s)
    \mathds{1}_{\{\|y_{n-1}+M\|_{\xis}> \frac{3}{\delta}\}}(s)\,ds}\nonumber\\
&\leq&
  \Big(\frac{1}{2}{\eps}^{4/3}+\frac{9}{2{\eps}^4}(m+1)^2\delta^{4p-2}
     \Big)\|y_{n+1}-y_{n}\|^{2}_{\Upsilon_t(\rH)}
  +
  9\frac{{\eps}^{4/3}}{\delta^{2p}}\|y_n-y_{n-1}\|^{2}_{\Upsilon_t(\rH)}.
\end{eqnarray}
The proof of \textbf{Case 2} is complete.
\end{proof}

\begin{proof}[{\bf Case 3.}] Assume that $\|y_n+M\|_{\xis}> \frac{3}{\delta}$ and $\|y_{n-1}+M\|_{\xis}\leq \frac{3}{\delta}$.

In this case, similar to the \textbf{Case 2}, by the definitions of functions $\theta_m$ and $\phi_\delta$, we have
$$
I(s)=-\dual{ \big\langle \Pi(y_{n-1},y_{n-1},y_{n-1},y_{n})(s),y_{n+1}(s)-y_n(s)\big\rangle}
,
$$
and
\begin{eqnarray}\label{Eq Claim 3 00}
&&\hspace{-0.8truecm}\lefteqn{\int_0^t|I(s)|\mathds{1}_{\{\|y_n+M\|_{\xis}> \frac{3}{\delta}\}}(s)
    \mathds{1}_{\{\|y_{n-1}+M\|_{\xis}\leq \frac{3}{\delta}\}}(s)\,ds}\\
&=&
  {\int_0^t|I(s)|\mathds{1}_{\{\|y_n+M\|_{\xis}> \frac{3}{\delta}\}}(s)
    \mathds{1}_{\{\|y_{n-1}+M\|_{\xis}\leq \frac{2}{\delta}\}}(s)\mathds{1}_{\{\|y_{n-1}+M\|_{\Upsilon_s(\rH)}\leq m+1\}}\,ds}.\nonumber
\end{eqnarray}

For any $s\in[0,t]$ such that
$$
\mathds{1}_{\{\|y_n+M\|_{\xis}> \frac{3}{\delta}\}}(s)
    \mathds{1}_{\{\|y_{n-1}+M\|_{\xis}\leq \frac{2}{\delta}\}}(s)\mathds{1}_{\{\|y_{n-1}+M\|_{\Upsilon_s(\rH)}\leq m+1\}}=1,
$$
we have
\begin{eqnarray}\label{Eq Claim3 001}
\delta^2\|y_n-y_{n-1}\|^2_{\Upsilon_s(\rH)}\geq \delta^2\|(y_n+M)-(y_{n-1}+M)\|^2_{\xis}\geq 1
\end{eqnarray}
and, for any $p\in(0,\frac{1}{2})$ and ${\eps}>0$,
\begin{eqnarray}\label{app case 3}
  \Big|I(s)\Big|
&\leq&
   \Big|
   \dual{\Big\langle \rB(y_{n-1}(s)+M(s),y_{n}(s)+M(s)),y_{n+1}(s)-y_n(s)\Big\rangle}
   \Big|\nonumber\\
&\leq&
   2|y_{n-1}(s)+M(s)|^{\frac{1}{2}}_{\rH}\|y_{n-1}(s)+M(s)\|^{\frac{1}{2}}_{\rV}|y_{n+1}(s)-y_n(s)|^{\frac{1}{2}}_{\rH}\nonumber\\
   &&\cdot\|y_{n+1}(s)-y_n(s)\|^{\frac{1}{2}}_{\rV}\|y_n(s)+M(s)\|_{\rV}\nonumber\\
&\leq&
   \frac{3}{2}\delta^{\frac{-4p}{3}}\|y_{n+1}(s)-y_n(s)\|^{2/3}_{\rV}\|y_n(s)+M(s)\|^{4/3}_{\rV}\nonumber\\
   &&+
   \frac{1}{2}\delta^{4p}\|y_{n-1}(s)+M(s)\|^{2}_{\rV}|y_{n-1}(s)+M(s)|^{2}_{\rH}|y_{n+1}(s)-y_n(s)|^{2}_{\rH}\nonumber\\
&\leq&
   \frac{1}{2}\delta^{4p}(m+1)^2\|y_{n-1}(s)+M(s)\|^{2}_{\rV}|y_{n+1}(s)-y_n(s)|^{2}_{\rH}\nonumber\\
   &&+
   \frac{1}{2}{\eps}^3\|y_{n+1}(s)-y_n(s)\|^{2}_{\rV}
   +
   \frac{1}{{\eps}^{\frac{3}{2}}\delta^{2p}}\|y_n(s)+M(s)\|^{2}_{\rV}\nonumber\\
&\leq&
   \frac{1}{2}\delta^{4p}(m+1)^2\|y_{n-1}(s)+M(s)\|^{2}_{\rV}|y_{n+1}(s)-y_n(s)|^{2}_{\rH}\\
   &&+
   \frac{1}{2}{\eps}^3\|y_{n+1}(s)-y_n(s)\|^{2}_{\rV}
   +
   2\frac{1}{{\eps}^{\frac{3}{2}}\delta^{2p}}\Big[\|y_{n}(s)-y_{n-1}(s)\|^{2}_{\rV}+\|y_{n-1}(s)+M(s)\|^{2}_{\rV}\Big]\nonumber\\
&\leq&
   \frac{1}{2}\delta^{4p}(m+1)^2\|y_{n-1}(s)+M(s)\|^{2}_{\rV}|y_{n+1}(s)-y_n(s)|^{2}_{\rH}
   +
   \frac{1}{2}{\eps}^3\|y_{n+1}(s)-y_n(s)\|^{2}_{\rV}\nonumber\\
   &&
   +
   2\frac{1}{{\eps}^{\frac{3}{2}}\delta^{2p}}\|y_{n}(s)-y_{n-1}(s)\|^{2}_{\rV}+2\frac{1}{{\eps}^{\frac{3}{2}}\delta^{2p}}\|y_{n-1}(s)+M(s)\|^{2}_{\rV}\cdot \|y_{n}-y_{n-1}\|^{2}_{\Upsilon_s(\rH)}\delta^2.\nonumber
\end{eqnarray}
In the last ``$\leq$" in (\ref{app case 3}), we have used (\ref{Eq Claim3 001}).

\vskip 0.2cm
 Similarly to (\ref{Eq Claim 1.1}), by (\ref{Eq Claim 3 00}) and (\ref{app case 3}),
\begin{eqnarray}\label{Eq Claim 3}
&&\lefteqn{\int_0^t|I(s)|\mathds{1}_{\{\|y_n+M\|_{\xis}> \frac{3}{\delta}\}}
    \mathds{1}_{\{\|y_{n-1}+M\|_{\xis}\leq \frac{3}{\delta}\}}\,ds}\\
&\leq&
  \Big(\frac{1}{2}{\eps}^{3}+{2}(m+1)^2\delta^{4p-2}
     \Big)\|y_{n+1}-y_{n}\|^{2}_{\Upsilon_t(\rH)}
  +
  \frac{2}{\eps^{\frac{3}{2}}\delta^{2p}}\|y_n-y_{n-1}\|^{2}_{\Upsilon_t(\rH)}.\nonumber
\end{eqnarray}

{The proof of \textbf{Case 3 }is complete.}
\end{proof}
\vskip 0.2cm

Combining (\ref{Eq Claim 1.1}), (\ref{Eq Claim 1.2}), (\ref{Eq Claim 1.3}), (\ref{Eq Claim 2}), and (\ref{Eq Claim 3}), there exist constants $C>0$ and $l_m>0$, for any ${\eps}>0$ and $p\in(0,\frac{1}{2})$, such that
\begin{eqnarray}\label{Eq esta IS P}
&&\hspace{-1truecm}\int_0^t|I(s)|\,ds\\
&\leq&
  l_m\Big({\eps}+\frac{1}{{\eps}^{3}\delta^{2}}+{\eps}^{4/3}+\frac{1}{{\eps}^{4}\delta^{2}}+\frac{1}{{\eps}^{4}\delta}+
  {\eps}^{3}+\delta^{4p-2}+{\eps}^{-4}\delta^{4p-2}
     \Big)\|y_{n+1}-y_{n}\|^{2}_{\Upsilon_t(\rH)}\nonumber\\
  && +\nonumber
  C\Big({\eps}+\frac{{\eps}}{\delta^{2}}+\frac{{\eps}^{4/3}}{\delta^{2}}+\frac{{\eps}^{4/3}}{\delta^{\frac{1}{2}}}
  +{\eps}^{4/3}+\frac{{\eps}^{4/3}}{\delta^{2p}}+\frac{1}{{\eps}^{\frac{3}{2}}\delta^{2p}}\Big)\|y_n-y_{n-1}\|^{2}_{\Upsilon_t(\rH)}.
\end{eqnarray}
Choosing $p=\frac{1}{4}$. Let ${\eps}$ small enough first, and then $\delta$ large enough, there exist ${\eps}_0>0$ and $\delta_0>0$ such that
\begin{eqnarray}\label{Eq esta IS P 0}
  && l_m\Big({{\eps}_0}+\frac{1}{{{\eps}_0}^{3}{\delta_0}^{2}}+{{\eps}_0}^{4/3}+\frac{1}{{{\eps}_0}^{4}{\delta_0}^{2}}
  +\frac{1}{{{\eps}_0}^{4}{\delta_0}}+{{\eps}_0}^{3}+{\delta_0}^{4p-2}+\frac{{\delta_0}^{4p-2}}{{{\eps}_0}^{4}}
     \Big)\\\nonumber
  && +
  C\Big({{\eps}_0}+\frac{{{\eps}_0}}{{\delta_0}^{2}}+\frac{{{\eps}_0}^{4/3}}{{\delta_0}^{2}}
  +\frac{{{\eps}_0}^{4/3}}{{\delta_0}^{\frac{1}{2}}}+{{\eps}_0}^{4/3}+\frac{{{\eps}_0}^{4/3}}{{\delta_0}^{2p}}
  \frac{1}{+{{\eps}_0}^{\frac{3}{2}}{\delta_0}^{2p}}\Big)
  \leq \frac{1}{16}.
\end{eqnarray}

Set $\delta=\delta_0$ in (\ref{App auxi spde 01}).
By (\ref{App auxi esti 01}), (\ref{Eq esta IS P}), and (\ref{Eq esta IS P 0}), we arrive at
\begin{eqnarray}\label{App auxi esti 02}
\sup_{s\in[0,t]}|y_{n+1}(s)-y_n(s)|^2_{\rH}\!\!\!\!&+\!\!\!\!&2\int_0^t\|y_{n+1}(s)-y_n(s)\|^2_{\rV}\,ds
\\ \!\!\!\!&\ \leq&\!
  \frac{1}{8}\Big[\|y_{n+1}-y_n\|^2_{\Upsilon_t(\rH)}+\|y_{n}-y_{n-1}\|^2_{\Upsilon_t(\rH)}\Big].
  \nonumber
\end{eqnarray}
Since
$$
\frac{1}{2}\|y_{n+1}-y_n\|^2_{\Upsilon_t(\rH)}\leq \sup_{s\in[0,t]}|y_{n+1}(s)-y_n(s)|^2_{\rH}+\int_0^t\|y_{n+1}(s)-y_n(s)\|^2_{\rV}\,ds,
$$
by (\ref{App auxi esti 02}) {we infer that}
\begin{eqnarray}\label{App auxi esti 03}
\|y_{n+1}-y_n\|^2_{\Upsilon_T(\rH)}
\leq
  \frac{1}{3}\|y_{n}-y_{n-1}\|^2_{\Upsilon_T(\rH)},
\end{eqnarray}
which implies that $\{y_{n},n\in\mathbb{N}\}$ is a Cauchy sequence in $C([0,T],\rH)\cap L^2([0,T],\rV)$; we denote its limit by $Y^1$.
Using classical arguments, it is not difficult to prove that $Y^1$ is a solution of Problem $(\ref{App auxi spde 01})$ with $\delta=\delta_0$.


\end{proof}

\begin{proof}[{\bf Step 2.}]
Let $\delta_0$ be as in \textbf{Step 1}, and set  \[ t_1:=\inf \{ t\in [0,T] : \|Y^1+M\|_{\xit}\geq\frac{1}{\delta_0}\}.\]
Since by \eqref{eqn-phi_delta}, $\phi_\delta(r)=1$ if $r\in [0,\frac1\delta]$, it is easy to show  that $Y^1$ is a solution of Problem (\ref{eq lemma 1 01}) on time interval $[0,t_1]$. {If $t_1=T$ then the proof of Lemma \ref{lem 1} is finished. Otherwise,} let us consider the following {deterministic time-inhomogeneous} evolution equation
\begin{eqnarray}\label{App auxi spde 03}
&& X^\prime(t)+{\rA}X(t)=f(t)
\\
&&\ \ \ \ \ \ \ \ \ \ \ \ \ \ \ \ \ -\theta_m(\|X+M\|_{\Upsilon_t(\rH)})\phi_{\delta_0}\Big(\|X+M\|_{L^2([t_1,t],\rV)}\Big)\rB(X(t)+M(t)),\ t>t_1,\nonumber\\
&& X(t)=Y^1(t), \ \ t\in[0,t_1].
\nonumber
\end{eqnarray}
Using a similar argument to that in \textbf{Step 1}, we can find a solution $Y^2$ to Problem (\ref{App auxi spde 03}). As in the beginning of this step, we set
$$
t_2:=\inf\{t\in [t_1,T]: \|Y^2+M\|_{L^2([t_1,t],\rV)}\geq\frac{1}{\delta_0}\}.
$$
and see that $Y^2$ is a solution of problem (\ref{eq lemma 1 01}) on the time interval $[0,t_2]$. {If $t_2=T$ then the proof of Lemma \ref{lem 1} is finished. Otherwise,} by induction, we construct two sequences $\{ t_n\}_{n\in \mathbb{N}}$ and $\{Y^n\}_{n\in \mathbb{N}}$ satisfying
\begin{itemize}
 \item $0<t_1<\cdots<t_{n}<t_{n+1}<\cdots$,
 \item $Y^n\in C([0,T],\rH)\cap L^2([0,T],\rV)$ and $Y^{n+1}(t)=Y^n(t)$ on $t\in[0,t_n]$,
 \item $Y^n$ is a solution of Problem (\ref{eq lemma 1 01}) for $t\in[0,t_n]$,
 \item $t_{n+1}:=\inf\{ t \in [t_n,T]: \{\|Y^{n+1}+M\|_{L^2([t_n,t],\rV)}\geq\frac{1}{\delta_0}\}$.
\end{itemize}
{The proof of Lemma \ref{lem 1} is concluded once we prove that for some $n\in \mathbb{N}$, $t_n=T$. This is done in the next step. }
\end{proof}

\begin{proof}[{\bf Step 3.}] Assume that $X\in C([0,\tau],\rH)\cap L^2([0,\tau],\rV)$, for some $\tau>0$, is a solution of the deterministic Problem (\ref{eq lemma 1 01}). By the Lions-Magenes lemma (\cite{Lions+Magenes_1972}; \cite[Lemma III.1.2]{Temam_2001}), we have this:
For every $t\in [0,\tau]$,
\begin{eqnarray*}
&&\hspace{-1truecm}|X(t)|_{\rH}^2+2\int_0^t\|X(s)\|^2_{\rV}\,ds\\
&&\hspace{-1truecm}=
  |u_0|_{\rH}^2-2\int_0^t\theta_m(\|X+M\|_{\Upsilon_s(\rH)})\langle \rB(X(s)+M(s)), X(s)\rangle_{\rV^\prime,\rV}\,ds+2\int_0^t\langle f(s),X(s)\rangle_{\rV^\prime,\rV}\,ds\\
&&\hspace{-1truecm}\leq
  |u_0|_{\rH}^2+\!\int_0^t\!\|X(s)\|^2_{\rV}\,ds\!+\!8\int_0^t\theta^2_m(\|X+M\|_{\Upsilon_s(\rH)})\|\rB(X(s)\!+\!M(s))\|^2_{\rV^\prime}\,ds\!+\!8\int_0^t\!\|f(s)\|^2_{\rV^\prime}\,ds\\
&&\hspace{-1truecm}\leq
  |u_0|_{\rH}^2+\int_0^t\|X(s)\|^2_{\rV}\,ds+8\int_0^t\theta^2_m(\|X+M\|_{\Upsilon_s(\rH)})|X(s)+M(s)|^2_{\rH}\|X(s)+M(s)\|^2_{V}\,ds\\
  &&\hspace{-0.5truecm}+8\int_0^t\|f(s)\|^2_{\rV^\prime}\,ds\\
&&\hspace{-1truecm}\leq
  |u_0|_{\rH}^2+\int_0^t\|X(s)\|^2_{\rV}\,ds+8(m+1)^4+8\int_0^\tau\|f(s)\|^2_{\rV^\prime}\,ds.
\end{eqnarray*}
Hence,
\begin{eqnarray*}
\sup_{t\in[0,\tau]}|X(t)|_{\rH}^2+\int_0^\tau\|X(s)\|^2_{\rV}\,ds
\leq
  |u_0|_{\rH}^2+8(m+1)^4+8\int_0^\tau\|f(s)\|^2_{\rV^\prime}\,ds.
\end{eqnarray*}
{This implies that there exists $n\in \mathbb{N}$: $t_n=T$. Therefore, the function $Y^n$ is  the solution sought in Lemma \ref{lem 1}. }
\end{proof}

\vskip 0.2cm
The proof of Lemma \ref{lem 1} is complete.
\end{proof}

\vskip 0.2cm

The following lemma implies that the solution of Problem (\ref{eq lemma 1 01}) is unique (see Corollary \ref{cor 1}); this lemma will be used later.
Recall that the space $\Lambda_T(\rH)$ (and its norm) was defined around equality \eqref{eqn-Lambda_T-H}.

\begin{lemma}\label{lem 2}
Assume that  $m\in\mathbb{N}$. Assume that for all $u_0\in \rH$ and $f \in L^2([0,T];\rV^\prime)$ and  $y\in \Lambda_T(\rH)$, there exists {an} element $u=\Phi^y\in \Lambda_T(\rH)$ satisfying
\begin{eqnarray}\label{eq SNS jieduan 2}
&& du(t)+{\rA}u(t)\,dt+\theta_m(\|u\|_{\Upsilon_t(\rH)})\rB(u(t))\,dt=f(t)\,dt + \int_{{\rZ}}G(y(t-),z)\widetilde{\eta}(dz,dt),\nonumber\\
&& u(0)=u_0.
\end{eqnarray}
Then there exists a constant $C_m>0$ such that
\begin{eqnarray}\label{eq BPFT 01}
\|\Phi^{y_1}-\Phi^{y_2}\|_{\Lambda_T(\rH)}^2\leq C_mT\|y_1-y_2\|^2_{\Lambda_T(\rH)},\ \ \ \forall y_1, y_2\in \Lambda_T(\rH).
\end{eqnarray}
\todozbdel{Lemma \ref{lem 2} involves $G$ in an essential way and I think assumption  \eqref{eqn-Lipschitz-H} is needed and it is not sufficient to assume that \eqref{eqn-Lipschitz-H-local}.
I think that in order to prove inequality similar to \eqref{eq BPFT 01} we would need to modify the ``noisy" coefficient by multiplying it by a something like we used to multiple the nonlinearity $\rB(u)$, i.e., by the $\theta_m(\|y\|_{\Upsilon_t(\rH)})$, which depends on $y$.
 }
\end{lemma}
{\textbf{Remark.} The above result is not true without the smoothing function $\theta_m$.}

\begin{proof}[Proof of Lemma \ref{lem 2}]
\vskip 0.3cm
For simplicity, define $u_1=\Phi^{y_1}$ and $u_2=\Phi^{y_2}$.
Set $U=u_1-u_2$. By {the It\^o} formula, we have
\begin{eqnarray}\label{eq Z 0}
&&\hspace{-1truecm}|U(t)|^2_{\rH}+2\int_0^t\|U(s)\|^2_{\rV}\,ds\nonumber\\
&=&
 -2\int_0^t\dual{\Big{\langle \theta_m(\|u_1\|_{\Upsilon_s(\rH)}) \rB(u_1(s))-\theta_m(\|u_2\|_{\Upsilon^H_2}) \rB(u_2(s)),U(s)\Big\rangle}}\,ds\nonumber\\
 &&+
 2\int_0^t\int_{{\rZ}}\Big\langle G(y_1(s-),z)-G(y_2(s-),z),U(s-)\Big\rangle_{\rH} \widetilde{\eta}(dz,ds)\nonumber\\
 &&+
 \int_0^t\int_{{\rZ}}|G(y_1(s-),z)-G(y_2(s-),z)|^2_{\rH} \eta(dz,ds)\nonumber\\
&=:&
 J_1(t)+J_2(t)+ J_3(t), {\;\; t \in [0,T]}.
\end{eqnarray}

Concerning $J_1$, we have
\begin{eqnarray}\label{eq J1}
 |J_1(t)| &\leq&
 \frac{1}{2} \int_0^t\|U(s)\|^2_{\rV}\,ds\\
 &&+
 2 \int_0^t\Big\|\theta_m(\|u_1\|_{\Upsilon_s(\rH)}) \rB(u_1(s))-\theta_m(\|u_2\|_{\Upsilon_s(\rH)})\rB(u_2(s))\Big\|^2_{\rV^\prime}\,ds.\nonumber
\end{eqnarray}

Set
$$
K(s):=\Big\|\theta_m(\|u_1\|_{\Upsilon_s(\rH)}) \rB(u_1(s))-\theta_m(\|u_2\|_{\Upsilon_s(\rH)})\rB(u_2(s))\Big\|^2_{\rV^\prime}, \;\; s\in [0,T].
$$
We distinguish four cases to find appropriate bounds for $K$. By the property of $\theta_m$ and the Minkowski inequality, we have the following estimates. {Let us fix $s\in [0,T]$.}
\begin{itemize}
\item[(1)] {Assume that} $\|u_1\|_{\Upsilon_s(\rH)}\vee\|u_2\|_{\Upsilon_s(\rH)}\leq m+1$. {In this case, we have}
  \begin{eqnarray*}
      K(s)
   &\leq&
      C \Big[
        \|\rB(u_1(s))-\rB(u_2(s))\|^2_{\rV^\prime}
      +
       \Big|\theta_m(\|u_1\|_{\Upsilon_s(\rH)})-\theta_m(\|u_2\|_{\Upsilon_s(\rH)})\Big|^2\|\rB(u_2(s))\|^2_{\rV^\prime}
       \Big]\\
   &\leq&
     C |U(s)|_{\rH}\|U(s)\|_{\rV}\Big[
       |u_1(s)|_{\rH}\|u_1(s)\|_{\rV}+|u_2(s)|_{\rH}\|u_2(s)\|_{\rV}
       \Big]\\
       &&+
     C|u_2(s)|^2_{\rH}\|u_2(s)\|^2_{\rV}\|U\|^2_{\Upsilon_s(\rH)}\\
   &\leq&
     \frac{1}{4} \|U(s)\|^2_{\rV}
     +
     C\|U\|^2_{\Upsilon_s(\rH)}
       \Big[
       |u_1(s)|^2_{\rH}\|u_1(s)\|^2_{\rV}+|u_2(s)|^2_{\rH}\|u_2(s)\|^2_{\rV}
       \Big].
  \end{eqnarray*}

\item[(2)] {Assume that} $\|u_1\|_{\Upsilon_s(\rH)}\leq m+1$ and $\|u_2\|_{\Upsilon_s(\rH)}\geq m+1$. {In this case, we have}
  \begin{eqnarray*}
      K(s)&=&
          \Big\|\theta_m(\|u_1\|_{\Upsilon_s(\rH)}) \rB(u_1(s))\Big\|^2_{\rV^\prime}\\
        &=&
          \Big|\theta_m(\|u_1\|_{\Upsilon_s(\rH)})-\theta_m(\|u_2\|_{\Upsilon_s(\rH)})\Big|^2\| \rB(u_1(s))\|^2_{\rV^\prime}\\
        &\leq&
          C|u_1(s)|^2_{\rH}\|u_1(s)\|^2_{\rV}\|U\|^2_{\Upsilon_s(\rH)}.
  \end{eqnarray*}

\item[(3)] {Assume that} $\|u_1\|_{\Upsilon_s(\rH)}\geq m+1$ and $\|u_2\|_{\Upsilon_s(\rH)}\leq m+1$ . In this case, as in the similar Case (2), we get
  \begin{eqnarray*}
      K(s)
        \leq
          C|u_2(s)|^2_{\rH}\|u_2(s)\|^2_{\rV}\|U\|^2_{\Upsilon_s(\rH)}.
  \end{eqnarray*}

\item[(4)] {Assume that} $\|u_1\|_{\Upsilon_s(\rH)}\wedge \|u_2\|_{\Upsilon_s(\rH)}\geq m+1$. {In this case, we have}
  \begin{eqnarray*}
      K(s)=0.
  \end{eqnarray*}

\end{itemize}
Hence we infer that
\begin{eqnarray}\label{eq K}
  K(s)
&\leq&
  \frac{1}{4} \|U(s)\|^2_{\rV}    +
     C\|U\|^2_{\Upsilon_s(\rH)}\\
      && \Big[
       |u_1(s)|^2_{\rH}\|u_1(s)\|^2_{\rV}\cdot \mathds{1}_{[0,m+1]}(\|u_1\|_{\Upsilon_s(\rH)})
       +
       |u_2(s)|^2_{\rH}\|u_2(s)\|^2_{\rV}\cdot \mathds{1}_{[0,m+1]}(\|u_2\|_{\Upsilon_s(\rH)})\nonumber
       \Big].
\end{eqnarray}
Set
$$
\Theta(t):=\sup_{s\in[0,t]}|U(s)|^2_{\rH}+\int_0^t\|U(s)\|^2_{\rV}\,ds, \;\;\; {t \in [0,T]}.
$$
Substituting (\ref{eq K}) into (\ref{eq J1}), and then into (\ref{eq Z 0}), notice that
$$
\|U\|^2_{\Upsilon_s(\rH)}\leq 2 \Theta(s),
$$
we have
\begin{eqnarray}\label{eq local 01}
  \Theta(T)
&\leq&
  C\int_0^T\Theta(s)\Big[
       |u_1(s)|^2_{\rH}\|u_1(s)\|^2_{\rV}\cdot \mathds{1}_{[0,m+1]}(\|u_1\|_{\Upsilon_s(\rH)})\\
       &&\;\;\;\;\;\;\;\;\;\;\;\;\ \ \ \ \ \ \ \ \ \ \ +
       |u_2(s)|^2_{\rH}\|u_2(s)\|^2_{\rV}\cdot \mathds{1}_{[0,m+1]}(\|u_2\|_{\Upsilon_s(\rH)})\nonumber
       \Big]
    ds\nonumber\\
    &&+
    \sup_{t\in[0,T]}|J_2(t)|
    +
    J_3(T).\nonumber
\end{eqnarray}
Gronwall's lemma implies that
\begin{eqnarray}\label{eq local 02}
  \Theta(T)
&\leq&
  \Big(\sup_{t\in[0,T]}|J_2(t)|
    +
    J_3(T)
  \Big)\nonumber
  \\
&&  \cdot
  e^{C\int_0^T\left[
       |u_1(s)|^2_{\rH}\|u_1(s)\|^2_{\rV}\cdot \mathds{1}_{[0,m+1]}(\|u_1\|_{\Upsilon_s(\rH)})
       +
       |u_2(s)|^2_{\rH}\|u_2(s)\|^2_{\rV}\cdot \mathds{1}_{[0,m+1]}(\|u_2\|_{\Upsilon_s(\rH)})\nonumber
       \right]
    ds}\nonumber\\
&\leq&
  C_m \Big(\sup_{t\in[0,T]}|J_2(t)|
    +
    J_3(T)
  \Big).
\end{eqnarray}
By the Burkholder-Davis-Gundy inequality (see Theorem 23.12 in \cite{Kallenberg}) and assumption {\bf (G-H1)} (see (\ref{eqn-Lipschitz-H})), we get in a standard way the following inequality:
\begin{eqnarray}\label{eq J2}
  C_m\mathbb{E}\Big(\sup_{t\in[0,T]}|J_2(t)|\Big)
\leq
  \frac{1}{2}\|U\|^2_{\Lambda_T(\rH)}
  +
  C_mT\|y_1-y_2\|^2_{\Lambda_T(\rH)}.
\end{eqnarray}
Moreover, applying {\bf (G-H1)} again, we have
\begin{eqnarray}\label{eq J3}
  \mathbb{E}\Big(J_3(T)\Big)
\leq
  CT\|y_1-y_2\|^2_{\Lambda_T(\rH)}.
\end{eqnarray}
Summing up the inequalities (\ref{eq local 02}), (\ref{eq J2}), and (\ref{eq J3}), we deduce that
\begin{eqnarray*}
\|U\|^2_{\Lambda_T(\rH)}\leq C_m T \|y_1-y_2\|^2_{\Lambda_T(\rH)}.
\end{eqnarray*}
This proves  inequality (\ref{eq BPFT 01}), and thus the proof of Lemma \ref{lem 2} is complete.
\end{proof}

\vskip 0.2cm

\begin{corollary}\label{cor 1}
Under the assumptions {of} Lemma \ref{lem 1}, the solution of Problem (\ref{eq lemma 1 01}) is unique.

\end{corollary}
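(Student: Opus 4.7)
The plan is to mimic, in the purely deterministic setting, the energy/case-analysis argument used in the proof of Lemma \ref{lem 2}, with the simplification that there are no stochastic terms to handle (so no Burkholder--Davis--Gundy step, no $J_2$ or $J_3$).

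Let $Y_1,Y_2\in C([0,T],\rH)\cap L^2([0,T],\rV)$ be two solutions of \eqref{eq lemma 1 01} for the same data $(u_0,f,M,m)$, and put $D:=Y_1-Y_2$, so that $D(0)=0$. Using the standard deterministic chain rule for the Gelfand triple $\rV\subset\rH\subset \rV^\prime$, I would write, for every $t\in[0,T]$,
\begin{equation*}
\|D(t)\|_{\rH}^2+2\int_0^t\|D(s)\|_{\rV}^2\,ds
= -2\int_0^t \dual{\langle \theta_m(\|Y_1+M\|_{\Upsilon^H_s}) B(Y_1(s)+M(s)) - \theta_m(\|Y_2+M\|_{\Upsilon^H_s}) B(Y_2(s)+M(s)),D(s)\rangle}\,ds,
\end{equation*}
and bound the right-hand side by Cauchy--Schwarz, obtaining
$\tfrac12\int_0^t\|D(s)\|_{\rV}^2\,ds + 2\int_0^t K(s)\,ds$
with
$K(s):=\|\theta_m(\|Y_1+M\|_{\Upsilon^H_s}) B(Y_1(s)+M(s)) - \theta_m(\|Y_2+M\|_{\Upsilon^H_s}) B(Y_2(s)+M(s))\|_{\rV^\prime}^2$.

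Next I would estimate $K(s)$ by the four-case split used in Lemma \ref{lem 2}, depending on whether $\|Y_i+M\|_{\Upsilon^H_s}\lessgtr m+1$, $i=1,2$. Using the Lipschitz property of $\theta_m$ (see \eqref{eqn-theta_m}), the fact that $\theta_m$ is supported in $[0,m+1]$, and the standard bilinear bound $\|B(u)-B(v)\|_{\rV^\prime}\le C(\|u\|_{\rH}\|u\|_{\rV}+\|v\|_{\rH}\|v\|_{\rV})^{1/2}\|u-v\|_{\rH}^{1/2}\|u-v\|_{\rV}^{1/2}$ from Lemma \ref{lem B baisc prop}, I expect to obtain
\begin{equation*}
K(s)\le \tfrac14\|D(s)\|_{\rV}^2
+ C\,\|D\|_{\Upsilon^H_s}^2\,g(s),
\qquad g(s):=\sum_{i=1,2}\|Y_i(s)+M(s)\|_{\rH}^2\|Y_i(s)+M(s)\|_{\rV}^2\,\mathbf{1}_{\{\|Y_i+M\|_{\Upsilon^H_s}\le m+1\}}.
\end{equation*}
Note that on the sets where each indicator is non-zero we have $\|Y_i(s)+M(s)\|_{\rH}\le m+1$, so $g\in L^1(0,T)$ with $\int_0^T g(s)\,ds\le (m+1)^2\sum_i\int_0^T\|Y_i(s)+M(s)\|_{\rV}^2\,ds<\infty$.

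Setting $\Theta(t):=\sup_{s\in[0,t]}\|D(s)\|_{\rH}^2+\int_0^t\|D(s)\|_{\rV}^2\,ds$ and using $\|D\|_{\Upsilon^H_s}^2\le 2\Theta(s)$, the previous inequalities give $\Theta(t)\le C\int_0^t \Theta(s)\,g(s)\,ds$ for all $t\in[0,T]$. Since $g\in L^1(0,T)$ and $\Theta(0)=0$, Gronwall's lemma forces $\Theta\equiv 0$ on $[0,T]$, hence $Y_1=Y_2$. The main technical obstacle is precisely the case $\|Y_i+M\|_{\Upsilon^H_s}\ge m+1$, where the cutoff $\theta_m$ vanishes but the factor $\theta_m(\|Y_j+M\|_{\Upsilon^H_s})B(Y_j+M)$ for the other index can still be large; this is resolved exactly as in cases (2)--(4) of Lemma \ref{lem 2}, by using that the Lipschitz increment of $\theta_m$ controls the residual bilinear term via the $\Upsilon^H_s$-norm of $D$.
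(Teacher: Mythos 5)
Your proposal is correct and takes essentially the same route as the paper: there, one simply sets $u_i=Y_i+M$, observes that the resulting energy identity is (\ref{eq Z 0}) with $G\equiv 0$ (so $J_2=J_3=0$), and invokes the four-case bound on $K(s)$ and the Gronwall argument from the proof of Lemma \ref{lem 2}. Since $Y_1-Y_2=(Y_1+M)-(Y_2+M)$, your $K(s)$, $g(s)$ and $\Theta(t)$ coincide with the quantities appearing in that proof, so your argument is the same one written without the explicit shift by $M$.
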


\begin{proof}[Proof of Corollary \ref{cor 1}]
Suppose that $Y_1$ and $Y_2$ are two solutions of Problem (\ref{eq lemma 1 01}). By the Lions-Magenes lemma, we infer that for every $t\geq 0$,
\begin{eqnarray}\label{eq cor 01}
&&\hspace{-1truecm}|Y_1(t)-Y_2(t)|^2_{\rH}+2\int_0^t\|Y_1(s)-Y_2(s)\|^2_{\rV}\,ds\nonumber\\
&=&
-2\int_0^t {\fourIdx{}{\rV^\prime}{}{}{\langle\,}} \theta_m(\|Y_1+M\|_{\Upsilon_s(\rH)})\rB(Y_1(s)+M(s))\\
&&\;\;\;\ \ \ \ \ \ \ \ \ \ \ \ -\theta_m(\|Y_2+M\|_{\Upsilon_s(\rH)})\rB(Y_2(s)+M(s)),Y_1(s)-Y_2(s){\fourIdx{}{}{}{\rV}{\,\rangle}}
\,ds.\nonumber
\end{eqnarray}
Set $u_1=Y_1+M$ and $u_2=Y_2+M$. The above equality implies that
\begin{eqnarray}\label{eq cor 02}
&&\hspace{-1truecm}|u_1(t)-u_2(t)|^2_{\rH}+2\int_0^t\|u_1(s)-u_2(s)\|^2_{\rV}\,ds\\
&=&
-2\int_0^t \dual{\langle \theta_m(\|u_1\|_{\Upsilon_s(\rH)})\rB(u_1(s))-\theta_m(\|u_2\|_{\Upsilon_s(\rH)})\rB(u_2(s)),u_1(s)-u_2(s)\rangle}
\,ds.\nonumber
\end{eqnarray}
We observe that the above equality is a special case of equality (\ref{eq Z 0}) with $G\equiv0$.
Therefore, the proof of Lemma \ref{lem 2} implies that $u_1=u_2$. Hence {we infer that } $Y_1=Y_2$.

The proof of  Corollary \ref{cor 1} is complete. \end{proof}
\vskip 0.2cm

Finally, we are ready to finish the proof of the main result in this section. We use the Banach fixed point theorem to prove this result.

\begin{proof}[\textbf{Proof of Theorem \ref{thm-initial data in H}}.]
The proof is divided into three steps.

\begin{itemize}
\item[{\bf Step 1.}] \textbf{Uniqueness.} For the uniqueness part of Theorem \ref{thm-initial data in H}, we refer to \cite{BLZ} or \cite{BHZ}.

\item[{\bf Step 2.}] \textbf{Local existence.} Consider  the following auxiliary problem
\begin{eqnarray}\label{eq SNS jieduan 1}
&& du_n(t)+{\rA}u_n(t)\,dt+\theta_n(\|u_n\|_{\Upsilon_t(\rH)})\rB(u_n(t))\,dt=f(t)\,dt+ \int_{{\rZ}}G(u_n(t-),z)\widetilde{\eta}(dz,dt),\nonumber\\
&& u_n(0)=u_0.
\end{eqnarray}
\todozbdel{As in my comment to Lemma \ref{lem 2} I think that we could need modify the ``noisy" coefficient by multiplying it by a something like we used to multiple the nonlinearity $\rB(u)$, i.e., by the $\theta_n(\|u_n\|_{\Upsilon_t(\rH)})$, which depends on $u_n$.
 }

We choose and fix $T>0$. For any $y\in \Lambda_T(\rH)$, Lemma \ref{lem 1} and Corollary \ref{cor 1} imply that there exists a unique element $u_n=\Phi^y\in\Lambda_T(\rH)$ satisfying
\begin{eqnarray}\label{eq thm 1 SNS jieduan 01}
&& du_n(t)+{\rA}u_n(t)\,dt+\theta_n(\|u_n\|_{\Upsilon_t(\rH)})\rB(u_n(t))\,dt=f(t)\,dt+ \int_{{\rZ}}G(y(t-),z)\widetilde{\eta}(dz,dt),\nonumber\\
&& u_n(0)=u_0.
\end{eqnarray}
\todozbdel{Following my last remark, I think that we could need modify the ``noisy" coefficient in equation \eqref{eq thm 1 SNS jieduan 01} by multiplying it by  $\theta_n(\|y\|_{\Upsilon_t(\rH)})$, which depends on $y$.
 }

Indeed, it is known  that there exists a unique $M\in \Lambda_T(\rH)$ satisfying the following equation
\begin{eqnarray*}
&&dM(t)+{\rA}M(t)\,dt=\int_{{\rZ}}G(y(t-),z)\widetilde{\eta}(dz,dt), \;\; t\geq 0, \;\;\; M(0)=0
\end{eqnarray*}
 and satisfying the following inequality,
$$
\mathbb{E}\big(\sup_{t\in[0,T]}|M(t)|^2_{\rH}\big)+\mathbb{E}\big(\int_0^T\|M(t)\|^2_{\rV}\,dt\big)\leq C_T\Big[\mathbb{E}\big(\sup_{t\in[0,T]}|y(t)|^2_{\rH}\big)+1\Big].
$$
Hence, Lemma \ref{lem 1} and Corollary \ref{cor 1} imply that for any $\omega\in \Omega$, there exists a unique element $X(\omega)\in C([0,T],\rH)\cap L^2([0,T],\rV)$ solving
\begin{eqnarray*}
&& dX(t)+{\rA}X(t)\,dt+\theta_n(\|X+M\|_{\Upsilon_t(\rH)})\rB(X(t)+M(t))\,dt=f(t)\,dt,\nonumber\\
&& X(0)=u_0.
\end{eqnarray*}

One can show that $u$ is a solution to (\ref{eq thm 1 SNS jieduan 01}) iff  $u=X+M$. For uniqueness, we refer to Lemma \ref{lem 2}. Moreover, Lemma \ref{lem 2} implies that there exists a constant $C_n>0$ such that
\begin{eqnarray}\label{eq BPFT 0100}
\|\Phi^{y_1}-\Phi^{y_2}\|_{\Lambda_T(\rH)}^2\leq C_nT\|y_1-y_2\|^2_{\Lambda_T(\rH)},\ \ \ \forall y_1, y_2\in \Lambda_T(\rH).
\end{eqnarray}

Let $T_n=\frac{1}{2C_n}$. In view of inequality (\ref{eq BPFT 0100}) and by using the Banach fixed point theorem, we infer that there exists a unique element $u_n^1\in \Lambda_{T_n}(\rH)$ such that $u_n^1$ is a solution of (\ref{eq SNS jieduan 1}) for $t\in[0,T_n]$.
Repeating the above proof, and observing that the constant $T_n$ does not depend on the initial datum,
 we can find a unique element $u_n^2:=\{u_n^2(t),\ t\in[0,2T_n]\} \in \Lambda_{2T_n}(\rH)$ solving the  following problem
 \begin{eqnarray*}
&& \hspace{-0.8truecm}du_n(t)+{\rA}u_n(t)\,dt+\theta_n(\|u_n\|_{\Upsilon_t(\rH)})\rB(u_n(t))\,dt
\\
&&\ \ =f(t)\,dt+ \int_{{\rZ}}G(u_n(t-),z)\widetilde{\eta}(dz,dt),\ t\in [T_n,2T_n],\nonumber\\
&& \hspace{-0.8truecm}u_n(t)=u_n^1(t),\ t\in[0,T_n].
\end{eqnarray*}
\todozbdel{Following my last remark, I think that we could need modify the ``noisy" coefficient in equation \eqref{eq thm 1 SNS jieduan 01} by multiplying it by  $\theta_n(\|u_n\|_{\Upsilon_t(\rH)})$, which depends on $u_n$.
 }

 It is not difficult to see that $u_n^2$ is a solution of Problem (\ref{eq SNS jieduan 1}) on the time interval $[0,2T_n]$.
 By induction, we can construct a unique element $u_n\in \Lambda_T(\rH)$ which
is a solution of Problem (\ref{eq SNS jieduan 1}) for $t\in[0,T]$, where $T>0$ is arbitrary.
\vskip 0.2cm

{Define} a stopping time
\begin{equation}\label{eqn-tau_n}
\tau_n=\inf\{t\geq0:\,\|u_n\|_{\Upsilon_t(\rH)}{>} n\}.
\end{equation}
By definition, $\theta_n(\|u_n\|_{\Upsilon_t(\rH)})=1$
for any $t\in[0,\tau_n)$, hence $\{u_n(t),\, t\in[0,\tau_n)\}$ is a local solution of Problem (\ref{eq SNS 01}).
Thus, by the uniqueness of solutions to Problem (\ref{eq SNS 01}), we infer that
$$
u_{n+1}(t)=u_{n}(t),\ \ \ t\in[0,\tau_{n}\wedge\tau_{n+1})\ \mathbb{P}\text{-a.s.}.
$$
Hence, the sequence $(\tau_n)_{n=1}^\infty $ is nondecreasing. We set
$\tau_{max}:=\lim_{n\to\infty}\tau_n$, and we observe that $\tau_{max}$ is also a stopping time.

\vskip 0.2cm

Now we can construct a local solution of Problem (\ref{eq SNS 01}) as $\{u(t),\,t\in[0,\tau_{max})\}$ defined by
$$
u(t)=u_n(t),\ \ \ t\in[0,\tau_n).
$$
 Using an argument similar to the proof of Theorem 3.5 in \cite{BHR15}, we can
prove that
\begin{equation}\label{eqn-tau_max}
\lim_{t\toup \tau_{max}}\|u\|_{\Upsilon_t(\rH)}=\infty\text{ on }\{\omega \in \Omega:\,\tau_{max}<\infty\}\ \mathbb{P}\text{-a.s..}
\end{equation}

\item[{\bf Step 3.}] \textbf{Global existence.} We will prove that
\begin{equation}\label{eqn-tau_max=infty}
\mathbb{P}(\tau_{max}=\infty)=1.
\end{equation}
{It is sufficient to prove that for very $T>0$, $\mathbb{P}(\tau_{max} \geq T)=1$. For the rest of this proof we choose and fix $T>0$.} \\
We observe that in this step, we do not use the Lipschitz assumption \eqref{eqn-Lipschitz-H} but only the linear growth assumption (\ref{eqn-linear growth-H}).\\
By {the It\^o} formula, we have
\begin{eqnarray*}
&&\hspace{-1truecm}|u(t\wedge\tau_n)|^2_{\rH}+2\int_0^{t\wedge\tau_n}\|u(s)\|^2_{\rV}\,ds\\
&=&
  |u_0|^2_{\rH}
  +
  2\int_0^{t\wedge\tau_n}\dual{\langle f(s),u(s)\rangle}\,ds
  +
  2\int_0^{t\wedge\tau_n}\int_{{\rZ}}\langle G(u(s-),z),u(s-)\rangle_{\rH}\widetilde{\eta}(dz,ds)\\
  &&+
  \int_0^{t\wedge\tau_n}\int_{{\rZ}}|G(u(s-),z)|^2_{\rH} \eta(dz,ds),\ \forall t\in[0,T].\nonumber
\end{eqnarray*}
Applying the Burkholder-Davis-Gundy inequality,
\begin{eqnarray}\label{eq global H 01}
&& \hspace{-1truecm}\mathbb{E}\Big(\sup_{t\in[0,T]}|u({t\wedge\tau_n})|^2_{\rH}
    +
   \int_0^{T\wedge\tau_n}\|u(s)\|^2_{\rV}\,ds\Big)\nonumber\\
&\leq&
   |u_0|^2_{\rH}
   +
   \int_0^T\|f(s)\|^2_{\rV^\prime}\,ds
   +
    2\mathbb{E}\Big(\sup_{t\in[0,T]}
       \Big|\int_0^{t\wedge\tau_n}\int_{{\rZ}}\langle G(u(s-),z),u(s-)\rangle_{\rH}\widetilde{\eta}(dz,ds)\Big|
         \Big)\nonumber\\
    &&+
    \mathbb{E}\Big(\int_0^{T\wedge\tau_n}\int_{{\rZ}}|G(u(s-),z)|^2_{\rH} \eta(dz,ds)\Big)\\
&\leq&
  |u_0|^2_{\rH}
  +
   \int_0^T\|f(s)\|^2_{\rV^\prime}\,ds
   +
   \frac{1}{2}\mathbb{E}\Big(\sup_{t\in[0,T]}|u({t\wedge\tau_n})|^2_{\rH}\Big)
   \nonumber\\
    && +
  C\int_0^T\mathbb{E}\Big(\sup_{l\in[0,t]}|u(l\wedge\tau_n)|^2_{\rH}\Big)\,dt
  +
  CT.\nonumber
\end{eqnarray}
Applying Gronwall's lemma, we infer that
\begin{eqnarray*}
\mathbb{E}\Big(\sup_{t\in[0,T]}|u(t\wedge\tau_n)|^2_{\rH}
    +
   \int_0^{T\wedge\tau_n}\|u(s)\|^2_{\rV}\,ds\Big)
   \leq
   C_T(1+|u_0|^2_{\rH}+ \int_0^T\|f(s)\|^2_{\rV^\prime}\,ds).
\end{eqnarray*}
Taking the limit $n\to \infty$, so that  $\tau_n\toup \tau_{max}$, we deduce that
\begin{eqnarray}\label{eq sup es u}
\hspace{-0.2truecm}\mathbb{E}\Big(\sup_{t\in[0,T\wedge\tau_{max})}\!|u(t)|^2_{\rH}
    +
   \int_0^{T\wedge\tau_{max}}\!\|u(s)\|^2_{\rV}\,ds\Big)
   \leq
   C_T(1+|u_0|^2_{\rH}+ \int_0^T\!\|f(s)\|^2_{\rV^\prime}\,ds).
\end{eqnarray}
This lead to $\mathbb{P}$-a.s.,
$$
\sup_{t\in[0,T\wedge\tau_{max})}|u(t)|^2_{\rH}+\int_0^{T\wedge\tau_{max}}\|u(s)\|^2_{\rV}\,ds<\infty.
$$
{The above implies that    \[
\mbox{ the function } [0,T\wedge\tau_{max}) \ni t\ni \|u\|^2_{\Upsilon_t(\rH)}  \mbox{ is bounded } \mathbb{P}\mbox{-almost surely}, \]
what in turn in view of \eqref{eqn-tau_max} implies that $\tau_{max} \geq T$, $\mathbb{P}$-almost surely, as required.}
\end{itemize}

The proof of Theorem \ref{thm-initial data in H} is thus complete.
\end{proof}

\section{
Solutions to SNSEs  with initial data in the space $\rV$}
\label{subsection 2.2}
\vskip 0.3cm

Now we consider SNSEs with more regular data. For this purpose, we formulate the following assumptions.

\begin{con}\label{con1 G} The function
$G:\rV\times {\rZ}\to \rV$ is a measurable map \dela{such that
\[G: (u,z) \in \rV \mbox{ for all }(u,v) \in \rV \times {\rZ}.\]
Assume the corresponding restriction function $\rV\times {\rZ}\to \rV$, denoted by the same notation, is also measurable.
Assume also }that constants $C_1>0$ and $C_2>0$ exist such that
\begin{itemize}
\item[(G-V1)](Lipschitz {in $\rV$})
\begin{equation}\label{eqn-Lipschitz-V}
       \int_{\rZ}\|G(v_1,z)-G(v_2,z)\|^2_{\rV}\nu(dz)\leq C_1\|v_1-v_2\|^2_{\rV},\ \ \ v_1,v_2\in \rV,
      \end{equation}

\item[(G-V2)](Linear growth {in $\rV$})
\begin{equation}\label{eqn-linear growth-V}
       \int_{{\rZ}}\|G(v,z)\|^2_{\rV}\nu(dz)\leq C_1(1+\|v\|^2_{\rV}),\ \ \ \ v\in \rV.
      \end{equation}
\item[(G-VH2)](Linear growth in $\rH$)
\begin{equation}\label{eqn-linear growth-H-2}
       \int_{{\rZ}}|G(v,z)|^2_{\rH}\nu(dz)\leq C_2(1+|v|^2_{\rH}),\;\;\; v\in \rV.
      \end{equation}
\end{itemize}
\end{con}

In this section, we will  prove the following result.
\begin{thm}\label{thm-initial data in V}
Assume that a function $G$ satisfies  \textbf{Assumption \ref{con1 G}}.
Then for all $u_0\in \rV$ and $f\in L^2_{loc}([0,\infty),\rH)$, there exists a unique $\mathbb{F}$-{progressively measurable} process $u$ such that
\begin{itemize}
\item[(1)] $u\in D([0,\infty),\rV)\cap L^2_{loc}([0,\infty),\mathcal{D}({\rA}))$, $\mathbb{P}$-a.s.,

\item[(2)] the following equality holds, for all $t\in[0,\infty)$, $\mathbb{P}$-a.s., in \addaok{$\rH$}:
      \begin{eqnarray}\label{eq V 00}
      \hspace{-1truecm}u(t)=u_0-\int_0^t{\rA}u(s)\,ds - \int_0^t\rB(u(s))\,ds+\int_0^t f(s)\,ds+\int_0^t\int_{{\rZ}}G(u({s-}),z)\widetilde{\eta}(dz,ds).
      \end{eqnarray}
\end{itemize}
\end{thm}

\begin{remark}\label{rem-uniqueness in V} Let us observe that {Assumption \ref{con1 G}}, i.e., the Lipschitz property of $G$ with respect to the $\rV$-norm, does not imply Assumption \ref{con G},
i.e., the Lipschitz property of $G$ with respect to the $\rH$-norm. Hence, the uniqueness part of Theorem \ref{thm-initial data in V} is not a consequence of Theorem \ref{thm-initial data in H},
and we need an independent proof of the uniqueness.
\end{remark}

\begin{remark}\label{Rem2}
In \cite{BHR15}, the authors considered the existence and uniqueness of solutions defined as in Theorem \ref{thm-initial data in V} for stochastic
hydrodynamical systems with L\'evy noise, including 2D Navier-Stokes euations. They assumed {that the function $G$ is globally Lipschitz in the sense that there exists $K>0$ such that} for $p=1,2$,
$$
\int_{\rZ}\|G(v_1,z)-G(v_2,z)\|^{2p}_{\rV}\nu(dz)\leq K\|v_1-v_2\|^{2p}_{\rV}, \;\;\; {v_1,v_2 \in \rV},
$$
and
$$
\int_{\rZ}|G(v_1,z)-G(v_2,z)|^{2p}_{\rH}\nu(dz)\leq K|v_1-v_2|^{2p}_{\rH},\;\;\;  {v_1,v_2 \in \rH}.
$$
It is easy to see that our assumptions are weaker than the above.

\dela{\textcolor[rgb]{1.00,0.00,0.00}{In a forthcoming paper \cite{BPZ_2021-local Lipschitz}, the authors will generalize the results from the present paper to the case when the coefficient $G$ is Lipschitz on balls, i.e., it satisfies the following condition. For every $\hbar>0$ there exists $C_\hbar>0$ such that
\begin{equation}\label{eqn-Lipschitz-V-local}
       \int_{\rZ}\Vert G(v_1,z)-G(v_2,z)\Vert_{\rV}^2\nu(dz)\leq C_\hbar\Vert v_1-v_2\Vert_{\rV}^2,\;\;\; v_1,v_2\in B_\hbar(\rV),
\end{equation}
where $B_\hbar(\rV)$ denotes the ball of radius $\hbar$ in $\rV$.
As in Remark \ref{Rem1}, the case when $G$ is only locally Lipschitz remains open.} \textcolor[rgb]{0.44,0.00,0.94}{From Zhai: It is Theorem \ref{thm-initial data in V local}. }}

Let us also mention that in the Gaussian case, stochastic Navier-Stokes equations, respectively Euler equations, for initial data in the space $\rV$ have been studied in \cite{Mikulevicius_2009} and \cite{GHZ2009}, and respectively in \cite{Brz+Peszat_2001_eu}.
\end{remark}


\begin{remark}
In Section \ref{subsection 2.1}, we proved  two existence results.
The first one, i.e., Theorem \ref{thm-initial data in H}, holds under the global Lipschitz assumptions on the coefficient $G$.
The second
one, i.e., Theorem \ref{thm-initial data in H local}, holds under the assumption that $G$ is Lipschitz on balls on the space $\rH$ and of linear growth.
The bulk of the proof was devoted to the proof of the former result, as the latter one follows from the former by standard procedure. \\
In the same vein, in the present section, we formulate first Theorem \ref{thm-initial data in V} which holds under the assumption that the coefficient $G$ is globally Lipschitz with respect to the space $\rV$.
This result is supplemented by Theorem \ref{thm-initial data in V local} below, in which we assume that the coefficient $G$ is  Lipschitz on balls on the space $\rV$. The latter result can be deduced from the former one
by standard truncation procedure.
\end{remark}


\begin{con}\label{con G Local in V}
 A map $G:\rV\times {\rZ}\rightarrow \rV$ is a measurable map such that
\begin{itemize}
\item[(G-V1-local)](Lipschitz on balls) for every $\hbar>0$, there exists a constant $C_\hbar>0$ such that, for all $v_1,v_2\in\rV$ with $\|v_1\|_\rV\vee\|v_2\|_\rV\leq \hbar$,
\begin{equation}\label{eqn-Lipschitz-V-local}
       \int_{\rZ}\|G(v_1,z)-G(v_2,z)\|^2_{\rV}\nu(dz)\leq C_\hbar\|v_1-v_2\|^2_{\rV},
\end{equation}
\end{itemize}
and the assumptions \textbf{(G-V2)}(Linear growth {in $\rV$}) {and \textbf{(G-VH2)}(Linear growth in $\rH$)} hold.
\end{con}

\vskip 0.2cm

\begin{thm}\label{thm-initial data in V local}
Assume that \textbf{Assumption \ref{con G Local in V}} holds\dela{ and \textbf{(G-H2)} in \textbf{Assumption \ref{con G}} hold}. Then for all $u_0\in \rV$ and $f\in L^2_{loc}([0,\infty),\rH)$, there exists a unique $\mathbb{F}$-{progressively measurable} process $u$ such that
\begin{itemize}
\item[(1)] $u\in D([0,\infty),\rV)\cap L^2_{loc}([0,\infty),\mathcal{D}({\rA}))$, $\mathbb{P}$-a.s.,

\item[(2)] the following equality holds, for all $t\in[0,\infty)$, $\mathbb{P}$-a.s., in $\rH$,
      \begin{eqnarray}\label{eq Zhai local V 01}
      \hspace{-1truecm}u(t)=u_0-\int_0^t{\rA}u(s)\,ds - \int_0^t\rB(u(s))\,ds+\int_0^t f(s)\,ds
      +
      \int_0^t\int_{{\rZ}}G(u({s-}),z)\widetilde{\eta}(dz,ds).
      \end{eqnarray}
\end{itemize}
\end{thm}

\dela{\textcolor[rgb]{0.44,0.00,0.94}{From Zhai: the details:}}
\begin{proof}[Proof of Theorem \ref{thm-initial data in V local}]
The proof is similar to that for Theorem \ref{thm-initial data in H local}.

For any natural number $k \in \mathbb{N}$, we define an auxiliary function $G_k$ by
$$
G_k :\rV \times \rZ \ni (y,z)\mapsto G\Big(\frac{\|y\|_\rV\wedge k}{\|y\|_\rV}y,z\Big) \in \rV,
$$
where we set $\frac{\|y\|_\rV\wedge k}{\|y\|_\rV}=1$ when $y=0$. {Since, by our assumptions, $G$ satisfies \textbf{Assumption \ref{con G Local in V}}, we can easily show that
$G_k$ satisfies \textbf{Assumption \ref{con1 G}}.}
From now we choose and fix $k>\|u_0\|_\rV$.
By Theorem \ref{thm-initial data in V}, we infer that there exists
a unique $\mathbb{F}$-{progressively measurable} process $X^k$ such that
\begin{itemize}
 \item $X^k\in D([0,\infty),\rV)\cap L^2_{loc}([0,\infty),\mathcal{D}({\rA}))$, $\mathbb{P}$-a.s.,
 \item the following equality holds, for all $t\in[0,\infty)$, $\mathbb{P}$-a.s., in $\rH$,
      \begin{eqnarray*}\label{Eq th local H 01}
      \hspace{-1truecm}X^k(t)=u_0-\int_0^t{\rA}X^k(s)\,ds - \int_0^t\rB(X^k(s))\,ds+\int_0^t f(s)\,ds+\int_0^t\int_{{\rZ}}G_k(X^k({s-}),z)\widetilde{\eta}(dz,ds).
      \end{eqnarray*}
\end{itemize}
{Similarly to \eqref{eqn-sigma_k} we} define a stopping time
$$
\sigma_k:=\inf\{t\geq 0:\sup_{s\in[0,t]}\|X^k(s)\|_\rV>k\}.
$$
\deln{where we set $\inf \emptyset= \infty$.} It is not difficult to see that $\sigma_k$ is increasing in $k$, and $X^{k+1}(t)=X^k(t),\, t\in[0,\sigma_k)$.
{We also define a stopping time} $\sigma:=\lim_{k\rightarrow\infty}\sigma_k$.
The property above  enables us to define $u(t)$ for $t\in[0,\sigma)$ as follows:
$$
u(t):=X^k(t),\, t\in[0,\sigma_k).
$$
It is easy to see that $u(t),\, t\in[0,\sigma)$ is a local solution of Problem (\ref{eq Zhai local V 01}). To complete the proof, we need only show that
$\mathbb{P}(\sigma=\infty)=1$. For this purpose, we use Condition \textbf{{(G-VH2)}} from \textbf{Assumption \ref{con G Local in V}}.

\vskip 0.1cm
Following the argument we used in the proof of inequality (\ref{eq sup es u}), we can find $C_T>0$ such that
\begin{eqnarray}\label{eq1 esta u pro Zhai}
\mathbb{E}\Big(\sup_{t\in[0,T\wedge\sigma)}|u(t)|^2_{\rH}\Big)
+
\mathbb{E}\Big(\int_0^{T\wedge\sigma}\|u(t)\|^2_{\rV}\,dt\Big)
\leq
C_T.
\end{eqnarray}
\vskip 0.2cm

Define an additional stopping time $\widetilde{\tau}_N$ by
$$
\widetilde{\tau}_N:=\inf\{t\geq 0: \sup_{s\in[0,t]}|u(s)|^2_{\rH}+\int_0^t\|u(s)\|^2_{\rV}\,ds\geq N\}\wedge T\wedge\sigma,
$$
and set $\tau_{N,k}:=\widetilde{\tau}_N\wedge\sigma_k$.
By {the It\^o} formula and Lemma \ref{lem B baisc prop}, we have, for $t \geq 0$,
\begin{eqnarray*}
&&\hspace{-2truecm}\|u(t)\|^2_{\rV} + 2\int_0^t\|u(s)\|^2_{\mathcal{D}({\rA})}\,ds\nonumber\\
&&\hspace{-2truecm}=
 \|u_0\|^2_{\rV} - 2\int_0^t\langle \rB(u(s)),{\rA}u(s)\rangle_{\rH}\,ds
 +
 2\int_0^t\langle f(s),{\rA}u(s)\rangle_{\rH}\,ds\nonumber\\
 &&\hspace{-2truecm}\ \ \ +
 2\int_0^t\int_{{\rZ}}\langle G(u(s-),z),u(s-)\rangle_{\rV}\widetilde{\eta}(dz,ds)
 +
 \int_0^t\int_{{\rZ}}\|G(u(s-),z)\|^2_{\rV}\eta(dz,ds)\nonumber\\
&&\hspace{-2truecm}\leq
  \|u_0\|^2_{\rV} + \int_0^t\|u(s)\|^2_{\mathcal{D}({\rA})}\,ds +C\int_0^t\|u(s)\|^4_{\rV}|u(s)|^2_{\rH}\,ds+
  2\int_0^t|f(s)|^2_{\rH}\,ds\nonumber\\
  &&\hspace{-2truecm}\ \ \ + 2\int_0^t\int_{{\rZ}}\langle G(u(s-),z),u(s-)\rangle_{\rV}\widetilde{\eta}(dz,ds)
  +
  \int_0^t\int_{{\rZ}}\|G(u(s-),z)\|^2_{\rV}\eta(dz,ds).
\end{eqnarray*}
Applying Gronwall's lemma, we infer that
\begin{eqnarray}\label{eq1 page 13 01 Zhai}
 &&\hspace{-1truecm}\|u(t\wedge \tau_{N,k})\|^2_{\rV} + \int_0^{t\wedge \tau_{N,k}}\|u(s)\|^2_{\mathcal{D}({\rA})}\,ds\nonumber\\
&\leq&
  e^{C\int_0^{t\wedge \tau_{N,k}}|u(s)|^2_{\rH}\|u(s)\|^2_{\rV}\,ds}\nonumber\\
  &&\times \Big(
     \|u_0\|^2_{\rV} + 2\int_0^T|f(s)|^2_{\rH}\,ds
     +
     \sup_{s\in[0, T]}\Big|\int_0^{s\wedge\tau_{N,k}}\int_{{\rZ}}\langle G(u(l-),z),u(l-)\rangle_{\rV}\widetilde{\eta}(dz,dl)\Big|\nonumber\\
     &&\ \ \ \ \ +
     \int_0^{T\wedge\tau_{N,k}}\int_{{\rZ}}\|G(u(s-),z)\|^2_{\rV}\eta(dz,ds)
  \Big)\nonumber\\
&\leq&
  e^{CN^2}\Big(
     \|u_0\|^2_{\rV} + 2\int_0^T|f(s)|^2_{\rH}\,ds
     +
     \sup_{s\in[0, T]}\Big|\int_0^{s\wedge\tau_{N,k}}\int_{{\rZ}}\langle G(u(l-),z),u(l-)\rangle_{\rV}\widetilde{\eta}(dz,dl)\Big|\nonumber\\
     &&\ \ \ \ \ \ \ \ \ \ \ \ \ \ +
     \int_0^{T\wedge\tau_{N,k}}\int_{{\rZ}}\|G(u(s-),z)\|^2_{\rV}\eta(dz,ds)
  \Big),\;\; t\in[0,T].
\end{eqnarray}

By the Burkholder-Davis-Gundy inequality and the assumption \textbf{(G-V2)}, i.e., \eqref{eqn-linear growth-V}, we get
\begin{align}\label{eq1 page 13 02 Zhai}
&\hspace{-1truecm}e^{CN^2}\mathbb{E}\Big(
    \sup_{s\in[0, T]}\Big|\int_0^{s\wedge\tau_{N,k}}\int_{{\rZ}}\langle G(u(l-),z),u(l-)\rangle_{\rV}\widetilde{\eta}(dz,dl)\Big|
     \Big)\nonumber\\
&\leq
Ce^{CN^2}\mathbb{E}\Big(
     \Big|\int_0^{T\wedge\tau_{N,k}}\int_{{\rZ}}\|G(u(s-),z)\|^2_{\rV}\|u(s-)\|^2_{\rV}\eta(dz,ds)\Big|^{\frac{1}{2}}
         \Big)\nonumber\\
&\leq
 \frac{1}{2} \mathbb{E}\Big(\sup_{s\in[0,T]}\|u(s\wedge\tau_{N,k})\|^2_{\rV}\Big)
+
 Ce^{CN^2}\mathbb{E}\Big(
             \int_0^{T\wedge\tau_{N,k}}\int_{{\rZ}}\|G(u(s),z)\|^2_{\rV}\nu(dz)\,ds
           \Big)\nonumber\\
&\leq
 \frac{1}{2} \mathbb{E}\Big(\sup_{s\in[0,T]}\|u(s\wedge\tau_{N,k})\|^2_{\rV}\Big)
+
 Ce^{CN^2}\int_0^{T}\mathbb{E}(1+\|u(s\wedge\tau_{N,k})\|^2_{\rV})\,ds.
\end{align}
Applying the assumption \textbf{(G-V2)} again, we infer that
\begin{eqnarray}\label{eq1 page 13 03 Zhai}
 \mathbb{E}\Big(\int_0^{T\wedge\tau_{N,k}}\int_{{\rZ}}\|G(u(s-),z)\|^2_{\rV}\eta(dz,ds)\Big)
\leq
 C\int_0^{T}\mathbb{E}(1+\|u(s\wedge\tau_{N,k})\|^2_{\rV})\,ds.
\end{eqnarray}
Inserting inequalities (\ref{eq1 page 13 02 Zhai}) and (\ref{eq1 page 13 03 Zhai}) into inequality (\ref{eq1 page 13 01 Zhai}), and then using Gronwall's lemma, we infer that
\begin{eqnarray*}
  \mathbb{E}\Big(\sup_{t\in[0,T]}\|u(t\wedge\tau_{N,k})\|^2_{\rV}\Big) + \mathbb{E}\Big(\int_0^{T\wedge\tau_{N,k}}\|u(s)\|^2_{\mathcal{D}({\rA})}\,ds\Big)
  \leq
  C_{N,T}\Big(
      \|u_0\|^2_{\rV}+\int_0^T|f(s)|^2_{\rH}\,ds + 1
      \Big).
\end{eqnarray*}
Taking the limit $k\to \infty$, we get
\begin{eqnarray*}
 \mathbb{E}\Big(\sup_{t\in[0,T\wedge\widetilde{\tau}_N\wedge\sigma)}\|u(t)\|^2_{\rV}\Big)
 +
 \mathbb{E}\Big(\int_0^{T\wedge\widetilde{\tau}_N\wedge\sigma}\|u(t)\|^2_{\mathcal{D}({\rA})}\,dt\Big)
 \leq
 C_{N,T}\Big(
      \|u_0\|^2_{\rV}+\int_0^T|f(s)|^2_{\rH}\,ds + 1
     \Big).
\end{eqnarray*}
This implies that
\begin{eqnarray}\label{eq2 V global Zhai}
  \sup_{t\in[0,T\wedge\widetilde{\tau}_N\wedge\sigma)}\|u(t)\|^2_{\rV}+\int_0^{T\wedge\widetilde{\tau}_N\wedge\sigma}\|u(t)\|^2_{\mathcal{D}({\rA})}\,dt<\infty,\ \ \ \ \mathbb{P}\text{-a.s..}
\end{eqnarray}

For a fixed $T>0$, we set
$$
\Omega_N:=\{\omega\in\Omega,\ \widetilde{\tau}_N= T\wedge\sigma\}.
$$
Then, $\Omega_N\subset\Omega_{N+1}$. By (\ref{eq1 esta u pro Zhai}) and (\ref{eq2 V global Zhai}), we deduce  that
$$
\lim_{N\to\infty}\mathbb{P}(\Omega_N)=1,
$$
and
\begin{eqnarray*}
  \sup_{t\in[0,T\wedge\sigma)}\|u(t)\|^2_{\rV}+\int_0^{T\wedge\sigma}\|u(t)\|^2_{\mathcal{D}({\rA})}\,dt<\infty,\ \ \ \ \text{on }\Omega_N\ \ \mathbb{P}\text{-a.s..}
\end{eqnarray*}

Hence
\begin{eqnarray*}
  \sup_{t\in[0,T\wedge\sigma)}\|u(t)\|^2_{\rV}+\int_0^{T\wedge\sigma}\|u(t)\|^2_{\mathcal{D}({\rA})}\,dt<\infty,\ \ \ \ \mathbb{P}\text{-a.s.,}
\end{eqnarray*}
which yields that
$$
\mathbb{P}(\sigma\geq T)=1,\ \ \forall T>0.
$$

The proof of Theorem \ref{thm-initial data in V local} is complete.
\end{proof}

Similar to Section \ref{subsection 2.1}, we first introduce  symbols which will be used later. After that, we state three auxiliary results:
Lemmata \ref{lem PDE 1}, \ref{lem PDE 2}, and Corollary \ref{cor PDE 2}.

In this section, we set, for $T\geq 0$,
\begin{equation}
\label{eqn-Upsilon_T-V}
\Upsilon_T(\rV)= D([0,T],\rV)\cap L^2([0,T],\mathcal{D}({\rA})).
\end{equation}
 Note that the definition of the space above differs from \eqref{eqn-Upsilon_T-H}.
It is easy to see that the space $\Upsilon_T(\rV)$ endowed with the norm
\begin{equation}
\label{eqn-Upsilon_T-V-norm}
\|y\|_{\Upsilon_T(\rV)}= \sup_{s\in[0,T]}\|y(s)\|_{\rV} + \Bigl(\int_0^T\|y(s)\|^2_{\mathcal{D}({\rA})}\,ds\Bigr)^{\frac{1}{2}}
\end{equation}
is a Banach space.

Let $\Lambda_T(\rV)$ be the space of all {$\rV$}-valued c\`adl\`ag $\mathbb{F}$-{progressively measurable} processes $y$
 {whose a.a. trajectories belong to the space $\Upsilon_T(\rV)$ and }
such that
\begin{equation}
\label{eqn-Lambda_T-V}
\|y\|_{\Lambda_T(\rV)}^2:=\mathbb{E}\Big(\sup_{s\in[0,T]}\|y(s)\|^2_{\rV}+\int_0^T\|y(s)\|^2_{\mathcal{D}({\rA})}\,ds\Big)<\infty.
\end{equation}
 {We point out that the space $\Lambda_T(\rH)$ introduced earlier around \eqref{eqn-Lambda_T-H} differs from the current space $\Lambda_T(\rV)$}.

Recall that the auxiliary function $\theta_m(\cdot)$ has been introduced in \eqref{eqn-theta_m} (and used, for instance, in Lemma \ref{lem 1}).

We are now ready to state the first of the three auxiliary results we need to prove Theorem \ref{thm-initial data in V}.

\begin{lemma}\label{lem PDE 1}
Assume that $T>0$ and $m\in \mathbb{N}$. Then for all {$u_0 \in \rV$, $f\in L^2([0,T];\rH)$}, and $\zsmall \in\Upsilon_T(\rV)$, there exists a function $\ysmall \in C([0,T],\rV)\cap L^2([0,T],\mathcal{D}({\rA}))$ satisfying
\begin{eqnarray}\label{eq lemma PDE 1 01}
&& \ysmall ^\prime(t)+{\rA}\ysmall (t)+\theta_m(\|\ysmall +\zsmall \|_{\Upsilon_t(\rV)})\rB(\ysmall (t)+\zsmall (t))=f(t),\;\; t \in (0,T),\nonumber\\
&& \ysmall (0)=u_0.
\end{eqnarray}

\end{lemma}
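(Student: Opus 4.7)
My plan is to imitate the strategy used in the proof of Lemma \ref{lem 1}, but working in the stronger functional setting $\rV \hookrightarrow \mathcal{D}(\rA^{1/2})$ and $\mathcal{D}(\rA)$ rather than $\rH$ and $\rV$. That is, I will introduce an auxiliary doubly truncated equation, solve it on a small ball by Picard iteration, extend by concatenation, and finally close the energy estimate in the $\rV$-norm to get globality. The role played before by the bound $|\dual{\langle B(u,v),w\rangle}|\leq 2\|u\|_\rH^{1/2}\|u\|_\rV^{1/2}\|v\|_\rH^{1/2}\|v\|_\rV^{1/2}\|w\|_\rV$ from Lemma \ref{lem B baisc prop} will now be played by the $\mathcal{D}(\rA)$-valued bounds from the same lemma, in particular $\|B(u)\|_\rH^2 \leq C\|u\|_{\mathcal{D}(\rA)}\|u\|_\rV^2\|u\|_\rH$ and $|\dual{\langle B(u),\rA u\rangle}|\leq \tfrac{1}{4}\|\rA u\|_\rH^2 + C\|u\|_\rV^4\|u\|_\rH^2$.

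Concretely, fix $z \in \Upsilon_T^V$, $u_0 \in \rV$, $f \in L^2(0,T;\rH)$ and $m \in \mathbb{N}$. For $\delta>0$ consider the auxiliary problem
\begin{equation*}
X'(t)+\rA X(t)+\theta_m(\|X+z\|_{\Upsilon_t^V})\,\phi_\delta\Bigl(\bigl(\tint{\|X+z\|_{\mathcal{D}(\rA)}^2}{s}\bigr)^{1/2}\Bigr)B(X(t)+z(t))=f(t), \;\; X(0)=u_0,
\end{equation*}
where $\phi_\delta$ is the cut-off defined in \eqref{eqn-phi_delta}. The linearized Picard iteration $y_{n+1}$ defined analogously to \eqref{App auxi spde 02} is well-posed in $C([0,T];\rV)\cap L^2(0,T;\mathcal{D}(\rA))$ by standard parabolic theory (the Galerkin method applied to the linear coercive problem with $y_n$-dependent coefficient). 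Testing the equation for $y_{n+1}-y_n$ against $\rA(y_{n+1}-y_n)$ and splitting according to whether the $\theta_m$ and $\phi_\delta$ cut-offs are active — exactly as in the three Cases of Step 1 of Lemma \ref{lem 1} — yields an estimate of the form
\begin{equation*}
\|y_{n+1}-y_n\|_{\Upsilon_T^V}^2 \leq \Bigl(A_{m,\eps,\delta}\Bigr)\|y_{n+1}-y_n\|_{\Upsilon_T^V}^2 + \Bigl(B_{m,\eps,\delta}\Bigr)\|y_n-y_{n-1}\|_{\Upsilon_T^V}^2,
\end{equation*}
where the estimates of the trilinear form are done using $|\dual{\langle B(u,v),\rA w\rangle}|\leq C\|u\|_\rV^{1/2}\|u\|_{\mathcal{D}(\rA)}^{1/2}\|v\|_\rV^{1/2}\|v\|_{\mathcal{D}(\rA)}^{1/2}\|\rA w\|_\rH$ together with the $\theta_m$ and $\phi_\delta$ restrictions. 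Choosing $\eps$ small and then $\delta = \delta_0$ large enough (depending only on $m$), the prefactors become $\leq 1/2$, giving a contraction and hence a fixed point $Y^1$ solving the doubly truncated equation on $[0,T]$. Defining $t_1:=\inf\{t\geq 0:\int_0^t\|Y^1+z\|_{\mathcal{D}(\rA)}^2\,ds \geq 1/\delta_0\}$, $Y^1$ restricted to $[0,t_1]$ solves \eqref{eq lemma PDE 1 01}. Then one iterates on $[t_1,t_2]$, $[t_2,t_3]$, \dots as in Step 2 of Lemma \ref{lem 1}, obtaining times $t_n$ and functions $Y^n$ that agree on overlaps.

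To show $t_n \to \infty$ (or rather that the procedure covers $[0,T]$) I close a global a priori estimate in the $\rV$-norm: applying the It\^o-type chain rule to $\|y(t)\|_\rV^2$ (really just the deterministic one since the equation is deterministic) gives
\begin{equation*}
\|y(t)\|_\rV^2 + 2\tint{\|\rA y(s)\|_\rH^2}{s} = \|u_0\|_\rV^2 - 2\tint{\theta_m(\|y+z\|_{\Upsilon_s^V})\sprod{B(y+z)(s)}{\rA y(s)}}{s} + 2\tint{\sprod{f(s)}{\rA y(s)}}{s},
\end{equation*}
and estimating the nonlinear term by the Lemma \ref{lem B baisc prop} bound, using $\|y+z\|_{\Upsilon_s^V}\leq m+1$ on the support of $\theta_m$, together with Young's inequality to absorb $\tfrac14\|\rA y\|_\rH^2$ into the left-hand side, one obtains a bound of the form $\|y\|_{\Upsilon_T^V}\leq C(m,T,\|u_0\|_\rV,\|f\|_{L^2(\rH)},\|z\|_{\Upsilon_T^V})$ that is uniform. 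This forces the step lengths $t_{n+1}-t_n$ to be bounded below, so finitely many iterations cover $[0,T]$. Uniqueness follows from exactly the same energy-type argument applied to the difference of two solutions, using the local Lipschitz estimates for $B$ truncated by $\theta_m$, paralleling Corollary \ref{cor 1}.

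The main obstacle, as in Lemma \ref{lem 1}, will be the careful case analysis separating the regions where $\theta_m$ changes and where $\phi_\delta$ changes, since in each case one must exploit the indicator inequality $\|y_n-y_{n-1}\|_{\Upsilon_t^V}\geq 1$ (or a corresponding one for the time-integrated $\mathcal{D}(\rA)$ norm) to absorb troublesome $\|\rA\cdot\|_\rH$ factors via Young's inequality, ultimately producing coefficients that can be made small by the choice of $\eps$ and $\delta_0$. The bookkeeping is heavier than in Lemma \ref{lem 1} because the trilinear estimate for $\sprod{B(u,v)}{\rA w}$ has a less favorable exponent split than its $\rV^\prime$-counterpart, so the choices of exponents in the Young inequality steps must be made with more care.
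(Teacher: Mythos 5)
Your overall skeleton (Picard iteration against a truncated equation, concatenation, a final $\rV$-energy bound, uniqueness by the same estimates) is the right family of ideas, but the specific mechanism you import from Lemma \ref{lem 1} — a second cut-off $\phi_\delta$ on $\bigl(\int_0^t\|X+\zsmall\|^2_{\mathcal{D}({\rA})}\,ds\bigr)^{1/2}$ with $\delta_0$ chosen \emph{large} — breaks down at the term produced by the difference of the $\phi_\delta$-factors between consecutive iterates (the analogue of $I_{2,2}$ in Subcase (1.3)). There the Lipschitz bound gives a factor $\Cphi\delta\,\|y_n-y_{n-1}\|_{L^2([0,s];\mathcal{D}({\rA}))}$, multiplying $|\langle B(y_{n-1}+\zsmall,y_n+\zsmall),{\rA}(y_{n+1}-y_n)\rangle_{\rH}|\leq C_m\|y_n(s)+\zsmall(s)\|^{1/2}_{\mathcal{D}({\rA})}\|y_{n+1}(s)-y_n(s)\|_{\mathcal{D}({\rA})}$ on the $\theta_m$-support. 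Unlike the $\rV^\prime$-pairing used in Lemma \ref{lem 1}, here $\|y_{n+1}-y_n\|_{\mathcal{D}({\rA})}$ enters with full power $1$, so Young's inequality forces
\begin{equation*}
\Cphi\delta\,a(s)\,C_m\|v(s)\|^{1/2}_{\mathcal{D}({\rA})}\|w(s)\|_{\mathcal{D}({\rA})}
\leq \eps\|w(s)\|^2_{\mathcal{D}({\rA})}+\frac{C_m^2\Cphi^2\delta^2}{4\eps}\,a(s)^2\,\|v(s)\|_{\mathcal{D}({\rA})},
\end{equation*}
and the only compensation the $\phi_\delta$-support offers is $\int_0^t\|v\|_{\mathcal{D}({\rA})}\,ds\leq CT^{1/2}\delta^{-1}$ by Cauchy--Schwarz (the controlled quantity appears only to the power $1/2$ in the trilinear bound, not to the power $3/2$ as in the $\rH$-case). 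The net coefficient of $\|y_n-y_{n-1}\|^2_{\Upsilon^V_t}$ therefore grows like $\delta$, so "choose $\eps$ small, then $\delta_0$ large" cannot make the prefactors $\leq 1/2$; using instead $a(s)\leq 6/\delta$ on the support trades the blow-up for a coefficient of order $T^{1/2}$ that is independent of $\delta$ and destroys the squared-difference contraction. No choice of Young exponents repairs this, because the exponent mismatch is structural to the pairing with ${\rA}(y_{n+1}-y_n)$.

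This is precisely why the paper's proof of Lemma \ref{lem PDE 1} does \emph{not} introduce $\phi_\delta$ at all: in the $\rV$-setting the single cut-off $\theta_m$ on $\|\ysmall+\zsmall\|_{\Upsilon^V_t}$ already controls $\int_0^t\|\ysmall+\zsmall\|^2_{\mathcal{D}({\rA})}\,ds$, so the dangerous integrals are bounded by $C_m t^{1/2}$, and one obtains a contraction on $[0,t_0]$ with $t_0$ depending only on $m$ (not on the data), after which $[0,T]$ is covered by finitely many steps of the same length — no stopping times, no separate global a priori estimate is needed for existence. Your global $\rV$-estimate is fine in itself (it is essentially the argument of Lemma \ref{lem3 LDP deter 1}), but note that even if your concatenation scheme were available, it would not force the step lengths $t_{n+1}-t_n$ to be bounded below; the correct statement is that each step consumes at least $\delta_0^{-2}$ of the finite budget $\int_0^T\|Y+\zsmall\|^2_{\mathcal{D}({\rA})}\,dt$, so only finitely many steps occur. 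As written, however, the contraction step of your plan fails, and the natural repair is to drop $\phi_\delta$ and extract smallness from short time intervals as the paper does.
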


\begin{proof}[Proof of Lemma \ref{lem PDE 1}]

Fix a $T>0$ and $m\in \mathbb{N}$. Also fix {$u_0 \in \rV$, $f\in L^2([0,T];\rH)$} and $\zsmall \in\Upsilon_T(\rV)$.
We use the Picard iterative method again to prove this result.

\vskip 0.2cm
Choose and fix $y_0\in C([0,T],\rV)\cap L^2([0,T],\mathcal{D}({\rA}))$ such that $y_0(0)=u_0$.
For instance, we can take $y_0(t)=e^{-t\rA}u_0$, $t\in [0,T]$.

It is not difficult to prove that, given
$y_{n}\in C([0,T],\rV)\cap L^2([0,T],\mathcal{D}({\rA}))$, $n\in\mathbb{N}$, there exists a unique $y_{n+1}\in C([0,T],\rV)\cap L^2([0,T],\mathcal{D}({\rA}))$, satisfying the following deterministic initial value problem
\begin{eqnarray}\label{App auxi spde PDE 02}
&& y_{n+1}^\prime(t)+{\rA}y_{n+1}(t)+\theta_m(\|y_n+\zsmall \|_{\Upsilon_t(\rV)})\rB(y_n(t)+\zsmall (t),y_{n+1}(t)+\zsmall (t))=f(t),\nonumber\\
&& y_{n+1}(0)=u_0.
\end{eqnarray}

 {We will show that $\{y_n,n\in\mathbb{N}\}$ is a Cauchy sequence in $C([0,T],\rV)\cap L^2([0,T],\mathcal{D}({\rA}))$.}
We now estimate the norm in $C([0,T],\rV)\cap L^2([0,T],\mathcal{D}({\rA}))$ of the difference $y_{n+1}-y_n$, {for $n\geq 1$.}
To do so, set, for $x_i \in C([0,T],\rV)\cap L^2([0,T],\mathcal{D}({\rA}))$, $i=1,2,3,$
$$
\Xi(x_1,x_2,x_3)(s)=\theta_m(\|x_1+\zsmall \|_{\Upsilon_s(\rV)})\rB(x_2(s)+\zsmall (s),x_3(s)+\zsmall (s)),\,\;\; s\in [0,T].
$$
By the Lions-Magenes lemma, we have
\begin{eqnarray}\label{App auxi esti PDE 01}
\|y_{n+1}(t)-y_n(t)\|^2_{\rV}+2\int_0^t\|y_{n+1}(s)-y_n(s)\|^2_{\mathcal{D}({\rA})}\,ds=-2\int_0^t K(s)\,ds,
\end{eqnarray}
where
{\small
\begin{eqnarray*}
K(s)
=\Big\langle \Xi(y_n,y_n,y_{n+1})(s)-\Xi(y_{n-1},y_{n-1},y_{n})(s),
   {\rA}(y_{n+1}(s)-y_n(s))
   \Big\rangle_{\rH}
\end{eqnarray*}
} with $\big\langle \cdot,\cdot\big\rangle_{\rH}$ denoting the scalar product in $\rH$.

\vskip 0.1cm
 {Now fix $s\in [0,T]$.} To estimate $K(s)$, we {consider} {three} cases.
Each case contains a calculation of a certain ``partial" integral $\int_0^t |K(s)|\,ds$.

\vskip 0.1cm
\noindent{\bf Case 1.} {Assume that } $\|y_n+\zsmall \|_{\Upsilon_s(\rV)}\vee \|y_{n-1}+\zsmall \|_{\Upsilon_s(\rV)}\leq m+2$.
\vskip 0.1cm

Then {\small
\begin{eqnarray*}
 |K(s)|
 &\leq&
 |\theta_m(\|y_n+\zsmall \|_{\Upsilon_s(\rV)})-\theta_m(\|y_{n-1}+\zsmall \|_{\Upsilon_s(\rV)})|
\\
&&     \Big|\Big\langle \rB(y_{n-1}(s)+\zsmall (s),y_{n}(s)+\zsmall (s)),{\rA}(y_{n+1}(s)-y_n(s))\Big\rangle_{\rH}\Big|\\
 &&+
 \Big|\Big\langle \rB(y_{n}(s)+\zsmall (s),y_{n+1}(s)-y_n(s)),{\rA}(y_{n+1}(s)-y_n(s))\Big\rangle_{\rH}\Big|\\
 &&+
 \Big|\Big\langle \rB(y_{n}(s)-y_{n-1}(s),y_{n}(s)+\zsmall (s)),{\rA}(y_{n+1}(s)-y_n(s))\Big\rangle_{\rH}\Big|\\
 &=:& I_1(s)+I_2(s)+I_3(s).
\end{eqnarray*}
}
By Lemma \ref{lem B baisc prop} and the definition of $\theta_m$,
\begin{eqnarray}\label{eq es PDE Case 1 I1}
   I_1(s)
&\leq&
   C\|y_n-y_{n-1}\|_{\Upsilon_s(\rV)}|y_{n-1}(s)+\zsmall (s)|^{\frac{1}{2}}_{\rH}\|y_{n-1}(s)+\zsmall (s)\|^{\frac{1}{2}}_{\rV}\\
                 && \|y_{n}(s)+\zsmall (s)\|^{\frac{1}{2}}_{\rV}\|y_{n}(s)+\zsmall (s)\|^{\frac{1}{2}}_{\mathcal{D}({\rA})}
                  \|y_{n+1}(s)-y_n(s)\|_{\mathcal{D}({\rA})}\nonumber\\
&\leq&
  C_m\|y_n-y_{n-1}\|_{\Upsilon_s(\rV)}\|y_{n}(s)+\zsmall (s)\|^{\frac{1}{2}}_{\mathcal{D}({\rA})}\|y_{n+1}(s)-y_n(s)\|_{\mathcal{D}({\rA})}\nonumber\\
&\leq&
  {\eps}\|y_{n+1}(s)-y_n(s)\|^2_{\mathcal{D}({\rA})}
  +
  \frac{C_m}{{\eps}}\|y_n-y_{n-1}\|^2_{\Upsilon_s(\rV)}\|y_{n}(s)+\zsmall (s)\|_{\mathcal{D}({\rA})},\nonumber
\end{eqnarray}
\begin{eqnarray}\label{eq es PDE Case 1 I2}
   I_2(s)
&\leq&
   C|y_{n}(s)+\zsmall (s)|^{\frac{1}{2}}_{\rH}\|y_{n}(s)+\zsmall (s)\|^{\frac{1}{2}}_{\rV}
   \|y_{n+1}(s)-y_n(s)\|^{\frac{1}{2}}_{\rV}
                  \|y_{n+1}(s)-y_n(s)\|^{\frac{3}{2}}_{\mathcal{D}({\rA})}\nonumber\\
&\leq&
  C_m\|y_{n+1}(s)-y_n(s)\|^{\frac{1}{2}}_{\rV}
                  \|y_{n+1}(s)-y_n(s)\|^{\frac{3}{2}}_{\mathcal{D}({\rA})}\nonumber\\
&\leq&
  {\eps}^{4/3}\|y_{n+1}(s)-y_n(s)\|^2_{\mathcal{D}({\rA})}
  +
  \frac{C_m}{{\eps}^4}\|y_{n+1}(s)-y_n(s)\|^2_{\rV},
\end{eqnarray}
and
\begin{eqnarray}\label{eq es PDE Case 1 I3}
   I_3(s)
&\leq&
   C|y_{n}(s)-y_{n-1}(s)|^{\frac{1}{2}}_{\rH}\|y_{n}(s)-y_{n-1}(s)\|^{\frac{1}{2}}_{\rV}\\
   &&\|y_{n}(s)+\zsmall (s)\|^{\frac{1}{2}}_{\rV}\|y_{n}(s)+\zsmall (s)\|^{\frac{1}{2}}_{\mathcal{D}({\rA})}
                  \|y_{n+1}(s)-y_n(s)\|_{\mathcal{D}({\rA})}\nonumber\\
&\leq&
  C_m\|y_{n}(s)-y_{n-1}(s)\|_{\rV}\|y_{n}(s)+\zsmall (s)\|^{\frac{1}{2}}_{\mathcal{D}({\rA})}
                  \|y_{n+1}(s)-y_n(s)\|_{\mathcal{D}({\rA})}\nonumber\\
&\leq&
  {\eps}\|y_{n+1}(s)-y_n(s)\|^2_{\mathcal{D}({\rA})}
  +
  \frac{C_m}{{\eps}}\|y_{n}(s)-y_{n-1}(s)\|^2_{\rV}\|y_{n}(s)+\zsmall (s)\|_{\mathcal{D}({\rA})}.
  \nonumber
\end{eqnarray}

 {Therefore, since
$$\int_0^t\|y_{n}(s)+\zsmall (s)\|_{\mathcal{D}({\rA})}\mathds{1}_{\{\|y_n+\zsmall \|_{\Upsilon_s(\rV)}\leq m+2\}}(s)\,ds\leq C_mt^{\frac{1}{2}},$$
 by (\ref{eq es PDE Case 1 I1})-(\ref{eq es PDE Case 1 I3}), we deduce }\\
\begin{eqnarray}\label{Claim 1 PDE}
&&\hspace{-1.4truecm}\lefteqn{ \int_0^t|K(s)|\mathds{1}_{\{\|y_n+\zsmall \|_{\Upsilon_s(\rV)}\vee \|y_{n-1}+\zsmall \|_{\Upsilon_s(\rV)}\leq m+2\}}(s)\,ds}\\
&\leq&
  (2{\eps}+{\eps}^{4/3})\int_0^t\|y_{n+1}(s)-y_n(s)\|^2_{\mathcal{D}({\rA})}\,ds
  +
  \frac{C_m}{{\eps}^4}t\|y_{n+1}-y_{n}\|^2_{\Upsilon_t(\rV)}\nonumber\\
  &&+
  \frac{C_m}{{\eps}}\|y_n-y_{n-1}\|^2_{\Upsilon_t(\rV)}
  \int_0^t\|y_{n}(s)+\zsmall (s)\|_{\mathcal{D}({\rA})}\mathds{1}_{\{\|y_n+\zsmall \|_{\Upsilon_s(\rV)}\leq m+2\}}(s)\,ds\nonumber\\
&\leq&
  (2{\eps}+{\eps}^{4/3}+\frac{C_m}{{\eps}^{4}}t)
\|y_{n+1}-y_{n}\|^2_{\Upsilon_t(\rV)}
  +
  \frac{C_m}{{\eps}}t^{\frac{1}{2}}\|y_n-y_{n-1}\|^2_{\Upsilon_t(\rV)}, \; t\in [0,T].
  \nonumber
\end{eqnarray}

\vskip 0.1cm

\noindent{\bf Case 2} {Assume that $\|y_n+\zsmall \|_{\Upsilon_s(\rV)}\leq m+2$ and $\|y_{n-1}+\zsmall \|_{\Upsilon_s(\rV)}>m+2$.}

\vskip 0.1cm
Then, the definition of $\theta_m$ implies that
$$
K(s)= \theta_m(\|y_n+\zsmall \|_{\Upsilon_s(\rV)})
     \Big\langle \rB(y_{n}(s)+\zsmall (s),y_{n+1}(s)+\zsmall (s)),{\rA}(y_{n+1}(s)-y_n(s))\Big\rangle_{\rH},
$$
and
\begin{eqnarray}\label{Claim 2 sss}
&&\hspace{-1truecm}\lefteqn{\int_0^t|K(s)|\mathds{1}_{\{\|y_n+\zsmall \|_{\Upsilon_s(\rV)}\leq m+2\}}(s)\mathds{1}_{\{\|y_{n-1}+\zsmall \|_{\Upsilon_s(\rV)}>m+2\}}(s)\,ds}\nonumber
\\
&=&{\int_0^t|K(s)|\mathds{1}_{\{\|y_n+\zsmall \|_{\Upsilon_s(\rV)}\leq m+1\}}(s)\mathds{1}_{\{\|y_{n-1}+\zsmall \|_{\Upsilon_s(\rV)}>m+2\}}(s)\,ds}.
\end{eqnarray}

For any $s\in[0,t]$ such that
$$
\mathds{1}_{\{\|y_n+\zsmall \|_{\Upsilon_s(\rV)}\leq m+1\}}(s)\mathds{1}_{\{\|y_{n-1}+\zsmall \|_{\Upsilon_s(\rV)}>m+2\}}(s)=1,
$$
we have
\begin{eqnarray}\label{Claim 2 s}
\|y_n-y_{n-1}\|_{\Upsilon_s(\rV)}\geq \|y_{n-1}+\zsmall \|_{\Upsilon_s(\rV)}-\|y_n+\zsmall \|_{\Upsilon_s(\rV)} \geq 1,
\end{eqnarray}
and by Lemma \ref{lem B baisc prop},
\begin{eqnarray}\label{eq esta PDE Case 2 01}
  |K(s)|
&\leq&
   C|y_{n}(s)+\zsmall (s)|^{\frac{1}{2}}_{\rH}\|y_{n}(s)+\zsmall (s)\|^{\frac{1}{2}}_{\rV}\nonumber\\
                 && \|y_{n+1}(s)+\zsmall (s)\|^{\frac{1}{2}}_{\rV}\|y_{n+1}(s)+\zsmall (s)\|^{\frac{1}{2}}_{\mathcal{D}({\rA})}
                  \|y_{n+1}(s)-y_n(s)\|_{\mathcal{D}({\rA})}\nonumber\\
&\leq&C_m \|y_{n+1}(s)+\zsmall (s)\|^{\frac{1}{2}}_{\rV}\|y_{n+1}(s)+\zsmall (s)\|^{\frac{1}{2}}_{\mathcal{D}({\rA})}
                  \|y_{n+1}(s)-y_n(s)\|_{\mathcal{D}({\rA})}\\
&\leq&
   {\eps} \|y_{n+1}(s)-y_n(s)\|^2_{\mathcal{D}({\rA})}
   +
   \frac{C_m}{{\eps}} \|y_{n+1}(s)+\zsmall (s)\|_{\rV}\|y_{n+1}(s)+\zsmall (s)\|_{\mathcal{D}({\rA})}.\nonumber
\end{eqnarray}
For the second term of the above inequality, we have
\begin{eqnarray}\label{eq Case 02 sec term}
&&\hspace{-1.3truecm}\lefteqn{\frac{C_m}{{\eps}} \|y_{n+1}(s)+\zsmall (s)\|_{\rV}\|y_{n+1}(s)+\zsmall (s)\|_{\mathcal{D}({\rA})}}\\
&&\hspace{-0.9truecm}\leq
   \frac{C_m}{{\eps}} (\|y_{n}(s)+\zsmall (s)\|_{\rV}+\|y_{n+1}(s)-y_{n}(s)\|_{\rV})
   (\|y_{n}(s)+\zsmall (s)\|_{\mathcal{D}({\rA})}
   \lefteqn{ +\|y_{n+1}(s)-y_{n}(s)\|_{\mathcal{D}({\rA})})}
  \nonumber\\
&&\hspace{-0.9truecm}\leq
   \frac{C_m}{{\eps}}\Big(
            \|y_{n}(s)+\zsmall (s)\|_{\mathcal{D}({\rA})}\|y_n-y_{n-1}\|^2_{\Upsilon_s(\rV)}
            +
            \|y_{n+1}(s)-y_{n}(s)\|_{\mathcal{D}({\rA})}\|y_n-y_{n-1}\|_{\Upsilon_s(\rV)}\nonumber\\
            &&\hspace{-0.9truecm}\ \ \ +
            \|y_{n+1}(s)-y_{n}(s)\|_{\rV}\|y_{n}(s)+\zsmall (s)\|_{\mathcal{D}({\rA})}
             \lefteqn{
            +
            \|y_{n+1}(s)-y_{n}(s)\|_{\rV}\|y_{n+1}(s)-y_{n}(s)\|_{\mathcal{D}({\rA})}
           \Big)}
           \nonumber\\
&&\hspace{-0.9truecm}\leq
   2{\eps} \|y_{n+1}(s)-y_{n}(s)\|^2_{\mathcal{D}({\rA})}
   +
   \frac{C_m}{{\eps}^3}\Big(\|y_n-y_{n-1}\|^2_{\Upsilon_s(\rV)}+\|y_{n+1}(s)-y_{n}(s)\|^2_{\rV}\Big)\nonumber\\
   &&\hspace{-0.9truecm}\ \ \ \lefteqn{ +
   \frac{C_m}{{\eps}}\Big(\|y_{n}(s)+\zsmall (s)\|_{\mathcal{D}({\rA})}\|y_n-y_{n-1}\|^2_{\Upsilon_s(\rV)}
             +
              \|y_{n+1}(s)-y_{n}(s)\|_{\rV}\|y_{n}(s)+\zsmall (s)\|_{\mathcal{D}({\rA})}
           \Big).}\nonumber
\end{eqnarray}
In the second ``$\leq$" in (\ref{eq Case 02 sec term}), we have used (\ref{Claim 2 s}) and $\|y_{n}(s)+\zsmall (s)\|_{\rV}\leq \|y_{n}+\zsmall \|_{\Upsilon_s(\rV)}\leq m+1$.

Considering (\ref{Claim 2 sss}), (\ref{eq esta PDE Case 2 01}), and (\ref{eq Case 02 sec term}) together, we deduce
\begin{eqnarray}\label{Claim 2 PDE}
&&\hspace{-1truecm}\lefteqn{\int_0^t|K(s)|\mathds{1}_{\{\|y_n+\zsmall \|_{\Upsilon_s(\rV)}\leq m+2\}}(s)\mathds{1}_{\{\|y_{n-1}+\zsmall \|_{\Upsilon_s(\rV)}>m+2\}}(s)\,ds}
\\
&\leq&
  3{\eps}\int_0^t\|y_{n+1}(s)-y_{n}(s)\|^2_{\mathcal{D}({\rA})}\,ds
  +
  \frac{C_m}{{\eps}^3} t \sup_{s\in[0,t]}\|y_{n+1}(s)-y_{n}(s)\|^2_{\rV} \nonumber\\
  &&+
  \frac{C_m}{{\eps}^3}\sup_{s\in[0,t]}\|y_{n}-y_{n-1}\|^2_{\Upsilon_t(\rV)} t\nonumber\\
  &&+
  \frac{C_m}{{\eps}}\sup_{s\in[0,t]}\|y_{n+1}(s)-y_{n}(s)\|_{\rV}
 \nonumber \\ &&\hspace{1truecm}\lefteqn{\int_0^t\|y_{n}(s)+\zsmall (s)\|_{\mathcal{D}({\rA})}\mathds{1}_{\{\|y_n+\zsmall \|_{\Upsilon_s(\rV)}\leq m+1\}}(s)\mathds{1}_{\{\|y_{n-1}+\zsmall \|_{\Upsilon_s(\rV)}>m+2\}}(s)\,ds}
 \nonumber\\
  &&+
  \frac{C_m}{{\eps}}\|y_{n}-y_{n-1}\|^2_{\Upsilon_t(\rV)}
  \int_0^t\|y_{n}(s)+\zsmall (s)\|_{\mathcal{D}({\rA})}\mathds{1}_{\{\|y_n+\zsmall \|_{\Upsilon_s(\rV)}\leq m+1\}}(s)\,ds\nonumber\\
&\leq&
  \|y_{n+1}-y_{n}\|^2_{\Upsilon_t(\rV)}\Big(3{\eps}+\frac{C_m}{{\eps}^3}t+{\eps}^2\Big)
  +
  \frac{C_m}{{\eps}^3}t\|y_{n}-y_{n-1}\|^2_{\Upsilon_t(\rV)}\nonumber\\
  &&+
  \frac{C_m}{{\eps}^4}\Big(\int_0^t\|y_{n}(s)+\zsmall (s)\|_{\mathcal{D}({\rA})}\mathds{1}_{\{\|y_n+\zsmall \|_{\Upsilon_s(\rV)}\leq m+1\}}(s)\mathds{1}_{\{\|y_{n-1}+\zsmall \|_{\Upsilon_s(\rV)}>m+2\}}(s)\nonumber \\
  &&\hspace{4truecm}\lefteqn{
  \cdot \|y_{n}-y_{n-1}\|_{\Upsilon_s(\rV)}\,ds\Big)^2}\nonumber\\
  &&+
  \frac{C_m}{{\eps}}\|y_{n}-y_{n-1}\|^2_{\Upsilon_t(\rV)}
  \Big(\int_0^t\|y_{n}(s)+\zsmall (s)\|^2_{\mathcal{D}({\rA})}\mathds{1}_{\{\|y_n+\zsmall \|_{\Upsilon_s(\rV)}\leq m+1\}}(s)\,ds\Big)^{\frac{1}{2}}t^{\frac{1}{2}}\nonumber\\
&\leq&
  \|y_{n+1}-y_{n}\|^2_{\Upsilon_t(\rV)}\Big(3{\eps}+\frac{C_m}{{\eps}^3}t+{\eps}^2\Big)
  +
  \frac{C_m}{{\eps}^3}t\|y_{n}-y_{n-1}\|^2_{\Upsilon_t(\rV)}\nonumber\\
  &&+
  \frac{C_m}{{\eps}^4}\|y_{n}-y_{n-1}\|^2_{\Upsilon_t(\rV)}t\Big(\int_0^t\|y_{n}(s)+\zsmall (s)\|^2_{\mathcal{D}({\rA})}\mathds{1}_{\{\|y_n+\zsmall \|_{\Upsilon_s(\rV)}\leq m+1\}}(s) ds\Big)\nonumber\\
  &&+
  \frac{C_m}{{\eps}}\|y_{n}-y_{n-1}\|^2_{\Upsilon_t(\rV)}t^{\frac{1}{2}}\nonumber\\
&\leq&
  \|y_{n+1}-y_{n}\|^2_{\Upsilon_t(\rV)}\Big(3{\eps}+\frac{C_m}{{\eps}^3}t+{\eps}^2\Big)
  +
\|y_{n}-y_{n-1}\|^2_{\Upsilon_t(\rV)}\Big(\frac{C_m}{{\eps}^3}t+\frac{C_m}{{\eps}}t^{\frac{1}{2}}+\frac{C_m}{{\eps}^4}t\Big).\nonumber
\end{eqnarray}
In the second ``$\leq$" in (\ref{Claim 2 PDE}), we have used (\ref{Claim 2 s}).

\vskip 0.1cm
\noindent {\bf Case 3:} Assume that $\|y_n+\zsmall \|_{\Upsilon_s(\rV)}> m+2$ and $\|y_{n-1}+\zsmall \|_{\Upsilon_s(\rV)}\leq m+2$.

\vskip 0.1cm
The definition of $\theta_m$ implies that
$$
K(s)= -\theta_m(\|y_{n-1}+\zsmall \|_{\Upsilon_s(\rV)})
     \Big\langle \rB(y_{n-1}(s)+\zsmall (s),y_{n}(s)+\zsmall (s)),{\rA}(y_{n+1}(s)-y_n(s))\Big\rangle_{\rH},
$$
and
\begin{eqnarray}\label{Claim 3 pss}
&&\hspace{-1.4truecm}\int_0^t|K(s)|\mathds{1}_{\{\|y_n+\zsmall \|_{\Upsilon_s(\rV)}> m+2\}}(s)\mathds{1}_{\{\|y_{n-1}+\zsmall \|_{\Upsilon_s(\rV)}\leq m+2\}}(s)\,ds\nonumber\\
&=&
\int_0^t|K(s)|\mathds{1}_{\{\|y_n+\zsmall \|_{\Upsilon_s(\rV)}> m+2\}}(s)\mathds{1}_{\{\|y_{n-1}+\zsmall \|_{\Upsilon_s(\rV)}\leq m+1\}}(s)\,ds.
\end{eqnarray}

For any $s\in[0,t]$ such that
$$
\mathds{1}_{\{\|y_n+\zsmall \|_{\Upsilon_s(\rV)}> m+2\}}(s)\mathds{1}_{\{\|y_{n-1}+\zsmall \|_{\Upsilon_s(\rV)}\leq m+1\}}(s)=1,
$$
we have
\begin{eqnarray}\label{Claim 3 pp}
\|y_n-y_{n-1}\|_{\Upsilon_s(\rV)}\geq \|y_n+\zsmall \|_{\Upsilon_s(\rV)}-\|y_{n-1}+\zsmall \|_{\Upsilon_s(\rV)} \geq 1,
\end{eqnarray}
and by Lemma \ref{lem B baisc prop} again,
\begin{eqnarray}\label{eq esta PDE Case 3 01}
  |K(s)|
&\leq&
   C|y_{n-1}(s)+\zsmall (s)|^{\frac{1}{2}}_{\rH}\|y_{n-1}(s)+\zsmall (s)\|^{\frac{1}{2}}_{\rV}\nonumber\\
                 && \|y_{n}(s)+\zsmall (s)\|^{\frac{1}{2}}_{\rV}\|y_{n}(s)+\zsmall (s)\|^{\frac{1}{2}}_{\mathcal{D}({\rA})}
                  \|y_{n+1}(s)-y_n(s)\|_{\mathcal{D}({\rA})}\\
&\leq&C_m \|y_{n}(s)+\zsmall (s)\|^{\frac{1}{2}}_{\rV}\|y_{n}(s)+\zsmall (s)\|^{\frac{1}{2}}_{\mathcal{D}({\rA})}
                  \|y_{n+1}(s)-y_n(s)\|_{\mathcal{D}({\rA})}\nonumber\\
&\leq&
   {\eps} \|y_{n+1}(s)-y_n(s)\|^2_{\mathcal{D}({\rA})}
   +
   \frac{C_m}{{\eps}} \|y_{n}(s)+\zsmall (s)\|_{\rV}\|y_{n}(s)+\zsmall (s)\|_{\mathcal{D}({\rA})}.\nonumber
\end{eqnarray}
Using similar arguments as (\ref{eq Case 02 sec term}),
we have
\begin{eqnarray}\label{eq Case 03 sec term}
&&\hspace{-0.5truecm}\lefteqn{\frac{C_m}{{\eps}} \|y_{n}(s)+\zsmall (s)\|_{\rV}\|y_{n}(s)+\zsmall (s)\|_{\mathcal{D}({\rA})}}\\
&&\hspace{-0.5truecm}\leq
   \frac{C_m}{{\eps}} (\|y_{n-1}(s)\!+\!\zsmall (s)\|_{\rV}\!+\!\|y_{n}(s)\!-\!y_{n-1}(s)\|_{\rV})
   (\|y_{n-1}(s)\!+\!\zsmall (s)\|_{\mathcal{D}({\rA})}\!+\!\|y_{n}(s)\!-\!y_{n-1}(s)\|_{\mathcal{D}({\rA})})\nonumber\\
&&\hspace{-0.5truecm}\leq
   \frac{C_m}{{\eps}}\Big(
            \|y_{n-1}(s)+\zsmall (s)\|_{\mathcal{D}({\rA})}\|y_n-y_{n-1}\|^2_{\Upsilon_s(\rV)}
            +
            \|y_{n}(s)-y_{n-1}(s)\|_{\mathcal{D}({\rA})}\|y_n-y_{n-1}\|_{\Upsilon_s(\rV)}\nonumber\\
            &&\hspace{-1.5truecm}\ \ \ \ \ \ \ \ \ \ \ +
            \|y_{n}(s)-y_{n-1}(s)\|_{\rV}\|y_{n-1}(s)+\zsmall (s)\|_{\mathcal{D}({\rA})}
            +
            \|y_{n}(s)-y_{n-1}(s)\|_{\rV}\|y_{n}(s)-y_{n-1}(s)\|_{\mathcal{D}({\rA})}
           \Big)\nonumber\\
&&\hspace{-0.5truecm}\leq
   2{\eps} \|y_{n}(s)-y_{n-1}(s)\|^2_{\mathcal{D}({\rA})}
   +
   \frac{C_m}{{\eps}^3}\Big(\|y_n-y_{n-1}\|^2_{\Upsilon_s(\rV)}+\|y_{n}(s)-y_{n-1}(s)\|^2_{\rV}\Big)\nonumber\\
   &&\hspace{-0.5truecm}\ \ \ \ +
   \frac{C_m}{{\eps}}\|y_{n-1}(s)+\zsmall (s)\|_{\mathcal{D}({\rA})}\|y_n-y_{n-1}\|^2_{\Upsilon_s(\rV)}
           \nonumber.
\end{eqnarray}
In the second and third ``$\leq$" of (\ref{eq Case 03 sec term}), we have used (\ref{Claim 3 pp}) and
$$
\|y_{n-1}(s)+\zsmall (s)\|_{\rV}\leq\|y_{n-1}+\zsmall \|_{\Upsilon_s(\rV)}\leq m+1\ \text{and} \ \ \|y_{n}(s)-y_{n-1}(s)\|_{\rV}\leq \|y_n-y_{n-1}\|^2_{\Upsilon_s(\rV)}.
$$

 Considering (\ref{Claim 3 pss}), (\ref{eq esta PDE Case 3 01}), and (\ref{eq Case 03 sec term}) together, and using the same idea as in (\ref{Claim 2 PDE}), we deduce
\begin{eqnarray}\label{Claim 3 PDE}
&&\hspace{-1truecm}\int_0^t|K(s)|\mathds{1}_{\{\|y_n+\zsmall \|_{\Upsilon_s(\rV)}> m+2\}}(s)\mathds{1}_{\{\|y_{n-1}+\zsmall \|_{\Upsilon_s(\rV)}\leq m+2\}}(s)\,ds\\
&\leq&
  {\eps}\|y_{n+1}-y_{n}\|^2_{\Upsilon_t(\rV)}
  +
\|y_{n}-y_{n-1}\|^2_{\Upsilon_t(\rV)}\Big(3{\eps}+\frac{C_m}{{\eps}^3}t+\frac{C_m}{{\eps}}t^{\frac{1}{2}}\Big).\nonumber
\end{eqnarray}

We have now finished the estimates for $K(s)$ in the three cases.
\vskip 0.2cm

The statements made in (\ref{Claim 1 PDE}), (\ref{Claim 2 PDE}), and (\ref{Claim 3 PDE}), combined with equality (\ref{App auxi esti PDE 01}), allow us to arrive at the following.
For all ${\eps}>0$ and $t {\in [0,T]}$,
\begin{eqnarray}\label{App auxi esti PDE star 01}
&&\hspace{-1truecm}\lefteqn{  \|y_{n+1}(t)-y_n(t)\|^2_{\rV}+2\int_0^t\|y_{n+1}(s)-y_n(s)\|^2_{\mathcal{D}({\rA})}\,ds}\\
&\leq&
   C_m\|y_{n+1}-y_{n}\|^2_{\Upsilon_t(\rV)}\Big({\eps}+{\eps}^{4/3}+\frac{t}{{\eps}^4}+{\eps}^2+\frac{t}{{\eps}^3}\Big)
   +
   C_m\|y_{n}-y_{n-1}\|^2_{\Upsilon_t(\rV)}\Big({\eps}+\frac{t}{{\eps}^4}+\frac{t^{\frac12}}{{\eps}}+\frac{t}{{\eps}^3}\Big).\nonumber
\end{eqnarray}
Since, {by the definition of the space $\Upsilon_t(\rV)$},
$$
\frac{1}{2}\|y_{n+1}-y_n\|^2_{\Upsilon_t(\rV)}\leq \sup_{s\in[0,t]}\|y_{n+1}(s)-y_n(s)\|^2_{\rV}+\int_0^t\|y_{n+1}(s)-y_n(s)\|^2_{\mathcal{D}({\rA})}\,ds,
$$
we can choose ${\eps}$ and $t_0>0$ small enough, such that for all $n\geq 1$,
 \begin{eqnarray}\label{App auxi esti PDE star 01}
\|y_{n+1}-y_{n}\|^2_{\Upsilon_{t_0}(\rV)}
\leq
   \frac{1}{3}\|y_{n}-y_{n-1}\|^2_{\Upsilon_{t_0}(\rV)}.
\end{eqnarray}
This implies that $\{y_n,n\in\mathbb{N}\}$ is a Cauchy sequence in $C([0,t_0],\rV)\cap L^2([0,t_0],\mathcal{D}({\rA}))$.
Therefore, it has a unique limit in that space which we denote by $\ysmall ^1$. Let us note that it is rather standard (if not obvious) to prove that $\ysmall ^1$ is a solution of (\ref{eq lemma PDE 1 01}) on the time interval $[0,t_0]$.
Observe that the constant $t_0$ does not depend on the initial data.

Next, we consider
\begin{eqnarray}\label{eq lemma PDE 1 02}
&& \ysmall ^\prime(t)+{\rA}\ysmall (t)+\theta_m(\|\ysmall +\zsmall \|_{\Upsilon_t(\rV)})\rB(\ysmall (t)+\zsmall (t))=f(t),\ t>t_0,\nonumber\\
&& \ysmall (t)=\ysmall ^1(t),\ t\in[0,t_0].
\end{eqnarray}
 Repeating the above arguments, we can solve
(\ref{eq lemma PDE 1 02}) on interval $[0,2t_0]$, and denote its solution by $\ysmall ^2:=\{\ysmall ^2(t),\ t\in[0,2t_0]\}$. It is not difficult to prove that
 $\ysmall ^2$ is a solution of (\ref{eq lemma PDE 1 01}) on the time interval $[0, 2t_0]$.
Then, by induction, we can solve (\ref{eq lemma PDE 1 01})
on $[0,3t_0]$, $[0,4t_0]$, and so on. We finally
obtain a solution $\ysmall \in C([0,T],\rV)\cap L^2([0,T],\mathcal{D}({\rA}))$ of (\ref{eq lemma PDE 1 01}){ for any fixed $T>0$}.
The proof of Lemma \ref{lem PDE 1} is complete.

\end{proof}

\vskip 0.1cm

Although the uniqueness of solutions to equation \eqref{eq lemma PDE 1 01} follows from the existence proof, for completeness' sake,  we give an independent proof of this property (see also Corollary \ref{cor PDE 2}). The following lemma is a preliminary step in this direction.
It will also be used later.

Let us recall that the space $\Lambda_T(\rV)$ (and its norm) was defined around equality \eqref{eqn-Lambda_T-V}.

\begin{lemma}\label{lem PDE 2} {Assume that $n\in\mathbb{N}$ and $T>0$.}
Assume that {for all $u_0\in \rV$, $f\in L^2([0,T];\rH)$ and } $y\in \Lambda_T(\rV)$, there exists an element $u=\Phi^y\in \Lambda_T(\rV)$ satisfying
\begin{eqnarray}\label{eq SNS PDE jieduan 2}
&& du(t)+{\rA}u(t)\,dt+\theta_n(\|u\|_{\Upsilon_t(\rV)})\rB(u(t))\,dt=f(t)\,dt + \int_{{\rZ }}G(y(t-),z)\widetilde{\eta}(dz,dt),\nonumber\\
&& u(0)=u_0.
\end{eqnarray}
Then there exist a positive constant $C$ and a function $L_{n}: (0,\infty) \to (0,\infty) $ such that $\lim_{T\to 0}L_{n}(T)=1$ and
\begin{eqnarray}\label{eq BPFT PDE 01}
\|\Phi^{y_1}-\Phi^{y_2}\|_{\Lambda_T(\rV)}^2\leq CL^2_{n}(T)T\|y_1-y_2\|^2_{\Lambda_T(\rV)},\ \ \ \forall y_1, y_2\in \Lambda_T(\rV).
\end{eqnarray}

\end{lemma}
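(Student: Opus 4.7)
The plan is to adapt the strategy of Lemma \ref{lem 2} to the more regular $\rV$-valued setting, where the natural energy identity comes from testing against $\rA M$ rather than $M$. Set $u_i=\Phi^{y_i}$, $i=1,2$, and $M=u_1-u_2$. Applying the It\^o formula for $\|\cdot\|_{\rV}^2=\langle \rA\cdot,\cdot\rangle_{\rH}$ (as in \cite{GK_1982}/\cite{BHZ}) to \eqref{eq SNS PDE jieduan 2} yields, $\PP$-a.s.,
\begin{eqnarray*}
\|M(t)\|_{\rV}^2 + 2\int_0^t\|M(s)\|_{\mathcal{D}(\rA)}^2\,ds
&=& -2\int_0^t \Big\langle \theta_n(\|u_1\|_{\Upsilon^V_s})B(u_1(s))-\theta_n(\|u_2\|_{\Upsilon^V_s})B(u_2(s)),\rA M(s)\Big\rangle_{\rH}ds\\
&&+\,2\int_0^t\!\int_{\rZ}\langle G(y_1(s-),z)-G(y_2(s-),z),M(s-)\rangle_{\rV}\widetilde\eta(dz,ds)\\
&&+\int_0^t\!\int_{\rZ}\|G(y_1(s-),z)-G(y_2(s-),z)\|_{\rV}^2\,\eta(dz,ds)\\
&=:& J_1(t)+J_2(t)+J_3(t).
\end{eqnarray*}

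The core of the argument is the pointwise control of the drift integrand $K(s):=\langle\theta_n(\|u_1\|_{\Upsilon^V_s})B(u_1(s))-\theta_n(\|u_2\|_{\Upsilon^V_s})B(u_2(s)),\rA M(s)\rangle_{\rH}$, following the four-case split already used in Lemma \ref{lem 2} (and the three-case analysis inside the proof of Lemma \ref{lem PDE 1}): the cases $\|u_1\|_{\Upsilon^V_s}\vee\|u_2\|_{\Upsilon^V_s}\leq n+1$, only one exceeds $n+1$, and both do (where $K(s)=0$). Using Lemma \ref{lem B baisc prop}, Lipschitz continuity of $\theta_n$, Young's inequality, and the bounds $\|u_i\|_{\Upsilon^V_s}\leq n+1$ on the relevant support, one obtains for any $\eps>0$ an estimate of the form
\[
|K(s)|\leq \eps\,\|M(s)\|_{\mathcal{D}(\rA)}^2+C_n\bigl(\|u_1(s)\|_{\mathcal{D}(\rA)}+\|u_2(s)\|_{\mathcal{D}(\rA)}+1\bigr)\|M\|_{\Upsilon^V_s}^2,
\]
on the set where $\theta_n(\|u_i\|_{\Upsilon^V_s})\neq 0$. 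Integrating in $s$ and using $\int_0^T(\|u_i(s)\|_{\mathcal{D}(\rA)}+1)\,ds \leq (n+2)T^{1/2}$ on the support of the cutoff gives
\[
\int_0^T|K(s)|\,ds\leq 2\eps\int_0^T\|M(s)\|_{\mathcal{D}(\rA)}^2\,ds + C_n\bigl(T+T^{1/2}\bigr)\|M\|_{\Upsilon^V_T}^2 .
\]

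For $J_2$ and $J_3$, the Burkholder--Davis--Gundy inequality together with \textbf{(G-V1)} give, in the standard way,
\[
\EE\!\left(\sup_{t\in[0,T]}|J_2(t)|\right)\leq \tfrac{1}{4}\|M\|_{\Lambda_T(\rV)}^2+CT\|y_1-y_2\|_{\Lambda_T(\rV)}^2, \qquad \EE(J_3(T))\leq CT\|y_1-y_2\|_{\Lambda_T(\rV)}^2.
\]
Choosing $\eps$ small and taking $\sup_{t\in[0,T]}$ then expectation, one absorbs the $\|M\|_{\mathcal{D}(\rA)}^2$ term into the dissipation and the $\|M\|_{\Upsilon^V_T}^2$ factor (multiplied by the small prefactor $C_n(T+T^{1/2})$) into $\tfrac{1}{2}\|M\|_{\Lambda_T(\rV)}^2$ provided $T$ is small. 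Collecting, one arrives at
\[
\|M\|_{\Lambda_T(\rV)}^2\leq \frac{1}{1-C_n(T+T^{1/2})}\cdot C\,T\,\|y_1-y_2\|_{\Lambda_T(\rV)}^2,
\]
which is the announced bound with $L_n(T)^2:=[1-C_n(T+T^{1/2})]^{-1}\to 1$ as $T\to 0$.

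The main obstacle is the last step: the coefficient in front of $\|y_1-y_2\|_{\Lambda_T(\rV)}^2$ must be $T\cdot L_n(T)^2$ with $L_n(T)\to 1$, not merely a generic $C_nT$ as in the $\rH$-case of Lemma \ref{lem 2}. This rules out any use of Gronwall's lemma that would produce a factor like $e^{C_n(n+1)^2}$. Consequently, all nonlinear contributions that cannot be absorbed by the dissipation $2\int_0^T\|M\|_{\mathcal{D}(\rA)}^2\,ds$ must come with a factor vanishing with $T$ (such as $T^{1/2}$ originating from Cauchy--Schwarz on the cutoff-bounded integral of $\|u_i\|_{\mathcal{D}(\rA)}$). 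Taking care to preserve this structure throughout the case analysis, instead of summarily bounding by the cutoff constant, is exactly what makes the argument more delicate than in the $\rH$-setting, and is precisely the ingredient needed later to run a Banach fixed point iteration on arbitrarily large intervals by splicing.
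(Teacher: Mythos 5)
Most of your argument coincides with the paper's: the It\^o formula for $\|\cdot\|^2_{\rV}$ yielding the same decomposition $J_1+J_2+J_3$, the Cauchy--Schwarz step reducing $J_1$ to a bound on $\|\theta_n(\|u_1\|_{\Upsilon^V_s})B(u_1(s))-\theta_n(\|u_2\|_{\Upsilon^V_s})B(u_2(s))\|^2_{\rH}$, the four-case analysis on the cutoffs, and the observation that on the support of the cutoff the coefficients integrate to quantities carrying positive powers of $T$. The gap is in your final step and in the reason you give for it. You reject Gronwall on the grounds that it "would produce a factor like $e^{C_n(n+1)^2}$", but that is not what happens here: precisely because the coefficient $\Xi(s)$ in the Gronwall inequality (see \eqref{eq1 K}--\eqref{eq1 local 01}) is multiplied by indicators of the sets $\{\|u_i\|_{\Upsilon^V_s}\leq n+1\}$, one has $\int_0^T\Xi(s)\,ds\leq C\bigl(n^4T+n^4T^{1/2}+n^2T^{1/2}\bigr)$, so Gronwall gives the factor $L_n(T)=e^{C(n^4T+n^4T^{1/2}+n^2T^{1/2})}$ of \eqref{eqn-L_n(T)}, which is finite for \emph{every} $T>0$ and tends to $1$ as $T\to0$ --- exactly the structure the lemma asserts. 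This is the paper's route.

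Your replacement, absorbing $C_n(T+T^{1/2})\|M\|^2_{\Upsilon^V_T}$ into the left-hand side, only works when $C_n(T+T^{1/2})<1$; for the arbitrary fixed $T>0$ in the statement of Lemma \ref{lem PDE 2} your $L_n(T)^2=[1-C_n(T+T^{1/2})]^{-1}$ is negative or undefined and your argument produces no estimate at all, so \eqref{eq BPFT PDE 01} is not proved as stated. Nor can the small-$T$ bound be upgraded to general $T$ by splitting $[0,T]$ into short subintervals: on later subintervals $\Phi^{y_1}$ and $\Phi^{y_2}$ no longer share the same initial datum, and your estimate does not track the dependence on initial data, so the pieces do not concatenate. (It is true that the later fixed-point construction only uses the small-$T$ contraction, but the lemma claims the inequality for every $T>0$, and the Gronwall step is what delivers it.) The fix is simply to keep your case analysis and then run Gronwall on $\Theta(t)=\sup_{s\le t}\|M(s)\|^2_{\rV}+\int_0^t\|M(s)\|^2_{\mathcal{D}({\rA})}\,ds$ as in \eqref{eq1 local 02}, before applying the Burkholder--Davis--Gundy and Lipschitz estimates to $J_2$, $J_3$.
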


\begin{proof}[Proof of Lemma \ref{lem PDE 2}] Choose and fix $n\in\mathbb{N}$ and $T>0$.
Assume that  $u_0\in \rV$, $f\in L^2([0,T];\rH)$ and  $y_1,y_2\in \Lambda_T(\rV)$.

For simplicity, let us set $u_1=\Phi^{y_1}$, $u_2=\Phi^{y_2}$, and $\usmall =u_1-u_2$.
By {the It\^o} formula, we have
\begin{eqnarray}\label{eq1 Z 0}
&&\hspace{-1truecm}\|\usmall (t)\|^2_{\rV}+2\int_0^t\|\usmall (s)\|^2_{\mathcal{D}({\rA})}\,ds\nonumber\\
&=&
 -2\int_0^t\Big\langle \theta_n(\|u_1\|_{\Upsilon_s(\rV)}) \rB(u_1(s))-\theta_n(\|u_2\|_{\Upsilon_s(\rV)}) \rB(u_2(s)),{\rA}\usmall (s)\Big\rangle_{\rH}\,ds\nonumber\\
 &&+
 2\int_0^t\int_{{\rZ}}\Big\langle G(y_1(s-),z)-G(y_2(s-),z),\usmall (s-)\Big\rangle_{\rV} \widetilde{\eta}(dz,ds)\nonumber\\
 &&+
 \int_0^t\int_{{\rZ}}\|G(y_1(s-),z)-G(y_2(s-),z)\|^2_{\rV} \eta(dz,ds)\nonumber\\
&=:&
 J_1(t)+J_2(t)+ J_3(t).
\end{eqnarray}

For the first term, $J_1$, we have
\begin{equation}\label{eq1 J1}|J_1(t)|
\leq
 \frac{1}{2} \int_0^t\|\usmall (s)\|^2_{\mathcal{D}({\rA})}\,ds
 +
 2 \int_0^t\,K(s)ds
\end{equation}
where
\begin{equation*}\label{eqn-K(s)}
K(s):=\Big|\theta_n(\|u_1\|_{\Upsilon_s(\rV)}) \rB(u_1(s))-\theta_n(\|u_2\|_{\Upsilon_s(\rV)})\rB(u_2(s))\Big|^2_{\rH},\;\;\; s\in [0,T].
\end{equation*}

To find suitable bounds on $K$, we consider four cases.
By the property  of $\theta_n$ (see (\ref{eqn-theta_m})) and the Minkowski inequality, we have the following estimates.
\begin{itemize}
\item[(\textbf{1})] For $\|u_1\|_{\Upsilon_s(\rV)}\vee\|u_2\|_{\Upsilon_s(\rV)}\leq n+1$, we have
  \begin{eqnarray*}
&&\hspace{-1.6truecm}K(s)
   \leq
      C \Big[
        |\rB(u_1(s))-\rB(u_2(s))|^2_{\rH}
      +
       \Big|\theta_n(\|u_1\|_{\Upsilon_s(\rV)})-\theta_n(\|u_2\|_{\Upsilon_s(\rV)})\Big|^2|\rB(u_2(s))|^2_{\rH}
       \Big]\\
   &&\hspace{-0.6truecm}\leq
     C \Big[
       |u_1(s)|_{\rH}\|u_1(s)\|_{\rV}\|u(s)\|_{\rV}\|u(s)\|_{\mathcal{D}({\rA})}\!+\!\|u_2(s)\|_{\rV}\|u_2(s)\|_{\mathcal{D}({\rA})}|u(s)|_{\rH}\|u(s)\|_{\rV}
       \Big]\\
       &&\hspace{-0.6truecm}\ \ \ +
     C|u_2(s)|_{\rH}\|u_2(s)\|^2_{\rV}\|u_2(s)\|_{\mathcal{D}({\rA})}\|u\|^2_{\Upsilon_s(\rV)}\\
   &&\hspace{-0.6truecm}\leq
     \frac{1}{4} \|u(s)\|^2_{\mathcal{D}({\rA})}\\
     &&\hspace{-0.6truecm}\ \ \ +
     C\|u\|^2_{\Upsilon_s(\rV)}
       \Big[
       |u_1(s)|^2_{\rH}\|u_1(s)\|^2_{\rV}+\|u_2(s)\|_{\rV}\|u_2(s)\|_{\mathcal{D}({\rA})}+\|u_2(s)\|^3_{\rV}\|u_2(s)\|_{\mathcal{D}({\rA})}
       \Big].
  \end{eqnarray*}

\item[(\textbf{2})] For $\|u_1\|_{\Upsilon_s(\rV)}\leq n+1$ and $\|u_2\|_{\Upsilon_s(\rV)}\geq n+1$, we have
  \begin{eqnarray*}
      K(s)\!\!\!&=&\!\!\!
          \Big|\theta_n(\|u_1\|_{\Upsilon_s(\rV)}) \rB(u_1(s))\Big|^2_{\rH}\\
        \!\!\!&=&\!\!\!
          \Big|\theta_n(\|u_1\|_{\Upsilon_s(\rV)})-\theta_n(\|u_2\|_{\Upsilon_s(\rV)})\Big|^2| \rB(u_1(s))|^2_{\rH}\\
        &\leq&\!\!\!
          C|u_1(s)|_{\rH}\|u_1(s)\|^2_{\rV}\|u_1(s)\|_{\mathcal{D}({\rA})}\|u\|^2_{\Upsilon_s(\rV)}.
  \end{eqnarray*}

\item[(\textbf{3})] For $\|u_1\|_{\Upsilon_s(\rV)}\geq n+1$ and $\|u_2\|_{\Upsilon_s(\rV)}\leq n+1$, similar to Case (2), we get
  \begin{eqnarray*}
      K(s)
        \leq
          C|u_2(s)|_{\rH}\|u_2(s)\|^2_{\rV}\|u_2(s)\|_{\mathcal{D}({\rA})}\|u\|^2_{\Upsilon_s(\rV)}.
  \end{eqnarray*}

\item[(\textbf{4})] For $\|u_1\|_{\Upsilon_s(\rV)}\wedge \|u_2\|_{\Upsilon_s(\rV)}\geq n+1$, we have
  \begin{eqnarray*}
      K(s)=0.
  \end{eqnarray*}

\end{itemize}
Hence,
\begin{eqnarray}\label{eq1 K}
  K(s)
\leq
  \frac{1}{4} \|u(s)\|^2_{\mathcal{D}({\rA})}
     +
     C\|u\|^2_{\Upsilon_s(\rV)}\Xi(s), \;\;\; s\in [0,T],
\end{eqnarray}
where
\begin{eqnarray*}
\Xi(s)\!\!\!&:=&\!\!\!\Big(|u_1(s)|^2_{\rH}\|u_1(s)\|^2_{\rV}+\|u_1(s)\|^3_{\rV}\|u_1(s)\|_{\mathcal{D}({\rA})}\Big)
\mathds{1}_{[0,n+1]}(\|u_1\|_{\Upsilon_s(\rV)})\nonumber\\
       &&\hspace{-0.2truecm}+
       \Big(\|u_2(s)\|_{\rV}\|u_2(s)\|_{\mathcal{D}({\rA})}+\|u_2(s)\|^3_{\rV}\|u_2(s)\|_{\mathcal{D}({\rA})}\Big) \mathds{1}_{[0,n+1]}(\|u_2\|_{\Upsilon_s(\rV)}), \;\;\; s\in [0,T].
\end{eqnarray*}
Set
$$
\Theta(t):=\sup_{s\in[0,t]}\|u(t)\|^2_{\rV}+\int_0^t\|u(s)\|^2_{\mathcal{D}({\rA})}\,ds.
$$
Substituting (\ref{eq1 K}) into (\ref{eq1 J1}), and then into (\ref{eq1 Z 0}), and noting that
$$
\|u\|^2_{\Upsilon_s(\rV)}\leq 2 \Theta(s), \;\;\; s\in [0,T],
$$
we deduce that
\begin{eqnarray}\label{eq1 local 01}
  \Theta(t)
\leq
  C\int_0^t\Theta(s)\Xi(s)
    ds
    +
    \sup_{s\in[0,t]}|J_2(s)|
    +
    J_3(t), \;\;\; t\in [0,T].
\end{eqnarray}
Then, Gronwall's lemma implies that
\begin{eqnarray}\label{eq1 local 02}
  \Theta(T)
&\leq&
  \Big(\sup_{t\in[0,T]}|J_2(t)|
    +
    J_3(T)
  \Big)
  \cdot
  e^{C\int_0^T\Xi(s)
    ds}\nonumber\\
&\leq&
  \Big(\sup_{t\in[0,T]}|J_2(t)|
    +
    J_3(T)
  \Big)\times e^{C\Big(n^4T+n^4 T^{\frac{1}{2}}+n^2 T^{\frac{1}{2}}\Big)}.
\end{eqnarray}
Set
\begin{equation}\label{eqn-L_n(T)}
L_{n}(T)=e^{C\Big(n^4T+n^4 T^{\frac{1}{2}}+n^2 T^{\frac{1}{2}}\Big)}.
\end{equation}
By the Burkholder-Davis-Gundy inequality and Assumption \ref{con1 G}, we have
\begin{eqnarray}\label{eq1 J2}
  L_{n}(T)\mathbb{E}\Big(\sup_{t\in[0,T]}|J_2(t)|\Big)
\leq
  \frac{1}{2}\|u\|^2_{\Lambda_T(\rV)}
  +
  CL_{n}(T)^2T\|y_1-y_2\|^2_{\Lambda_T(\rV)}.
\end{eqnarray}
and
\begin{eqnarray}\label{eq11 J3}
  \mathbb{E}\Big(J_3(T)\Big)
\leq
  CT\|y_1-y_2\|^2_{\Lambda_T(\rV)}.
\end{eqnarray}
Summing up (\ref{eq1 local 02}), (\ref{eq1 J2}), and (\ref{eq11 J3}), we have
\begin{eqnarray*}
\|u\|^2_{\Lambda_T(\rV)}\leq CL^2_{n}(T) T \|y_1-y_2\|^2_{\Lambda_T(\rV)}.
\end{eqnarray*}

Notice that in view of the definition \eqref{eqn-L_n(T)}, $L_{n}(T)\to 1$ as $T\to 0$.

The proof of Lemma \ref{lem PDE 2} is thus complete.

\end{proof}

Using arguments similar to those for Corollary \ref{cor 1}, by Lemma \ref{lem PDE 2}, we have
\begin{corollary}\label{cor PDE 2}
Under the same assumptions as in Lemma \ref{lem PDE 1}, the solution of Problem (\ref{eq lemma PDE 1 01}) is unique.
\end{corollary}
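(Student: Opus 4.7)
The plan is to mirror the argument used for Corollary \ref{cor 1}, reducing the uniqueness for the deterministic equation \eqref{eq lemma PDE 1 01} to the (deterministic content of the) stability estimate already proved in Lemma \ref{lem PDE 2}. The key observation is that Lemma \ref{lem PDE 2} was derived from an It\^o-type identity whose only stochastic contributions are the terms $J_2$ and $J_3$; when there is no noise, those terms vanish and the estimate becomes a purely deterministic inequality of exactly the form needed for uniqueness.

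Concretely, suppose $\ysmall _1$ and $\ysmall _2$ are two elements of $C([0,T],\rV)\cap L^2([0,T],\mathcal{D}(\rA))$, both solving \eqref{eq lemma PDE 1 01} with the same data $u_0, f, \zsmall , m, T$. Setting $u_i=\ysmall _i+\zsmall $ for $i=1,2$, the translated pair $(u_1,u_2)$ satisfies, in $\rV^\prime$,
\[
u_i^\prime(t)+\rA u_i(t)+\theta_m(\|u_i\|_{\Upsilon^V_t})B(u_i(t))=f(t)+\rA \zsmall (t)+\bigl(\theta_m(\|u_i\|_{\Upsilon^V_t})-1\bigr)\rA \zsmall (t),
\]
but what matters for subtraction is only the nonlinear term. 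By the classical chain rule in the Gelfand triple $\mathcal{D}(\rA)\hookrightarrow \rV\hookrightarrow \rH$ (see e.g.\ \cite[Lemma III.1.2]{Temam_2001}), writing $w=\ysmall _1-\ysmall _2=u_1-u_2$ we obtain
\begin{align*}
\|w(t)\|^2_{\rV}+2\int_0^t\|w(s)\|^2_{\mathcal{D}(\rA)}\,ds
=-2\int_0^t\!\Bigl\langle \theta_m(\|u_1\|_{\Upsilon^V_s})B(u_1(s))-\theta_m(\|u_2\|_{\Upsilon^V_s})B(u_2(s)),\rA w(s)\Bigr\rangle_{\rH}ds.
\end{align*}
This is exactly identity \eqref{eq1 Z 0} with $J_2\equiv 0$ and $J_3\equiv 0$, i.e.\ with $G\equiv 0$.

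From this point the argument is a verbatim repetition of the proof of Lemma \ref{lem PDE 2}: bounding the right-hand side by the four-case analysis of $K(s)$ in \eqref{eq1 K} absorbs $\tfrac14\|w(s)\|^2_{\mathcal{D}(\rA)}$ on the left, and the Gronwall Lemma applied with the function $\Xi(s)$ (which is integrable on $[0,T]$ because $\|u_i\|_{\Upsilon^V_s}$ is controlled by $m+1$ on the support of the indicator) yields
\[
\sup_{t\in[0,T]}\|w(t)\|^2_{\rV}+\int_0^T\|w(s)\|^2_{\mathcal{D}(\rA)}\,ds\leq 0\cdot e^{C_{m,T}}=0.
\]
Hence $w\equiv 0$, i.e.\ $\ysmall _1=\ysmall _2$, which proves the corollary.

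The only point that requires care is that Lemma \ref{lem PDE 2} is formulated for $\Lambda_T(\rV)$-valued processes $y_1,y_2$ and relies on taking expectations in order to treat $J_2$; here, since $J_2=J_3=0$, the entire argument is deterministic and we do not need to pass to expectations at all. In particular, the factor $L_n(T)$ appearing in Lemma \ref{lem PDE 2} and its small-$T$ behaviour play no role: we only need the absorption of $\tfrac14\|w\|^2_{\mathcal{D}(\rA)}$ and the deterministic Gronwall step. I do not expect any genuine obstacle; the whole proof is two or three lines once the substitution $u_i=\ysmall _i+\zsmall $ is made and one observes that the nonlinear estimate from Lemma \ref{lem PDE 2} applies pointwise in $\omega$ (trivially, since there is no $\omega$).
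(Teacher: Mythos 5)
Your proof is correct and takes essentially the same route as the paper: subtract the two equations, use the energy identity in $\rV$ (i.e.\ \eqref{eq1 Z 0} with $G\equiv 0$, so $J_2=J_3=0$), and run the four-case bound on $K(s)$ together with the deterministic Gronwall step from Lemma \ref{lem PDE 2}, exactly as the paper reduces Corollary \ref{cor PDE 2} to Lemma \ref{lem PDE 2} in analogy with Corollary \ref{cor 1}. The only blemish is your displayed ``translated'' equation for $u_i=\ysmall_i+\zsmall$ (it omits $\zsmall'$ and inserts a spurious $(\theta_m-1)\rA\zsmall$ term), but this is harmless since the identity you actually use for $w=\ysmall_1-\ysmall_2$ follows directly from subtracting the two equations for $\ysmall_1,\ysmall_2$, where all $i$-independent terms cancel.
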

\vskip 0.2cm

\noindent

Now we are in a position to prove Theorem \ref{thm-initial data in V}. But before we do so, for the benefit of a reader, let us make the following remark.
Conditions  (2.15) and (2.16), { the auxiliary problems (3.4)\footnote{Let us remark that (3.4) in \cite{BHR15} should read as follows
$$
du_n(t)+\rA u_n(t)dt=-(\rm B_n^Tv)(t)dt+\int_\rZ G(z,v(t-))\widetilde{\eta}(dz,dt).
$$}
and (3.13) and   their proofs of the existence and uniqueness of solutions in \cite{BHR15},}  correspond to properties (\ref{eqn-theta_m}), {the auxiliary problems (\ref{eq1 SNS jieduan 2})
 and (\ref{eq1 SNS jieduan 1}) and their proofs of the existence and uniqueness of solutions}  in the present paper.

\begin{proof}[\bf Proof Theorem \ref{thm-initial data in V}]

We also use the Banach fixed point theorem to give this proof in three steps.
\vskip 0.3cm

\noindent
{\bf Step 1: Local existence}. Consider the following auxiliary problem
\begin{eqnarray}\label{eq1 SNS jieduan 1}
&& du_n(t)+{\rA}u_n(t)\,dt+\theta_n(\|u_n\|_{\Upsilon_t(\rV)})\rB(u_n(t))\,dt=f(t)\,dt+ \int_{{\rZ}}G(u_n(t-),z)\widetilde{\eta}(dz,dt),\nonumber\\
&& u_n(0)=u_0.
\end{eqnarray}

Choose and fix $T>0$. For any $y\in \Lambda_T(\rV)$, there exists a unique element $u=\Phi^y$ such that $u \in D([0,T],\rV)\cap L^2([0,T],\mathcal{D}({\rA}))=\Upsilon_T(\rV)$, $\mathbb{P}$-a.s. and
\begin{eqnarray}\label{eq1 SNS jieduan 2}
&& du(t)+{\rA}u(t)\,dt+\theta_n(\|u\|_{\Upsilon_t(\rV)})\rB(u(t))\,dt=f(t)\,dt + \int_{{\rZ}}G(y(t-),z)\widetilde{\eta}(dz,dt),\nonumber\\
&& u(0)=u_0.
\end{eqnarray}

This result can be seen as follows. It is known that there exists a unique $\mathbb{F}$-{progressively measurable} process $\zbig\in \Upsilon_T(\rV)$ satisfying the following stochastic Langevin equation
\begin{eqnarray}\label{eq1 OU}
&&d\zbig (t)+{\rA}\zbig (t)\,dt=\int_{{\rZ}}G(y(t-),z)\widetilde{\eta}(dz,dt),\\
&&\zbig(0)=0.\nonumber
\end{eqnarray}
Moreover, this process, called an Ornstein-Uhlenbeck process, satisfies
\begin{eqnarray*}
\mathbb{E}\Big(\sup_{t\in[0,T]}\|\zbig (t)\|^2_{\rV}\Big)
+
\mathbb{E}\Big(\int_0^T\|\zbig(t)\|^2_{\mathcal{D}({\rA})}\,dt\Big)
\leq
CT\Big(\mathbb{E}\Big(\sup_{t\in[0,T]}\|y(t)\|^2_{\rV}\Big)+1\Big).
\end{eqnarray*}

For any $\omega\in \Omega$, consider the following deterministic PDE:
\begin{eqnarray*}
&& d\xsmall(t)+{\rA}\xsmall(t)+\theta_n(\|\xsmall+\zbig\|_{\Upsilon_t(\rV)})\rB(\xsmall(t)+\zbig(t))\,dt=f(t)\,dt,\nonumber\\
&& \xsmall(0)=u_0.
\end{eqnarray*}
By Lemma \ref{lem PDE 1} and Corollary \ref{cor PDE 2},
this PDE has a unique solution $\xsmall\in \Upsilon_T(\rV)$.
One can show that a process $u$ defined by $u=\xsmall+\zbig$ is a solution to (\ref{eq1 SNS jieduan 2}).
For the uniqueness, we refer to Lemma \ref{lem PDE 2}.

\vskip 0.3cm

Now we prove that $u \in \Lambda_T(\rV)$.
Applying {the It\^o} formula,
\begin{eqnarray}\label{eq1 Pn 01}
&& \hspace{-1.4truecm}\|u(t)\|^2_{\rV} + 2\int_0^t\|u(s)\|^2_{\mathcal{D}({\rA})}\,ds\nonumber\\
&=&
  \|u_0\|^2_{\rV} - 2\int_0^t\Big\langle\theta_n(\|u\|_{\Upsilon_s(\rV)})\rB(u(s)),{\rA}u(s)\Big\rangle_{\rH}\,ds\nonumber\\
  &&+
  2\int_0^t\langle f(s),{\rA}u(s)\rangle_{\rH}\,ds
  +
  2 \int_0^t\int_{{\rZ}}\Big\langle G(y(s-),z),u(s)\Big\rangle_{\rV}\widetilde{\eta}(dz,ds)\nonumber\\
  &&+
  \int_0^t\int_{{\rZ}}\|G(y(s-),z)\|^2_{\rV}\eta(dz,ds)\nonumber\\
&=& \sum_{i=1}^5 J_i(t).
\end{eqnarray}
By Lemma \ref{lem B baisc prop}, we have
\begin{eqnarray}\label{eq1 J2 01 }
   |J_2(t)|
 &\leq&
   \frac{1}{2}\int_0^t\|u(s)\|^2_{\mathcal{D}({\rA})}\,ds + 2\int_0^t|\theta_n(\|u\|_{\Upsilon_s(\rV)})\rB(u(s))|^2_{\rH}\,ds \nonumber\\
 &\leq&
   \frac{1}{2}\int_0^t\|u(s)\|^2_{\mathcal{D}({\rA})}\,ds
   +
   C\int_0^t \theta^2_n(\|u\|_{\Upsilon_s(\rV)}) |u(s)|_{\rH}\|u(s)\|_{\rV}^2\|u(s)\|_{\mathcal{D}({\rA})}\,ds\nonumber\\
 &\leq&
   \frac{1}{2}\int_0^t\|u(s)\|^2_{\mathcal{D}({\rA})}\,ds
   +
   Cn^4t^{\frac{1}{2}}.
\end{eqnarray}

It is easy to see
\begin{eqnarray}\label{eq1 J3}
  J_3(t)
\leq
  \frac{1}{2}\int_0^t\|u(s)\|^2_{\mathcal{D}({\rA})}\,ds
   +
  2\int_0^t|f(s)|^2_{\rH}\,ds,\;\; t\in [0,T],
\end{eqnarray}
and
\begin{eqnarray}\label{eq1 J5}
  \mathbb{E}(J_5(T))
\leq
  CT\Big(1+\mathbb{E}\Big(\sup_{s\in[0,T]}\|y(s)\|^2_{\rV}\Big)\Big).
\end{eqnarray}
By the Burkholder-Davis-Gundy inequality, we get
\begin{eqnarray}\label{eq1 J4}
  \mathbb{E}\Big(\sup_{t\in[0,T]}|J_4(t)|\Big)
\leq
  \frac{1}{2}\mathbb{E}\Big(\sup_{t\in[0,T]}\|u(t)\|^2_{\rV}\Big)
  +
  CT\Big(1+\mathbb{E}\Big(\sup_{s\in[0,T]}\|y(s)\|^2_{\rV}\Big)\Big).
\end{eqnarray}
Combining (\ref{eq1 Pn 01})--(\ref{eq1 J4}), we have
\begin{eqnarray*}
  \mathbb{E}\Big(\sup_{t\in[0,T]}\|u(t)\|^2_{\rV}\Big) + \mathbb{E}\Big(\int_0^T\|u(s)\|^2_{\mathcal{D}({\rA})}\,ds\Big)
\leq
  C_{n,T}\Big(\|u_0\|^2_{\rV}+1+\mathbb{E}\Big(\sup_{s\in[0,T]}\|y(s)\|^2_{\rV}\Big)\Big).
\end{eqnarray*}
We have obtained that $u \in \Lambda_T(\rV)$, and this implies that
$$
\Phi^{\cdot}: \Lambda_T(\rV)\to\Lambda_T(\rV)
$$
is well-defined.

\vskip 0.3cm
By Lemma \ref{lem PDE 2}, there exist a positive constant $C$ and $L_{n}(T)\to 1$ as $T\to 0$ such that
\begin{eqnarray}
\|\Phi^{y_1}-\Phi^{y_2}\|_{\Lambda_T(\rV)}^2\leq CL^2_{n}(T)T\|y_1-y_2\|^2_{\Lambda_T(\rV)},\ \ \ \forall y_1, y_2\in \Lambda_T(\rV).
\end{eqnarray}

 Using arguments similar to the proof of Theorem \ref{thm-initial data in H}, we can construct a
 unique element $u_n\in\Lambda_T(\rV)$ for any $T>0$ such that $u_n$ is a solution of (\ref{eq1 SNS jieduan 1}). However, we do not know whether
 $u_n$ is the unique solution of (\ref{eq1 SNS jieduan 1}).

{Define} a stopping time $\tau_n$ by  \begin{equation}\label{eqn-tau_n}
                                              \tau_n=\inf\{t\geq0:\,\|u_n\|_{\Upsilon_t(\rV)}> n\}.
                                            \end{equation}
By the definition of $\theta_n$, we have  $\theta_n(\|u_n\|_{\Upsilon_t(\rV)})=1$
for any $t\in[0,\tau_n)$, hence $\{u_n(t),\, t\in[0,\tau_n)\}$ is a local solution of Problem (\ref{eq V 00}).

\vskip 0.3cm

\noindent
\textbf{Step 2: Local uniqueness.} We need a proof of the uniqueness not relying on the uniqueness from Theorem \ref{thm-initial data in H}; see Remark \ref{rem-uniqueness in V}.

\vskip 0.2cm
Assume that
$(U_1(t)$, $t\in[0,\sigma_1))$
and
$(U_2(t)$, $t\in[0,\sigma_2))$
are two local solutions of (\ref{eq V 00}).
{Choose and fix $R>0$.}
Define
\begin{align*}
\sigma^i_{R}&=\inf\{t>0,\ \|U_i\|_{\Upsilon_t(\rV)}> R\}\wedge\sigma_i,\ \ i=1,2.
\\
\sigma&=\sigma_1\wedge\sigma_2,\ \ \ \ \sigma_{R}=\sigma^1_{R}\wedge\sigma^2_{R}.
\end{align*}
It is known that $\sigma_i,\ \sigma_R^i,\ i=1,2$, $\sigma$, and $\sigma_R$ are stopping times.

Now we prove that
\begin{eqnarray}\label{eq1 unique 00}
U_1=U_2,\ \ \text{on } [0,\sigma).
\end{eqnarray}
\vskip 0.3cm

Let $M(t)=U_1(t)-U_2(t)$. By {the It\^o} formula,
\begin{eqnarray}\label{eq1 unique 01}
&&  \hspace{-1.4truecm}\|M(t)\|^2_{\rV} + 2\int_0^t\|M(s)\|^2_{\mathcal{D}({\rA})}\,ds\nonumber\\
&=&
  -2\int_0^t\langle \rB(U_1(s))-\rB(U_2(s)), {\rA}M(s)\rangle_{\rH}\,ds\nonumber\\
  &&+
  2\int_0^t\int_{{\rZ}}\langle G(U_1(s-),z)-G(U_2(s-),z),M(s-)\rangle_{\rV}\widetilde{\eta}(dz,ds)\nonumber\\
  &&+
  \int_0^t\int_{{\rZ}}\|G(U_1(s-),z)-G(U_2(s-),z)\|^2_{\rV}\eta(dz,ds)\nonumber\\
&=&
  I_1(t)+I_2(t)+I_3(t).
\end{eqnarray}
By Lemma \ref{lem B baisc prop},
\begin{eqnarray}\label{eq1 I1}
  |I_1(t)|
&\leq&
  \frac{1}{2}\int_0^t\|M(s)\|^2_{\mathcal{D}({\rA})}\,ds + 2\int_0^t\|\rB(U_1(s))-\rB(U_2(s))\|^2_{\rH}\,ds\nonumber\\
&\leq&
  \frac{1}{2}\int_0^t\|M(s)\|^2_{\mathcal{D}({\rA})}\,ds + C\int_0^t|U_1(s)|_{\rH}\|U_1(s)\|_{\rV}\|M(s)\|_{\rV}\|M(s)\|_{\mathcal{D}({\rA})}\,ds\nonumber\\
  &&+
  C\int_0^t\|U_2(s)\|_{\rV}\|U_2(s)\|_{\mathcal{D}({\rA})}\|M(s)\|_{\rV}\|M(s)\|_{\rH}\,ds\\
&\leq&
  \int_0^t\|M(s)\|^2_{\mathcal{D}({\rA})}\,ds
  +
  C\int_0^t\|M(s)\|_{\rV}^2\Big[
              |U_1(s)|_{\rH}^2\|U_1(s)\|_{\rV}^2+\|U_2(s)\|_{\mathcal{D}({\rA})}\|U_2(s)\|_{\rV}
             \Big]ds.\nonumber
\end{eqnarray}
In view of inequality \eqref{eq1 I1}, by Gronwall's lemma applied to equality (\ref{eq1 unique 01}), we infer that for all $t\in[0,T]$,
\begin{eqnarray}\label{eq1 unique 03}
&&  \hspace{-1.2truecm}\|M(t\wedge\sigma_R)\|^2_{\rV} + \int_0^{t\wedge\sigma_R}\|M(s)\|^2_{\mathcal{D}({\rA})}\,ds\nonumber\\
&\leq&
   e^{C\int_0^{t\wedge\sigma_R}
              |U_1(s)|_{\rH}^2\|U_1(s)\|_{\rV}^2+\|U_2(s)\|_{\mathcal{D}({\rA})}\|U_2(s)\|_{\rV}
             ds}
   \Big[
     \sup_{s\in[0,T]}|I_2(s\wedge\sigma_R)| + I_3(T\wedge\sigma_R)
   \Big]\nonumber\\
&\leq&
  e^{CTR^4+CR^2T^{\frac{1}{2}}}
  \Big[
     \sup_{s\in[0,T]}|I_2(s\wedge\sigma_R)| + I_3(T\wedge\sigma_R)
   \Big].
\end{eqnarray}
Next, by the Burkholder-Davis-Gundy inequality and Assumption \ref{con1 G} we infer that that for any $\delta>0$,
\begin{eqnarray}\label{eq1 expec I2}
  \mathbb{E}\Big(\sup_{s\in[0,T]}|I_2(s\wedge\sigma_R)|\Big)
\leq
  \delta \mathbb{E}\Big(\sup_{s\in[0,T]}\|M(s\wedge\sigma_R)\|^2_{\rV}\Big)
  +
  C_\delta \mathbb{E}\Big(\int_0^{T\wedge\sigma_R}\|M(s)\|^2_{\rV}\,ds\Big)
\end{eqnarray}
and
\begin{eqnarray}\label{eq1 expec I3}
  \mathbb{E}\Big(I_3(T\wedge\sigma_R)\Big)
\leq
  C\mathbb{E}\Big(\int_0^{T\wedge\sigma_R}\|M(s)\|^2_{\rV}\,ds\Big).
\end{eqnarray}

Combining inequalities (\ref{eq1 unique 03}), (\ref{eq1 expec I2}), and (\ref{eq1 expec I3}), and by applying Gronwall's lemma, we deduce that
\begin{eqnarray*}
\mathbb{E}\Big(\sup_{t\in[0,T]}\|M(t\wedge\sigma_R)\|^2_{\rV}\Big) + \mathbb{E}\Big(\int_0^{T\wedge\sigma_R}\|M(s)\|^2_{\mathcal{D}({\rA})}\,ds\Big)
=
0.
\end{eqnarray*}
Since $\lim_{R\toup \infty}\sigma_R=\sigma$, by taking first the limit as $R\toup \infty$ and then the limit as $T\toup \infty$,
we infer that
\begin{eqnarray*}
\mathbb{E}\Big(\sup_{t\in[0,\sigma)}\|M(t)\|^2_{\rV}\Big) + \mathbb{E}\Big(\int_0^{\sigma}\|M(s)\|^2_{\mathcal{D}({\rA})}\,ds\Big)
=
0,
\end{eqnarray*}
what implies the uniqueness of the local solution.

\vskip 0.3cm
\noindent
\textbf{Step 3. Global existence.} {Let us recall that $\tau_n$
has been defined in \eqref{eqn-tau_n}. By  \textbf{Step 2} we infer  that the sequence $(\tau_n)_{n=1}^\infty$} is nondecreasing and
$$
u_{n+1}(t)=u_n(t),\ \ \ t\in[0,\tau_n),\ \mathbb{P}\text{-a.s..}
$$

{Put }
$\tau_{max}:=\lim_{n\to\infty}\tau_n$. {By \cite[Proposition 2.1.2]{Ethier+Kurtz_1986}, the random time   $\tau_{\max}$ is a stopping time.}
As in the proof of Theorem \ref{thm-initial data in H}, we can define a process $\{u(t),\,t\in[0,\tau_{max})\}$
$$
u(t)=u_n(t),\ \ \ t\in[0,\tau_n),
$$
This process is a local solution of (\ref{eq V 00}), and it satisfies, {see \eqref{eqn-tau_max},}
$$
\lim_{t \toup \tau_{max}}\|u\|_{\Upsilon_t(\rV)}=\infty\text{ on }\{\omega:\,\tau_{max}<\infty\}\ \mathbb{P}\text{-a.s..}
$$
\deljz{From Zhai:}\addjzok{Using an argument similar to the one used in the proof of Theorem \ref{thm-initial data in V local}, we can prove that
$$
\mathbb{P}(\tau_{max}=\infty)=1.
$$
This concludes the proof of the global existence, and hence
 Theorem \ref{thm-initial data in V} is thus established.}
\end{proof}

\deljz{\textcolor[rgb]{0.39,0.02,0.46}{From Zhai: the following red words can be deleted.}
}

\deljz{We prove that
$$
\mathbb{P}(\tau_{max}=\infty)=1.
$$
 For this purpose, we use condition \textbf{\dela{(G-H2)}\adda{(G-VH2)}} from \textbf{Assumption \ref{con G}}.
\\
Following the argument we used in the proof of inequality (\ref{eq sup es u}), we find $C_T>0$ such that
\begin{eqnarray}\label{eq1 esta u pro}
\mathbb{E}\Big(\sup_{t\in[0,T\wedge\tau_{max})}|u(t)|^2_{\rH}\Big)
+
\mathbb{E}\Big(\int_0^{T\wedge\tau_{max}}\|u(t)\|^2_{V}\,dt\Big)
\leq
C_T.
\end{eqnarray}
\vskip 0.2cm
Let us define an additional stopping time $\widetilde{\tau}_N$ by
$$
\widetilde{\tau}_N:=\inf\{t\geq 0: \sup_{s\in[0,t]}|u(s)|^2_{\rH}+\int_0^t\|u(s)\|^2_{\rV}\,ds\geq N\}\wedge T\wedge\tau_{max},
$$
and set $\tau_{N,n}:=\widetilde{\tau}_N\wedge\tau_n$.
By {the It\^o} formula and Lemma \ref{lem B baisc prop}, we have, for $t \geq 0$,
\begin{eqnarray*}
\|u(t)\|^2_{\rV} &+& 2\int_0^t\|u(s)\|^2_{\mathcal{D}({\rA})}\,ds\nonumber\\
&=&
 \|u_0\|^2_{\rV} - 2\int_0^t\langle \rB(u(s)),{\rA}u(s)\rangle_{\rH}\,ds
 +
 2\int_0^t\langle f(s),{\rA}u(s)\rangle_{\rH}\,ds\nonumber\\
 &&+
 2\int_0^t\int_{{\rZ}}\langle G(u(s-),z),u(s-)\rangle_{\rV}\widetilde{\eta}(dz,ds)
 +
 \int_0^t\int_{{\rZ}}\|G(u(s-),z)\|^2_{\rV}\eta(dz,ds)\nonumber\\
&\leq&
  \|u_0\|^2_{\rV} + \int_0^t\|u(s)\|^2_{\mathcal{D}({\rA})}\,ds +C\int_0^t\|u(s)\|^4_{\rV}|u(s)|^2_{\rH}\,ds+
  2\int_0^t|f(s)|^2_{\rH}\,ds\nonumber\\
  && + 2\int_0^t\int_{{\rZ}}\langle G(u(s-),z),u(s-)\rangle_{\rV}\widetilde{\eta}(dz,ds)
  +
  \int_0^t\int_{{\rZ}}\|G(u(s-),z)\|^2_{\rV}\eta(dz,ds).
\end{eqnarray*}
Applying the Gronwall lemma, we infer that
\begin{eqnarray}\label{eq1 page 13 01}
 && \|u(t\wedge \tau_{N,n})\|^2_{\rV} + \int_0^{t\wedge \tau_{N,n}}\|u(s)\|^2_{\mathcal{D}({\rA})}\,ds\nonumber\\
&\leq&
  e^{C\int_0^{t\wedge \tau_{N,n}}|u(s)|^2_{\rH}\|u(s)\|^2_{\rV}\,ds}\nonumber\\
  &&\times \Big(
     \|u_0\|^2_{\rV} + 2\int_0^T|f(s)|^2_{\rH}\,ds
     +
     \sup_{s\in[0, T]}\Big|\int_0^{s\wedge\tau_{N,n}}\int_{{\rZ}}\langle G(u(l-),z),u(l-)\rangle_{\rV}\widetilde{\eta}(dz,dl)\Big|\nonumber\\
     &&\ \ \ \ \ +
     \int_0^{T\wedge\tau_{N,n}}\int_{{\rZ}}\|G(u(s-),z)\|^2_{\rV}\eta(dz,ds)
  \Big)\nonumber\\
&\leq&
  e^{CN^2}\Big(
     \|u_0\|^2_{\rV} + 2\int_0^T|f(s)|^2_{\rH}\,ds
     +
     \sup_{s\in[0, T]}\Big|\int_0^{s\wedge\tau_{N,n}}\int_{{\rZ}}\langle G(u(l-),z),u(l-)\rangle_{\rV}\widetilde{\eta}(dz,dl)\Big|\nonumber\\
     &&\ \ \ \ \ \ \ \ \ \ \ \ \ \ +
     \int_0^{T\wedge\tau_{N,n}}\int_{{\rZ}}\|G(u(s-),z)\|^2_{\rV}\eta(dz,ds)
  \Big),\;\; t\in[0,T].
\end{eqnarray}
\\
By the Burkholder-Davis-Gundy inequality and Assumption \ref{con1 G}, we get
\begin{align}\label{eq1 page 13 02}
&\hspace{-1truecm}e^{CN^2}\mathbb{E}\Big(
    \sup_{s\in[0, T]}\Big|\int_0^{s\wedge\tau_{N,n}}\int_{{\rZ}}\langle G(u(l-),z),u(l-)\rangle_{\rV}\widetilde{\eta}(dz,dl)\Big|
     \Big)\nonumber\\
&\leq
Ce^{CN^2}\mathbb{E}\Big(
     \Big|\int_0^{T\wedge\tau_{N,n}}\int_{{\rZ}}\|G(u(s-),z)\|^2_{\rV}\|u(s-)\|^2_{\rV}\eta(dz,ds)\Big|^{\frac{1}{2}}
         \Big)\nonumber\\
&\leq
 \frac{1}{2} \mathbb{E}\Big(\sup_{s\in[0,T]}\|u(s\wedge\tau_{N,n})\|^2_{\rV}\Big)
+
 Ce^{CN^2}\mathbb{E}\Big(
             \int_0^{T\wedge\tau_{N,n}}\int_{{\rZ}}\|G(u(s),z)\|^2_{\rV}\nu(dz)\,ds
           \Big)\nonumber\\
&\leq
 \frac{1}{2} \mathbb{E}\Big(\sup_{s\in[0,T]}\|u(s\wedge\tau_{N,n})\|^2_{\rV}\Big)
+
 Ce^{CN^2}\int_0^{T}\mathbb{E}(1+\|u(s\wedge\tau_{N,n})\|^2_{\rV})\,ds.
\end{align}
Applying Assumption \ref{con1 G} again, we infer that
\begin{eqnarray}\label{eq1 page 13 03}
 \mathbb{E}\Big(\int_0^{T\wedge\tau_{N,n}}\int_{{\rZ}}\|G(u(s-),z)\|^2_{\rV}\eta(dz,ds)\Big)
\leq
 \int_0^{T}\mathbb{E}(1+\|u(s\wedge\tau_{N,n})\|^2_{\rV})\,ds.
\end{eqnarray}
Inserting inequalities (\ref{eq1 page 13 02}) and (\ref{eq1 page 13 03}) into inequality (\ref{eq1 page 13 01}), and then using the Gronwall lemma, we get
\begin{eqnarray*}
  \mathbb{E}\Big(\sup_{t\in[0,T]}\|u(t\wedge\tau_{N,n})\|^2_{\rV}\Big) + \mathbb{E}\Big(\int_0^{T\wedge\tau_{N,n}}\|u(s)\|^2_{\mathcal{D}({\rA})}\,ds\Big)
  \leq
  C_{N,T}\Big(
      \|u_0\|^2_{\rV}+\int_0^T|f(s)|^2_{\rH}\,ds + 1
      \Big).
\end{eqnarray*}
Taking the limit $n\to \infty$, we infer that
\begin{eqnarray*}
 \mathbb{E}\Big(\sup_{t\in[0,T\wedge\widetilde{\tau}_N\wedge\tau_{max})}\|u(t)\|^2_{\rV}\Big)
 +
 \mathbb{E}\Big(\int_0^{T\wedge\widetilde{\tau}_N\wedge\tau_{max}}\|u(t)\|^2_{\mathcal{D}({\rA})}\,dt\Big)
 \leq
 C_{N,T}\Big(
      \|u_0\|^2_{\rV}+\int_0^T|f(s)|^2_{\rH}\,ds + 1
     \Big).
\end{eqnarray*}
This implies
\begin{eqnarray}\label{eq2 V global}
  \sup_{t\in[0,T\wedge\widetilde{\tau}_N\wedge\tau_{max})}\|u(t)\|^2_{\rV}+\int_0^{T\wedge\widetilde{\tau}_N\wedge\tau_{max}}\|u(t)\|^2_{\mathcal{D}({\rA})}\,dt<\infty,\ \ \ \ \mathbb{P}\text{-a.s..}
\end{eqnarray}
\\
For a fixed $T>0$, we set
$$
\Omega_N:=\{\omega\in\Omega,\ \widetilde{\tau}_N= T\wedge\tau_{max}\}.
$$
Then $\Omega_N\subset\Omega_{N+1}$.
By (\ref{eq1 esta u pro}) and (\ref{eq2 V global}), we deduce that
$$
\lim_{N\to\infty}\mathbb{P}(\Omega_N)=1,
$$
and
\begin{eqnarray*}
  \sup_{t\in[0,T\wedge\tau_{max})}\|u(t)\|^2_{\rV}+\int_0^{T\wedge\tau_{max}}\|u(t)\|^2_{\mathcal{D}({\rA})}\,dt<\infty,\ \ \ \ \text{on }\Omega_N\ \ \mathbb{P}\text{-a.s..}
\end{eqnarray*}
\\
Hence
\begin{eqnarray*}
  \sup_{t\in[0,T\wedge\tau_{max})}\|u(t)\|^2_{\rV}+\int_0^{T\wedge\tau_{max}}\|u(t)\|^2_{\mathcal{D}({\rA})}\,dt<\infty,\ \ \ \ \mathbb{P}\text{-a.s.,}
\end{eqnarray*}
which yields that
$$
\mathbb{P}(\tau_{max}\geq T)=1\ \ \forall T>0.
$$
This proves the global existence.
\vskip 0.3cm
The proof of Theorem \ref{thm-initial data in V} is thus complete.}

\section{Large deviation principle (LDP) }\label{sec-LDP}

Fix $T>0$. In this section, we establish a Freidlin-Wentzell LDP for Problem (\ref{eq SNS 01}) on space $\Upsilon_T(\rV)$ defined in
\eqref{eqn-Upsilon_T-V}, i.e.,
$$
\Upsilon_T(\rV):=D([0,T],\rV)\cap L^2([0,T],\mathcal{D}({\rA})).$$
In the following, the space $D([0,T],\rV)$ is equipped with the Skorohod topology.

\subsection{Description of the problem and the statement of the main result}
We first introduce the problem and then state the precise assumptions on the coefficients, followed by the main result.
\vskip 0.2cm

Let us recall that ${\rZ}$ is a locally compact Polish space.  We set
\begin{eqnarray*}{\rZ}_T=[0,T]\times{\rZ}, \;\; \rY=\rZ \times [0,\infty)
 \mbox { and } \rY_T&=&[0,T]\times{\rZ}\times[0,\infty).
\end{eqnarray*}
 We set $\MT=\mathcal{M} ({\rZ}_T)$ be the space of all {nonnegative} measures $\vartheta$ on $({\rZ}_T,\mathcal{B}({\rZ}_T))$ such that
$\vartheta(K)<\infty$ for every compact subset $K$ of ${\rZ}_T$.

We endow the set $\MT$ with the weakest topology, denoted by $\mathcal{T}(\MT)$, such that for every $g\in C_c({\rZ}_T)$
(where by $C_c({\rZ}_T)$, we denote the space of real continuous functions on ${\rZ}_T$ with compact support), the map \[M_T \ni \vartheta\mapsto \int_{{\rZ}_T}g(z,s)\vartheta(dz,ds)\in \mathbb{R} \] is continuous.
\vskip 0.2cm
Analogously, we define $\MTb=\mathcal{M} (\rY_T)$ and $\mathcal{T}(\MTb)$.

It is known, see Section 1  of \cite{Budhiraja-Dupuis-Maroulas},
that both $\bigl(\MT, \mathcal{T}(\MT)\bigr)$
and $\bigl(\MTb, \mathcal{T}(\MTb)\bigr)$
are Polish spaces.

\vskip 0.2cm
In the present paper, we denote
$$
\bar{\Omega}=\MTb,\; \G:=\mathcal{T}(\MTb).
$$
\vskip 0.2cm
Fix a $\sigma$-finite measure $\nu$ on $(\rZ,\mathcal{B}(\rZ))$  such that
$\nu(K)<\infty$ for every compact subset $K$ of ${\rZ}$. By \cite{Ikeda-Watanabe} Section I.8,
there exists a unique probability measure $\Q$ on {$(\bar{\Omega},\G)$} on which the canonical/identity map 
\[\Nbar: \bar{\Omega} \ni \mbar \mapsto \mbar \in \MTb \]
 is a Poisson random measure (PRM) on $\rY_T$ with intensity measure $\Leb(dt)\otimes \nu(dz)\otimes\Leb(dr)$, over the probability space $(\bar{\Omega},\G,\Q)$.
\vskip 0.2cm
We also introduce the following notation: 
\begin{eqnarray*} \G_{t} &=& \mbox{ the } \Q\text{-}\mbox{completion of }\sigma\{\Nbar((0,s]\times {O}): s\in[0,t],\ {O}\in\mathcal{B}(\dela{\mathbb{Y}} {\rY})\}, \;\; t\in [0,T],\\
 \Gb&=&(\G_t)_{t\in[0,T]},\\
\Pc &=& \mbox{ the }\Gb\text{-}\mbox{predictable }\sigma\text{-}\mbox{field
on }[0,T]\times \bar{\Omega},\\
\bar{\mathbb{A}}&=&
\mbox{the class of all $(\Pc\otimes\mathcal{B}({\rZ}))$-measurable functions
$\varphi:{\rZ}_T\times\bar{\Omega}\to[0,\infty)$.}\label{page-Abar}
\end{eqnarray*}

It can be shown, that $\Nbar$ is a time-homogenous PRM on
 $\rY$, with intensity measure $\Leb(dt)\otimes \nu(dz)\otimes\Leb(dr)$, over the (filtered) probability space $(\bar{\Omega},\G,\Gb,\Q)$; see Appendix \ref{sec-A-PRM}. {The corresponding compensated PRM is denoted by $\widetilde{\Nbar}$. }

 For every function $\varphi\in\bar{\mathbb{A}}$, let us define a
counting process $\mathrm{N}^\varphi$ on ${\rZ}$ by
  \begin{eqnarray}\label{Jump-representation}
   \mathrm{N}^\varphi((0,t]\times {O})&:=&\int_{(0,t]\times {O} \times (0,\infty)}\mathds{1}_{[0,\varphi(s,z)]}(r)\Nbar(ds,dz,dr),
   \\
&=&\int_{(0,t]\times {O} \times (0,\infty)}\mathds{1}_{\{(s,z,r): r \leq \varphi(s,z)\}}(s,z,r) \Nbar(ds,dz,dr)
\nonumber \\
&=&\int_{(0,t]\times {O} \times (0,\infty)}\mathds{1}_{[r,\infty)}(\varphi(s,z)) \Nbar(ds,dz,dr),\;\; t\in[0,T],{O}\in\mathcal{B}({\rZ}).
\nonumber
  \end{eqnarray}
\addaok{We observe that
\[
\mathrm{N}^\varphi: \bar{\Omega} \to \mathcal{M}(\rZ_T)=\MT.
\]
}

Analogously, we define a process $\widetilde{\mathrm{N}}^\varphi$, \addaok{ i.e.,
  \begin{eqnarray}\label{Jump-representation-compensated}
   \widetilde{\mathrm{N}}^\varphi((0,t]\times {O})&:=&\int_{(0,t]\times {O} \times (0,\infty)}\mathds{1}_{[0,\varphi(s,z)]}(r)\widetilde{\Nbar}(ds,dz,dr),
   \\
&=&\int_{(0,t]\times {O} \times (0,\infty)}\mathds{1}_{\{(s,z,r): r \leq \varphi(s,z)\}}(s,z,r) \widetilde{\Nbar}(ds,dz,dr)
\nonumber \\
&=&\int_{(0,t]\times {O} \times (0,\infty)}\mathds{1}_{[r,\infty)}(\varphi(s,z)) \widetilde{\Nbar}(ds,dz,dr),\;\; t\in[0,T],{O}\in\mathcal{B}({\rZ}).
\nonumber
  \end{eqnarray}
For any Borel function $g: \rZ_T \to [0,\infty)$,

  \begin{eqnarray}\label{Jump-representaton-integral}
 \int_{(0,t] \times \rZ}  g(s,z) \, \widetilde{\mathrm{N}}^\varphi(ds,dz) &=&
 \int_{(0,t]\times {\rZ} \times (0,\infty)}\mathds{1}_{[0,\varphi(s,z)]}(r) g(s,z) \widetilde{\Nbar}(ds,dz,dr).
\end{eqnarray}

Note that if $\varphi$ is a constant function $a$ with value $a \in [0,\infty)$, then
  \begin{eqnarray*}
  \mathrm{N}^a((0,t]\times {O})&=& \Nbar \bigl((0,t] \times O\times (0,a] \bigr), \;\; t\in[0,T],{O}\in\mathcal{B}({\rZ}),\\
  \widetilde{\mathrm{N}}^a((0,t]\times {O})&=&\widetilde{\Nbar}\bigl((0,t] \times O\times (0,a] \bigr),\;\; t\in[0,T],{O}\in\mathcal{B}({\rZ}).
  \end{eqnarray*}
We finish this introduction with the following two simple observations.
\begin{proposition}\label{prop-PRM-N^a} In the above framework, for every $a>0$,
 the map
\begin{eqnarray}\label{eqnN^a}
  \mathrm{N}^a: \bar{\Omega} \to \mathcal{M}(\rZ_T)=\MT
 \end{eqnarray}
 is a Poisson random measure on $\rZ_T$ with intensity measure $\Leb(dt)\otimes a\nu(dz)$ and $\widetilde{\mathrm{N}}^a$ is equal to the corresponding compensated Poisson random measure.
 \end{proposition}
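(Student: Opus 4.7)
The plan is to recognize that $\mathrm{N}^a$ is obtained from $\Nbar$ by restriction followed by projection, and then invoke the standard restriction and mapping properties of Poisson random measures.

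First, I would unpack the definition \eqref{Jump-representation} with $\varphi\equiv a$ to obtain the clean identity
\[
\mathrm{N}^a(B) = \Nbar\bigl(B \times (0,a]\bigr), \quad B \in \mathcal{B}({\rZ}_T),
\]
so that $\mathrm{N}^a$ is the pushforward, under the projection $(s,z,r)\mapsto (s,z)$, of $\Nbar$ restricted to the measurable set ${\rZ}_T \times (0,a] \subset \YT$.

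Next, I would verify the two defining properties of a PRM directly from those of $\Nbar$. Given $B \in \mathcal{B}({\rZ}_T)$ with $(\Leb \otimes \nu)(B) < \infty$, Fubini yields
\[
(\Leb \otimes \nu \otimes \Leb)\bigl(B \times (0,a]\bigr) = a\,(\Leb \otimes \nu)(B),
\]
so the PRM property of $\Nbar$ gives that $\mathrm{N}^a(B)$ is Poisson distributed with mean $a(\Leb \otimes \nu)(B)$. If $B_1,\dots,B_n \in \mathcal{B}({\rZ}_T)$ are pairwise disjoint, then the sets $B_i \times (0,a]$ are pairwise disjoint in $\YT$, hence $\mathrm{N}^a(B_1),\dots,\mathrm{N}^a(B_n)$ are $\Q$-independent. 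Local finiteness, ensuring that $\mathrm{N}^a$ really takes values in $\MT$, holds because for each compact $K \subset {\rZ}_T$ the set $K \times (0,a]$ has finite intensity, hence $\mathrm{N}^a(K)<\infty$ $\Q$-a.s.; measurability of $\mathrm{N}^a : \bar{\Omega} \to (\MT, \mathcal{T}(\MT))$ reduces, via the fact that the Borel $\sigma$-field of $\mathcal{T}(\MT)$ is generated by the evaluation maps $\mu \mapsto \int f\,d\mu$ for $f \in C_c({\rZ}_T)$, to a standard monotone class argument lifting from indicators $f = 1_B$.

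Finally, the assertion about $\widetilde{\mathrm{N}}^a$ is immediate: since $\mathrm{N}^a$ is a PRM with intensity $\Leb \otimes a\nu$, its compensator is $\Leb \otimes a\nu$, and on a rectangle $(0,t]\times A$,
\[
\widetilde{\mathrm{N}}^a\bigl((0,t]\times A\bigr) = \Nbar\bigl((0,t]\times A\times(0,a]\bigr) - ta\,\nu(A) = \widetilde{\Nbar}\bigl((0,t]\times A\times(0,a]\bigr),
\]
which coincides with \eqref{Jump-representation-compensated} specialised to $\varphi \equiv a$. I do not anticipate a serious obstacle: the proof is essentially a direct application of the restriction and mapping theorems for Poisson random measures, the only mildly technical point being the measurability of $\mathrm{N}^a$ into the space of locally finite measures endowed with its vague-type topology $\mathcal{T}(\MT)$.
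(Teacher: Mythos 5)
Your argument is correct, and in fact the paper offers no proof at all: Proposition \ref{prop-PRM-N^a} is introduced there as a ``simple observation'', so your restriction-plus-projection argument --- writing $\mathrm{N}^a(B)=\Nbar\bigl(B\times(0,a]\bigr)$ for $B\in\mathcal{B}({\rZ}_T)$ and pulling the Poisson law, the independence over disjoint sets, the intensity $a\,\Leb\otimes\nu$ (via Fubini), local finiteness and the measurability into $(\MT,\mathcal{T}(\MT))$ back from the corresponding properties of $\Nbar$ --- is exactly the argument that is implicitly intended, and each of your steps goes through. One small addition is needed to match the notion of Poisson random measure actually used in the paper: Definition \ref{def-PRM-Z} in Appendix \ref{sec-A-PRM} also requires that for each $U\in\mathcal{B}({\rZ})$ the process $t\mapsto \mathrm{N}^a((0,t]\times U)$ be $\Gb$-adapted with increments independent of the past (this is what is later needed, e.g.\ in Corollary \ref{cor-solution to eq LDP u epsilon 00}, where $N^{\eps^{-1}}$ must be a PRM over the filtered space $(\bar{\Omega},\G,\Gb,\Q)$). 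You do not verify this item, but it follows in one line from the same identity $\mathrm{N}^a((0,t]\times U)=\Nbar\bigl((0,t]\times U\times(0,a]\bigr)$ combined with property (4) of $\Nbar$ as a time-homogeneous PRM on $Y$; adding that remark makes your proof complete relative to the paper's definition. Your identification of $\widetilde{\mathrm{N}}^a$ with the compensated measure on sets $(0,t]\times A$ with $\nu(A)<\infty$ is fine as written.
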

\begin{proposition}\label{prop-PRM-b} In the above framework, suppose that
two functions $\varphi, \psi\in\bar{\mathbb{A}}$,
a number $t_0\in[0,T]$,
and
a Borel set $O \subset \rZ$
are such that
\[ \varphi(s,z,\omega)=\psi(s,z,\omega), \mbox{ for } (s,z,\omega) \in [0,t_0]\times O \times \bar{\Omega}.\]
Then
\begin{eqnarray}\label{eqn-PRM-b}
  \mathrm{N}^\varphi((0,t]\times C)=  \mathrm{N}^\psi((0,t]\times C), \mbox{ for } t\in [0,t_0], C\in O\cap \mathcal{B}(\rZ).
 \end{eqnarray}
\end{proposition}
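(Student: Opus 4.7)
My plan is to read the statement off directly from the defining formula \eqref{Jump-representation}. The equality claimed is a pathwise identity, so nothing probabilistic is really at stake; only an elementary observation about Lebesgue integrals of a function on $\bar{\Omega} \times Z_T \times (0,\infty)$ against the (random) counting measure $\Nbar$.

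The first step is to fix $\omega \in \bar{\Omega}$, $t \in [0,T]$ and $B \in \mathcal{B}(A)$ and to write out, using \eqref{Jump-representation},
\begin{align*}
\mathrm{N}^\varphi((0,t]\times B)(\omega)
&= \int_{(0,t]\times B \times (0,\infty)} 1_{[0,\varphi(s,x,\omega)]}(r)\, \Nbar(\omega; ds\, dx\, dr),\\
\mathrm{N}^\psi((0,t]\times B)(\omega)
&= \int_{(0,t]\times B \times (0,\infty)} 1_{[0,\psi(s,x,\omega)]}(r)\, \Nbar(\omega; ds\, dx\, dr).
\end{align*}
The second step is to observe that since $(0,t] \subset [0,T]$ and $B \subset A$, the hypothesis guarantees that $\varphi(s,x,\omega) = \psi(s,x,\omega)$ for every $(s,x) \in (0,t]\times B$, and hence the two integrands are equal pointwise on $(0,t]\times B \times (0,\infty)$. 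Consequently the two integrals coincide for every $\omega$, which is exactly \eqref{eqn-PRM-b}.

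The only subtlety worth flagging is measurability: since $\varphi, \psi \in \bar{\mathbb{A}}$ are $(\Pc \otimes \mathcal{B}(Z))$-measurable and non-negative, the indicator functions $1_{[0,\varphi(\cdot)]}(\cdot)$ and $1_{[0,\psi(\cdot)]}(\cdot)$ are jointly measurable in $(s,x,r,\omega)$, so that the integrals above are well-defined as random variables. I do not expect any real obstacle here; the proof is essentially one display followed by the observation that the two integrands agree on the region of integration. In particular no use of the Poisson structure of $\Nbar$, nor of the intensity measure, is required.
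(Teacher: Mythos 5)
Your argument is correct and is precisely the one the paper has in mind: the authors state this as a ``simple observation'' without proof, and the intended justification is exactly your pathwise computation from the defining formula \eqref{Jump-representation}, noting that the two indicator integrands agree on $(0,t]\times B\times(0,\infty)$ because $B\subset A$ and $(0,t]\subset[0,T]$. Nothing further is needed, and you are right that neither the Poisson structure of $\Nbar$ nor its intensity plays any role.
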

}
\vskip 0.1cm
Let us fix ${\eps}>0$, $u_0\in \rV$ and $f\in L^2([0,T];\rH)$.
Consider the following SPDE on the given probability space $(\bar{\Omega},\G,\Gb,\Q)$
\begin{equation}\label{eq LDP u epsilon 00}
\begin{split}
 du^{\eps}(t)&+{\rA}u^{\eps}(t)\,dt + \rB(u^{\eps}(t))\,dt
 =
  f(t)\,dt + {\eps}\int_{{\rZ}}G(u^{\eps}(t-),z)\widetilde{\mathrm{N}}^{1/{\eps}}(dz,dt),\ \ t\in[0,T],\\
 u^{\eps}(0)&=u_0.
 \end{split}
\end{equation}
By Theorem \ref{thm-initial data in V local} we infer that there exists a unique solution $u^{\eps}$ to Problem (\ref{eq LDP u epsilon 00}) whose trajectories a.s. belong to the space ${\UT}$
(see \eqref{eqn-Upsilon_T-V}).
In particular, $u^\eps$ induces an $\UT$-valued random variable. In this section, we aim to establish an LDP
for the laws of family $\{u^{\eps}\}_{{\eps}>0}$ on $\UT$.

\vskip 0.2cm

For our main result, we use the following notation.

Denote, for $N>0$,
  \begin{eqnarray}\label{S_N}
   S^N&=&\Big\{g:{\rZ}_T\to[0,\infty): g \mbox{ is Borel measurable and }\,L_T(g)\leq N\Big\},
   \\
\mathbb{S}&=&\cup_{N\geq 1}S^N, \nonumber
  \end{eqnarray}
  where for a  Borel measurable function $g:{\rZ}_T\to[0,\infty)$,
 we set
\begin{equation}\label{L_T}
L_T(g):=\int_0^T\int_{\rZ}\Big(g(t,z)\log g(t,z)-g(t,z)+1\Big)\nu(dz)\,dt.
\end{equation}
A function $g\in S^N$ can be identified with a measure $\nu^g\in\MT$, defined by
  \begin{eqnarray*}
   &&\nu^g({O})=\int_{O} g(t,z)\nu(dz)\,dt,\ \ {O}\in\mathcal{B}({\rZ}_T).
  \end{eqnarray*}
This identification induces a topology on $S^N$, under which $S^N$ is a compact space
(see \cite[Appendix]{Budhiraja-Chen-Dupuis}).
Throughout this section, we use this topology on $S^N$.

\vskip 0.1cm

Let us finally define
\begin{eqnarray*}
\mathcal{H}&:=&\Big\{h:{\rZ}\to\mathbb{R}: h \mbox{ is Borel measurable and there exists } \delta>0: \\
&&\hspace{3truecm}\lefteqn{ \int_{\Gamma}e^{\delta h^2(z)}\nu(dz)<\infty
 \mbox{ for all } \Gamma\in\mathcal{B}({\rZ}): \nu(\Gamma)<\infty\ \Big\},}
\end{eqnarray*}
and
\begin{eqnarray*}
L^2(\nu):=\{h:{\rZ}\to[0,\infty): h \mbox{ is Borel measurable and } \int_\rZ h^2(z)\nu(dz)<\infty\}.
\end{eqnarray*}

In many parts of this section, we use the following assumption.

\begin{con}\label{con LDP} \addjzok{There exist functions $L_\hbar,\,L_i\in \mathcal{H}\cap L^2(\nu)$, for $\hbar>0$, $i=2,3$, such that}
\begin{itemize}

\item[(LDP-01)]\addjzok{(Lipschitz on balls) for every $\hbar>0$, $v_1,\,v_2\in\rV$ with $\|v_1\|_\rV\vee\|v_2\|_\rV\leq \hbar$,
\begin{eqnarray*}
 \|G(v_1,z)-G(v_2,z)\|_{\rV}\leq L_\hbar(z)\|v_1-v_2\|_{\rV},\ z\in{\rZ},
\end{eqnarray*}}
\item[(LDP-02)] (Linear growth in $\rV$)
\begin{eqnarray*}
 \|G(v,z)\|_{\rV}\leq L_2(z)(1+\|v\|_{\rV}),\ v\in \rV,\ z\in{\rZ},
\end{eqnarray*}

\item[(LDP-03)] (Linear growth in $\rH$)
\begin{eqnarray*}
 |G(v,z)|_{\rH}\leq L_3(z)(1+|v|_{\rH}),\ v\in {\rV},\ z\in{\rZ}.
\end{eqnarray*}
\end{itemize}
\end{con}

\noindent
\begin{remark}\label{rem-con LDP}
A word of warning is due here. Quite often, the Lipschitz property is formulated differently. See, for instance, inequality \eqref{eqn-Lipschitz-H} in Assumption \ref{con G}.
\end{remark}

\begin{remark}
\dela{\addjz{Because $L_\hbar,L_2 \in L^2(\nu)$, the first two parts of \textbf{Assumption} \ref{con LDP} imply \textbf{Assumption} \ref{con G Local in V}.}}
Because the functions $L_\hbar$, $L_2$, {and $L_3$} belong to $L^2(\nu)$, the \dela{first two parts of} \textbf{Assumption} \ref{con LDP} {implies} \textbf{Assumption} \ref{con G Local in V}.
\end{remark}

We now state the main result of this section. We use  convention that  $\inf(\emptyset)=\infty$.
\begin{theorem}\label{thm-LDP}
Assume that \textbf{Assumption \ref{con LDP}} holds, $f\in L^2([0,T];\rH)$, and $u_0\in \rV$.
Then the family $\{u^{\eps}\}_{{\eps}>0}$
 satisfies an LDP on $\UT$ with the good rate function $I$ defined by
 \begin{eqnarray}\label{eq3 rate function}
  I(k):=\inf\Big\{L_T(g): g\in\mathbb{S}, u^g=k \Big\},\ \ \ k\in\UT,
 \end{eqnarray}
where for $g\in\mathbb{S}$, $u^g$ is the unique solution of the following deterministic PDE
\begin{equation}\label{eq3 LDP deter 00}
 \begin{split}
 \frac{du^g(t)}{dt}&+{\rA}u^g(t) + \rB(u^g(t)) = f(t) + \int_{{\rZ}}G(u^g(t),z)(g(t,z)-1)\nu(dz),\\
 u^g(0)&=u_0.
 \end{split}
\end{equation}
\end{theorem}
\vskip 0.3cm
\begin{remark}

By Theorem \ref{thm-initial data in H local}, and using similar arguments as in the proof of  Theorem \ref{thm-LDP}, it is not difficult to improve
the results on Freidlin-Wentzell-type LDP for strong solutions in the probabilistic sense
of 2D SNSEs driven by L\'evy processes of jump type. We only state the result here, and omit the proof.
\begin{con}\label{con LDP Prob} \addjzok{There exist functions $\Upsilon_\hbar\in \mathcal{H}\cap L^2(\nu)$ for $\hbar>0$ and $\Upsilon\in \mathcal{H}\cap L^2(\nu)$ such that}
\begin{itemize}

\item[(LDP-01-P)]\addjzok{(Lipschitz on balls in $\rH$) for every $\hbar>0$, $v_1,\,v_2\in\rH$ with $\|v_1\|_\rH\vee\|v_2\|_\rH\leq \hbar$,
\begin{eqnarray*}
 \|G(v_1,z)-G(v_2,z)\|_{\rH}\leq \Upsilon_\hbar(z)\|v_1-v_2\|_{\rH},\ z\in{\rZ},
\end{eqnarray*}}
\item[(LDP-02-P)] (Linear growth in $\rH$)
\begin{eqnarray*}
 \|G(v,z)\|_{\rH}\leq \Upsilon(z)(1+\|v\|_{\rH}),\ v\in \rH,\ z\in{\rZ}.
\end{eqnarray*}
\end{itemize}
\end{con}
\begin{theorem}
Assume that \textbf{Assumption \ref{con LDP Prob}} holds, $f\in L^2([0,T];\rV')$, and $u_0\in \rH$.
Then the family $\{u^{\eps}\}_{{\eps}>0}$
 satisfies an LDP on $D([0,T],\rH)\cap L^2([0,T],\rV)$ with the good rate function $J$ defined by
 \begin{eqnarray*}
  J(k):=\inf\Big\{L_T(g): g\in\mathbb{S}, u^g=k \Big\},\ \ \ k\in D([0,T],\rH)\cap L^2([0,T],\rV),
 \end{eqnarray*}
where for $g\in\mathbb{S}$, $u^g$ is the unique solution of the following deterministic PDE
\begin{equation*}
 \begin{split}
 \frac{du^g(t)}{dt}&+{\rA}u^g(t) + \rB(u^g(t)) = f(t) + \int_{{\rZ}}G(u^g(t),z)(g(t,z)-1)\nu(dz),\\
 u^g(0)&=u_0.
 \end{split}
\end{equation*}

\end{theorem}

Let us point out that the results (see \cite{Dong-Xiong-Zhai-Zhang, Xiong Zhai, Zhai-Zhang}) on this topic assume that the global Lipschitz condition in $\rH$ with Condition (LDP-02-P) holds,  i.e.,
\begin{itemize}
\item \addjzok{(Global Lipschitz in $\rH$) There exist functions $\widetilde{\Upsilon}\in \mathcal{H}\cap L^2(\nu)$ such that, for every $v_1,\,v_2\in\rH$,
\begin{eqnarray*}
 \|G(v_1,z)-G(v_2,z)\|_{\rH}\leq \widetilde{\Upsilon}(z)\|v_1-v_2\|_{\rH},\ z\in{\rZ}.
\end{eqnarray*}}
\end{itemize}

\end{remark}

Before we can embark on the proof of Theorem \ref{thm-LDP}, we need to establish the well-posedness of equation (\ref{eq3 LDP deter 00}). This is a consequence of the following result, whose proof is postponed to Appendix \ref{sec-B}.

\begin{lemma}\label{lem3 LDP deter 1}
Assume that $N\in\mathbb{N}$. Then, for all $u_0\in \rV$, $f\in L^2([0,T],\rH)$, and $g\in S^N$, there exists a unique  solution $u^g \in C([0,T],\rV)\cap L^2([0,T],\mathcal{D}({\rA}))$ of Problem (\ref{eq3 LDP deter 00}). Moreover, for any $\rho>0$ and $R>0$,
there exists a positive constant $C_N=C_{N,\rho,R}$ such that for every $g \in S^N$ and all $u_0\in \rV$ and $f\in L^2([0,T],\rH)$, such that
$\Vert u_0\Vert_{\rV} \leq \rho $ and $\vert f \vert_{L^2([0,T],\rH)} \leq R$,
the following estimate is satisfied
\begin{eqnarray}\label{eq3 LDP deter pro}
             \sup_{t\in[0,T]}\|u^g(t)\|^2_{\rV}+\int_0^T\|u^g(t)\|^2_{\mathcal{D}({\rA})}\,dt\leq C_N.
\end{eqnarray}
\end{lemma}

\vskip 0.3cm

\begin{proof}[{\bf Proof of Theorem \ref{thm-LDP}}]

By applying \dela{\addjz{Theorem \ref{thm-initial data in V local}}} Theorem \ref{thm-initial data in V local} to Problem (\ref{eq LDP u epsilon 00}), in view of  \cite[Theorem 8]{HY Zhao}, we infer that there exists
a family of $\{\mathcal{G}^{\eps}\}_{{\eps}>0}$, where
\[ \mathcal{G}^{\eps}: \MT \to \UT \mbox{ is a measurable map}\]
such that for every $\eps>0$, the following condition holds.
\begin{itemize}
 \item[] If $\eta$ is a  {time-homogenous} Poisson random measure {on $\rZ$ with intensity ${\eps}^{-1}\nu(dz)$}, i.e.,  {a Poisson random measure on $\rZ_T$} with intensity $\Leb(dt) \otimes{\eps}^{-1}\nu(dz)$, on a stochastic basis $(\Omega^1,\mathcal{F}^1,\mathbb{P}^1,\mathbb{F}^1)$ with $\mathbb{F}^1 =\{\mathcal{F}^1_t,t\in[0,T]\}$ satisfying the usual conditions, then the process $Y^{\eps}$ defined by
$$
Y^{\eps}:=\mathcal{G}^{\eps}({\eps}\eta)
$$
is the unique solution of
\begin{equation}
\begin{split}
 dY^{\eps}(t)&+{\rA}Y^{\eps}(t)\,dt + \rB(Y^{\eps}(t))\,dt
\\ &=
  f(t)\,dt
  + {\eps}\int_{{\rZ}}G(Y^{\eps}(t-),z)(\eta(dz,dt)-{\eps}^{-1}\nu(dz)\,dt),\\
 Y^{\eps}(0)&=u_0\dela{\in \rV}.
 \end{split}
\end{equation}
\end{itemize}
The statements in the condition means, that $Y^{\eps}$ induces (in a natural way) an $\mathbb{F}^1$-{progressively measurable} process {(for which we do not introduce a separate notation)} which satisfies
\begin{enumerate}
 \item[(a1)] the trajectories of $Y^{\eps}$ belong to $\UT$ $\mathbb{P}^1$-a.s.,
 \item[(a2)] the following equality holds \text{ in } $\rH$: for all $t\in[0,T]$, $\mathbb{P}^1$-a.s.:
  \begin{equation}
  \begin{split}
   Y^{\eps}(t)
   &=
   u_0-\int_0^t{\rA}Y^{\eps}(s)\,ds - \int_0^t\rB(Y^{\eps}(s))\,ds\\
    &\ \ +
    \int_0^tf(s)\,ds + {\eps}\int_0^t\int_{{\rZ}}G(Y^{\eps}(s-),z)(\eta(dz,ds)-{\eps}^{-1}\nu(dz)\,ds).
    \end{split}
\end{equation}
\end{enumerate}

Therefore, since by Proposition \ref{prop-PRM-N^a}, $N^{{\eps}^{-1}}$ is a Poisson random measure on $\rZ_T$ with intensity measure $\Leb(dt)\otimes \eps^{-1}\nu(dz)$, we deduce the following result which will be used later on.

\begin{corollary}\label{cor-solution to eq LDP u epsilon 00}
In the above framework, the unique solution of Problem (\ref{eq LDP u epsilon 00})
on the probability space $(\bar{\Omega},\G,\Gb,\Q)$ is given by the following equality:
\begin{eqnarray}\label{eqn-u^eps}
u^{\eps}:=\mathcal{G}^{\eps}({\eps} N^{{\eps}^{-1}}).
\end{eqnarray}
\end{corollary}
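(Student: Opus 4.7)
The plan is that this corollary follows almost immediately by unwinding the two ingredients assembled just above. First, I would invoke Proposition \ref{prop-PRM-N^a} with $a=\eps^{-1}$ to conclude that $\mathrm{N}^{\eps^{-1}}$, viewed as a map from $\bar{\Omega}$ into $\MT$, is a Poisson random measure on $\rZ_T$ with intensity $\Leb(dt)\otimes \eps^{-1}\nu(dz)$ over the filtered probability space $(\bar{\Omega},\G,\Gb,\Q)$, and that its compensated version is exactly $\widetilde{\mathrm{N}}^{\eps^{-1}}$, i.e.
\[
\widetilde{\mathrm{N}}^{\eps^{-1}}(dz,dt)=\mathrm{N}^{\eps^{-1}}(dz,dt)-\eps^{-1}\nu(dz)\,dt.
\]

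Next, I would apply the defining property (i) of the family $\{\mathcal{G}^\eps\}_{\eps>0}$, cited from \cite{HY Zhao}, with the stochastic basis $(\Omega^1,\mathcal{F}^1,\mathbb{P}^1,\mathbb{F}^1)$ chosen to be $(\bar{\Omega},\G,\Q,\Gb)$ and with $\eta:=\mathrm{N}^{\eps^{-1}}$ (which is admissible by the previous step). This yields that $Y^\eps:=\mathcal{G}^\eps(\eps\,\mathrm{N}^{\eps^{-1}})$ is an $\Upsilon_T^V$-valued, $\Gb$-progressively measurable process satisfying, $\Q$-a.s., for every $t\in[0,T]$ in $\rV^\prime$,
\[
Y^\eps(t)=u_0-\int_0^t \rA Y^\eps(s)\,ds-\int_0^t B(Y^\eps(s))\,ds+\int_0^t f(s)\,ds+\eps\int_0^t\!\int_{\rZ} G(Y^\eps(s-),z)\bigl(\mathrm{N}^{\eps^{-1}}(dz,ds)-\eps^{-1}\nu(dz)\,ds\bigr).
\]
Substituting the identity for $\widetilde{\mathrm{N}}^{\eps^{-1}}$ from the previous paragraph into the stochastic integral term shows that $Y^\eps$ satisfies precisely equation (\ref{eq LDP u epsilon 00}). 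Finally, I would appeal to the uniqueness assertion of Theorem \ref{thm-initial data in V} (whose hypotheses are guaranteed by the remark after Assumption \ref{con LDP}: $L_1,L_2\in L^2(\nu)$ imply Assumption \ref{con1 G}, while (LDP-03) gives (G-H2)) to conclude that $u^\eps=\mathcal{G}^\eps(\eps\,\mathrm{N}^{\eps^{-1}})$ is the unique solution on $(\bar{\Omega},\G,\Gb,\Q)$.

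There is essentially no obstacle here; the entire content is a bookkeeping check that $\mathrm{N}^{\eps^{-1}}$ has the correct intensity on the canonical space so that the abstract factorization theorem for Poisson-functional SPDEs from \cite{HY Zhao} applies verbatim, and that the compensation in the definition of $\mathcal{G}^\eps$ lines up with the definition of $\widetilde{\mathrm{N}}^{1/\eps}$ in (\ref{eq LDP u epsilon 00}). The only mildly subtle point is to make sure the predictability/measurability structure transferred to $(\bar{\Omega},\G,\Gb,\Q)$ is consistent with the one used implicitly in Theorem \ref{thm-initial data in V}, which is immediate because $\Gb$ was defined precisely as the $\Q$-completion of the natural filtration generated by $\Nbar$.
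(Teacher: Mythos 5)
Your proposal is correct and follows essentially the same route as the paper: invoke Proposition \ref{prop-PRM-N^a} to see that $N^{\eps^{-1}}$ is a Poisson random measure on $\rZ_T$ with intensity $\Leb(dt)\otimes\eps^{-1}\nu(dz)$ over $(\bar{\Omega},\G,\Gb,\Q)$ with compensated measure $\widetilde{N}^{\eps^{-1}}$, then apply the defining property (i) of $\mathcal{G}^{\eps}$ (from \cite{HY Zhao} together with Theorem \ref{thm-initial data in V}) on this basis to identify $\mathcal{G}^{\eps}(\eps N^{\eps^{-1}})$ with the unique solution of (\ref{eq LDP u epsilon 00}). The only difference is that you make explicit the bookkeeping (matching the compensation and the filtration) that the paper leaves implicit, which is harmless.
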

\vskip 0.3cm
Moreover, Lemma \ref{lem3 LDP deter 1} implies that, for every $g\in\mathbb{S}$, there is a unique solution $u^g\in \UT$ of equation (\ref{eq3 LDP deter 00}).
This allows us to define a map
\begin{eqnarray}\label{eq3 define G0}
\mathcal{G}^0:\, \mathbb{S}\ni g\mapsto u^g\in\UT.
\end{eqnarray}
\vskip 0.3cm

We apply Theorem 2.4 of \cite{Budhiraja-Chen-Dupuis} to finish the proof of Theorem \ref{thm-LDP}. According to \cite{Budhiraja-Chen-Dupuis}, it is sufficient to verify two claims. The first one is the following.

\noindent
\textbf{Claim-LDP-1} For all $ N\in\mathbb{N}$, if $g_n,\ g\in S^N$ are such that $g_n\to g$ as
 $n\to\infty$, then
   $$
     \mathcal{G}^0({g_n})\to \mathcal{G}^0(
     {g})\text{ \emph{i.e.,} } u^{g_n}\to u^g\quad\text{in}\quad \UT.
   $$

To state the second claim, we introduce additional notations.

Let us fix an increasing sequence $\{K_n\}_{
n=1,2,\cdots}$
 of compact subsets of ${\rZ}$ such that
\begin{equation}\label{eqn-K_n}
 \cup _{n=1}^\infty K_n={\rZ}.
 \end{equation}
 Define
  \begin{equation}
  \label{eqn-A_b}
\begin{split}
    \bar{\mathbb{A}}_{b}
     &=
       \bigcup_{n=1}^\infty\Big\{\varphi\in\bar{\mathbb{A}}:
                  \varphi(t,z,\omega)\in[\frac{1}{n},n],
                  \mbox{ if }\ (t,z,\omega)\in [0,T]\times K_n\times \bar{\Omega}\ \\
                 &\ \ \ \ \ \ \ \ \ \ \ \ \ \ \ \
                   \mbox{ and }\ \varphi(t,z,\omega)=1,\ \mbox{ if }\ (t,z,\omega)\in [0,T]\times K_n^c\times \bar{\Omega}
       \Big\},
       \end{split}
  \end{equation}
where the class $\bar{\mathbb{A}}$ was introduced on page \pageref{page-Abar}. We also define the following notation:
\begin{equation}\label{eqn-U^N}
\begin{split}
\mathcal{U}^N&:=\{\varphi\in \bar{\mathbb{A}}_{b}:\, \varphi(\cdot,\cdot,\omega)\in S^N, \mbox{ for } \Q\mbox{-a.a. } \omega \in \bar{\Omega}\},\\
\mathcal{U}&= \bigcup_{N=1}^\infty \mathcal{U}^N.
\end{split}
\end{equation}

\noindent
\textbf{Claim-LDP-2.} For all $N\in\mathbb{N}$, if ${{\eps}_n}\to 0$ and $\varphi_{{\eps}_n},\ \varphi\in\mathcal{U}^N$
are such that $\varphi_{{\eps}_n}$ converges in law  to $\varphi$, then
   $$
     \mathcal{G}^{{\eps}_n}\Big({{\eps}_n}
     N^{{{\eps}_n}^{-1}\varphi_{{\eps}_n}}\Big)\text{ converges in law to }
     \mathcal{G}^0({\varphi})\text{ in } \UT.
   $$

\vskip 0.3cm

The verification of \textbf{Claim-LDP-1} will be given in Proposition \ref{prop-1st continuity lemma} in the following subsection.
\textbf{Claim-LDP-2} will be established in Proposition \ref{prop3 02} in the final section.
Assuming these claims have been proven,
the proof of Theorem \ref{thm-LDP} is complete.

\end{proof}

\vskip 0.3cm

\subsection{The first continuity lemma}
\label{subsec-1st continuity lemma}

To verify \textbf{Claim-LDP-1},
it is sufficient to prove the following result.
\begin{proposition}[The first continuity lemma]\label{prop-1st continuity lemma}
For all $ N\in\mathbb{N}$, let $g_n,\ g\in S^N$ be such that $g_n\to g$ in $S^N$ as
 $n\to\infty$. Then
   $$
     \mathcal{G}^0({g_n})\to \mathcal{G}^0(
     {g})\quad\text{in}\quad \UT.
   $$
\end{proposition}

\begin{proof}[Proof of Proposition \ref{prop-1st continuity lemma}]
Recall the definition of $\mathcal{G}^0$ in (\ref{eq3 define G0}). Set $u^{g_n}$ be the solution of (\ref{eq3 LDP deter 00}) with $g$
replaced by $g_n$. For simplicity, set $u_n=u^{g_n}=\mathcal{G}^0({g_n})$ and $u=u^g=\mathcal{G}^0({g})$. To prove
our result, we prove that
$$
u_n\to u\ \ \text{in }\UT.
$$
\vskip 0.3cm

Fix $\alpha\in(0,\frac{1}{2})$.
Let $W^{\alpha,2}([0,T],\rV^\prime)$ be the Sobolev space consisting of all $h\in L^2([0,T],\rV^\prime)$ satisfying
$$
\int_0^T\int_0^T\frac{\|h(t)-h(s)\|^2_{\rV^\prime}}{|t-s|^{1+2\alpha}}\,dtds<\infty,
$$
endowed with the norm
\begin{equation}\label{eqn-W^alpha2-norm}
\|h\|^2_{W^{\alpha,2}([0,T],\rV^\prime)}=\int_0^T\|h(t)\|^2_{\rV^\prime}\,dt + \int_0^T\int_0^T\frac{\|h(t)-h(s)\|^2_{\rV^\prime}}{|t-s|^{1+2\alpha}}\,dtds.
\end{equation}

 By Lemma \ref{lem3 LDP deter 1} and using arguments similar to the proof of (4.8) in \cite{Zhai-Zhang}, we can deduce that
\begin{eqnarray}\label{eq3 prop 1 W 00}
\sup_{n\geq 1}\|u_n\|^2_{W^{\alpha,2}([0,T],\rV^\prime)}
\leq
\widetilde{C}_{N}<\infty.
\end{eqnarray}
Moreover, since by  \cite[Theorem 2.1]{Flandoli+Gatarek_1995} (see also \cite{Temam_2001}), the embedding
\begin{equation}
\label{eqn-comapct embedding}
L^2([0,T],\mathcal{D}({\rA}))\cap W^{\alpha,2}([0,T],\rV^\prime)\hookrightarrow L^2([0,T],\rV)
\end{equation}
is compact.
By Lemma \ref{lem3 LDP deter 1} and (\ref{eq3 prop 1 W 00}),
we infer that there exists $\tilde{u}\in L^2([0,T],\mathcal{D}({\rA}))\cap L^\infty([0,T],\rV)$ and a subsequence  (for simplicity, we also denote it by $u_n$)
such that
\begin{itemize}
\item[(P1)] $u_n\to \tilde{u}$ weakly in $L^2([0,T],\mathcal{D}({\rA}))$,

\item[(P2)] $u_n\to \tilde{u}$ in the weak-$^\ast$ topology of $L^\infty([0,T],\rV)$, \dela{\addjz{}}
{and
$$
\sup_{n\geq 1}\sup_{t\in[0,T]}\|u_n(t)\|_\rV+\sup_{t\in[0,T]}\|\tilde{u}(t)\|_\rV=:\hbar_0<\infty,
$$}

\item[(P3)] $u_n\to \tilde{u}$ strongly in $L^2([0,T],\rV)$.

\end{itemize}
\vskip 0.3cm

Now we prove that the limit function $\tilde{u}$ is a solution of equation (\ref{eq3 LDP deter 00}).
By the uniqueness of this solution, we infer $\tilde{u}=u=u^g$. The proof seems to be
classical, but it is not, because of the nonstandard terms.
\vskip 0.2cm

Let $\psi$ be a continuously differentiable {$\rV$-valued} function on $[0,T]$ with $\psi(T)=0$.
We multiply  $u_n(t)$ scalarly in $\rH$ by $\psij(t)$, and then
integrate by parts. This leads to the following equation:
\begin{eqnarray}\label{eq3 prop1 Xn lim 00}
&&\hspace{-1.4truecm}-\int_0^T\langle u_n(t),\psij^\prime(t)\rangle_{\rH}\,dt + \int_0^T\langle u_n(t),\psij(t)\rangle_{\rV}\,dt\nonumber\\
&=&
 \langle u_0,\psij(0)\rangle_{\rH}-\int_0^T\dual{\langle \rB(u_n(t)),\psij(t)\rangle}
 \,dt
 +
 \int_0^T\langle f(t),\psij(t)\rangle_{\rH}\,dt\nonumber\\
 &&+
 \int_0^T\int_{{\rZ}}\langle G(u_n(t),z),\psij(t)\rangle_{\rH}(g_n(t,z)-1)\nu(dz)\,dt.
\end{eqnarray}
Keeping in mind properties (P1), (P2), and (P3) from above and using a argument similar to the proof in \cite[Theorem III.3.1]{Temam_2001}, we see that
\begin{eqnarray}\label{eq3 prop1 Xn lim 01}
&&\lim_{n\to\infty}\Big[-\int_0^T\langle u_n(t),\psij^\prime(t)\rangle_{\rH}\,dt + \int_0^T\langle u_n(t),\psij(t)\rangle_{\rV}\,dt\nonumber\\
&&\ \ \ \ \ \ \ \ -
 \langle u_0,\psij(0)\rangle_{\rH}+\int_0^T\dual{\langle \rB(u_n(t)),\psij(t)\rangle}\,dt
 -
 \int_0^T\langle f(t),\psij(t)\rangle_{\rH}\,dt\Big]\nonumber\\
 &=&
 -\int_0^T\langle \tilde{u}(t),\psij^\prime(t)\rangle_{\rH}\,dt + \int_0^T\langle \tilde{u}(t),\psij(t)\rangle_{\rV}\,dt\nonumber\\
&&-
 \langle u_0,\psij(0)\rangle_{\rH}+\int_0^T\dual{\langle \rB(\tilde{u}(t)),\psij(t)\rangle}\,dt
 -
 \int_0^T\langle f(t),\psij(t)\rangle_{\rH}\,dt.
 \end{eqnarray}
What concerns us is the last term in (\ref{eq3 prop1 Xn lim 00}).
Since  $g_n\to g$ in $S^N$, by Lemma 3.11 in \cite{Budhiraja-Chen-Dupuis}, we infer that
\begin{eqnarray}\label{eq3 prop1 Xn lim last 01}
&&\hspace{-1.4truecm}\lim_{n\to\infty} \int_0^T\int_{{\rZ}}\langle G(\tilde{u}(t),z),\psij(t)\rangle_{\rH}(g_n(t,z)-1)\nu(dz)\,dt\nonumber\\
&=&
\int_0^T\int_{{\rZ}}\langle G(\tilde{u}(t),z),\psij(t)\rangle_{\rH}(g(t,z)-1)\nu(dz)\,dt.
\end{eqnarray}
\vskip 0.2cm

Next, for $\delta>0$, we set
$$
A_{n,\delta}:=\{t\in[0,T],\ \|u_n(t)-\tilde{u}(t)\|_{\rV}\geq \delta\}.
$$
Since $u_n\to \tilde{u}$ strongly in $L^2([0,T],\rV)$, {by applying the Chebyshev inequality we infer that}
\begin{eqnarray}\label{eq3 An delta}
\lim_{n\to \infty}\Leb_{[0,T]}(A_{n,\delta})
\leq
\lim_{n\to \infty}\frac{ {1}}{\delta {^2}}\int_0^T\|u_n(t)-\tilde{u}(t)\|^2_{\rV}\,dt
=
0,
\end{eqnarray}
where $\Leb_{[0,T]}$ is the Lebesgue measure on $[0,T]$.

{Fix $\delta>0$.
Then, \addjzok{by Assumption \ref{con LDP} and assertion (P2)}, we infer that
\begin{eqnarray}\label{eq 20180624-00}
&&\hspace{-2truecm}\Big|\int_0^T\int_{{\rZ}}\langle G(u_n(t),z)-G(\tilde{u}(t),z),\psij(t)\rangle_{\rH}(g_n(t,z)-1)\nu(dz)\,dt\Big|\nonumber\\
&\leq&
     \vert \psi\vert_{L^\infty([0,T];\rV)} \int_0^T\|u_n(t)-\tilde{u}(t)\|_{\dela{\rV} {\rV}}\int_{{\rZ}}\addjzok{L_{\hbar_0}(z)}|g_n(t,z)-1|\nu(dz)\,dt\nonumber\\
&\leq&
  2\hbar_0 \vert \psi\vert_{L^\infty([0,T];\rV)}  \int_{A_{n,\delta}}\int_{{\rZ}}\addjzok{L_{\hbar_0}(z)}|g_n(t,z)-1|\nu(dz)\,dt\nonumber\\
   &&+
\delta\vert \psi\vert_{L^\infty([0,T];\rV)} \int_0^T\int_{{\rZ}}\addjzok{L_{\hbar_0}(z)}|g_n(t,z)-1|\nu(dz)\,dt,
\end{eqnarray}
\addjzok{where ${\hbar_0}$ is the positive constant appearing in (P2).}}

In what follows, we use the following result;
see  \cite[(3.3) of Lemma 3.1]{Zhai-Zhang},
\cite[ Remark 2]{Yang-Zhai-Zhang},
or \cite[(3.5) of Lemma 3.4]{Budhiraja-Chen-Dupuis}.
\addjzok{\begin{center}
For every function $\Im\in \mathcal{H}\cap L^2(\nu)$ and  every ${\eps}>0$ there exists $\beta>0$ such that for every $O\in\mathcal{B}([0,T])$ with $\Leb_{[0,T]}({O})\leq \beta $, the following inequality holds
\begin{eqnarray}\label{eq3 small {A}}
   \sup_{h\in S^N} \int_{O}\int_{{\rZ}}\Im(z)|h(s,z)-1|\nu(dz)\,ds\leq {\eps}.
\end{eqnarray}
\end{center}}

Hence, by (\ref{eq3 An delta})--(\ref{eq3 small {A}}) and {(\ref{eq3 star local})},
we have
 {\begin{eqnarray}\label{eq3 prop1 Xn lim last 02}
&& \hspace{-2truecm}\limsup_{n\to\infty}
 \Big|\int_0^T\int_{{\rZ}}\langle G(u_n(t),z)-G(\tilde{u}(t),z),\psij(t)\rangle_{\rH}(g_n(t,z)-1)\nu(dz)\,dt\Big|
 \\
 &\leq& \delta \addjzok{C_{\hbar_0,N}} \vert \psi\vert_{L^\infty([0,T];\rV)}.\nonumber
\end{eqnarray}}
Since $\delta>0$ can be chosen arbitrarily small, this implies
\begin{eqnarray}\label{eq3 prop1 Xn lim last 03}
\lim_{n\to\infty}\Big|\int_0^T\int_{{\rZ}}\langle G(u_n(t),z)-G(\tilde{u}(t),z),\psij(t)\rangle_{\rH}(g_n(t,z)-1)\nu(dz)\,dt\Big|
=
0.
\end{eqnarray}

We observe that the above proof of (\ref{eq3 prop1 Xn lim last 02}) yields the following stronger result, which we use later on. \addjzok{For every $\Im\in \mathcal{H}\cap L^2(\nu)$,
\begin{eqnarray}\label{eq3 star00}
\lim_{n\to\infty}\sup_{k\in S^N}\int_0^T\|u_n(s)-\tilde{u}(s)\|_{\rV}\int_{{\rZ}}\Im(z)|k(s,z)-1|\nu(dz)\,ds=0.
\end{eqnarray}}

Combining (\ref{eq3 prop1 Xn lim 01}), (\ref{eq3 prop1 Xn lim last 01}), and (\ref{eq3 prop1 Xn lim last 03}), we arrive at the following:
\begin{eqnarray}\label{eq3 prop1 X lim 00}
&&\hspace{-1.4truecm}\lefteqn{-\int_0^T\langle \tilde{u}(t),\psij^\prime(t)\rangle_{\rH}\,dt + \int_0^T\langle \tilde{u}(t),\psij(t)\rangle_{\rV}\,dt}\nonumber\\
&=&
 \langle u_0,\psij(0)\rangle_{\rH}-\int_0^T\dual{\langle \rB(\tilde{u}(t)),\psij(t)\rangle}\,dt
 +
 \int_0^T\langle f(t),\psij(t)\rangle_{\rH}\,dt\nonumber\\
 &&+
 \int_0^T\int_{{\rZ}}\langle G(\tilde{u}(t),z),\psij(t)\rangle_{\rH}(g(t,z)-1)\nu(dz)\,dt.
\end{eqnarray}
From here, following an argument as in the proof of  Theorems 3.1 and 3.2 from  \cite[Sect. 3, Chapter III]{Temam_2001},
we can conclude that $\tilde{u}$ is a solution of equation (\ref{eq3 LDP deter 00}) as claimed, and then by the uniqueness of this solution, $\tilde{u}=u=u^g.$

\vskip 0.1cm
In the final stage of our proof of Proposition \ref{prop-1st continuity lemma}, we prove that
\[u_n\to u \mbox{ in } C([0,T],\rV)\cap L^2([0,T],\mathcal{D}({\rA})).\]

For this purpose, let $\Zbig_n=u_n-u$. Then by \cite[Lemma III.1.2]{Temam_2001} (in the space $\rV$) we get, for all $t\in [0,T]$,
\begin{eqnarray}\label{eq3 Zn 00}
&&\hspace{-1.3truecm}\lefteqn{\|\Zbig_n(t)\|^2_{\rV} + 2\int_0^t\|\Zbig_n(s)\|^2_{\mathcal{D}({\rA})}\,ds}\\
&=&
 -2\int_0^t\langle \rB(u_n(s))-\rB(u(s)),{\rA}\Zbig_n(s)\rangle_{\rH}\,ds\nonumber\\
 &&+
 2\int_{0}^t\int_{{\rZ}}\Big\langle
                 G(u_n(s),z)(g_n(s,z)-1)-G(u(s),z)(g(s,z)-1),\Zbig_n(s)
                \Big\rangle_{\rV}
 \nu(dz)\,ds\nonumber\\
&\leq&
  \frac{1}{2}\int_0^t\|\Zbig_n(s)\|^2_{\mathcal{D}({\rA})}\,ds + 2\int_0^t|\rB(u_n(s)-\rB(u(s))|^2_{\rH}\,ds\nonumber\\
  &&+
 {\addjzok{2\int_0^t\|\Zbig_n(s)\|^2_{\rV}\int_{{\rZ}}L_{\hbar_0}(z)|g_n(s,z)-1|\nu(dz)}\,ds}\nonumber\\
  &&+
  2\int_0^t\|\Zbig_n(s)\|_{\rV}(1+\|u(s)\|_{\rV})\int_{{\rZ}}L_2(z)(|g_n(s,z)-1|+|g(s,z)-1|)\nu(dz)\,ds.
  \nonumber
\end{eqnarray}

By Lemma \ref{lem B baisc prop}, for all $s\in [0,T]$,
\begin{eqnarray}\label{eq3 prop1 B}
 &&\hspace{-1truecm}\lefteqn{ |\rB(u_n(s))-\rB(u(s))|^2_{\rH}}\nonumber\\
&\leq&
   2\Big(|\rB(u_n(s),\Zbig_n(s))|^2_{\rH}+|\rB(\Zbig_n(s),u(s)|^2_{\rH}\Big)\nonumber\\
&\leq&
  C\Big(
   |u_n(s)|_{\rH}\|u_n(s)\|_{\rV}\|\Zbig_n(s)\|_{\rV}\|\Zbig_n(s)\|_{\mathcal{D}({\rA})} + |\Zbig_n(s)|_{\rH}\|\Zbig_n(s)\|_{\rV}\|u(s)\|_{\rV}\|u(s)\|_{\mathcal{D}({\rA})}
  \Big)\nonumber\\
&\leq&
  \frac{1}{4}\|\Zbig_n(s)\|_{\mathcal{D}({\rA})}^2 + C \|\Zbig_n(s)\|^2_{\rV}\Big(\|u_n(s)\|^4_{\rV}+\|u(s)\|_{\rV}\|u(s)\|_{\mathcal{D}({\rA})}\Big).
\end{eqnarray}
Substituting (\ref{eq3 prop1 B}) into (\ref{eq3 Zn 00}), and by Lemma \ref{lem3 LDP deter 1}, since $u \in L^\infty([0,T];\rV)$, we obtain
\begin{eqnarray}\label{eq3 Zn 01}
&&\hspace{-1truecm}\lefteqn{ \|\Zbig_n(t)\|^2_{\rV} + \int_0^t\|\Zbig_n(s)\|^2_{\mathcal{D}({\rA})}\,ds}
\nonumber\\
&\leq&
 \int_0^t\|\Zbig_n(s)\|^2_{\rV} \Big(
              C\|u_n(s)\|^4_{\rV}+C\|u(s)\|_{\rV}\|u(s)\|_{\mathcal{D}({\rA})}+ {\addjzok{2\int_{{\rZ}}L_{\hbar_0}(z)|g_n(s,z)-1|\nu(dz)}}
             \Big)
   ds\nonumber\\
   &&+
   C\sup_{h\in S^N}\int_0^T\|\Zbig_n(s)\|_{\rV}\int_{{\rZ}}L_2(z)|h(s,z)-1|\nu(dz)\,dt,\;\;\; t\in [0,T] .
\end{eqnarray}

By Gronwall's lemma,
Lemma \ref{lem3 LDP deter 1} and (\ref{eq3 star}) imply that
\begin{eqnarray*}
&&\hspace{-1truecm}\lefteqn{ \sup_{t\in[0,T]}\|\Zbig_n(t)\|^2_{\rV} + \int_0^T\|\Zbig_n(t)\|^2_{\mathcal{D}({\rA})}\,dt}\\
&&
\leq e^{C\int_0^T\Big(
              \|u_n(s)\|^4_{\rV}+\|u(s)\|_{\rV}\|u(s)\|_{\mathcal{D}({\rA})}+ {\addjzok{\int_{{\rZ}}L_{\hbar_0}(z)|g_n(s,z)-1|\nu(dz)}}
             \Big)
        ds}\\
        &&\ \ \ \ \ \cdot\sup_{h\in S^N}\int_0^T\|\Zbig_n(s)\|_{\rV}\int_{{\rZ}}L_2(z)|h(s,z)-1|\nu(dz)\,dt\\
&&\leq
C_{N,T}\sup_{h\in S^N}\int_0^T\|\Zbig_n(s)\|_{\rV}\int_{{\rZ}}L_2(z)|h(s,z)-1|\nu(dz)\,dt,\;\; t\in [0,T].
\end{eqnarray*}
Note that the integral $\int_0^T\bigl(\|u_n(s)\|^4_{\rV}+\|u(s)\|_{\rV}\|u(s)\|_{\mathcal{D}({\rA})} \bigr)\, ds$ is finite in view of Lemma \ref{lem3 LDP deter 1}.

Therefore, by (\ref{eq3 star00})
\begin{eqnarray}\label{eq LDP claim 1}
\lim_{n\to\infty}\sup_{t\in[0,T]}\|\Zbig_n(t)\|^2_{\rV} + \int_0^T\|\Zbig_n(t)\|^2_{\mathcal{D}({\rA})}\,dt=0.
\end{eqnarray}
\vskip 0.2cm

The proof of Proposition \ref{prop-1st continuity lemma} is thus complete.

\end{proof}



\section{A generalization of the Girsanov Theorem}\label{sec-Girsanov}

The aim of this section is to establish a certain generalization of the Girsanov Theorem. This result will then be used in Section \ref{sec-LDP-verification} to verify \textbf{Claim-LDP-2}. At the beginning of this section, we state and prove Lemma \ref{Lem Q4 01}. This is followed by the second part in which we prove Theorem \ref{thm-Girsanov}, which is the main result of this section.


Before formulating the main result of this section, i.e.,  Theorem \ref{thm-Girsanov}, we give Lemma \ref{Lem Q4 01} below.
In order to formulate it, let us recall
the sets $K_n$ that were introduced on page \pageref{eqn-K_n},
and let us introduce, for $n\in\mathbb{N}$, the following set:
\begin{eqnarray}\label{eqn-barA_bn}
\bar{\mathbb{A}}_{b,n}&=&\Big\{\varphi\in\bar{\mathbb{A}}:
                  \varphi(t,z,\omega)\in[\frac{1}{n},n]
                  \ \mbox{ if }\ (t,z,\omega)\in [0,T]\times K_n\times \bar{\Omega}\ \nonumber\\
                 &&\ \ \ \ \ \ \ \ \ \ \ \ \ \ \ \
                   \mbox{ and }\ \varphi(t,z,\omega)=1\ \mbox{ if } (t,z,\omega)\in [0,T]\times K_n^c\times \bar{\Omega}
       \Big\}.
\end{eqnarray}

Note that with the notation defined in \eqref{eqn-A_b},
we have the following equality:
 $$\bar{\mathbb{A}}_{b}=\bigcup_{n=1}^\infty \bar{\mathbb{A}}_{b,n}.$$

The proof of Lemma 2.4 in \cite{Budhiraja-Dupuis-Maroulas} implies the following result, but since that paper's authors did not give the details, we present a detailed proof.
This result is important in proving Theorem \ref{thm-Girsanov}.

\begin{lemma}\label{Lem Q4 01}
Assume that $n\in \mathbb{N}$ and that
 $\varphi\in \bar{\mathbb{A}}_{b,n}$.
Then, there exists an $\bar{\mathbb{A}}_{b,n}$-valued sequence $\bigl(\psi_m\bigr)_{m\in\mathbb{N}}$ such that the following properties are satisfied.
\begin{itemize}
\item[{\rm\textbf{(R1)}}] {For every $m$}, there exist $l\in\mathbb{N}$ and $n_1,\cdots,n_l\in\mathbb{N}$,  a partition $0 =t_0<t_1<\cdots<t_l=T$ and  families
 \begin{eqnarray*}
 &&\Xbig_{ij}, \;\; i=1,\cdots,l,\; j=1,\cdots,n_i,\\
  &&E_{ij}, \;\; i=1,\cdots,l,\; j=1,\cdots,n_i,
 \end{eqnarray*}
 such that
the $ \Xbig_{ij}$ are $[\frac1n,n]$-valued,  $\G_{t_{i-1}}$-measurable random
variables,
 and, for each $i=1,\cdots,l$, $\bigl(E_{ij}\bigr)_{j=1}^{n_i}$ is a measurable partition of the set $K_n$, such that
\begin{eqnarray}\label{eqn-psi_m}
\psi_m(t,z,\omega)&=&\mathds{1}_{\{0\}}(t)+\sum_{i=1}^l\sum_{j=1}^{n_i}\mathds{1}_{(t_{i-1},t_i]}(t) \Xbig_{ij}(\omega)\mathds{1}_{E_{ij}}(z)+\mathds{1}_{K_n^c}(z)\mathds{1}_{(0,T]}(t)
\\&& \mbox{ for all } (t,z,\omega) \in [0,T]\times \rZ\times \bar{\Omega}.
\nonumber
\end{eqnarray}
\item[{\rm \textbf{(R2)}}] $\lim_{m\to\infty}\int_0^T|\psi_m(t,z,\omega)-\varphi(t,z,\omega)|\,dt=0$, for $\nu\otimes\Q$-a.a. $(z,\omega)\in \rZ\times \bar{\Omega}$.
\end{itemize}
\end{lemma}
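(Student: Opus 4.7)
My plan is to construct $\psi_m$ by a two-stage discretization of $\varphi$ on $(0,T]\times K_n$ (outside this product set both $\psi_m$ and $\varphi$ are required to equal $1$): first a \emph{shifted} time discretization on a dyadic partition of $[0,T]$ (the backward shift being what guarantees $\Pc$-measurability), and then a spatial discretization on the compact Polish space $K_n$ by means of $\nu$-averages over a refining sequence of Borel partitions.  The approximants will converge to $\varphi$ in $L^1(dt\otimes \nu|_{K_n}\otimes \Q)$; a subsequence, relabelled $\{\psi_m\}$, will then satisfy the pointwise condition \textbf{(R2)}.

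More precisely, set $\Delta_m:=T/2^m$, $t_i^{(m)}:=i\Delta_m$, and choose a refining sequence $\{E_{m,j}\}_{j=1}^{n_m}$ of finite Borel partitions of $K_n$ with $\max_j \mathrm{diam}(E_{m,j})\to 0$ (available since $K_n$ is a compact metric space); in particular $\sigma(\bigcup_m\{E_{m,j}\}_j)=\mathcal{B}(K_n)$.  For $i\geq 2$ and $\nu(E_{m,j})>0$ define
\[
   \Xbig_{ij}(\omega) := \frac{1}{\Delta_m\,\nu(E_{m,j})}\int_{t_{i-2}^{(m)}}^{t_{i-1}^{(m)}}\!\int_{E_{m,j}}\varphi(s,y,\omega)\,\nu(dy)\,ds,
\]
and $\Xbig_{ij}:=1$ otherwise; using the same spatial partition for every $i$, the formula \eqref{eqn-psi_m} then defines $\psi_m$.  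Each $\Xbig_{ij}$ is $\G_{t_{i-1}^{(m)}}$-measurable (the time integration ends at $t_{i-1}^{(m)}$ and $\varphi$ is $\Pc$-measurable) and takes values in $[1/n,n]$, so $\psi_m\in\bar{\mathbb{A}}_{b,n}$.

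For convergence, introduce the auxiliary process $\tilde\psi_m$ defined like $\psi_m$ but replacing the interval $[t_{i-2}^{(m)},t_{i-1}^{(m)}]$ by $[t_{i-1}^{(m)},t_i^{(m)}]$ (no backward shift).  For each fixed $\omega$, $\tilde\psi_m(\cdot,\cdot,\omega)$ is precisely the conditional expectation of $\varphi(\cdot,\cdot,\omega)$ with respect to the product $\sigma$-algebra on $[0,T]\times K_n$ generated by the rectangles $(t_{i-1}^{(m)},t_i^{(m)}]\times E_{m,j}$; since these $\sigma$-algebras increase to $\mathcal{B}([0,T])\otimes\mathcal{B}(K_n)$, Doob's martingale convergence theorem gives $\tilde\psi_m\to\varphi$ in $L^1(dt\otimes\nu|_{K_n})$ for every $\omega$, which by dominated convergence (uniform bound $n$) upgrades to $L^1(dt\otimes\nu|_{K_n}\otimes\Q)$-convergence.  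Observe that $\psi_m(t,x,\omega)=\tilde\psi_m(t-\Delta_m,x,\omega)$ for $t\in(\Delta_m,T]$, so the standard $L^1$-translation continuity $\int_0^{T-\Delta_m}|\varphi(s,x,\omega)-\varphi(s+\Delta_m,x,\omega)|\,ds\to 0$ (valid pathwise for the bounded function $s\mapsto \varphi(s,x,\omega)\in L^1([0,T])$), together with the trivial bound $\int_0^{\Delta_m}|\psi_m-\varphi|dt\leq 2n\Delta_m$, yields
\[
   \int_{K_n\times\bar{\Omega}}\!\int_0^T|\psi_m(t,x,\omega)-\varphi(t,x,\omega)|\,dt\,(\nu\otimes\Q)(dx\,d\omega)\;\xrightarrow[m\to\infty]{}\;0,
\]
whence a subsequence, still denoted $\{\psi_m\}$, satisfies the a.e.\ pointwise conclusion in \textbf{(R2)}.

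The principal obstacle is that the spatial martingale convergence on $K_n$ supplies only $L^1$-, rather than pointwise, convergence in $x$ for fixed $\omega$, so the final subsequence extraction seems unavoidable.  A related subtle point is that preserving $\Pc$-measurability forces the one-step backward time-shift in the definition of $\Xbig_{ij}$, which must then be absorbed using $L^1$-translation continuity of $s\mapsto\varphi(s,x,\omega)$ — this keeps the whole argument at the $L^1$-level rather than exploiting any stronger mode of convergence.
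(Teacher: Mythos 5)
Your construction is correct, and it reaches the lemma by a genuinely different (more explicit) route than the paper. The paper follows the three-stage scheme of Budhiraja--Dupuis--Maroulas: first a backward-in-time mollification $\varphi_k(t,x,\omega)=k(\tfrac1k-t)^++k\int_{(t-1/k)^+}^t\varphi(s,x,\omega)\,ds$ (which is predictable and pathwise continuous in $t$), then piecewise-constant sampling of $\varphi_k$ at grid points $m/q$, then an abstract simple-function approximation of each sampled $\mathcal{B}({\rZ})\otimes\G_{m/q}$-measurable map by $\sum_j c^r_j(\omega)1_{E^r_j}(x)$ converging $\nu\otimes\Q$-a.e.; the single sequence $\psi_m$ is finally produced by a quantitative diagonal argument (the sets $I_m^i$, $II_m^{k,i}$, $III_m^{k,q,i}$ with exceptional mass at most $1/m^2$) gluing the three a.e. limits. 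You instead build the step process in one stroke, as backward-shifted time-cell averages combined with $\nu$-averages over a refining partition of the compact set $K_n$, and prove convergence by identifying the unshifted version with a conditional expectation (martingale convergence in $L^1(dt\otimes\nu|_{K_n})$ for each $\omega$), absorbing the predictability-forced time shift via $L^1$-translation continuity, and then extracting one subsequence from convergence in $L^1(dt\otimes\nu|_{K_n}\otimes\Q)$ — which is legitimate since the lemma only asks for existence of some sequence satisfying \textbf{(R2)}. Both proofs share the essential idea of averaging backward in time to keep $\Pc\otimes\mathcal{B}({\rZ})$-measurability; what your version buys is a self-contained, explicit formula for $\psi_m$ and a single limit/extraction in place of three staged a.e. limits plus diagonal bookkeeping, at the (mild) cost of needing a mesh-vanishing refining partition of $K_n$ and the martingale convergence theorem, whereas the paper's abstract simple-function step and its $1/m^2$ estimates do that work there. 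Two small points worth stating explicitly if you write this up: the restriction of a predictable process to $[0,t_{i-1}]\times\bar{\Omega}$ is $\mathcal{B}([0,t_{i-1}])\otimes\G_{t_{i-1}}$-measurable, which together with Fubini is what makes your cell averages $\G_{t_{i-1}}$-measurable; and cells with $\nu(E_{m,j})=0$ (where you set the coefficient to $1$) only affect a $\nu$-null set of $x$, so they do not disturb \textbf{(R2)}.
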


\begin{proof}[Proof of Lemma \ref{Lem Q4 01}]
Fix $n\in \mathbb{N}$ and
 $\varphi\in \bar{\mathbb{A}}_{b,n}$.

First, let us remark that, ``$\varphi_k$" on page 729 line -6, in the proof of \cite[Lemma 2.4]{Budhiraja-Dupuis-Maroulas}
 should read as follows
\begin{equation}\label{eqn-phi_k}
\varphi_k(t,z,\omega)=k(\frac{1}{k}-t)^+ +k\int_{(t-1/k)^+}^t\varphi(s,z,\omega)\,ds.
\end{equation}
One can check that
\begin{equation}\label{eqn-phi_k-2}
\varphi_k(t,z,\omega)=1, \text{ on }(t,z,\omega)\in [0,T]\times K_n^c\times\bar{\Omega}.
\end{equation}

In the proof of \cite[Lemma 2.4]{Budhiraja-Dupuis-Maroulas}, the authors  proved the following three assertions.
\begin{itemize}
 \item [(L1)] The process $\varphi_k$ defined in \eqref{eqn-phi_k} satisfies the following three properties:
   \begin{itemize}
    \item [(L1.1)] $\lim_{k\to\infty}\int_0^T|\varphi_k(t,z,\omega)-\varphi(t,z,\omega)|\,dt=0$, $\nu\otimes\Q$-a.s. $(z,\omega)\in \rZ\times \bar{\Omega}$,
    \item [(L1.2)] $\varphi_k\in\bar{\mathbb{A}}_{b,n}$,
    \item [(L1.3)] the function $[0,\infty) \ni t\mapsto \varphi_{k}(t,z,\omega)$ is continuous for $\nu\otimes\Q$-a.s. $(z,\omega)\in \rZ\times \bar{\Omega}$.
   \end{itemize}

 \item [(L2)] If, for $k,q\in\mathbb{N}$, we set
    $$
     \varphi_k^q(t,z,\omega)=\mathds{1}_{\{0\}}(t)+\sum_{m=0}^{\llcorner qT\lrcorner}\varphi_k(\frac{m}{q},z,\omega)\mathds{1}_{(m/q,(m+1)/q]}(t),
     \ \ (t,z,\omega)\in[0,T]\times{\rZ}\times\bar{\Omega}.
    $$
    then
    \begin{itemize}
      \item [(L2.1)] $\varphi_k^q\in\bar{\mathbb{A}}_{b,n}$,

      \item [(L2.2)]
        $$
        \lim_{q\to\infty}\int_0^T|\varphi_k^q(t,z,\omega)-\varphi_k(t,z,\omega)|\,dt=0,\ \nu\otimes\Q\text{-a.s.}\ (z,\omega)\in \rZ\times \bar{\Omega}.
        $$
    \end{itemize}

 \item [(L3)] For all $k,q\in\mathbb{N}$, there exists an $\bar{\mathbb{A}}_{b,n}$-valued sequence of processes $\bigl(\varphi_k^{q,r}\bigr)_{k=1}^\infty$ such that
       \begin{itemize}
        \item [(L3.1)] for every $k$, $\varphi_k^{q,r}$ satisfies condition {\rm\textbf{(R1)}},
        \item [(L3.2)]
          $
          \lim_{r\to\infty}\int_0^T|\varphi_k^{q,r}(t,z,\omega)-\varphi_k^q(t,z,\omega)|\,dt=0,\ \nu\otimes\Q\text{-a.s.}\ (z,\omega)\in \rZ\times \bar{\Omega}.
          $
\end{itemize}

\end{itemize}

Note that      claim (L2.2) follows easily from Claim (L1.3).

In order to prove assertion (L3), we repeat the following argument from the proof of \cite[Lemma 2.4]{Budhiraja-Dupuis-Maroulas}.

Note that for fixed $q$ and $m$, $g(z,\omega)=\varphi_k(\frac{m}{q},z,\omega)$ is a $\mathcal{B}({\rZ})\otimes\G_{m/q}$-measurable map with values in $[1/n,n]$ and $g(z,\omega)=1$ for $z\in K_n^c$.
By a standard approximation procedure,
 one can find $\mathcal{B}({\rZ})\otimes\G_{m/q}$-measurable maps $g_r,\ r\in\mathbb{N}$
with the following properties: $g_r(z,\omega)=\sum_{j=1}^{a(r)}c_j^r(\omega)\mathds{1}_{E_j^r}(z)$ for $z\in K_n$, where for each $r$, $\{E_j^r\}_{j=1}^{a(r)}$ is some measurable partition of $K_n$ and for all $j,r,$ $c_j^r(\omega)\in[1/n,n]$ a.s.; $g_r(z,\omega)=1$ for $z\in K_n^c$;
$g_r\to g$, as $r\to \infty$,  $\nu\otimes\Q$-a.s..

Having established the above, it is easy to see that assertion (L3.2) holds.

\vskip 0.2cm
Moreover, these three assertions imply our result in Lemma \ref{Lem Q4 01}. This can be seen as follows.

First of all, it is easy to see that, for any $k,q,r\in\mathbb{N}$
\begin{eqnarray}\label{eq 2018-06-26}
\varphi(t,z,\omega)=\varphi_k(t,z,\omega)=\varphi_k^q(t,z,\omega)=\varphi_k^{q,r}(t,z,\omega)=1,\ \ \text{for }(t,z,\omega)\in [0,T]\times K_n^c\times\bar{\Omega}.
\end{eqnarray}
Hence, we only need to consider the case of $z\in K_n$.

Set
$$
\Omega_1=\Big\{
      (z,\omega)\in K_n\times\bar{\Omega}:\ \lim_{k\to\infty}\int_0^T|\varphi_k(t,z,\omega)-\varphi(t,z,\omega)|\,dt=0
     \Big\}.
$$
Then assertion (L1.1) implies
\begin{eqnarray}\label{Lem Q4 eq step 1 00}
(\nu\otimes\Q)(K_n\times\bar{\Omega} \setminus\Omega_1)=0.
\end{eqnarray}

For simplicity, in the following we set ${O}^c=K_n\times\bar{\Omega}\setminus{O}$ for any ${O}\subset K_n\times\bar{\Omega}$, and keep in mind
that $\nu(K_n)<\infty$.

\vskip 0.2cm
Let
$$
I_m^i=\bigcap_{k\geq i} \Big\{
      (z,\omega)\in K_n\times\bar{\Omega}:\ \int_0^T|\varphi_k(t,z,\omega)-\varphi(t,z,\omega)|\,dt\leq 1/m
     \Big\}.
$$
It is easy to see that
$$
I_m^i\subset I_{m}^{i+1}, \;\; \bigcup_{i=1}^{\infty}I_m^i=:\Omega_{1,m} \mbox{ and } \Omega_{1}\subset\Omega_{1,m}.
$$
Considering (\ref{Lem Q4 eq step 1 00}) with this,  there exists $i_m$ such that
\begin{eqnarray}\label{Lem Q4 eq step 1 01}
(\nu\otimes\Q)((I_m^{i_m})^c)\leq \frac{1}{m^2}.
\end{eqnarray}
\vskip 0.2cm

Set
$$
\Omega_{2,k}=\Big\{
      (z,\omega)\in K_n\times\bar{\Omega}:\ \lim_{q\to\infty}\int_0^T|\varphi_k^q(t,z,\omega)-\varphi_k(t,z,\omega)|\,dt=0
     \Big\},
$$
and
$$
\Omega_{3,k,q}=\Big\{
      (z,\omega)\in K_n\times\bar{\Omega}:\ \lim_{r\to\infty}\int_0^T|\varphi_k^{q,r}(t,z,\omega)-\varphi_k^q(t,z,\omega)|\,dt=0
     \Big\}.
$$
Let
$$
II_m^{k,i}=\cap_{q\geq i} \Big\{
      (z,\omega)\in K_n\times\bar{\Omega}:\ \int_0^T|\varphi_k^q(t,z,\omega)-\varphi_k(t,z,\omega)|\,dt\leq \frac{1}{m}
     \Big\},
$$
and
$$
III_m^{k,q,i}=\cap_{r\geq i} \Big\{
      (z,\omega)\in K_n\times\bar{\Omega}:\ \int_0^T|\varphi_k^{q,r}(t,z,\omega)-\varphi_k^q(t,z,\omega)|\,dt\leq \frac{1}{m}
     \Big\}.
$$
Using arguments similar to (\ref{Lem Q4 eq step 1 01}), there exist $j_{m,k}$ and $l_{m,k,q}$ such that
\begin{eqnarray}\label{Lem Q4 eq step 1 02}
(\nu\otimes\Q)((II_m^{k,j_{m,k}})^c)\leq \frac{1}{m^2},
\end{eqnarray}
and
\begin{eqnarray}\label{Lem Q4 eq step 1 03}
(\nu\otimes\Q)((III_m^{k,q,l_{m,k,q}})^c)\leq \frac{1}{m^2}.
\end{eqnarray}

By the definition of $I_m^k,\ II_m^{k,q},\ III_m^{k,q,r}$,
\begin{eqnarray*}
 &&\hspace{-2truecm}\lefteqn{ I_m^k\cap II_m^{k,q}\cap III_m^{k,q,r}}\\
&\subset&
 \Big\{(z,\omega)\in K_n\times \bar{\Omega}:
  \int_0^T|\varphi_k(t,z,\omega)-\varphi(t,z,\omega)|\,dt\leq \frac{1}{m}
  \Big\}\\
  &&\cap
  \Big\{(z,\omega)\in K_n\times \bar{\Omega}:\int_0^T|\varphi_k^{q}(t,z,\omega)-\varphi_k(t,z,\omega)|\,dt\leq \frac{1}{m} \Big\}\\
   &&\cap
   \Big\{(z,\omega)\in K_n\times \bar{\Omega}: \int_0^T|\varphi_k^{q,r}(t,z,\omega)-\varphi_k^q(t,z,\omega)|\,dt\leq \frac{1}{m} \Big\},
\end{eqnarray*}
so, for any $(z,\omega)\in I_m^k\cap II_m^{k,q}\cap III_m^{k,q,r}$,
\begin{eqnarray}\label{Lem Q4 eq step 1 04}
 \int_0^T|\varphi_k^{q,r}(t,z,\omega)-\varphi(t,z,\omega)|\,dt\leq \frac{3}{m}.
\end{eqnarray}
\vskip 0.2cm

For
\begin{equation}\label{eqn-kqj}
\mbox{$k=i_m$, $q=j_{m,k}=j_{m,i_m}$, $r=l_{m,k,q}=l_{m,i_m,j_{m,i_m}}$},
\end{equation}
set \[\psi_m=\varphi_k^{q,r} \]
and define
$$
\Omega_0=\cup_{i=1}\cap_{m\geq i}
       \Big\{
          I_m^k\cap II_m^{k,q}\cap III_m^{k,q,r}\ 
       \Big\}.
$$
{In the following, we prove that $\psi_m$ satisfies {\rm\textbf{(R1)}} and {\rm\textbf{(R2)}}, while keeping in mind (\ref{eq 2018-06-26}).
{The claim for \rm\textbf{(R1)}} is obvious.}
For \rm\textbf{(R2)}, it is easy to see $\Omega_0\subset K_n\times\bar{\Omega}$,
thus, we only need to prove the following two results
\begin{eqnarray}\label{Lem Q4 eq step 1 05}
&&(\nu\otimes\Q)(\Omega_0^c)\,=\,0,\\
 && \lim_{m\to\infty}\int_0^T|\psi_m(t,z,\omega)-\varphi(t,z,\omega)|\,dt,\ \text{on } (z,\omega)\in\Omega_0.
 \label{Lem Q4 eq step 1 05-b}
\end{eqnarray}

We use convention \eqref{eqn-kqj} again to get
$$\cap_{m= i}^\infty
       \Big\{
          I_m^k\cap II_m^{k,q}\cap III_m^{k,q,r}\ 
       \Big\}\toup \Omega_0 \mbox{ as } k \to \infty,
$$
and so, by (\ref{Lem Q4 eq step 1 01})-(\ref{Lem Q4 eq step 1 03}), we infer that for every $i\in\mathbb{N}$,
\begin{eqnarray}\label{Lem Q4 eq step 1 06}
&&\hspace{-1truecm}\lefteqn{(\nu\otimes\Q)(\Omega_0^c)
\leq
  (\nu\otimes\Q)\Big(\Big(\cap_{m\geq i}
       \Big\{
          I_m^k\cap II_m^{k,q}\cap III_m^{k,q,r}\ 
       \Big\}\Big)^c\Big)}\nonumber\\
&\leq&
   \sum_{m=i}^\infty
   \Big(
     (\nu\otimes\Q)((I_m^{k})^c)
     +
     (\nu\otimes\Q)((II_m^{k,q})^c)
     +
     (\nu\otimes\Q)((III_m^{k,q,r})^c)
   \Big)\\
    &\leq&
  \sum_{m=i}^\infty\frac{3}{m^2}.\nonumber
\end{eqnarray}
Since $  \sum_{m=i}^\infty\frac{3}{m^2} \to 0$ as $i\to\infty$,
we conclude the proof of equality \eqref{Lem Q4 eq step 1 05}.

Next, we prove claim \eqref{Lem Q4 eq step 1 05-b}.

For this, fix $(z,\omega)\in \Omega_0$. Then, using the convention \eqref{eqn-kqj} again, there exists $i_0 \in \mathbb{N}$ such that
$$
(z,\omega)\in \bigcap_{m=i_0}^\infty
       \Big\{
          I_m^k\cap II_m^{k,q}\cap III_m^{k,q,r}\ 
       \Big\}.
$$
Thus by (\ref{Lem Q4 eq step 1 04}), we infer that
\begin{eqnarray}\label{Lem Q4 eq step 1 07}
 \int_0^T|\psi_m(t,z,\omega)-\varphi(t,z,\omega)|\,dt\leq \frac{3}{m},\ \ \mbox{ for all } m\geq i_0.
\end{eqnarray}
Hence \eqref{Lem Q4 eq step 1 05-b} follows,
and therefore we have proved that {\rm\textbf{(R2)}} holds.
Consequently, the proof of Lemma \ref{Lem Q4 01} is complete.
\end{proof}

\vskip 0.3cm

Fix $\eps>0$ and $\varphi_{\eps}\in\bar{\mathbb{A}}_b$,
and set $\psi_{\eps}=1/{\varphi_{\eps}}$.
In view of the definition \eqref{eqn-A_b} of the set $\bar{\mathbb{A}}_b$ , it is easy
 to see that $\psi_{\eps}\in\bar{\mathbb{A}}_b$. In particular, there exists a natural number $n\in\mathbb{N}$ such that
 $$
 \psi_{\eps}(t,z,\omega)\in[\frac{1}{n},n]\ \mbox{ if }\ (t,z,\omega)\in[0,T]\times K_n\times\bar{\Omega},
 $$
where $K_n$ is a compact subset of ${\rZ}$ from \eqref{eqn-K_n} and
 $$
 \psi_{\eps}(t,z,\omega)=1\ \mbox{ if }\ (t,z,\omega)\in[0,T]\times K^c_n\times\bar{\Omega}.
 $$
 Combining the fact that $\nu(K_n)<\infty$,
 we infer the following four assertions grouped for convenience in Theorem \ref{thm-Girsanov}.

\begin{theorem}\label{thm-Girsanov}
In the framework introduced above, the following hold.
\begin{itemize}
\item[(S1)] The process $\mathcal{M}^{\eps}_t(\psi_{\eps})$, $t\in[0,T]$, defined by
\begin{eqnarray}
\label{eqn-M^eps}
&&\hspace{-3truecm}\lefteqn{ \mathcal{M}^{\eps}_t(\psi_{\eps})=\exp\Big(
                     \int_{(0,t]\times{\rZ}\times[0,{\eps}^{-1}\varphi_{\eps}(s,z)]}\log(\psi_{\eps}(s,z))\Nbar(ds,dz,dr)}
                     \\&&\ \ \ \ \ \ \ +
                     \int_{(0,t]\times{\rZ}\times[0,{\eps}^{-1}\varphi_{\eps}(s,z)]}\Big(-\psi_{\eps}(s,z)+1\Big)\nu(dz)\,dsdr
                       \Big)\nonumber\\
                  &&\hspace{-1.5truecm}=\exp\Big(
                     \int_{(0,t]\times K_n\times[0,{\eps}^{-1}\varphi_{\eps}(s,z)]}\log(\psi_{\eps}(s,z))\Nbar(dsdzdr)
                     \nonumber \\&&\ \ \ \ \ \ \ +
                     \int_{(0,t]\times K_n\times[0,{\eps}^{-1}\varphi_{\eps}(s,z)]}\Big(-\psi_{\eps}(s,z)+1\Big)\nu(dz)\,dsdr
\nonumber                       \Big), \;\; t\in [0,T],
\end{eqnarray}
is an $\Gb$-martingale {on $(\bar{\Omega},\G,\Gb,\Q)$}.

\item[(S2)] The formula
$$
\mathbb{P}^{\eps}_T({O})=\int_{O}\mathcal{M}^{\eps}_T(\psi_{\eps})\,d\Q,\ \ \forall {O}\in \G
$$
defines a probability measure on $(\bar{\Omega},\G)$.

\item[(S3)] The measures $\Q$ and $\mathbb{P}^{\eps}_T$ are equivalent.

\item[(S4)] The laws on {$\MT$} of  the following two random variables are equal: (i) ${\eps} N^{{\eps}^{-1}\varphi_{\eps}}$ defined on probability space $(\bar{\Omega},\G,\Gb,\mathbb{P}^{\eps}_T)$
and (ii) ${\eps} N^{{\eps}^{-1}}$ defined on probability space $(\bar{\Omega},\G,\Gb,\Q)$.
 \end{itemize}
\end{theorem}
\todozbdel{ZB to JZ: I added the below and above in blue. }
\addaok{Note: Recall that the two processes appearing in Assertion (S4) were introduced in equality \eqref{Jump-representation}.}

\vskip 0.2cm
{Although, Theorem \ref{thm-Girsanov} is a ``standard result", see for instance  \cite[Theorem III.3.24]{Jacod-Shiryaev} for the semimartingales and  \cite[Theorem 3.10.21]{BICHTELER} for the Poisson point processes; it seems hard to find an accessible reference in the literature which  would work under our conditions. Therefore, we give a detailed proof of this result in our situation.}

\begin{proof}[Proof of Theorem \ref{thm-Girsanov}]

Since assertion (S2) is implied by assertion (S1), we only prove assertions (S1), (S3), and (S4). We divide the proof into three steps.

\begin{proof}[\bf{Step 1}]
Assume that $\varphi_{\eps}$ is a step process
(see Lemma \ref{Lem Q4 01});
i.e.,
there exist $l,n_1,\cdots,n_l\in\mathbb{N}$ and
a partition \[0=t_0<t_1<\cdots<t_l=T,\]
$[\frac1n,n]$-valued random variables \[ \Xbig_{ij}, \;\; i=1,\cdots,l, \;\; j=1,\cdots,n_i,\]
such that $\Xbig_{ij}$ is $\G_{t_{i-1}}$-measurable,
and,
for each $i=1,\cdots,l$,
a disjoint measurable partition $\bigl(E_{ij}\bigr)_{j=1,\cdots,n_i}$ of the set $K_n$,
 such that for all $(t,z,\omega) \in [0,T]\times \rZ\times \bar{\Omega}$,
$$
\varphi_{\eps}(t,z,\omega)=\mathds{1}_{\{0\}}(t)+\sum_{i=1}^l\sum_{j=1}^{n_i}\mathds{ 1}_{(t_{i-1},t_i]}(t) \Xbig_{ij}(\omega)\mathds{1}_{E_{ij}}(z)+\mathds{1}_{K_n^c}(z)\mathds{1}_{(0,T]}(t).
$$

Then, for any ${O}\in\mathcal{B}({\rZ})$, we have
\begin{eqnarray*}
  &&\hspace{-3.1truecm}\lefteqn{ N^{{\eps}^{-1}\varphi_{\eps}}(T,{O})=N^{{\eps}^{-1}\varphi_{\eps}}(t_l,{O})}\\
&=&
 \int_0^{t_l}\int_{O}\int_0^\infty \mathds{1}_{(0,{\eps}^{-1}\varphi_{\eps}(s,z)]}(r) \Nbar(ds,dz,dr)\\
&=&
  N^{{\eps}^{-1}\varphi_{\eps}}(t_{l-1},{O})\\
  &&+
  \int_{t_{l-1}}^{t_l}\int_{O\cap {K_n^c}}\int_0^\infty \mathds{1}_{(0,{{\eps}^{-1}\varphi_{\eps}(s,z)}]}(r) \Nbar(ds,dz,dr)\\
  &&+
  \sum_{j=1}^{n_l}\int_{t_{l-1}}^{t_l}\int_{O\cap {E_{lj}}}\int_0^\infty \mathds{1}_{(0, {{\eps}^{-1}\varphi_{\eps}(s,z)]}}(r) \Nbar(ds,dz,dr)\\
&=&
 N^{{\eps}^{-1}\varphi_{\eps}}(t_{l-1},{O})\\
  &&+
  \int_{t_{l-1}}^{t_l}\int_{O\cap {K_n^c}}\int_0^\infty \mathds{1}_{(0, {{\eps}^{-1}}]}(r) \Nbar(ds,dz,dr)\\
  &&+
  \sum_{j=1}^{n_l}\int_{t_{l-1}}^{t_l}\int_{O\cap {E_{lj}}}\int_0^\infty \mathds{1}_{(0, {{\eps}^{-1} \Xbig_{lj}}]}(r) \Nbar(ds,dz,dr).
\end{eqnarray*}
{Moreover, with the process $\mathcal{M}^{\eps}_t(\psi_{\eps})$ defined in formula \eqref{eqn-M^eps}, we have}
\begin{eqnarray}\label{eq Porp M}
&&\hspace{-2.5truecm}\mathcal{M}^{\eps}_T(\psi_{\eps})
=\exp\Big(
                     \int_{(0,T]\times{\rZ}\times[0,{\eps}^{-1}\varphi_{\eps}(s,z)]}\log(\psi_{\eps}(s,z))\Nbar(ds,dz,dr)\nonumber\\
                     &&\hspace{1.2truecm}+
                     \int_{(0,T]\times{\rZ}\times[0,{\eps}^{-1}\varphi_{\eps}(s,z)]}\Big(-\psi_{\eps}(s,z)+1\Big)\nu(dz)\,dsdr
                       \Big)\nonumber\\
&&\hspace{-0.9truecm}=\exp\Big(
                     \int_{(0,T]\times K_n\times[0,{\eps}^{-1}\varphi_{\eps}(s,z)]}\log(\psi_{\eps}(s,z))\Nbar(ds,dz,dr)\nonumber\\
                     &&\hspace{1.2truecm}+
                     \int_{(0,T]\times K_n\times[0,{\eps}^{-1}\varphi_{\eps}(s,z)]}\Big(-\psi_{\eps}(s,z)+1\Big)\nu(dz)\,dsdr
                       \Big)\nonumber\\
&&\hspace{-0.9truecm}=
  \exp\Big(
                     \int_{(0,t_{l-1}]\times K_n\times[0,{\eps}^{-1}\varphi_{\eps}(s,z)]}\log(\psi_{\eps}(s,z))\Nbar(ds,dz,dr)\nonumber\\
                     &&\hspace{1.2truecm}+
                     \int_{(0,t_{l-1}]\times K_n\times[0,{\eps}^{-1}\varphi_{\eps}(s,z)]}\Big(-\psi_{\eps}(s,z)+1\Big)\nu(dz)\,dsdr
                       \Big)\nonumber\\
 &&\cdot
  \exp\Big( \sum_{j=1}^{n_l} {\Big[}
                   \int_{{(t_{l-1},t_l]}}\int_{E_{lj}}\int_{(0,{\eps}^{-1}\Xbig_{lj} ]}\log(\frac{1}{\Xbig_{lj} })\Nbar(ds,dz,dr)\nonumber\\
                   &&\hspace{3truecm}+
                   \int_{{(t_{l-1},t_l]}}\int_{E_{lj}}\int_{(0,{\eps}^{-1}\Xbig_{lj} ]}\Big(-\frac{1}{\Xbig_{lj} }+1\Big)\nu(dz)\,dsdr
               {\Big]}
    \Big)\nonumber\\
&&\hspace{-0.9truecm}=\mathcal{M}^{\eps}_{t_{l-1}}(\psi_{\eps})
  \cdot
  \exp\Big( \sum_{j=1}^{n_l} {\Big[}
                   \int_{{(t_{l-1},t_l]}}\int_{E_{lj}}\int_{(0,{\eps}^{-1}\Xbig_{lj} ]}\log(\frac{1}{\Xbig_{lj} })\Nbar(ds,dz,dr)\nonumber\\
                   &&\hspace{3truecm}+
                   \int_{{(t_{l-1},t_l]}}\int_{E_{lj}}\int_{(0,{\eps}^{-1}\Xbig_{lj} ]}\Big(-\frac{1}{\Xbig_{lj} }+1\Big)\nu(dz)\,dsdr
               {\Big]}
    \Big).
\end{eqnarray}

Hence,
{ for any $\xi\in \mathbb{R}$, we have}
\begin{eqnarray}\label{eq Q4 01}
  &&\hspace{-1truecm}\lefteqn{\mathbb{E}^{\Q}\Big(e^{\sqrt{-1} \xi N^{{\eps}^{-1}\varphi_{\eps}}(T,{O})}\cdot \mathcal{M}^{\eps}_T(\psi_{\eps})\Big)}\nonumber\\
&=&
  \mathbb{E}^{\Q}\Big[
      \mathbb{E}^{\Q}
          \Big(
           e^{\sqrt{-1} \xi N^{\frac{\varphi_{\eps}}{\eps}}(T,{O})}\cdot \mathcal{M}^{\eps}_T(\psi_{\eps})| \G_{t_{l-1}}
          \Big)
                \Big]\nonumber\\
&=&
 \mathbb{E}^{\Q}\Big[
         e^{\sqrt{-1} \xi N^{{\eps}^{-1}\varphi_{\eps}}(t_{l-1},{O})}
      \cdot
         \mathcal{M}^{\eps}_{t_{l-1}}(\psi_{\eps})
      \cdot
         {Y(\cdot;\xi,t_{l-1},t_l)}
                \Big],
\end{eqnarray}
where {$Y= Y(\cdot;\xi,t_{l-1},t_l)$ is defined by }
\begin{eqnarray*}
 Y(\cdot;\xi,t_l,t_{l-1})
 &\!\!\!:=\!\!\!&\mathbb{E}^{\Q}\Big(
    \exp {\Big(}\sqrt{-1} \xi\Big(
           \int_{t_{l-1}}^{t_l}\int_{O\cap {K_n^c}}\int_0^\infty \mathds{1}_{(0, {{\eps}^{-1}}]}(r) \Nbar(ds,dz,dr)\\
  &&\ \ \ \ \ \ \ \ \ \ \ \ \ \ +
  \sum_{j=1}^{n_l}\int_{(t_{l-1},t_l]}\int_{O\cap {E_{lj}}}\int_0^\infty \mathds{1}_{(0, {{\eps}^{-1} \Xbig_{lj} }]}(r) \Nbar(ds,dz,dr)\Big) {\Big)}\\
  &&
  \ \ \ \ \ \ \cdot \exp\Big( \sum_{j=1}^{n_l} {\Big[}
                   \int_{{(t_{l-1},t_l]}}\int_{E_{lj}}\int_{(0,{\eps}^{-1}\Xbig_{lj} ]}\log(\frac{1}{\Xbig_{lj} })\Nbar(ds,dz,dr)\\
                   &&\ \ \ \ \ \ \ \ \ \ \ \ \ \ \  +
                   \int_{{(t_{l-1},t_l]}}\int_{E_{lj}}\int_{(0,{\eps}^{-1}\Xbig_{lj} ]}\Big(-\frac{1}{\Xbig_{lj} }+1\Big)\nu(dz)\,dsdr
               {\Big]}
    \Big)
           | \G_{t_{l-1}}
          \Big). \dela{(\omega)}
 \end{eqnarray*}

By assumptions, each $\Xbig_{lj}$, $j=1,2,\cdots,n_l$ {is $ \G_{t_{l-1}}$-measurable,
so by the properties of the conditional expectation, we infer that,  $\Q$-a.s. }
\begin{eqnarray*}
Y(\omega,\xi,t_l,t_{l-1})=K(\omega,\xi, \Xbig_{l1}(\omega),\Xbig_{l2}(\omega),\cdots,\Xbig_{ln_l}(\omega),t_l,t_{l-1})\nonumber
\end{eqnarray*}
where a random variable
$K(\omega,\xi, a_1,a_2,\cdots,a_{n_l},t_l,t_{l-1})$ is defined by
\begin{eqnarray*}
 &&\hspace{-1truecm}\lefteqn{ K(\cdot,\xi, a_1,a_2,\cdots,a_{n_l},t_l,t_{l-1})}\\
 &:=&
 \mathbb{E}^{\Q}\Big(
    \exp {\Big(}\sqrt{-1} \xi\Big(
           \int_{t_{l-1}}^{t_l}\int_{O\cap {K_n^c}}\int_0^\infty \mathds{1}_{(0, {{\eps}^{-1}}]}(r) \Nbar(ds,dz,dr)\\
  &&\ \ \ \ \ \ \ \ \ \ \ \ \ \ +
  \sum_{j=1}^{n_l}\int_{(t_{l-1},t_l]}\int_{O\cap {E_{lj}}}\int_0^\infty \mathds{1}_{(0, {{\eps}^{-1} a_j }]}(r) \Nbar(ds,dz,dr)\Big) {\Big)}\\
  &&
  \ \ \ \ \ \ \cdot \exp\Big( \sum_{j=1}^{n_l} {\Big[}
                   \int_{{(t_{l-1},t_l]}}\int_{E_{lj}}\int_{(0,{\eps}^{-1}a_j ]}\log(\frac{1}{a_j })\Nbar(ds,dz,dr)\\
                   &&\ \ \ \ \ \ \ \ \ \ \ \ \ +
                   \int_{{(t_{l-1},t_l]}}\int_{E_{lj}}\int_{(0,{\eps}^{-1}a_j ]}\Big(-\frac{1}{a_j }+1\Big)\nu(dz)\,dsdr
               {\Big]}
    \Big)
           | \G_{t_{l-1}}
          \Big)\dela{(\omega)}.
\end{eqnarray*}
Note that for any positive constants $a_1,a_2,\cdots,a_{n_l}$, {we have the following identity}
\begin{eqnarray*}
 &&\hspace{-1truecm}\lefteqn{ K(\cdot,\xi, a_1,a_2,\cdots,a_{n_l},t_l,t_{l-1})}\\ &=&
 \exp\Big(\sum_{j=1}^{n_l}(t_{l}-t_{l-1})\nu(E_{lj}){\eps}^{-1}a_j\Big(-\frac{1}{a_j }+1\Big)\Big)\\
 &&\cdot
 \mathbb{E}^{\Q}\Big(
    \exp \Big(\sqrt{-1} \xi\Big(
           \int_{t_{l-1}}^{t_l}\int_{O\cap {K_n^c}}\int_0^\infty \mathds{1}_{(0,{\eps}^{-1}]}(r) \Nbar(ds,dz,dr)\Big)\Big)\Big)\\
 &&\cdot
 \mathbb{E}^{\Q}\Big(\exp\Big(\sum_{j=1}^{n_l}(\sqrt{-1} \xi+\log(\frac{1}{a_j}))
\int_{(t_{l-1},t_l]}\int_{O\cap {E_{lj}}}\int_0^\infty \mathds{1}_{(0,{\eps}^{-1} a_j]}(r) \Nbar(ds,dz,dr)
    \Big)\\
 &&\cdot
 \mathbb{E}^{\Q}\Big(\exp\Big(\sum_{j=1}^{n_l}\log(\frac{1}{a_j})
\int_{(t_{l-1},t_l]}\int_{O^c\cap {E_{lj}}}\int_0^\infty \mathds{1}_{(0,{\eps}^{-1} a_j]}(r) \Nbar(ds,dz,dr)
    \Big)
 \\
&=&
 \exp\Big((t_l-t_{l-1})\nu({O}){\eps}^{-1}[e^{\sqrt{-1}\xi}-1]\Big).
\end{eqnarray*}
{Summing up, we infer that for $\omega \in \Omega$, $\Q$-a.s., we have }
\begin{eqnarray}\label{eq Q4 star}
Y(\dela{\omega}\cdot,\xi,t_l,t_{l-1})
=\exp\Big((t_l-t_{l-1})\nu({O}){\eps}^{-1}[e^{\sqrt{-1}\xi}-1]\Big).
\end{eqnarray}

In particular, we infer that $Y(\omega,0,t_l,t_{l-1})=1$, and hence by identity \eqref{eq Q4 01}, we have
$$
\mathbb{E}^{\Q}\Big(\mathcal{M}^{\eps}_{t_l}(\psi_{\eps})|\G_{t_{l-1}}\Big)
=
\mathcal{M}^{\eps}_{t_{l-1}}(\psi_{\eps})Y(\omega,0,t_l,t_{l-1})
=
\mathcal{M}^{\eps}_{t_{l-1}}(\psi_{\eps}).
$$
Furthermore, by employing the above argument, we can easily verify
$$
\mathbb{E}^{\Q}\Big(\mathcal{M}^{\eps}_{t_l}(\psi_{\eps})|\G_{t}\Big)
=
\mathcal{M}^{\eps}_{t}(\psi_{\eps}),\ t\in[0,T].
$$
This implies that the process $\{\mathcal{M}^{\eps}_t(\psi_{\eps}),\ t\geq 0\}$ is an $\Gb$-martingale {on $(\bar{\Omega},\G,\Gb,\Q)$}. Hence we infer that $\mathbb{P}^{\eps}_T$ is a well-defined probability measure.

\vskip 0.2cm
Inserting identity (\ref{eq Q4 star}) into identity (\ref{eq Q4 01}), we arrive at
\begin{eqnarray*}
 \mathbb{E}^{\mathbb{P}^{\eps}_T}\Big(e^{\sqrt{-1} \xi N^{{\eps}^{-1}\varphi_{\eps}}(T,{O})}\Big)
=
 \mathbb{E}^{\Q}\Big[
         e^{\sqrt{-1} \xi N^{{\eps}^{-1}\varphi_{\eps}}(t_{l-1},{O})}
      \cdot
         \mathcal{M}^{\eps}_{t_{l-1}}(\psi_{\eps})
                \Big]
                e^{(t_l-t_{l-1})\nu({O}){\eps}^{-1}[e^{\sqrt{-1}\xi}-1]}.
\end{eqnarray*}
By induction, we can get
\begin{eqnarray*}
 \mathbb{E}^{\mathbb{P}^{\eps}_T}\Big(e^{\sqrt{-1} \xi N^{{\eps}^{-1}\varphi_{\eps}}(T,{O})}\Big)
=
                \exp\Big(T\nu({O}){\eps}^{-1}[e^{\sqrt{-1}\xi}-1]\Big).
\end{eqnarray*}

We have proved that if $\varphi_{\eps}$ is a step process, then the law of ${\eps} N^{{\eps}^{-1}\varphi_{\eps}}$ on $(\bar{\Omega},\G,\Gb,\mathbb{P}^{\eps}_T)$ is equal to the law of ${\eps} N^{{\eps}^{-1}}$ {on $(\bar{\Omega},\G,\Gb,\Q)$}.
The proof of \textbf{Step 1} is now over. \end{proof}

\vskip 0.2cm

\begin{proof}[\bf{Step 2. The general case}] Let us assume that $\varphi_{\eps}\in\bar{\mathbb{A}}_b$. Then
the definition \eqref{eqn-A_b} of $\bar{\mathbb{A}}_b$ implies that there exists $n\in\mathbb{N}$ such that $\varphi_{\eps}\in\bar{\mathbb{A}}_{b,n}$.
Hence, by Lemma \ref{Lem Q4 01}, there exists a sequence $\psi_m\in\bar{\mathbb{A}}_{b,n}$, $m=1,2,\cdots$ satisfying conditions {\rm\textbf{(R1)}} and {\rm\textbf{(R2)}} from that lemma with $\varphi$ replaced by $\varphi_{\eps}$.

Applying Step 1 to function $\psi_m$, we get
\begin{itemize}
 \item for any ${O}\in\mathcal{B}({\rZ})$ and $\nu({O})<\infty$,
   \begin{eqnarray}\label{Eq Q4 step 1 01}
     \mathbb{E}^{\Q}\Big(\exp\Big(\sqrt{-1}\xi N^{{\eps}^{-1}\psi_m}(T,{O})\Big)\mathcal{M}^{\eps}_T(\frac{1}{\psi_m}) \Big)
     =
     \exp\Big(T\nu({O}){\eps}^{-1}(e^{\sqrt{-1}\xi}-1)\Big),
   \end{eqnarray}

 \item for any $0\leq t_1<t_2\leq T$,
   \begin{eqnarray}\label{Eq Q4 step 1 02}
     \mathbb{E}^{\Q}\Big(\mathcal{M}^{\eps}_{t_2}(\frac{1}{\psi_m}) |\G_{t_1}\Big)
     =
     \mathcal{M}^{\eps}_{t_1}(\frac{1}{\psi_m}),\ \ \ \Q\text{-a.s.}
   \end{eqnarray}

\end{itemize}

In order to prove our results, we first prove that there exists a subsequence, which for simplicity still be denoted by $m$, such that
\begin{eqnarray}\label{Eq Q4 step 1 03}
     \lim_{m\to\infty}
     \mathbb{E}^{\Q}\Big(\Big|\mathcal{M}^{\eps}_{t}(\frac{1}{\psi_m})-\mathcal{M}^{\eps}_{t}(\varphi_{\eps})\Big|\Big)
     =
     0,
   \end{eqnarray}
and, for any ${O}\in\mathcal{B}({\rZ})$ satisfying $\nu({O})<\infty$,
\begin{eqnarray}\label{Eq Q4 step 1 04}
     \lim_{m\to\infty}
     \mathbb{E}^{\Q}
     \Big(\sup_{t\in[0,T]}\Big| N^{{\eps}^{-1}\psi_m}(t,{O})-N^{{\eps}^{-1}\varphi_{\eps}}(t,{O})\Big|\Big)
     =
     0.
   \end{eqnarray}


To make the proof in the following more structured, we have divided our argument into four parts. Recall that the set $\bar{\mathbb{A}}_{b,n}$ \addaok{was defined in \eqref{eqn-barA_bn}.}

\noindent
\textbf{Part 1:} For any $\psi\in \bar{\mathbb{A}}_{b,n}$, we have
\begin{eqnarray}\label{eq step claim 1 01}
\int_0^T|\psi(s,z)|\,ds\leq nT,\ \ \ \mbox{ for }\ (z,\omega)\in \rZ\times\bar{\Omega},
\end{eqnarray}
and
\begin{eqnarray}\label{eq step claim 1 02}
&&\hspace{-1truecm}\Big|\int_{(0,T]\times {\rZ}\times[0,{\eps}^{-1}\psi(s,z)]}\log(\frac{1}{\psi(s,z)})\Nbar(ds,dz,dr)\nonumber
\\&&                     +
\int_{(0,T]\times {\rZ}\times[0,{\eps}^{-1}\psi(s,z)]}\Big(-\frac{1}{\psi(s,z)}+1\Big)\nu(dz)\,dsdr\Big|\nonumber\\
&=&
\Big|\int_{(0,T]\times K_n\times[0,{\eps}^{-1}\psi(s,z)]}\log(\frac{1}{\psi(s,z)})\Nbar(ds,dz,dr)\nonumber
 \\&&                    +
\int_{(0,T]\times K_n\times[0,{\eps}^{-1}\psi(s,z)]}\Big(-\frac{1}{\psi(s,z)}+1\Big)\nu(dz)\,dsdr\Big|\nonumber\\
&\leq&
\int_{(0,T]\times K_n\times[0,{\eps}^{-1}n]}\log n\Nbar(ds,dz,dr)
                     +
\int_{(0,T]\times K_n\times[0,{\eps}^{-1}n]}\Big(n+1\Big)\nu(dz)\,dsdr.
\end{eqnarray}
 It is easy to see that, {since } $\nu(K_n)<\infty$,
{\small\begin{equation}\label{eq step claim 1 03}
\mathbb{E}^{\Q}\Big(\exp\Big(\int_{(0,T]\times K_n\times[0,{\eps}^{-1}n]}\log n\Nbar(ds,dz,dr)
                     +
\int_{(0,T]\times K_n\times[0,{\eps}^{-1}n]}\Big(n+1\Big)\nu(dz)\,dsdr\Big)\Big)<\infty.
\end{equation}}

By inequality (\ref{eq step claim 1 01}), we have
\begin{eqnarray}\label{eq step claim 1 01 star}
\int_0^T|\psi_m(s,z)-\varphi_\epsilon(s,z)|\,ds\leq 2nT,\ \ \ on\ (z,\omega)\in \rZ\times\bar{\Omega}.
\end{eqnarray}

\noindent
\textbf{Part 2.} The following inequality holds.
\begin{eqnarray}\label{eq step claim 2 01}
     &&\hspace{-1.5truecm}\lefteqn{ \mathbb{E}^{\Q}
     \Big(\sup_{t\in[0,T]}\Big| N^{{\eps}^{-1}\psi_m}(t,{O})-N^{{\eps}^{-1}\varphi_{\eps}}(t,{O})\Big|\Big)}\nonumber\\
     &=&
     \mathbb{E}^{\Q}
     \Big(\sup_{t\in[0,T]}\Big| \int_0^t\int_{O}\int_0^{{\eps}^{-1}n}\mathds{1}_{[0,{\eps}^{-1}\psi_m(s,z)]}(r)
                   -
                   \mathds{1}_{[0,{\eps}^{-1}\varphi_{\eps}(s,z)]}(r)\Nbar(ds,dz,dr)
               \Big|\Big)\nonumber\\
     &\leq&
     \mathbb{E}^{\Q}
        \Big(\int_0^T\int_{O}\int_0^{{\eps}^{-1}n}\Big| \mathds{1}_{[0,{\eps}^{-1}\psi_m(s,z)]}(r)
                   -
                   \mathds{1}_{[0,{\eps}^{-1}\varphi_{\eps}(s,z)]}(r)\Big|\Nbar(ds,dz,dr)
      \Big)\nonumber\\
      &=&
       \mathbb{E}^{\Q}
        \Big(\int_0^T\int_{O\cap K_n}\int_0^{{\eps}^{-1}n}\Big| \mathds{1}_{[0,{\eps}^{-1}\psi_m(s,z)]}(r)
                   -
                   \mathds{1}_{[0,{\eps}^{-1}\varphi_{\eps}(s,z)]}(r)\Big|\,dr\nu(dz)\,ds
      \Big)\nonumber\\
      &\leq&
      \mathbb{E}^{\Q}
        \Big(\int_0^T\int_{O\cap K_n}{\eps}^{-1}\Big| \psi_m(s,z)
                   -
                   \varphi_{\eps}(s,z)\Big|\nu(dz)\,ds
      \Big).
   \end{eqnarray}

\textbf{Part 3.} The following equality holds.
\begin{eqnarray}\label{eq step claim 3 01}
&&\hspace{-1.5truecm}\lefteqn{ \Big|\int_{(0,T]\times {\rZ}\times[0,{\eps}^{-1}\psi_m(s,z)]}\Big(-\frac{1}{\psi_m(s,z)}+1\Big)\nu(dz)\,dsdr}\nonumber\\
    &&\ \ \ \ \ \ \ \ \ \ \ \ \ \ \ -
    \int_{(0,T]\times {\rZ}\times[0,{\eps}^{-1}\varphi_{\eps}(s,z)]}\Big(-\frac{1}{\varphi_{\eps}(s,z)}+1\Big)\nu(dz)\,dsdr
 \Big|\nonumber\\
&=&
 \Big|\int_{(0,T]\times K_n\times[0,{\eps}^{-1}\psi_m(s,z)]}\Big(-\frac{1}{\psi_m(s,z)}+1\Big)\nu(dz)\,dsdr\nonumber\\
    &&\ \ \ \ \ \ \ \ \ \ \ \ \ \ \ -
    \int_{(0,T]\times K_n\times[0,{\eps}^{-1}\varphi_{\eps}(s,z)]}\Big(-\frac{1}{\varphi_{\eps}(s,z)}+1\Big)\nu(dz)\,dsdr
 \Big|\nonumber\\
&=&
  \Big|\int_{(0,T]\times K_n\times[0,{\eps}^{-1} n]}\mathds{1}_{[0,{\eps}^{-1}\psi_m(s,z)]}(r)\Big(-\frac{1}{\psi_m(s,z)}+1\Big)
 \nonumber \\ &&\hspace{2truecm}\lefteqn{   -
    \mathds{1}_{[0,{\eps}^{-1}\varphi_{\eps}(s,z)]}(r)\Big(-\frac{1}{\varphi_{\eps}(s,z)}+1\Big)\nu(dz)\,dsdr
 \Big|}\nonumber\\
&\leq&
\int_{(0,T]\times K_n\times[0,{\eps}^{-1} n]}
   \Big|\mathds{1}_{[0,{\eps}^{-1}\psi_m(s,z)]}(r)
    -
    \mathds{1}_{[0,{\eps}^{-1}\varphi_{\eps}(s,z)]}(r) \Big|\nu(dz)\,dsdr
\nonumber\\
 &&+
 \int_{(0,T]\times K_n\times[0,{\eps}^{-1} n]}
   \Big|\mathds{1}_{[0,{\eps}^{-1}\psi_m(s,z)]}(r)
    -
    \mathds{1}_{[0,{\eps}^{-1}\varphi_{\eps}(s,z)]}(r) \Big|\frac{1}{\psi_m(s,z)}\nu(dz)\,dsdr
\nonumber\\
 &&+
\int_{(0,T]\times K_n\times[0,{\eps}^{-1} n]}\mathds{1}_{[0,{\eps}^{-1}\varphi_{\eps}(s,z)]}(r)
      \Big|\frac{1}{\psi_m(s,z)}-\frac{1}{\varphi_{\eps}(s,z)}\Big|\nu(dz)\,dsdr
 \nonumber\\
&\leq&
  {\eps}^{-1}(1+n+n^3)\int_{(0,T]\times K_n}|\psi_m(s,z)-\varphi_{\eps}(s,z)| \nu(dz)\,ds.
\end{eqnarray}

\textbf{Part 4:} Using arguments similar to those used in \textbf{Parts} 2 and 3, we have
\begin{eqnarray}\label{eq step claim 4 01}
&&\hspace{-1.5truecm}\lefteqn{ \mathbb{E}^{\Q}
  \Big(\sup_{t\in[0,T]}\Big|
  \int_{(0,t]\times {\rZ}\times[0,{\eps}^{-1}\psi_m(s,z)]}\log(\frac{1}{\psi_m(s,z)})\Nbar(ds,dz,dr)}\nonumber\\
  &&\ \ \ \ \ \ \ \ \ \ \ \ \ \ \ \ \ \ \ \ \ \ -
  \int_{(0,t]\times {\rZ}\times[0,{\eps}^{-1}\varphi_{\eps}(s,z)]}\log(\frac{1}{\varphi_{\eps}(s,z)})\Nbar(ds,dz,dr)
  \Big|\Big)\nonumber\\
&=&
  \mathbb{E}^{\Q}
  \Big(\sup_{t\in[0,T]}\Big|
  \int_{(0,t]\times {\rZ}\times[0,{\eps}^{-1}n]}\mathds{1}_{[0,{\eps}^{-1}\psi_m(s,z)]}(r)\log(\frac{1}{\psi_m(s,z)})\nonumber\\
  &&\ \ \ \ \ \ \ \ \ \ \ \ \ \ \ \ \ \ \ \ \ -
  \mathds{1}_{[0,{\eps}^{-1}\varphi_{\eps}(s,z)]}(r)\log(\frac{1}{\varphi_{\eps}(s,z)})\Nbar(ds,dz,dr)
  \Big|\Big)\nonumber\\
&\leq&
  \mathbb{E}^{\Q}\Big(
  \int_{(0,T]\times K_n\times[0,{\eps}^{-1}n]}\Big|
  \mathds{1}_{[0,{\eps}^{-1}\psi_m(s,z)]}(r)\log(\frac{1}{\psi_m(s,z)})
 \nonumber \\ &&\hspace{2.9truecm}\lefteqn{  -
  \mathds{1}_{[0,{\eps}^{-1}\varphi_{\eps}(s,z)]}(r)\log(\frac{1}{\varphi_{\eps}(s,z)})\Big|\Nbar(ds,dz,dr)
\Big)}\nonumber\\
&=&
 \mathbb{E}^{\Q}\Big(
  \int_{(0,T]\times K_n\times[0,{\eps}^{-1}n]}\Big|
  \mathds{1}_{[0,{\eps}^{-1}\psi_m(s,z)]}(r)\log(\frac{1}{\psi_m(s,z)})
   \nonumber \\ &&\hspace{2.9truecm}\lefteqn{ -
  \mathds{1}_{[0,{\eps}^{-1}\varphi_{\eps}(s,z)]}(r)\log(\frac{1}{\varphi_{\eps}(s,z)})\Big|\nu(dz)\,dsdr
\Big)}\nonumber\\
&\leq&
   {\eps}^{-1}C_n\mathbb{E}^{\Q}\Big(
    \int_{(0,T]\times K_n}|\psi_m(s,z)-\varphi_{\eps}(s,z)| \nu(dz)\,ds
   \Big).
\end{eqnarray}

Keeping in mind $\nu(K_n)<\infty$, we consider together
assertion {\rm\textbf{(R2)}} from Lemma \ref{Lem Q4 01},
the estimates in (\ref{eq step claim 1 01 star}),
and \textbf{Parts} 2, 3 and 4.
Doing so, and applying the Lebesgue dominated convergence theorem (DCT),
we get equality (\ref{Eq Q4 step 1 04}),
as well as the following equality
\begin{eqnarray}\label{eq step conclusion 01}
&&\lim_{m\to\infty}\mathbb{E}^{\Q}
  \Big(\sup_{t\in[0,T]}\Big|
  \int_{(0,t]\times {\rZ}\times[0,{\eps}^{-1}\psi_m(s,z)]}\log(\frac{1}{\psi_m(s,z)})\Nbar(ds,dz,dr)\nonumber\\
  &&\ \ \ \ \ \ \ \ \ \ \ \ \ \ \ \ \ \ \ \ \ \ -
  \int_{(0,t]\times {\rZ}\times[0,{\eps}^{-1}\varphi_{\eps}(s,z)]}\log(\frac{1}{\varphi_{\eps}(s,z)})\Nbar(ds,dz,dr)
  \Big|\Big)=0,
\end{eqnarray}
and, $\Q\text{-a.s.}$,
\begin{eqnarray}\label{eq step conclusion 02}
&&\lim_{m\to\infty}
 \Big|\int_{(0,T]\times {\rZ}\times[0,{\eps}^{-1}\psi_m(s,z)]}\Big(-\frac{1}{\psi_m(s,z)}+1\Big)\nu(dz)\,dsdr\nonumber\\
    &&\ \ \ \ \ \ \ \ \ \ \ \ \ \ \ -
    \int_{(0,T]\times {\rZ}\times[0,{\eps}^{-1}\varphi_{\eps}(s,z)]}\Big(-\frac{1}{\varphi_{\eps}(s,z)}+1\Big)\nu(dz)\,dsdr
 \Big|=0.
\end{eqnarray}

{Let us observe that in view of } (\ref{eq step conclusion 01}),
there exists a subsequence, which for simplicity is still denoted by $m$, such that $\Q\text{-a.s.}$,
\begin{eqnarray}\label{eq step conclusion 03}
&&\lim_{m\to\infty}
  \Big(\sup_{t\in[0,T]}\Big|
  \int_{(0,t]\times {\rZ}\times[0,{\eps}^{-1}\psi_m(s,z)]}\log(\frac{1}{\psi_m(s,z)})\Nbar(ds,dz,dr)\nonumber\\
  &&\ \ \ \ \ \ \ \ \ \ \ \ \ \ \ \ \ \ \ \ \ \ -
  \int_{(0,t]\times {\rZ}\times[0,{\eps}^{-1}\varphi_{\eps}(s,z)]}\log(\frac{1}{\varphi_{\eps}(s,z)})\Nbar(ds,dz,dr)
  \Big|\Big)=0.
\end{eqnarray}
Combining (\ref{eq step conclusion 02}), (\ref{eq step conclusion 03}), (\ref{eq step claim 1 02}), (\ref{eq step claim 1 03}), (\ref{eqn-M^eps}), and the definition
of $\mathcal{M}_T^{\eps}(\cdot)$, and by again employing the Lebesgue DCT, we infer that (\ref{Eq Q4 step 1 03}) holds.

\vskip 0.2cm
Having proved (\ref{Eq Q4 step 1 03}) and (\ref{Eq Q4 step 1 04}), we are now in position to prove (S1) and (S4).
\vskip 0.1cm

Since (S1) can be immediately obtained by (\ref{Eq Q4 step 1 03}) and (\ref{Eq Q4 step 1 02}), we now prove (S4).
\vskip 0.1cm

By (\ref{Eq Q4 step 1 04}), there exists a subsequence, for simplicity, still denoted by $m$, such that
$$
 \lim_{m\to\infty}N^{{\eps}^{-1}\psi_m}(t,{O})=N^{{\eps}^{-1}\varphi_{\eps}}(t,{O}),\ \ \ \Q\text{-a.s.}
$$
Combining this result with (\ref{Eq Q4 step 1 03}) and \textbf{Part} 1, using the Lebesgue DCT again, we have
\begin{eqnarray*}
&&\hspace{-1truecm}\lefteqn{\Big|
\mathbb{E}^{\Q}\Big(\exp\Big(\sqrt{-1}\xi N^{{\eps}^{-1}\psi_m}(T,{O})\Big)\mathcal{M}^{\eps}_T(\frac{1}{\psi_m}) \Big)
-
\mathbb{E}^{\Q}\Big(\exp\Big(\sqrt{-1}\xi N^{{\eps}^{-1}\varphi_{\eps}}(T,{O})\Big)\mathcal{M}^{\eps}_T(\varphi_{\eps}) \Big)
\Big|}\nonumber\\
&\leq&
\mathbb{E}^{\Q}\Big(\Big|\mathcal{M}^{\eps}_T(\frac{1}{\psi_m})-\mathcal{M}^{\eps}_T(\varphi_{\eps})\Big|\Big)\nonumber\\
&&+
\mathbb{E}^{\Q}\Big(
    \Big|\exp\Big(\sqrt{-1}\xi N^{{\eps}^{-1}\varphi_{\eps}}(T,{O})\Big)-\exp\Big(\sqrt{-1}\xi N^{{\eps}^{-1}\psi_m}(T,{O})\Big)\Big|
    \mathcal{M}^{\eps}_T(\varphi_{\eps})
    \Big)\nonumber\\
&\leq&
  \mathbb{E}^{\Q}\Big(\Big|\mathcal{M}^{\eps}_T(\frac{1}{\psi_m})-\mathcal{M}^{\eps}_T(\varphi_{\eps})\Big|\Big)\nonumber\\
&&+
\mathbb{E}^{\Q}\Big(
    \Big|\exp\Big(\sqrt{-1}\xi N^{{\eps}^{-1}\varphi_{\eps}}(T,{O})\Big)-\exp\Big(\sqrt{-1}\xi N^{{\eps}^{-1}\psi_m}(T,{O})\Big)\Big|\nonumber\\
    &&\ \ \ \ \ \ \ \ \cdot\exp\Big(\int_{(0,T]\times K_n\times[0,{\eps}^{-1}n]}\log n\Nbar(ds,dz,dr)
                   +
    \int_{(0,T]\times K_n\times[0,{\eps}^{-1}n]}\Big(n+1\Big)\nu(dz)\,dsdr\Big)
    \Big).\nonumber
\end{eqnarray*}
Since the RHS above $\to 0$ as $m\to\infty$, we infer, by recalling (\ref{Eq Q4 step 1 01}), that
   \begin{eqnarray*}
     \mathbb{E}^{\Q}\Big(\exp\Big(\sqrt{-1}\xi N^{{\eps}^{-1}\varphi_{\eps}}(T,{O})\Big)\mathcal{M}^{\eps}_T(\varphi_{\eps}) \Big)
     =
     \exp\Big(T\nu({O}){\eps}^{-1}(e^{\sqrt{-1}\xi}-1)\Big),
   \end{eqnarray*}
for any ${O}\in\mathcal{B}({\rZ})$ such that $\nu({O})<\infty$, which implies assertion (S4) in Theorem \ref{thm-Girsanov}.
The proof of \textbf{Step 2} is now complete. \end{proof}


\begin{proof}[\bf{Step 3. Proof of assertion (S3) in Theorem \ref{thm-Girsanov}}]

{We observe that}, by (\ref{eqn-M^eps}) and arguments similar to those for
(\ref{eq step claim 1 02}), {$\Q\text{-a.s.}$}
\begin{eqnarray*}
&&\hspace{-2truecm}\lefteqn{\exp\Bigl( \int_{(0,T]\times K_n\times[0,{\eps}^{-1}n]}-\log n\Nbar(ds,dz,dr)
}\\
                     &+& \int_{(0,T]\times K_n\times[0,{\eps}^{-1}n]}[-(n+1)]\nu(dz)\,dsdr\Bigr)\leq
\mathcal{M}^{\eps}_T(\psi_{\eps}),
\end{eqnarray*}
and
\begin{eqnarray*}
&&\hspace{-2truecm}\lefteqn{\exp\Bigl( \int_{(0,T]\times K_n\times[0,{\eps}^{-1}n]}\log n\Nbar(ds,dz,dr)}
\\                     &+&
\int_{(0,T]\times K_n\times[0,{\eps}^{-1}n]}\Big(n+1\Big)\nu(dz)\,dsdr \Bigr)
\geq
\mathcal{M}^{\eps}_T(\psi_{\eps}).
\end{eqnarray*}
Using the facts that $\nu(K_n)<\infty$ and that $\int_{(0,t]\times K_n\times[0,{\eps}^{-1}n]}\Nbar(ds,dz,dr)$ only has finite jumps on $[0,T]$ $\Q$-a.s., we can conclude that {the probability measures} $\Q$ and $\mathbb{P}_T^{\eps}$ are {equivalent}, i.e., assertion (S3) in Theorem \ref{thm-Girsanov}.
This completes the proof of \textbf{Step 3}. \end{proof}

\vskip 0.2cm

The proof of Theorem \ref{thm-Girsanov} is thus complete. \end{proof}

\section{Verification of the \textbf{Claim-LDP-2}}\label{sec-LDP-verification}


 {The main result of this section is Proposition \ref{prop3 02}, in which we prove \textbf{Claim-LDP-2}. To this end, we first
prove that the process $X^{\eps}:=\mathcal{G}^{\eps}({\eps} N^{{\eps}^{-1}\varphi_{\eps}})$ is the unique solution of the controlled SPDE (\ref{eq3 prop2 00}), which is given in Lemma \ref{lem LDP stochastic 00}. Then to prove \textbf{Claim-LDP-2}, we only need to \dela{consider $X^{\eps}$, and we }prove some \emph{a priori} estimates and establish the tightness of laws of the processes $X^{\eps}$, $\eps>0$,
which we do in Lemmata \ref{lem LDP stochastic 01}--\ref{lem LDP stochastic 05}.
The key to proving Lemma \ref{lem LDP stochastic 00} is a Girsanov-type theorem for Poisson random measures, which is formulated within Theorem \ref{thm-Girsanov}.

This section is divided into two subsections. In the second one, we formulate and prove Proposition \ref{prop3 02} from which \textbf{Claim-LDP-2} follows. In the first subsection, we prove Lemma \ref{lem LDP stochastic 00} and find necessary estimates.

\subsection{A representation result and \emph{a priori}  estimates}\label{subsec-4.2}

Fix $u_0\in \rV$ and $f\in L^2([0,T];\rH)$.
Assume that the control $\phieps$ belongs to the set $\mathcal{U}$
(see equality (\ref{eqn-U^N})).
Let us consider the following controlled SPDE:
\begin{eqnarray}\label{eq3 prop2 00}
dX^{\eps}(t)+{\rA}X^{\eps}(t)dt + \rB(X^{\eps}(t))\,dt
  &=&
  f(t)\,dt\\
  \nonumber
  &&+
  {\eps}\int_{{\rZ}}G(X^{\eps}(t-),z)\Big(N^{{\eps}^{-1}\phieps}(dz,dt)-{\eps}^{-1}\nu(dz)\,dt\Big),\\
  \label{eq3 prop2 00-second}
  &=&f(t)\,dt + \int_{{\rZ}}G(X^{\eps}(t),z)(\phieps(t,z)-1)\nu(dz)\,dt\\
  \nonumber
  && +
  {\eps}\int_{{\rZ}}G(X^{\eps}(t-),z)\widetilde{N}^{{\eps}^{-1}\phieps}(dz,dt),\\
X^{\eps}(0)&=&u_0.
\nonumber
\end{eqnarray}

{Note that below, in (\ref{eq3 sto-lemm 00}) for example, we use the second version of the above equation, i.e.,
\eqref{eq3 prop2 00-second}.
We also observe that it is easy to see that the integral ${\eps}\int_0^t\int_{{\rZ}}G(X^{\eps}(s-),z)\Big(N^{{\eps}^{-1}\phieps}(dz,ds)-{\eps}^{-1}\nu(dz)\,ds\Big)$ exists. }
\vskip 0.2cm

Recall the definition of $\mathcal{G}^{\eps}$ in the proof of Theorem \ref{thm-LDP} (see around (\ref{eqn-u^eps})). By Corollary \ref{cor-solution to eq LDP u epsilon 00} we infer that the process
\begin{equation}\label{eq LDP 0000}
u^{\eps}
=\mathcal{G}^{\eps}({\eps} N^{{\eps}^{-1}})
\end{equation}
 is the unique solution of Problem (\ref{eq LDP u epsilon 00}) on the probability space on $(\bar{\Omega},\G,\Gb,\Q)$.

 We prove the following fundamental result.

\begin{lemma}\label{lem LDP stochastic 00}
Assuming $\eps>0$, for every process $\phieps\in\bar{\mathbb{A}}_b$ defined on $(\bar{\Omega},\G,\Gb,\Q)$, the process $X^{\eps}$ defined by
\begin{equation}\label{eqn-X^eps}
X^{\eps}=\mathcal{G}^{\eps}({\eps} N^{{\eps}^{-1}\phieps})
\end{equation}
 is the unique solution of Equation (\ref{eq3  prop2 00}).
\end{lemma}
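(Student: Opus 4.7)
The plan is to use the Girsanov-type theorem Lemma \ref{lem LDP stochastic star} to reduce the statement to the defining property of $\mathcal{G}^\eps$. Setting $\psi_\eps := 1/\phieps$, since $\phieps \in \bar{\mathbb{A}}_b$ one also has $\psi_\eps \in \bar{\mathbb{A}}_b$, and Lemma \ref{lem LDP stochastic star} yields a probability measure $\mathbb{P}^\eps_T$ on $(\bar{\Omega},\G)$ with density $\mathcal{M}^\eps_T(\psi_\eps)$ against $\Q$, equivalent to $\Q$ by (S3), under which — by (S4) — the law of $\eps N^{\eps^{-1}\phieps}$ coincides with the $\Q$-law of $\eps N^{\eps^{-1}}$. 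In particular, $N^{\eps^{-1}\phieps}$ is a Poisson random measure on $[0,T]\times\rZ$ with intensity $\Leb\otimes\eps^{-1}\nu$ on the filtered probability space $(\bar{\Omega},\G,\Gb,\mathbb{P}^\eps_T)$.

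By the defining property of $\mathcal{G}^\eps$ recalled in the proof of Theorem \ref{th3 LDP}, applied on the basis $(\bar{\Omega},\G,\Gb,\mathbb{P}^\eps_T)$ with driving noise $\eta := N^{\eps^{-1}\phieps}$, the process $X^\eps := \mathcal{G}^\eps(\eps N^{\eps^{-1}\phieps})$ has trajectories in $\UT$ and satisfies, $\mathbb{P}^\eps_T$-a.s.\ and in $\rV'$,
\begin{equation*}
X^\eps(t) = u_0 - \int_0^t \rA X^\eps(s)\,ds - \int_0^t B(X^\eps(s))\,ds + \int_0^t f(s)\,ds + \eps\int_0^t\!\int_{\rZ} G(X^\eps(s-),z)\bigl(N^{\eps^{-1}\phieps}(dz,ds) - \eps^{-1}\nu(dz)\,ds\bigr).
\end{equation*}
Since $\Q\sim\mathbb{P}^\eps_T$, the identity is valid $\Q$-a.s.\ as well; each term is defined pathwise in $\Nbar$, so the null sets coincide under the two equivalent measures. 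Adding and subtracting $\int_0^t\!\int_{\rZ} G(X^\eps(s),z)\phieps(s,z)\nu(dz)\,ds$ rewrites the noise integral as the sum of the $\Q$-compensated stochastic integral $\eps\int_0^t\!\int_{\rZ} G(X^\eps(s-),z)\,\widetilde{N}^{\eps^{-1}\phieps}(dz,ds)$ and the control drift $\int_0^t\!\int_{\rZ} G(X^\eps(s),z)(\phieps(s,z)-1)\nu(dz)\,ds$, giving exactly (\ref{eq3 prop2 00}).

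For uniqueness, let $X_1, X_2$ be any two $\UT$-valued $\Gb$-progressively measurable solutions of (\ref{eq3 prop2 00}) on $(\bar{\Omega},\G,\Gb,\Q)$. By equivalence of measures each is a solution $\mathbb{P}^\eps_T$-a.s., and reversing the splitting above — a purely pathwise manipulation between Bochner and $\Nbar$-stochastic integrals — both solve, $\mathbb{P}^\eps_T$-a.s., the SPDE driven by $\eps(N^{\eps^{-1}\phieps} - \eps^{-1}\nu(dz)ds)$. On $(\bar{\Omega},\G,\Gb,\mathbb{P}^\eps_T)$ this is precisely the SPDE \eqref{eq LDP u epsilon 00} with driving PRM $N^{\eps^{-1}\phieps}$ of intensity $\eps^{-1}\nu$, whose unique $\UT$-valued solution is $\mathcal{G}^\eps(\eps N^{\eps^{-1}\phieps}) = X^\eps$ by Theorem \ref{thm-initial data in V}. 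Hence $X_1 = X_2 = X^\eps$ $\mathbb{P}^\eps_T$-a.s., and therefore $\Q$-a.s.

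The main obstacle is the careful justification that the two representations of the noise term really coincide $\Q$-a.s.: under $\mathbb{P}^\eps_T$ the $\Gb$-predictable compensator of $N^{\eps^{-1}\phieps}$ is the deterministic measure $\eps^{-1}\nu(dz)\,ds$, whereas under $\Q$ it is the random measure $\eps^{-1}\phieps(s,z)\nu(dz)\,ds$. Verifying that the $\Q$-stochastic integral against $\widetilde{N}^{\eps^{-1}\phieps}$ is a genuine square-integrable martingale and that the drift $\int_0^t\!\int_{\rZ} G(X^\eps(s),z)(\phieps(s,z)-1)\nu(dz)\,ds$ converges absolutely reduces to \textbf{Assumption \ref{con LDP}} combined with the localisation encoded in $\phieps \in \bar{\mathbb{A}}_b$ (namely $\phieps = 1$ off $[0,T]\times K_n$ and $\phieps \in [1/n,n]$ on $[0,T]\times K_n$, with $\nu(K_n)<\infty$); this is precisely the kind of a priori control that will be established in Lemmas \ref{lem LDP stochastic 01}--\ref{lem LDP stochastic 05}.
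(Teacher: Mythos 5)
Your overall strategy coincides with the paper's: use Lemma \ref{lem LDP stochastic star} to produce the equivalent measure $\mathbb{P}^{\eps}_T$ under which, by (S4), $N^{{\eps}^{-1}\phieps}$ is a Poisson random measure with intensity ${\eps}^{-1}\nu$, read off from the defining property of $\mathcal{G}^{\eps}$ that $X^{\eps}$ satisfies the equation $\mathbb{P}^{\eps}_T$-a.s., and then transfer the identity to $\Q$ (your explicit uniqueness argument via Theorem \ref{thm-initial data in V} on $(\bar{\Omega},\G,\Gb,\mathbb{P}^{\eps}_T)$ is a reasonable way to make precise what the paper leaves implicit). However, there is a genuine gap exactly at the transfer step, which is the step the paper itself flags as not following from equivalence of measures without additional justification. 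Your justification is the sentence ``each term is defined pathwise in $\Nbar$, so the null sets coincide under the two equivalent measures''. This is false as stated: since $\nu$ is only $\sigma$-finite, the compensated integral ${\eps}\int_0^t\int_{\rZ}G(X^{\eps}(s-),z)\bigl(N^{{\eps}^{-1}\phieps}(dz,ds)-{\eps}^{-1}\nu(dz)\,ds\bigr)$ is not a pathwise (Lebesgue--Stieltjes) object; it is constructed as a limit in probability (or $L^2$) of integrals over sets of finite $\nu$-measure, and that construction is measure-dependent. Under $\mathbb{P}^{\eps}_T$ the compensator of $N^{{\eps}^{-1}\phieps}$ is the deterministic measure ${\eps}^{-1}\nu(dz)\,ds$, under $\Q$ it is ${\eps}^{-1}\phieps(s,z)\nu(dz)\,ds$, so the ``same'' integral expression denotes a priori two different limits, and an a.s.\ identity cannot simply be carried across by equivalence of the measures.

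You do acknowledge this obstacle at the end, but your proposed resolution — that it reduces to Assumption \ref{con LDP} plus the localisation in $\bar{\mathbb{A}}_b$ and ``is precisely the kind of a priori control established in Lemmas \ref{lem LDP stochastic 01}--\ref{lem LDP stochastic 05}'' — does not work: those lemmas are a priori estimates whose proofs begin by invoking Lemma \ref{lem LDP stochastic 00} in order to apply the It\^o formula to $X^{\eps}$ as a $\Q$-solution of \eqref{eq3 prop2 00}, so deferring to them is circular; moreover the real issue is not integrability of the drift or square-integrability of the martingale part, but the identification of two measure-dependent limit constructions. The paper's proof supplies exactly this identification: it splits the noise over $K_n$ and $K_n^c$ (with $K_n$ as in \eqref{eqn-K_n} and $\phieps\equiv 1$ on $[0,T]\times K_n^c$, so that by Proposition \ref{prop-PRM-b} one may replace $N^{{\eps}^{-1}\phieps}$ by $N^{{\eps}^{-1}}$ there); on $K_n$, where $\nu(K_n)<\infty$, all integrals are pathwise Lebesgue--Stieltjes integrals under both measures (assertions (V1) and (V1-0)); on $K_n^c$, the compensated integral is identified, separately under $\mathbb{P}^{\eps}_T$ and under $\Q$, as the a.s.\ limit along a common subsequence of pathwise integrals over $K_n^c\cap K_{m_k}$ (assertions (V3) and (V3-0)); only then is the equality transferred from $\mathbb{P}^{\eps}_T$-a.s.\ to $\Q$-a.s. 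Some argument of this kind is needed to close your proof, and the same caveat applies to the ``purely pathwise manipulation'' invoked in your uniqueness step.
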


\begin{proof}[Proof of Lemma \ref{lem LDP stochastic 00}]
Fix $\eps>0$ and a process $\phieps\in\bar{\mathbb{A}}_b$ defined on $(\bar{\Omega},\G,\Gb,\Q)$. Define a process $X^{\eps}$ by formula
\eqref{eqn-X^eps}. Then
by assertion (S4) in Theorem \ref{thm-Girsanov} and the definition of $\mathcal{G}^{\eps}$, we infer that the process
$X^{\eps}$ is the unique solution of (\ref{eq3 prop2 00})
on $(\bar{\Omega},\G,\Gb,\mathbb{P}^{\eps}_T)$, that is
\begin{itemize}
 \item [(C1)] $X^{\eps}$ is $\Gb$-{progressively measurable} process,

 \item [(C2)] trajectories of $X^{\eps}$ belong to $\UT$ $\mathbb{P}^{\eps}_T$-a.s.,

 \item [(C3)] the following equality holds, in $\rH$, for all $t\in[0,T]$, $\mathbb{P}^{\eps}_T$-a.s.:
  \begin{equation}\label{eqn-C3}
\begin{split}   X^{\eps}(t)
   &=
   u_0-\int_0^t{\rA}X^{\eps}(s)\,ds - \int_0^t\rB(X^{\eps}(s))\,ds\\
    &\ \ \ +
    \int_0^tf(s)\,ds + {\eps}\int_0^t\int_{{\rZ}}G(X^{\eps}(s-),z)(N^{{\eps}^{-1}\phieps}(dz,ds)-{\eps}^{-1}\nu(dz)\,ds).
    \end{split}
\end{equation}
\end{itemize}

Next, we prove that the process $X^{\eps}$ is the unique solution of (\ref{eq3 prop2 00})
on $(\bar{\Omega},\G,\Gb,\Q)$;
that is,
\begin{itemize}
 \item [(C1-0)] $X^{\eps}$ is $\Gb$-{progressively measurable} process,

 \item [(C2-0)] {trajectories} of $X^{\eps}$ belong to $\UT$ $\mathbb{Q}$-a.s.,

 \item [(C3-0)] the following equality holds, in $\rH$, for all $t\in[0,T]$, $\mathbb{Q}$-a.s.:
  \begin{equation}
  \label{eqn-C3-b}
\begin{split}
   X^{\eps}(t)
   &=
   u_0-\int_0^t{\rA}X^{\eps}(s)\,ds - \int_0^t\rB(X^{\eps}(s))\,ds\\
    &\ \ \ +
    \int_0^tf(s)\,ds + {\eps}\int_0^t\int_{{\rZ}}G(X^{\eps}(s-),z)(N^{{\eps}^{-1}\phieps}(dz,ds)-{\eps}^{-1}\nu(dz)\,ds).
\end{split}
\end{equation}
\end{itemize}
Note that  despite the two measures $\Q$ and $\mathbb{P}^{\eps}_T$
being equivalent, equality \eqref{eqn-C3-b} does not follow from \eqref{eqn-C3} without additional
justification. We provide this justification below. The proof is divided into two steps.\\
{\textbf{Step 1}:} We prove that the process $X^{\eps}$ satisfies (C1-0)-(C3-0).

Let us observe that obviously,  condition (C1) implies condition (C1-0). In view of assertion (S3) in Theorem \ref{thm-Girsanov}, i.e., that the measures $\Q$ and $\mathbb{P}^{\eps}_T$ are equivalent, condition (C2) implies condition (C2-0).

We are now in position to prove that condition (C3-0) holds as well. For this, let us fix a natural number $n$ (in view of the definition \eqref{eqn-A_b} of the set $\bar{\mathbb{A}}_b$).
Observe that equality (\ref{eq3 prop2 00}) can be rewritten as
\begin{eqnarray*}
dX^{\eps}(t)+{\rA}X^{\eps}(t)\,dt&&\!\!\!\!\!\!\!\!\!\!\!\!+ \rB(X^{\eps}(t))\,dt\\
  &=&
  f(t)\,dt
 +
  {\eps}\int_{K_n}G(X^{\eps}(t-),z)\Big(N^{{\eps}^{-1}\phieps}(dz,dt)-{\eps}^{-1}\nu(dz)\,dt\Big)\nonumber\\
  &&
   +
  {\eps}\int_{K_n^c}G(X^{\eps}(t-),z)\Big(N^{{\eps}^{-1}\phieps}(dz,dt)-{\eps}^{-1}\nu(dz)\,dt\Big)\nonumber\\
  &=&
   f(t)\,dt
  +
  {\eps}\int_{K_n}G(X^{\eps}(t-),z)\Big(N^{{\eps}^{-1}\phieps}(dz,dt)-{\eps}^{-1}\nu(dz)\,dt\Big)\nonumber\\
  &&
  +
  {\eps}\int_{K_n^c}G(X^{\eps}(t-),z)\Big(N^{{\eps}^{-1}}(dz,dt)-{\eps}^{-1}\nu(dz)\,dt\Big)\nonumber\\
X_0^{\eps}&=&u_0.
\end{eqnarray*}
The second equality follows from Proposition \ref{prop-PRM-b} because  $\phieps(s,z,\omega)=1$ if $(s,z,\omega)\in[0,T]\times K_n^c \times \bar{\Omega}$.

Thus, we have the following:
\begin{itemize}
 \item [(V1)] Since $\nu(K_n)<\infty$, by assertion (S4) in Theorem \ref{thm-Girsanov}, we infer that there exists $\Omega_1\subset\bar{\Omega}$ with $\mathbb{P}^{\eps}_T(\Omega_1)=1$ such that for any $\omega\in\Omega_1$, the process  $\{N^{{\eps}^{-1}\phieps}((0,t]\times K_n),\ t\in[0,T]\}$ has only finite
 jumps. Hence for any $\omega\in\Omega_1$, the integrals
 \[\int_0^t\int_{K_n}G(X^{\eps}(s-),z)N^{{\eps}^{-1}\phieps}(dz,ds) \mbox{ and }
 \int_0^t\int_{K_n}G(X^{\eps}(s-),z)\nu(dz)\,ds\]
 are well-defined as the Lebesgue-Stieltjes integrals.

 \item [(V2)] Similar to the proof of condition (V1), for any $m>n$, the integrals
 \[\int_0^t\int_{K^c_n\cap K_m}G(X^{\eps}(s-),z)N^{{\eps}^{-1}}(dz,ds) \mbox{ and }
\int_0^t\int_{K^c_n\cap K_m}G(X^{\eps}(s-),z)\nu(dz)\,ds\]
are well-defined as the  Lebesgue-Stieltjes integrals, $\mathbb{P}^{\eps}_T$-a.s.

 \item [(V3)] By \cite[Section 3 in Chapter II, pages 59-63]{Ikeda-Watanabe} and by the definition of the integral
 $$
 \int_0^t\int_{K_n^c}G(X^{\eps}(s-),z)\Big(N^{{\eps}^{-1}}(dz,ds)-{\eps}^{-1}\nu(dz)\,ds\Big)
 $$
 {on} the probability space $(\bar{\Omega},\G,\Gb,\mathbb{P}^{\eps}_T)$, there exist $\Omega_2\subset\bar{\Omega}$ with $\mathbb{P}^{\eps}_T(\Omega_2)=1$ and a subsequence $\{m_k\}$ such that for any $\omega\in\Omega_2$
 \begin{eqnarray*}
   &&\hspace{-1truecm}\lefteqn{\lim_{m_k\to\infty}\int_0^t\int_{K_n^c\cap K_{m_k}}G(X^{\eps}(s-),z)\Big(N^{{\eps}^{-1}}(dz,ds)-{\eps}^{-1}\nu(dz)\,ds\Big)}\\
   &=&
   \int_0^t\int_{K_n^c}G(X^{\eps}(s-),z)\Big(N^{{\eps}^{-1}}(dz,ds)-{\eps}^{-1}\nu(dz)\,ds\Big).
 \end{eqnarray*}
\end{itemize}

Using an argument as in the proof of assertions (V1)-(V3), and the equality
\begin{eqnarray*}
N^{{\eps}^{-1}\phieps}((0,t]\times K_n)
&=&
\int_0^t\int_{K_n}\int_0^\infty\mathds{1}_{[0,{\eps}^{-1}\phieps(z,s)]}(r)\Nbar(dz,ds,dr)\\
&=&
\int_0^t\int_{K_n}\int_0^{{\eps}^{-1} n}\mathds{1}_{[0,{\eps}^{-1}\phieps(z,s)]}(r)\Nbar(dz,ds,dr),
\end{eqnarray*}
we infer three facts:
\begin{itemize}
 \item [(V1-0)] There exists $\Omega_3\subset\bar{\Omega}$ with $\mathbb{Q}(\Omega_3)=1$ such that $\{N^{{\eps}^{-1}\phieps}((0,t]\times K_n),\ t\in[0,T]\}$ has only finite
 jumps  for any $\omega\in\Omega_3$. Hence for any $\omega\in\Omega_3$, $\int_0^t\int_{K_n}G(X^{\eps}(s-),z)N^{{\eps}^{-1}\phieps}(dz,ds)$ and
 $\int_0^t\int_{K_n}G(X^{\eps}(s-),z)\nu(dz)\,ds$ are well-defined as the Lebesgue-Stieltjes integrals.

 \item [(V2-0)] For any $m>n$, $\int_0^t\int_{K^c_n\cap K_m}G(X^{\eps}(s-),z)N^{{\eps}^{-1}}(dz,ds)$ and
 $\int_0^t\int_{K^c_n\cap K_m}G(X^{\eps}(s-),z)\nu(dz)\,ds$ are
well-defined $\mathbb{Q}$-a.s. as the Lebesgue-Stieltjes integrals.

 \item [(V3-0)] By the definition of
 $$
 \int_0^t\int_{K_n^c}G(X^{\eps}(s-),z)\Big(N^{{\eps}^{-1}}(dz,ds)-{\eps}^{-1}\nu(dz)\,ds\Big)
 $$
 {on} $(\bar{\Omega},\G,\Gb,\mathbb{Q})$, there exist $\Omega_4\subset\bar{\Omega}$ with $\mathbb{Q}(\Omega_4)=1$
 and a subsequence of the subsequence of $\{m_k\}$ from (V3))
(for simplicity still denoted by $m_k$), such that for any $\omega\in\Omega_4$,
 \begin{eqnarray*}
   &&\hspace{-2truecm}\lefteqn{\lim_{m_k\to\infty}\int_0^t\int_{K_n^c\cap K_{m_k}}G(X^{\eps}(s-),z)\Big(N^{{\eps}^{-1}}(dz,ds)-{\eps}^{-1}\nu(dz)\,ds\Big)}\\
   &=&
   \int_0^t\int_{K_n^c}G(X^{\eps}(s-),z)\Big(N^{{\eps}^{-1}}(dz,ds)-{\eps}^{-1}\nu(dz)\,ds\Big).
 \end{eqnarray*}
\end{itemize}

By (C3) and (V1)-(V3), there exists $\Omega_5\subset \bar{\Omega}$ such that $\mathbb{P}^{\eps}_T(\Omega_5)=1$ and, for any $\omega\in\Omega_5$,
\begin{eqnarray*}
&&\hspace{-2truecm}\lefteqn{-\lim_{m_k\to\infty}\int_0^t\int_{K_n^c\cap K_{m_k}}G(X^{\eps}(s-),z)\Big(N^{{\eps}^{-1}}(dz,ds)-{\eps}^{-1}\nu(dz)\,ds\Big)}\\
   &=&
X^{\eps}(t)-u_0+\int_0^t{\rA}X^{\eps}(s)\,ds + \int_0^t\rB(X^{\eps}(s))\,ds
  -
  \int_0^tf(s)\,ds\\
  &&-
  {\eps}\int_0^t\int_{K_n}G(X^{\eps}(s-),z)\Big(N^{{\eps}^{-1}\phieps}(dz,ds)-{\eps}^{-1}\nu(dz)\,ds\Big),\text{ in }\rH.
\end{eqnarray*}

\addaok{Since, by assertion} (S3) in Theorem \ref{thm-Girsanov}, the measures $\Q$ and $\mathbb{P}^{\eps}_T$ are {equivalent}, we deduce that $\mathbb{Q}(\cap_{i=1}^5\Omega_i)=1$. Moreover, since by (V1) and (V1-0) the right side of the above equality is path-wise well-defined for $\omega\in\Omega_1\cap\Omega_3$ , we infer for any $\omega\in\cap_{i=1}^5\Omega_i$
\begin{eqnarray*}
&&\hspace{-2truecm}\lefteqn{-\lim_{m_k\to\infty}\int_0^t\int_{K_n^c\cap K_{m_k}}G(X^{\eps}(s-),z)\Big(N^{{\eps}^{-1}}(dz,ds)-{\eps}^{-1}\nu(dz)\,ds\Big)}\\
   &=&
X^{\eps}(t)-u_0+\int_0^t{\rA}X^{\eps}(s)\,ds + \int_0^t\rB(X^{\eps}(s))\,ds
  -
  \int_0^tf(s)\,ds\\
  &&-
  {\eps}\int_0^t\int_{K_n}G(X^{\eps}(s-),z)\Big(N^{{\eps}^{-1}\phieps}(dz,ds)-{\eps}^{-1}\nu(dz)\,ds\Big),\text{ in }\rH.
\end{eqnarray*}

\vskip 0.2cm
Combining this last equality with (V3-0), the proof of claim (C3-0) is complete. The proof of \textbf{Step 1} is now over.\\
{\textbf{Step 2}:} The solution of (\ref{eq3 prop2 00}) on $(\bar{\Omega},\G,\Gb,\Q)$ is unique.

Assume that a process $Y^\eps$ is another solution of (\ref{eq3 prop2 00}) on $(\bar{\Omega},\G,\Gb,\Q)$, that is, (C1-0)-(C3-0) are satisfied with
$X^\eps$ replaced by $Y^\eps$.
By arguments similar to those for {\textbf{Step 1}}, the process
$Y^{\eps}$ is a solution of (\ref{eq3 prop2 00}) on $(\bar{\Omega},\G,\Gb,\mathbb{P}^{\eps}_T)$, and the uniqueness of solution to (\ref{eq3 prop2 00}) on $(\bar{\Omega},\G,\Gb,\mathbb{P}^{\eps}_T)$ implies that $Y^\eps=X^\eps$ $\mathbb{P}^{\eps}_T$-a.s.. Since the measures $\Q$ and $\mathbb{P}^{\eps}_T$ are equivalent, $Y^\eps=X^\eps$ $\Q$-a.s..

\vskip 0.2cm

Thus the proof of Lemma \ref{lem LDP stochastic 00} is complete.
\end{proof}

\vskip 0.2cm

 Now we give some \emph{a priori} estimates to be used later. For simplicity, in the following $\mathbb{E}^{\mathbb{Q}}$ is denoted by $\mathbb{E}$. Let us recall that the norm $\|\cdot\|^2_{W^{\alpha,2}([0,T],\rV^\prime)}$ was introduced in equality \eqref{eqn-W^alpha2-norm}.

\begin{lemma}\label{lem LDP stochastic 01}
For every $N \in \mathbb{N}$, there exist constants $C_N>0$ and $\eps_N \in (0,1]$, and for every
 $\alpha\in(0,\frac{1}{2})$ there exists a constant $C_{\alpha,N}>0$ such that for every process $\phieps \in \mathcal{U}^N$ and every $\eps \in (0,\eps_N]$,
 the process $X^\eps$ defined by \eqref{eqn-X^eps} satisfies
\begin{eqnarray}\label{eq3 X ep pro 01}
\dela{\sup_{{\eps}\in(0,{\eps}_{N})}}
&&                   \mathbb{E}\Big(\sup_{t\in[0,T]}\|X^{\eps}(t)\|^2_{\rH}+
                   \int_0^T\|X^{\eps}(t)\|^2_{\rV}\,dt\Big) \leq C_N,
\\\label{eq3 X ep pro 02}
\dela{\sup_{{\eps}\in(0,{\eps}_{N})}}
 &&                   \mathbb{E}\Big(\|X^{\eps}\|_{W^{\alpha,2}([0,T],\rV^\prime)}^2\Big)
\leq C_{\alpha,N}.
\end{eqnarray}

\end{lemma}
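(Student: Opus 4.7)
The approach for \eqref{eq3 X ep pro 01} is It\^o's formula applied to $\|X^{\eps}(t)\|_{\rH}^2$, with $X^{\eps}$ as in Lemma \ref{lem LDP stochastic 00} viewed via the ``drift'' form \eqref{eq3 prop2 00-second}. Using $\dual{\langle B(u),u\rangle}=0$ and computing the It\^o correction for the compensated Poisson integral, one gets
\begin{align*}
\|X^{\eps}(t)\|_{\rH}^2 + 2\int_0^t \|X^{\eps}(s)\|_{\rV}^2\,ds
&= \|u_0\|_{\rH}^2 + 2\int_0^t \dual{\langle f(s),X^{\eps}(s)\rangle}\,ds \\
&\quad + 2\int_0^t\!\int_{\rZ}\! \langle G(X^{\eps}(s),z),X^{\eps}(s)\rangle_{\rH}(\phieps(s,z)-1)\nu(dz)\,ds \\
&\quad + \eps\int_0^t\!\int_{\rZ}\! \|G(X^{\eps}(s),z)\|_{\rH}^2\phieps(s,z)\nu(dz)\,ds + M^{\eps}(t),
\end{align*}
with $M^{\eps}$ a mean-zero c\'adl\'ag local martingale arising from the $\widetilde N^{\eps^{-1}\phieps}$-integral.

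The two control terms (third and fourth on the right) must be majorised uniformly in $\phieps\in\mathcal{U}^N$. I would use two uniform-in-$\phi\in S^N$ estimates standard in the weak-convergence framework: first, the one already invoked in \eqref{eq3 star},
\[
\sup_{\phi\in S^N}\int_0^T\!\int_{\rZ} L_3(z)|\phi(s,z)-1|\nu(dz)\,ds \,=:\, \kappa_1(N)<\infty,
\]
and second, a consequence of the same entropy inequality applied to $L_3^2$ (which is exponentially integrable because $L_3\in\mathcal{H}$),
\[
\sup_{\phi\in S^N}\int_0^T\!\int_{\rZ} L_3^2(z)\phi(s,z)\nu(dz)\,ds \,=:\, \kappa_2(N)<\infty.
\]
Combined with \textbf{(LDP-03)}, the third term is bounded by $\int_0^t (1+\|X^{\eps}(s)\|_{\rH}^2)\,\xi(s)\,ds$ with $\int_0^T\xi\leq\kappa_1(N)$, while the fourth is bounded by $C\eps\kappa_2(N)(1+\sup_{s\leq t}\|X^{\eps}(s)\|_{\rH}^2)$. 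For $M^{\eps}$, the Burkholder-Davis-Gundy inequality, the growth of $G$, and the explicit compensator $\eps^{-1}\phieps\nu\otimes ds$ of $N^{\eps^{-1}\phieps}$ give
\[
\mathbb{E}\sup_{s\leq t}|M^{\eps}(s)| \leq \tfrac{1}{2}\mathbb{E}\sup_{s\leq t}\|X^{\eps}(s)\|_{\rH}^2 + C\eps\,\kappa_2(N)\bigl(1+\mathbb{E}\sup_{s\leq t}\|X^{\eps}(s)\|_{\rH}^2\bigr).
\]
Choosing $\eps_N$ so small that all $O(\eps)$-coefficients of $\mathbb{E}\sup\|X^{\eps}\|_{\rH}^2$ together with the $\tfrac12$ from BDG stay strictly below $1$, absorbing them on the left, and then invoking Gronwall's lemma against the integrable weight $\xi$ yields \eqref{eq3 X ep pro 01}.

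For the Sobolev-in-time bound \eqref{eq3 X ep pro 02} I would decompose $X^{\eps}(t)-u_0=I_1(t)+I_2(t)+I_3(t)+I_4(t)+J^{\eps}(t)$ into the four pathwise Bochner integrals corresponding to $-{\rA}X^{\eps}$, $-B(X^{\eps})$, $f$ and $\int_{\rZ}G(X^{\eps},z)(\phieps-1)\nu(dz)$, plus the pure-jump martingale $J^{\eps}=\eps\int_0^\cdot\!\int_{\rZ}G\,\widetilde N^{\eps^{-1}\phieps}$. Using Lemma \ref{lem B baisc prop} together with \eqref{eq3 X ep pro 01} and the $\kappa_1(N)$-bound, each $I_j$ admits an $L^2([0,T],\rV^\prime)$-integrable time derivative, hence lies in $W^{1,2}([0,T],\rV^\prime)\embed W^{\alpha,2}([0,T],\rV^\prime)$ with expected squared norm $\leq C_N$. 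For $J^{\eps}$ I would invoke the now-standard fractional-regularity estimate for stochastic integrals driven by compensated Poisson random measures (as in \cite{Flandoli-Gatarek} and \cite[(4.8)]{Zhai-Zhang}), which controls $\mathbb{E}\|J^{\eps}\|_{W^{\alpha,2}([0,T],\rV^\prime)}^2$ by a constant multiple of $\eps\,\mathbb{E}\int_0^T\!\int_{\rZ}\|G(X^{\eps},z)\|_{\rV^\prime}^2\phieps\,\nu(dz)\,ds$; the latter is then bounded via $\|\cdot\|_{\rV^\prime}\lesssim\|\cdot\|_{\rH}$, \textbf{(LDP-03)} and the $\kappa_2(N)$-estimate. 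The main obstacle throughout is the uniform-in-$\phieps\in\mathcal U^N$ treatment of the two nonlinear-in-$\phieps$ terms (the one carrying $\phieps-1$ and the one carrying $\phieps$ itself, which is not $L^1$-bounded without further structure): both are accessible only because $L_i\in\mathcal H\cap L^2(\nu)$ supplies simultaneously the exponential integrability needed for the entropy inequality and the $L^2(\nu)$-control required by the second-order It\^o term.
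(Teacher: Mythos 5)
Your treatment of \eqref{eq3 X ep pro 01} is essentially the paper's proof: It\^o's formula for $\|X^{\eps}(t)\|^2_{\rH}$, the entropy bound \eqref{eq3 star} for the $(\phieps-1)$-term, the uniform bound $\sup_{k\in S^N}\int_0^T\int_{\rZ}L_3^2(z)(k+1)\,\nu(dz)\,ds<\infty$ (your $\kappa_2(N)$, the paper's \eqref{eq3 star3}), the Burkholder--Davis--Gundy inequality producing $O(\eps^{1/2})$- and $O(\eps)$-coefficients in front of $\mathbb{E}\sup_t\|X^\eps(t)\|^2_{\rH}$, absorption for $\eps\leq\eps_N$, and Gronwall. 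The only cosmetic difference is that you compensate the quadratic jump term and fold its martingale part into $M^\eps$, whereas the paper keeps $J_2=\eps^2\int\int\|G\|^2_{\rH}N^{\eps^{-1}\phieps}$ and bounds its expectation through the compensator; these are equivalent.

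For \eqref{eq3 X ep pro 02} the paper only cites the argument of (4.67) in \cite{Zhai-Zhang}, and your sketch follows that standard route, but one step fails as written. You claim that each Bochner-integral term has an $L^2([0,T],\rV^\prime)$ time derivative with \emph{expected} squared norm controlled by \eqref{eq3 X ep pro 01}. For the convective term this requires
\begin{equation*}
\mathbb{E}\Big(\int_0^T\|B(X^\eps(s))\|^2_{\rV^\prime}\,ds\Big)
\;\leq\; C\,\mathbb{E}\Big(\sup_{t\in[0,T]}\|X^\eps(t)\|^2_{\rH}\int_0^T\|X^\eps(s)\|^2_{\rV}\,ds\Big),
\end{equation*}
a product (fourth-order) moment that does \emph{not} follow from \eqref{eq3 X ep pro 01}; the same issue reappears if you instead put $\int_0^\cdot B(X^\eps)\,ds$ in $W^{1,1}([0,T],\rV^\prime)$ and use $W^{1,1}\embed W^{\alpha,2}$ for $\alpha<1/2$, since the square of the $L^1$-norm again produces the same product. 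In \cite{Flandoli-Gatarek} (and in the argument the paper refers to) this is handled by first establishing higher-moment a priori bounds (e.g.\ $\mathbb{E}\sup_t\|X^\eps(t)\|^{4}_{\rH}$ and $\mathbb{E}(\int_0^T\|X^\eps\|^2_{\rV}ds)^2$), or by a suitable localization; your proposal is missing this ingredient. A smaller inaccuracy of the same kind: the term $\int_{\rZ}G(X^\eps(t),z)(\phieps(t,z)-1)\nu(dz)$ is only bounded in $L^1([0,T],\rH)$ uniformly over $S^N$ (via \eqref{eq3 star}), not in $L^2([0,T],\rV^\prime)$, so it should be routed through $W^{1,1}\embed W^{\alpha,2}$ rather than $W^{1,2}$ — this part, unlike the $B$-term, does close with second moments only.
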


\begin{proof}
Fix $\eps>0$, $N\in\mathbb{N}$, and process $\phieps \in \mathcal{U}^N$. Let the process $X^\eps$ be defined by formula \eqref{eqn-X^eps}.
By Lemma \ref{lem LDP stochastic 00}, this process
 is the unique solution of (\ref{eq3 prop2 00}) on the probability space $(\bar{\Omega},\G,\Gb,\Q)$.

 Therefore, we can apply {the It\^o} formula to deduce that
\begin{eqnarray}\label{eq3 sto-lemm 00}
&&\hspace{-1truecm}|X^{\eps}(t)|^2_{\rH} + 2\int_0^t\|X^{\eps}(s)\|^2_{\rV}\,ds\nonumber\\
&=& |u_0|^2_{\rH}+ 2\int_0^t\langle f(s),X^{\eps}(s)\rangle_{\rV^\prime,\rV}\,ds
  +
  2\int_0^t\int_{{\rZ}}\langle G(X^{\eps}(s),z),X^{\eps}(s)\rangle_{\rH}(\phieps(s,z)-1)\nu(dz)\,ds\nonumber\\
  &&+
  2{\eps}\int_0^t\int_{{\rZ}}\langle G(X^{\eps}(s-),z), X^{\eps}(s-)\rangle_{\rH}\widetilde{N}^{{\eps}^{-1}\phieps}(dz,ds)\nonumber\\
  &&+
 {\eps}^2\int_0^t\int_{{\rZ}}|G(X^{\eps}(s-),z)|^2_{\rH}N^{{\eps}^{-1}\phieps}(dz,ds)\nonumber\\
&\leq&
  |u_0|^2_{\rH} + \int_0^t\|X^{\eps}(s)\|^2_{\rV}\,ds + \int_0^t\|f(s)\|^2_{\rV^\prime}\,ds\nonumber\\
  &&+
  2\int_0^t(1+2|X^{\eps}(s)|^2_{\rH})\int_{{\rZ}}L_3(z)|\phieps(s,z)-1|\nu(dz)\,ds\nonumber\\
  &&+
  2{\eps}\int_0^t\int_{{\rZ}}\langle G(X^{\eps}(s-),z), X^{\eps}(s-)\rangle_{\rH}\widetilde{N}^{{\eps}^{-1}\phieps}(dz,ds)\nonumber\\
  &&+
 {\eps}^2\int_0^t\int_{{\rZ}}|G(X^{\eps}(s-),z)|^2_{\rH}N^{{\eps}^{-1}\phieps}(dz,ds).
\end{eqnarray}
Set
$$
J_1(t):=2{\eps}\int_0^t\int_{{\rZ}}\langle G(X^{\eps}(s-),z), X^{\eps}(s-)\rangle_{\rH}\widetilde{N}^{{\eps}^{-1}\phieps}(dz,ds)
$$
and
$$
J_2(t):={\eps}^2\int_0^t\int_{{\rZ}}|G(X^{\eps}(s-),z)|^2_{\rH}N^{{\eps}^{-1}\phieps}(dz,ds).
$$
Applying Gronwall's lemma and (\ref{eq3 star}), we get
\begin{eqnarray}\label{eq3 prop2 lem 00}
&&\hspace{-1truecm}\lefteqn{\sup_{t\in[0,T]}|X^{\eps}(t)|^2_{\rH} + \int_0^T\|X^{\eps}(t)\|^2_{\rV}\,dt}\nonumber\\
&\leq&
 C_N\Big(
   |u_0|^2_{\rH} + \int_0^T\|f(s)\|^2_{\rV^\prime}\,ds + 1 + \sup_{t\in[0,T]}|J_1(t)| + J_2(T)
   \Big).
\end{eqnarray}
For $\sup_{t\in[0,T]}|J_1(t)|$, the Burkholder-Davis-Gundy inequality implies that
\begin{eqnarray}\label{eq3 sto lem J1}
&& \hspace{-1truecm}\lefteqn{ \mathbb{E}\Big(\sup_{t\in[0,T]}|J_1(t)|\Big)}\nonumber\\
&\leq&
   C{\eps} \mathbb{E}\Big(
              \int_0^T\int_{{\rZ}}|X^{\eps}(s-)|^2_{\rH}|G(X^{\eps}(s-),z)|^2_{\rH}N^{{\eps}^{-1}\phieps}(dz,ds)
             \Big)^{\frac{1}{2}}\nonumber\\
&\leq&
  {\eps}^{\frac{1}{2}} \mathbb{E}\Big(\sup_{t\in[0,T]}|X^{\eps}(t)|^2_{\rH}\Big)
  +
  C{\eps}^{\frac{1}{2}}\mathbb{E}\Big(\int_0^T(1+|X^{\eps}(s)|^2_{\rH})\int_{{\rZ}}L^2_3(z)\phieps(s,z)\nu(dz)\,ds\Big)\nonumber\\
&\leq&
  C_N{\eps}^{\frac{1}{2}} \mathbb{E}\Big(\sup_{t\in[0,T]}|X^{\eps}(t)|^2_{\rH}\Big)
  +
  C_N{\eps}^{\frac{1}{2}}.
\end{eqnarray}
To deduce the last inequality above, we use the fact
(see (3.3) in \cite[Lemma 3.4]{Budhiraja-Chen-Dupuis}),
that \addjzok{for any fixed $\Im\in \mathcal{H}\cap L^2(\nu)$,
\begin{eqnarray}\label{eq3 star3}
C_{\Im,N}:=\sup_{k\in S^N}\int_0^T\int_{{\rZ}}\Im^2(z)(k(s,z)+1)\nu(dz)\,ds<\infty.
\end{eqnarray}}

As in (\ref{eq3 sto lem J1}), we get
\begin{eqnarray}\label{eq3 sto lem J2}
\mathbb{E}\Big(|J_2(T)|\Big)
\leq
C_N{\eps} + C_N{\eps} \mathbb{E}\Big(\sup_{t\in[0,T]}|X^{\eps}(t)|^2_{\rH}\Big).
\end{eqnarray}
Substituting (\ref{eq3 sto lem J1}) and (\ref{eq3 sto lem J2}) into (\ref{eq3 prop2 lem 00}), and then choosing ${\eps}_N>0$ small enough, we get
(\ref{eq3 X ep pro 01}).

\vskip 0.3cm
Using the same arguments that prove (4.67) in \cite{Zhai-Zhang}, we infer (\ref{eq3 X ep pro 02}).

This completes the proof of Lemma \ref{lem LDP stochastic 01}.
\end{proof}

\vskip 0.3cm

Let us define a stopping time $\tau_{{\eps},M}$ by\footnote{In fact, this stopping time depends on $X^\eps$ so it depends on both $\eps$ and $\phieps$. Hence, it should denoted $\tau_{{X^\eps},M}$ or $\tau_{{\phieps,\eps},M}$. Since these two are cumbersome, we decided not to use them. In the same vein, $X^\eps$ should be denoted by $X^{\eps,\phieps}$, but we decided to use the simpler notation.}

\begin{equation}\label{eqn-tau_eps,M}
\tau_{{\eps},M}:=\inf\{t\geq 0:\ \sup_{s\in[0,t]}|X^{\eps}(s)|^2_{\rH}+\int_0^t\|X^{\eps}(s)\|^2_{\rV}\,ds \deln{\geq}{>} M\},\;\; M>0.
\end{equation}

\deln{We use the convention here that $\inf \emptyset =\infty$.}

Before we continue with our estimates, let us state  the following simple but useful corollary from the previous result and the Chebyshev inequality.

\begin{corollary}\label{cor-Cheb}
In the framework above, we have
\begin{equation}\label{eqn-Cheb}\begin{split}
\mathbb{Q} \bigl( \tau_{{\eps},M} < T\bigr) &\leq \frac{C_N}{M}, \;\; M>0,\\
\mathbb{Q} \bigl( \|X^{\eps}\|_{W^{\alpha,2}([0,T],\rV^\prime)}^2 \geq R^2\bigr)
&\leq \frac{C_{\alpha,N}}{R^2}, \;\; R,M>0.
\end{split}
\end{equation}

\end{corollary}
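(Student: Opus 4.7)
The proof proposal for Corollary \ref{cor-Cheb} is essentially a direct application of the Chebyshev (Markov) inequality combined with the uniform bounds established in Lemma \ref{lem LDP stochastic 01}. No genuine work beyond unpacking the definition of $\tau_{\eps,M}$ is required, so the argument will be brief.

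For the first estimate, I would begin by observing that the quantity
\[
\Phi_\eps(t):=\sup_{s\in[0,t]}\|X^{\eps}(s)\|^2_{\rH}+\int_0^t\|X^{\eps}(s)\|^2_{\rV}\,ds
\]
is nondecreasing in $t$, so by the definition of $\tau_{\eps,M}$ in \eqref{eqn-tau_eps,M}, on the event $\{\tau_{\eps,M}<T\}$ we necessarily have $\Phi_\eps(T)\geq M$. Consequently
\[
\mathbb{Q}\bigl(\tau_{\eps,M}<T\bigr)\;\leq\;\mathbb{Q}\bigl(\Phi_\eps(T)\geq M\bigr)\;\leq\;\frac{1}{M}\,\mathbb{E}\bigl(\Phi_\eps(T)\bigr),
\]
where the second inequality is Chebyshev. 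Applying the estimate \eqref{eq3 X ep pro 01} of Lemma \ref{lem LDP stochastic 01} gives $\mathbb{E}(\Phi_\eps(T))\leq C_N$, which yields the desired bound $C_N/M$.

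For the second estimate, Chebyshev's inequality applied to the nonnegative random variable $\|X^{\eps}\|_{W^{\alpha,2}([0,T],\rV^\prime)}^2$ gives
\[
\mathbb{Q}\bigl(\|X^{\eps}\|_{W^{\alpha,2}([0,T],\rV^\prime)}^2\geq R^2\bigr)\;\leq\;\frac{1}{R^2}\,\mathbb{E}\bigl(\|X^{\eps}\|_{W^{\alpha,2}([0,T],\rV^\prime)}^2\bigr),
\]
and the bound \eqref{eq3 X ep pro 02} from Lemma \ref{lem LDP stochastic 01} controls the right-hand side by $C_{\alpha,N}/R^2$.

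There is no real obstacle here; the entire content of the corollary is the translation of the $L^1$-moment bounds of Lemma \ref{lem LDP stochastic 01} into tail probability bounds via Chebyshev, and the only subtlety worth flagging (which I would do explicitly in the writeup) is the monotonicity argument that lets us replace the event $\{\tau_{\eps,M}<T\}$ by $\{\Phi_\eps(T)\geq M\}$, together with the implicit uniformity in $\eps\in(0,\eps_N]$ and $\phieps\in\mathcal{U}^N$ inherited from Lemma \ref{lem LDP stochastic 01}.
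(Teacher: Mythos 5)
Your proposal is correct and is exactly the argument the paper intends: the corollary is stated as an immediate consequence of Lemma \ref{lem LDP stochastic 01} and the Chebyshev inequality, with the paper leaving the (monotonicity-of-$\Phi_\eps$ plus Markov bound) details implicit. Your explicit handling of the inclusion $\{\tau_{\eps,M}<T\}\subset\{\Phi_\eps(T)\geq M\}$ is a welcome but minor elaboration of the same route.
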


We have the following estimate.

\begin{lemma}\label{lem LDP stochastic 02}
For all $N \in \mathbb{N}$ and $M>0$, there exist constants $C_{N,M}>0$ and $\eps_{N,M} \in (0,1]$
 such that for every process $\phieps \in \mathcal{U}^N$ and every $\eps \in (0,\eps_{N,M}]$,
 the process $X^\eps$ defined by \eqref{eqn-X^eps} satisfies

\dela{\begin{eqnarray}\label{eq3 X ep pro 03}
C_{N,M}:=\sup_{{\eps}\in(0,{\eps}_{N,M})}\Big(
                   \mathbb{E}\Big(\sup_{t\in[0,T\wedge\tau_{{\eps},M})}\|X^{\eps}(t)\|^2_{\rV}\Big)
                   +
                   \mathbb{E}\Big(\int_0^{T\wedge\tau_{{\eps},M}}\|X^{\eps}(t)\|^2_{\mathcal{D}({\rA})}\,dt\Big)
                 \Big)
<\infty.
\end{eqnarray}
}
\begin{eqnarray}\label{eq3 X ep pro 03}
\sup_{{\eps}\in(0,{\eps}_{N,M})}
                   \mathbb{E}\Big(\sup_{t\in[0,T]}\|X^{\eps}(t\wedge\tau_{{\eps},M})\|^2_{\rV}
                   +
                   \int_0^{T\wedge\tau_{{\eps},M}}\|X^{\eps}(s)\|^2_{\mathcal{D}({\rA})}\,ds
                 \Big)\leq C_{N,M}.
\end{eqnarray}
\end{lemma}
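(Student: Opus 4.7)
The strategy mirrors Step 3 of the global existence proof of Theorem \ref{thm-initial data in V}, but adapted to the controlled SPDE \eqref{eq3 prop2 00} on the stopped interval $[0,t\wedge\tau_{{\eps},M}]$. Let us fix $N\in\mathbb{N}$, $M>0$, $\phieps\in\mathcal{U}^N$ and, as above, set $X^{\eps}=\mathcal{G}^{\eps}({\eps} N^{{\eps}^{-1}\phieps})$, which by Lemma \ref{lem LDP stochastic 00} solves \eqref{eq3 prop2 00}. Applying the It\^o formula (as in \cite{GK_1982,BHZ}) in the space $\rV$ to $\|X^{\eps}(t)\|_{\rV}^2$ yields
\begin{equation*}
\begin{split}
&\|X^{\eps}(t)\|^2_{\rV}+2\int_0^t\|X^{\eps}(s)\|^2_{\mathcal{D}({\rA})}\,ds
= \|u_0\|^2_{\rV}
-2\int_0^t\langle B(X^{\eps}(s)),{\rA}X^{\eps}(s)\rangle_{\rH}\,ds\\
&\;\;+ 2\int_0^t\langle f(s),{\rA}X^{\eps}(s)\rangle_{\rH}\,ds
+ 2\int_0^t\int_{{\rZ}}\langle G(X^{\eps}(s),z),X^{\eps}(s)\rangle_{\rV}(\phieps(s,z)-1)\nu(dz)\,ds\\
&\;\;+ 2{\eps}\int_0^t\int_{{\rZ}}\langle G(X^{\eps}(s-),z),X^{\eps}(s-)\rangle_{\rV}\widetilde{N}^{{\eps}^{-1}\phieps}(dz,ds)
+ {\eps}^2\int_0^t\int_{{\rZ}}\|G(X^{\eps}(s-),z)\|^2_{\rV}N^{{\eps}^{-1}\phieps}(dz,ds).
\end{split}
\end{equation*}

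The first step is to handle the deterministic drift terms. Using Lemma \ref{lem B baisc prop} I would estimate
$|\langle B(X^{\eps}),{\rA}X^{\eps}\rangle_{\rH}|\leq \tfrac14\|X^{\eps}\|^2_{\mathcal{D}({\rA})}+C\|X^{\eps}\|^4_{\rV}\|X^{\eps}\|^2_{\rH}$ and $|\langle f,{\rA}X^{\eps}\rangle_{\rH}|\leq \tfrac14\|X^{\eps}\|^2_{\mathcal{D}({\rA})}+\|f\|^2_{\rH}$, while for the control drift I would apply \textbf{(LDP-02)} to get
$|\langle G(X^{\eps},z),X^{\eps}\rangle_{\rV}|\leq L_2(z)(1+\|X^{\eps}\|_{\rV})\|X^{\eps}\|_{\rV}\leq 2L_2(z)(1+\|X^{\eps}\|_{\rV}^2)$. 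Key point: upon running up to $\tau_{{\eps},M}$, the factor $\|X^{\eps}(s)\|^2_{\rH}\leq M$ turns $\|X^{\eps}\|^4_{\rV}\|X^{\eps}\|^2_{\rH}$ into $M\|X^{\eps}\|_{\rV}^2\cdot\|X^{\eps}\|_{\rV}^2$, and the integrating factor $M\int_0^{T\wedge\tau_{{\eps},M}}\|X^{\eps}\|^2_{\rV}\,ds\leq M^2$ is deterministically bounded, so Gronwall's lemma produces a multiplicative constant $e^{CM^2}$. Similarly, the control-drift integrating factor $\int_0^T\!\int_{\rZ}L_2(z)|\phieps(s,z)-1|\nu(dz)\,ds$ is uniformly bounded over $\phieps\in S^N$ in view of \eqref{eq3 star3} (or \eqref{eq3 star}).

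The second step treats the two stochastic terms. Let $J_1(t)$ and $J_2(t)$ denote, respectively, the compensated and quadratic-variation jump terms above. By the Burkholder--Davis--Gundy inequality (\cite[Theorem 23.12]{Kallenberg}) and \textbf{(LDP-02)},
\begin{equation*}
\begin{split}
\mathbb{E}\Big(\sup_{t\in[0,T]}|J_1(t\wedge\tau_{{\eps},M})|\Big)
&\leq C\eps\,\mathbb{E}\Big(\int_0^{T\wedge\tau_{{\eps},M}}\!\!\!\int_{\rZ}\|X^{\eps}(s)\|^2_{\rV}\|G(X^{\eps}(s),z)\|^2_{\rV}N^{{\eps}^{-1}\phieps}(dz,ds)\Big)^{1/2}\\
&\leq \tfrac{1}{2}\mathbb{E}\Big(\sup_{t\in[0,T]}\|X^{\eps}(t\wedge\tau_{{\eps},M})\|^2_{\rV}\Big)
+ C_N\eps\,\mathbb{E}\Big(1+\int_0^{T}\|X^{\eps}(s\wedge\tau_{{\eps},M})\|^2_{\rV}ds\Big),
\end{split}
\end{equation*}
after replacing $N^{{\eps}^{-1}\phieps}$ by its compensator $\eps^{-1}\phieps\,\nu\otimes\Leb$ inside the square root and invoking \eqref{eq3 star3}. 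An analogous computation gives $\mathbb{E}|J_2(T\wedge\tau_{{\eps},M})|\leq C_N\eps+C_N\eps\,\mathbb{E}(\sup_{t}\|X^{\eps}(t\wedge\tau_{{\eps},M})\|^2_{\rV})$.

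The third step assembles everything. Taking $\sup_{t\in[0,T]}$ inside the expectation on the left-hand side of the It\^o identity (stopped at $\tau_{{\eps},M}$), using Gronwall's lemma with the deterministic integrating factor bounded by $C(M^2+C_{2,N})$, and then absorbing the factors $\tfrac12$ and $C_N\eps^{1/2}$ coming from $J_1,J_2$ into the left-hand side, I would choose $\eps_{N,M}\in(0,1]$ small enough so that $C_N e^{CM^2}\eps_{N,M}^{1/2}\leq \tfrac14$. This yields the desired uniform bound \eqref{eq3 X ep pro 03} with $C_{N,M}$ depending explicitly on $\|u_0\|_{\rV}$, $\|f\|_{L^2([0,T];\rH)}$, $M$ and $N$.

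The main obstacle is the quartic nonlinear term $\|X^{\eps}\|_{\rV}^4\|X^{\eps}\|_{\rH}^2$: without the stopping time it would only be integrable a.s. (as in Theorem \ref{thm-initial data in V}), but here the bound $\sup\|X^{\eps}\|_{\rH}^2+\int\|X^{\eps}\|_{\rV}^2\leq M$ on $[0,\tau_{{\eps},M}]$ converts it into a linear-in-$\|X^{\eps}\|_{\rV}^2$ Gronwall weight with \emph{deterministic}, $M$-controlled integral; this is precisely what makes the expectation finite (with constant blowing up as $e^{CM^2}$). A secondary technical point is that the BDG and compensator estimates for $J_1,J_2$ require the linear growth condition \textbf{(LDP-02)} together with the uniform bound \eqref{eq3 star3} on $\int L_2^2(z)(\phieps+1)\nu(dz)\,ds$ over $\phieps\in S^N$, rather than just \textbf{(LDP-03)} used in Lemma \ref{lem LDP stochastic 01}.
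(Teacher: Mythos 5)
Your proof follows essentially the same route as the paper's: It\^o's formula in $\rV$ for the stopped process, the estimate $|\langle B(X^{\eps}),\rA X^{\eps}\rangle_{\rH}|\le\tfrac14\|X^{\eps}\|^2_{\mathcal{D}(\rA)}+C\|X^{\eps}\|^4_{\rV}\|X^{\eps}\|^2_{\rH}$, a pathwise Gronwall argument whose weight is bounded by $M^2+C_N$ thanks to the stopping time $\tau_{\eps,M}$ and \eqref{eq3 star}, BDG estimates for the two jump terms, and a final choice of small $\eps_{N,M}$. The one point to adjust is your bound for $J_1$: since the Gronwall factor $e^{M^2+C_N}$ multiplies $\sup_{t}|J_1(t\wedge\tau_{\eps,M})|$ before the martingale estimate is invoked, a Young splitting with coefficient $\tfrac12$ in front of $\mathbb{E}\bigl(\sup_{t}\|X^{\eps}(t\wedge\tau_{\eps,M})\|^2_{\rV}\bigr)$ cannot be absorbed afterwards (as $\tfrac12 e^{M^2+C_N}>1$); take the Young parameter of order $\eps^{1/2}$ instead, exactly as in \eqref{eq3 sto lem2 J1}, after which your absorption condition $C_N e^{CM^2}\eps_{N,M}^{1/2}\le\tfrac14$ does the job.
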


Before we give the proof of Lemma \ref{lem LDP stochastic 02}, the following result is immediate from Lemma \ref{lem LDP stochastic 02} and the Chebyshev inequality.
\begin{corollary}\label{cor-Cheb-2}
In the framework above, for all $M,R>0$, the following inequality holds:
\begin{equation}\label{eqn-Cheb-2}
\mathbb{Q} \bigl( \tau_{{\eps},M} \geq T, \sup_{t\in[0,T]}\|X^{\eps}(t)\|^2_{\rV}
                   +
                   \int_0^{T}\|X^{\eps}(s)\|^2_{\mathcal{D}({\rA})}\,ds \geq R^2\bigr) \leq \frac{C_{N,M}}{R^2}.
\end{equation}
\end{corollary}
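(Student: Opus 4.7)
The plan is to derive this Chebyshev-type bound directly from the moment estimate \eqref{eq3 X ep pro 03} in Lemma \ref{lem LDP stochastic 02}, using the standard Markov inequality together with the key observation that on the event $\{\tau_{{\eps},M}\geq T\}$ one has $t\wedge\tau_{{\eps},M}=t$ for every $t\in[0,T]$.

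More precisely, I would fix $N\in\mathbb{N}$, $M,R>0$ and $\eps\in(0,\eps_{N,M}]$, and introduce the random variable
\[
Y^{\eps}_M:=\sup_{t\in[0,T]}\|X^{\eps}(t\wedge\tau_{{\eps},M})\|^2_{\rV}+\int_0^{T\wedge\tau_{{\eps},M}}\|X^{\eps}(s)\|^2_{\mathcal{D}({\rA})}\,ds.
\]
By Lemma \ref{lem LDP stochastic 02}, $\mathbb{E}(Y^{\eps}_M)\leq C_{N,M}$. On the event $\{\tau_{{\eps},M}\geq T\}$ we have $t\wedge\tau_{{\eps},M}=t$ for all $t\in[0,T]$ and $T\wedge\tau_{{\eps},M}=T$, so
\[
Y^{\eps}_M\, \mathbf{1}_{\{\tau_{{\eps},M}\geq T\}}=\Bigl(\sup_{t\in[0,T]}\|X^{\eps}(t)\|^2_{\rV}+\int_0^{T}\|X^{\eps}(s)\|^2_{\mathcal{D}({\rA})}\,ds\Bigr)\mathbf{1}_{\{\tau_{{\eps},M}\geq T\}}.
\]

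Consequently, the event whose probability we wish to bound is contained in $\{Y^{\eps}_M\geq R^2\}$, and Markov's inequality gives
\[
\mathbb{Q}\Bigl(\tau_{{\eps},M}\geq T,\ \sup_{t\in[0,T]}\|X^{\eps}(t)\|^2_{\rV}+\int_0^{T}\|X^{\eps}(s)\|^2_{\mathcal{D}({\rA})}\,ds\geq R^2\Bigr)\leq\mathbb{Q}(Y^{\eps}_M\geq R^2)\leq\frac{\mathbb{E}(Y^{\eps}_M)}{R^2}\leq\frac{C_{N,M}}{R^2}.
\]
This is essentially a one-line argument: the only subtlety is making sure that the supremum on the event $\{\tau_{{\eps},M}\geq T\}$ is correctly identified with the stopped supremum appearing in \eqref{eq3 X ep pro 03}, which requires the path-by-path identity $t\wedge\tau_{{\eps},M}(\omega)=t$ for $\omega$ in that event; there is no genuine obstacle.
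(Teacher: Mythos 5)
Your proposal is correct and follows essentially the same route as the paper: the paper's own proof consists precisely of the observation that on $\{\tau_{{\eps},M}\geq T\}$ one has $t\wedge\tau_{{\eps},M}=t$ for all $t\in[0,T]$, so the stopped quantities in \eqref{eq3 X ep pro 03} agree with the unstopped ones on that event, and the bound then follows from Lemma \ref{lem LDP stochastic 02} via the Chebyshev (Markov) inequality. Nothing further is needed.
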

\begin{proof} Notice that $\tau_{{\eps},M} \geq T$ iff $T \wedge \tau_{{\eps},M} = T$. Thus, if
$\tau_{{\eps},M} \geq T$ then $t \wedge \tau_{{\eps},M} = t$ for all $t \in [0,T]$.
\end{proof}

\begin{proof}[Proof of Lemma \ref{lem LDP stochastic 02}] We argue as in the proof of Lemma 7.3  in \cite{Brzezniak+Motyl_2013_NSE}.

By {the It\^o} formula and Lemma \ref{lem B baisc prop}, we have, for all $t\in [0,T]$,
\begin{eqnarray}\label{eq3 sto-lemm2 00}
&&\hspace{-1truecm}\|X^{\eps}({t\wedge\tau_{{\eps},M}})\|^2_{\rV} + 2\int^{t\wedge\tau_{{\eps},M}}_0\|X^{\eps}(s)\|^2_{\mathcal{D}({\rA})}\,ds\nonumber\\
&=&
  \|u_0\|^2_{\rV} -2\int_0^{t\wedge\tau_{{\eps},M}}\langle \rB(X^{\eps}(s)),{\rA}X^{\eps}(s)\rangle_{\rH}\,ds
  +
  2\int_0^{t\wedge\tau_{{\eps},M}}\langle f(s),{\rA}X^{\eps}(s)\rangle_{\rH}\,ds \nonumber\\
  &&+
  2\int_0^{t\wedge\tau_{{\eps},M}}\int_{{\rZ}}\langle G(X^{\eps}(s),z),X^{\eps}(s)\rangle_{\rV}(\phieps(s,z)-1)\nu(dz)\,ds\nonumber\\
  &&+
  2{\eps}\int_0^{t\wedge\tau_{{\eps},M}}\int_{{\rZ}}\langle G(X^{\eps}(s-),z), X^{\eps}(s-)\rangle_{\rV}\widetilde{N}^{{\eps}^{-1}\phieps}(dz,ds)\nonumber\\
  &&+
 {\eps}^2\int_0^{t\wedge\tau_{{\eps},M}}\int_{{\rZ}}\|G(X^{\eps}(s-),z)\|^2_{\rV}N^{{\eps}^{-1}\phieps}(dz,ds)\nonumber\\
&\leq&
  \|u_0\|^2_{\rV} + \int_0^{t\wedge\tau_{{\eps},M}}\|X^{\eps}(s)\|^2_{\mathcal{D}({\rA})}\,ds + C\int_0^{t\wedge\tau_{{\eps},M}}\|X^{\eps}(s)\|^4_{\rV}\|X^{\eps}(s)\|^2_{\rH}\,ds
\nonumber \\  &&+
   2\int_0^{t\wedge\tau_{{\eps},M}}|f(s)|^2_{\rH}\,ds\nonumber\\
  &&+
  2\int_0^{t\wedge\tau_{{\eps},M}}(1+2\|X^{\eps}(s)\|^2_{\rV})\int_{{\rZ}}L_2(z)|\phieps(s,z)-1|\nu(dz)\,ds\nonumber\\
  &&+
  2{\eps}\int_0^{t\wedge\tau_{{\eps},M}}\int_{{\rZ}}\langle G(X^{\eps}(s-),z), X^{\eps}(s-)\rangle_{\rV}\widetilde{N}^{{\eps}^{-1}\phieps}(dz,ds)\nonumber\\
  &&+
 {\eps}^2\int_0^{t\wedge\tau_{{\eps},M}}\int_{{\rZ}}\|G(X^{\eps}(s-),z)\|^2_{\rV}N^{{\eps}^{-1}\phieps}(dz,ds).
\end{eqnarray}
We define:
\begin{equation*}\begin{split}
J_1(t)&:=2{\eps}\int_0^t\int_{{\rZ}}\langle G(X^{\eps}(s-),z), X^{\eps}(s-)\rangle_{\rV}\widetilde{N}^{{\eps}^{-1}\phieps}(dz,ds),
\\
J_2(t)&:={\eps}^2\int_0^t\int_{{\rZ}}\|G(X^{\eps}(s-),z)\|^2_{\rV}N^{{\eps}^{-1}\phieps}(dz,ds),
\end{split}\;\;\; t\in [0,T].
\end{equation*}

Hence, by (\ref{eq3 star}),
\begin{equation}
\begin{split}
&\hspace{-0.1truecm}\lefteqn{  \|X^{\eps}({t\wedge\tau_{{\eps},M}})\|^2_{\rV} + \int_0^{t\wedge\tau_{{\eps},M}}\|X^{\eps}(s)\|^2_{\mathcal{D}({\rA})}\,ds}\nonumber\\
&\leq
   \|u_0\|^2_{\rV}
   + 2\int_0^{t\wedge\tau_{{\eps},M}}\!\!\!\!|f(s)|^2_{\rH}\,ds + \sup_{t\in[0,T]}|J_1({t\wedge\tau_{{\eps},M}})| + J_2(T\wedge\tau_{{\eps},M}) + C_N\\
   &\ \ \ +
   \int_0^{t\wedge\tau_{{\eps},M}}\!\!\!\!\|X^{\eps}(s)\|^2_{\rV} \Big(C\|X^{\eps}(s)\|^2_{\rV}|X^{\eps}(s)|^2_{\rH}+4\int_{\rZ}L_2(z)|\phieps(s,z)-1|\nu(dz)\Big)\,ds,\nonumber
   \ \ \ t\in [0,T].
\end{split}
\end{equation}

By (\ref{eq3 star}) again and the definition of $\tau_{{\eps},M}$, Gronwall's lemma implies that
\begin{eqnarray}\label{eq3 prop2 lem2 00}
&&\hspace{-1.0truecm}\lefteqn{ \sup_{t\in [0,T]}\|X^{\eps}({t\wedge\tau_{{\eps},M}})\|^2_{\rV} + \int_0^{t\wedge\tau_{{\eps},M}}\|X^{\eps}(s)\|^2_{\mathcal{D}({\rA})}\,ds} \\
&\leq&
 e^{M^2+C_N}\Big(
        \|u_0\|^2_{\rV}
\!+\! 2\int_0^T|f(s)|^2_{\rH}\,ds \!+\! \sup_{t\in[0,T]}|J_1({t\wedge\tau_{{\eps},M}})| \!+\! J_2(T\wedge\tau_{{\eps},M}) \!+\! C_N
       \Big), \;t\in [0,T].\nonumber
\end{eqnarray}
Similar to (\ref{eq3 sto lem J1}) and (\ref{eq3 sto lem J2}), we can get
\begin{equation}\label{eq3 sto lem2 J1}
 \mathbb{E}\Big(\sup_{t\in[0,T]}|J_1({t\wedge\tau_{{\eps},M}})|\Big)\leq
  C_N{\eps}^{\frac{1}{2}} \mathbb{E}\Big(\sup_{t\in[0,T]}\|X^{\eps}({t\wedge\tau_{{\eps},M}})\|^2_{\rV}\Big)
  +
  C_N{\eps}^{\frac{1}{2}},
\end{equation}
and
\begin{eqnarray}\label{eq3 sto lem2 J2}
\mathbb{E}\Big(|J_2(T\wedge\tau_{{\eps},M})|\Big)
\leq
C_N{\eps} + C_N{\eps} \mathbb{E}\Big(\sup_{t\in[0,T]}\|X^{\eps}({t\wedge\tau_{{\eps},M}})\|^2_{\rV}\Big).
\end{eqnarray}
Substituting (\ref{eq3 sto lem2 J1}) and (\ref{eq3 sto lem2 J2}) into (\ref{eq3 prop2 lem2 00}), and then choosing ${\eps}_{N,M}>0$ small enough, we get
(\ref{eq3 X ep pro 03}).

\end{proof}

\vskip 0.3cm
Our next result is the tightness result.
\begin{lemma}\label{lem LDP stochastic 03}
For every $N \in \mathbb{N}$,
for any fixed subsequence $\{{\eps}_k\}_{k\in\mathbb{N}}$ such that  ${\eps}_k\to0$,
and
for every $\mathcal{U}^N$-valued sequence  $\varphi_{\eps_k}$,
the laws of the sequence
$\{X^{{\eps}_k}\}_{k\in\mathbb{N}}$ are tight on the Hilbert space $L^2([0,T],\rV)$.
\end{lemma}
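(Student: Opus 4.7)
The plan is to deduce tightness from the compact embedding
\[
L^2([0,T],\mathcal{D}(\rA))\cap W^{\alpha,2}([0,T],\rV^\prime)\hookrightarrow L^2([0,T],\rV)
\]
(which was recalled in \eqref{eqn-comapct embedding}) by showing that for every $\delta>0$ one can find a ball in the left-hand side space, whose image under the embedding is a relatively compact set $\mathcal{K}_\delta\subset L^2([0,T],\rV)$, and
\[
\sup_{k}\Q\bigl(X^{{\eps}_k}\notin \mathcal{K}_\delta\bigr)\leq \delta.
\]
Fix $\alpha\in(0,1/2)$. For parameters $M,R>0$ to be chosen later, consider
\[
\mathcal{K}_{M,R}=\Bigl\{y\in\UT:\ \int_0^T\|y(s)\|^2_{\mathcal{D}(\rA)}\,ds\leq R^2,\ \|y\|^2_{W^{\alpha,2}([0,T],\rV^\prime)}\leq R^2\Bigr\},
\]
whose closure in $L^2([0,T],\rV)$ is compact.

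First I would bound $\Q(X^{\eps_k}\notin \mathcal{K}_{M,R})$ by decomposing
\[
\{X^{\eps_k}\notin \mathcal{K}_{M,R}\}\subset \{\|X^{\eps_k}\|^2_{W^{\alpha,2}([0,T],\rV^\prime)}>R^2\}\cup \{\tau_{{\eps_k},M}<T\}\cup \mathcal{E}_{M,R},
\]
where $\mathcal{E}_{M,R}:=\{\tau_{{\eps_k},M}\geq T,\ \int_0^T\|X^{\eps_k}(s)\|^2_{\mathcal{D}(\rA)}\,ds>R^2\}$. By Lemma~\ref{lem LDP stochastic 01} and Chebyshev (Corollary~\ref{cor-Cheb}), the first event has probability at most $C_{\alpha,N}/R^2$ and $\Q(\tau_{{\eps_k},M}<T)\leq C_N/M$, uniformly in small ${\eps_k}$. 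On the event $\{\tau_{{\eps_k},M}\geq T\}$ the stopping time does not truncate and Lemma~\ref{lem LDP stochastic 02} (via Corollary~\ref{cor-Cheb-2}) yields $\Q(\mathcal{E}_{M,R})\leq C_{N,M}/R^2$ uniformly for ${\eps_k}\leq {\eps}_{N,M}$.

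Given $\delta>0$, I would then first choose $M=M(\delta,N)$ so large that $C_N/M<\delta/3$, then pick $R=R(\delta,N,M)$ so large that both $C_{\alpha,N}/R^2<\delta/3$ and $C_{N,M}/R^2<\delta/3$. For the finitely many indices $k$ with ${\eps}_k>{\eps}_{N,M}$ I would enlarge $R$ further (the laws of finitely many random variables are automatically tight). This gives $\sup_k\Q(X^{{\eps}_k}\notin\mathcal{K}_{M,R})\leq\delta$, proving tightness of $\{\mathcal{L}(X^{{\eps}_k})\}$ on $L^2([0,T],\rV)$.

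The main obstacle is the mismatch between the two kinds of a~priori estimates: the Sobolev-in-time bound \eqref{eq3 X ep pro 02} is global, but the $\mathcal{D}(\rA)$-bound \eqref{eq3 X ep pro 03} is only valid up to $\tau_{{\eps},M}$. The stopping-time decomposition above is exactly designed to circumvent this, exploiting the fact that the $L^2([0,T],\rV)$-mass used in defining $\tau_{{\eps},M}$ is itself already controlled by \eqref{eq3 X ep pro 01}, so the event $\{\tau_{{\eps},M}<T\}$ is rare for large $M$ independently of $\eps$.
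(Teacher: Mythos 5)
Your proposal is correct and follows essentially the same route as the paper: the compact embedding $L^2([0,T],\mathcal{D}(\rA))\cap W^{\alpha,2}([0,T],\rV^\prime)\hookrightarrow L^2([0,T],\rV)$ combined with the decomposition over $\{\tau_{{\eps}_k,M}<T\}$ and its complement, using Lemma \ref{lem LDP stochastic 01} (via Corollary \ref{cor-Cheb}) to make the stopping-time event rare and Lemma \ref{lem LDP stochastic 02} (via Corollary \ref{cor-Cheb-2}) for the $\mathcal{D}(\rA)$-bound, with the finitely many indices ${\eps}_k>{\eps}_{N,M}$ handled separately exactly as in the paper's argument.
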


\begin{proof} Assume that $N \in \mathbb{N}$.
Fix a number $\eta>0$, and choose $M>0$ such that
\[\frac{C_N}{M} < \frac{\eta}{2},\]
where $C_N$ is the constant appearing in Lemma \ref{lem LDP stochastic 01}.
Let ${\eps}_N$ and ${\eps}_{N,M}$ be as in
 Lemmata \ref{lem LDP stochastic 01} and \ref{lem LDP stochastic 02}.

Without loss of generality, we can assume that ${\eps}_k\in(0,{\eps}_N\wedge {\eps}_{N,M})$ for all $k\in \mathbb{N}$.

Choose and fix an auxiliary number $\alpha\in(0,\frac{1}{2})$. Since the imbedding $\mathcal{D}({\rA})\subset \rV$ is compact,  by  \cite[Theorem 2.1]{Flandoli+Gatarek_1995}, the embedding
$$
\Lambda = L^2([0,T],\mathcal{D}({\rA}))\cap W^{\alpha,2}([0,T],\rV^\prime) \embed L^2([0,T],\rV)
$$
is also compact.
Define
$$
\|g\|^2_\Lambda=\int_0^T\|g(t)\|^2_{\mathcal{D}({\rA})}\,dt + \|g\|^2_{W^{\alpha,2}([0,T],\rV^\prime)},\ \ \ \ g\in\Lambda.
$$
Choose $R>0$ such that
\[
\frac{2C_{N,M}+2C_{\alpha,N}}{R^2}<\frac{\eta}{2},
\]
where the constants $C_{\alpha,N}$ and $C_{N,M}$
are those that appear in Lemmata \ref{lem LDP stochastic 01} and \ref{lem LDP stochastic 02}.

Since the set
$$
K_R=\{g\in \Lambda,\ \|g\|_\Lambda\leq R\}
$$
is relatively compact in $L^2([0,T],\rV)$,
 it is sufficient to show that
\[
\Q(X^{{\eps}_k}\not\in K_R) < \eta, \mbox{ for all } k.
\]
To do so, by Lemma \ref{lem LDP stochastic 01} and Corollaries \ref{cor-Cheb} and \ref{cor-Cheb-2} \dela{and Lemma \ref{lem LDP stochastic 02}}, we infer that
\begin{eqnarray*}
\Q(X^{{\eps}_k}\not\in K_R)
&\leq&
\Q(\tau_{{\eps}_k,M}< T) + \Q\Big((\tau_{{\eps}_k,M} \geq T)\cap (X^{{\eps}_k}\not\in K_R)\Big)\nonumber\\
&\leq&
\frac{C_N}{M} + \frac{2C_{N,M}+2C_{\alpha,N}}{R^2}< \eta.
\end{eqnarray*}

\dela{Hence for any $\eta>0$, there exists $M_0,R_0>0$ such that
\begin{eqnarray}\label{eq3 lem 3 00}
\Q(X^{{\eps}_k}\not\in K_{R_0})\leq\eta,\ \ \ \forall {\eps}_k\in(0,{\eps}_{N,M_0}).
\end{eqnarray}

\vskip 0.3cm
Because for any fixed ${\eps}_k$,
$$
1=\Q(X^{{\eps}_k}\in \Lambda)=\lim_{R\to\infty}\Q(X^{{\eps}_k}\in K_R),
$$
one can find $R_1$ such that
\begin{eqnarray}\label{eq3 lem 3 01}
\Q(X^{{\eps}_k}\not\in K_{R_1})\leq\eta,\ \ \ \forall {\eps}_k\not\in(0,{\eps}_{N,M_0}).
\end{eqnarray}
We have used the fact that there are only finite elements of $\{{\eps}_k\}_{k\in\mathbb{N}}$ such that ${\eps}_k\not\in (0,{\eps}_{N,M_0})$.

\vskip 0.2cm
(\ref{eq3 lem 3 00}) and (\ref{eq3 lem 3 01}) imply that for any fixed subsequence $\{{\eps}_k\}_{k\in\mathbb{N}}$, ${\eps}_k\in(0,{\eps}_N)$ and ${\eps}_k\to0$,
$\{X^{{\eps}_k}\}_{k\in\mathbb{N}}$ is tight in $L^2([0,T],V)$.
}
\vskip 0.2cm

The proof is complete.

\end{proof}

Using arguments similar to those proving  \cite[Lemma 4.5]{Zhai-Zhang}, we get
\begin{lemma}\label{lem LDP stochastic 04}
There exists $\varrho>1$ such that
for every $N \in \mathbb{N}$,
for any fixed subsequence $\{{\eps}_k\}_{k\in\mathbb{N}}$ such that  ${\eps}_k\to0$,
and
for every $\mathcal{U}^N$-valued sequence  $\varphi_{\eps_k}$,
the laws of the sequence
$\{X^{{\eps}_k}\}_{k\in\mathbb{N}}$ are tight on the
Skorokhod space $D([0,T],\mathcal{D}({\rA}^{-\varrho}))$.
\end{lemma}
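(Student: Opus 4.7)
The plan is to apply Aldous's tightness criterion in the Skorokhod space $D([0,T],\mathcal{D}({\rA}^{-\varrho}))$. First I would fix $\varrho>1$ large enough so that: (i) the embeddings $\rV\hookrightarrow\rH\hookrightarrow\mathcal{D}({\rA}^{-\varrho})$ are continuous and compact; (ii) ${\rA}\colon\rV\to\mathcal{D}({\rA}^{-\varrho})$ is bounded; (iii) $\|B(u)\|_{\mathcal{D}({\rA}^{-\varrho})}\leq C\|u\|_{\rH}^{2}$, which is standard in two dimensions once $\mathcal{D}({\rA}^{\varrho})\hookrightarrow W^{1,\infty}(D,\mathbb{R}^{2})$, since then $|\dual{\langle B(u),v\rangle}|\leq\|u\otimes u\|_{L^{1}}\|\nabla v\|_{L^{\infty}}\leq C\|u\|_{\rH}^{2}\|v\|_{\mathcal{D}({\rA}^{\varrho})}$; and (iv) $\|G(u,z)\|_{\mathcal{D}({\rA}^{-\varrho})}\leq CL_{3}(z)(1+\|u\|_{\rH})$ via (LDP-03) and the continuous embedding $\rH\hookrightarrow\mathcal{D}({\rA}^{-\varrho})$. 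Tightness of the marginals is then immediate: for each fixed $t\in[0,T]$, the uniform bound $\mathbb{E}\|X^{{\eps}_{k}}(t)\|_{\rH}^{2}\leq C_{N}$ from Lemma \ref{lem LDP stochastic 01}, combined with the compactness of $\rH\hookrightarrow\mathcal{D}({\rA}^{-\varrho})$ and the Chebyshev inequality, yields a tight family in $\mathcal{D}({\rA}^{-\varrho})$.

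The heart of the proof is to verify the Aldous modulus-of-continuity condition: for every $\eta>0$,
\[
\lim_{\delta\downarrow0}\sup_{k}\sup_{\tau\in\mathcal{T}_{T},\,0\leq\theta\leq\delta}\mathbb{Q}\bigl(\|X^{{\eps}_{k}}(\tau+\theta)-X^{{\eps}_{k}}(\tau)\|_{\mathcal{D}({\rA}^{-\varrho})}\geq\eta\bigr)=0,
\]
where $\mathcal{T}_{T}$ denotes the family of $\Gb$-stopping times with values in $[0,T]$. I would split the increment into the five terms of \eqref{eqn-C3-b}. The Stokes integral contributes at most $C\theta^{1/2}\bigl(\int_{0}^{T}\|X^{{\eps}_{k}}(s)\|_{\rV}^{2}\,ds\bigr)^{1/2}$, the bilinear integral at most $C\theta\sup_{s}\|X^{{\eps}_{k}}(s)\|_{\rH}^{2}$, and the forcing term at most $C\theta^{1/2}\|f\|_{L^{2}([0,T];\rH)}$; all three are controlled by Lemma \ref{lem LDP stochastic 01} and vanish in $L^{2}(\Q)$ as $\theta\downarrow0$, uniformly in $k$. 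For the control drift, property (iv) gives
\[
\Bigl\|\int_{\tau}^{\tau+\theta}\!\int_{\rZ}G(X^{{\eps}_{k}},z)(\phieps_{{\eps}_{k}}-1)\nu(dz)ds\Bigr\|_{\mathcal{D}({\rA}^{-\varrho})}\leq C\bigl(1+\sup_{s}\|X^{{\eps}_{k}}(s)\|_{\rH}\bigr)\!\int_{\tau}^{\tau+\theta}\!\int_{\rZ}L_{3}|\phieps_{{\eps}_{k}}-1|\nu(dz)ds,
\]
and the last factor tends to zero uniformly in $\phi\in S^{N}$ as $\delta\downarrow0$ by \eqref{eq3 small {A}}.

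The main obstacle will be the compensated Poisson integral $M^{{\eps}_{k}}(t):={\eps}_{k}\!\int_{0}^{t}\!\int_{\rZ}G(X^{{\eps}_{k}}(s-),z)\,\widetilde{N}^{{\eps}_{k}^{-1}\phieps_{{\eps}_{k}}}(dz,ds)$, because the intensity scales as ${\eps}_{k}^{-1}$. Exploiting the Hilbert-space structure of $\mathcal{D}({\rA}^{-\varrho})$, the It\^o isometry for compensated Poisson integrals gives
\[
\mathbb{E}\|M^{{\eps}_{k}}(\tau+\theta)-M^{{\eps}_{k}}(\tau)\|_{\mathcal{D}({\rA}^{-\varrho})}^{2}={\eps}_{k}\,\mathbb{E}\!\int_{\tau}^{\tau+\theta}\!\int_{\rZ}\|G(X^{{\eps}_{k}}(s),z)\|_{\mathcal{D}({\rA}^{-\varrho})}^{2}\phieps_{{\eps}_{k}}(s,z)\nu(dz)ds.
\]
Using $\|G(u,z)\|_{\mathcal{D}({\rA}^{-\varrho})}^{2}\leq CL_{3}^{2}(z)(1+\|u\|_{\rH}^{2})$ and Lemma \ref{lem LDP stochastic 01}, this is dominated by $C_{N}{\eps}_{k}\mathbb{E}\int_{\tau}^{\tau+\theta}\int_{\rZ}L_{3}^{2}\phieps_{{\eps}_{k}}\,\nu(dz)ds$. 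The key technical point is an $L_{3}^{2}$-analogue of \eqref{eq3 small {A}}, namely that $\sup_{\phi\in S^{N}}\int_{A}\!\int_{\rZ}L_{3}^{2}\phi\,\nu(dz)ds\to0$ as $\Leb_{[0,T]}(A)\to0$; this follows by splitting $L_{3}^{2}=L_{3}^{2}\mathbf{1}_{\{L_{3}\leq M\}}+L_{3}^{2}\mathbf{1}_{\{L_{3}>M\}}$, bounding the first piece by $M^{2}\int\phi\nu$ (which is uniformly integrable on $S^{N}$ via the entropy bound $L_{T}(\phi)\leq N$ and Young's inequality) and the second by the variational bound $\int L_{3}^{2}\mathbf{1}_{\{L_{3}>M\}}\phi\leq \int\phi\log\phi -\phi +1 + \int e^{L_{3}^{2}\mathbf{1}_{\{L_{3}>M\}}}\nu$, the exponential integral being finite for large $M$ since $L_{3}\in\mathcal{H}$. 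This produces a bound of the form ${\eps}_{k}C_{N}\omega_{N}(\delta)$ with $\omega_{N}(\delta)\downarrow0$, uniformly in $k$ (since ${\eps}_{k}\leq{\eps}_{1}$). Chebyshev's inequality then closes the Aldous condition, and the Aldous--Rebolledo criterion delivers the claimed tightness in $D([0,T],\mathcal{D}({\rA}^{-\varrho}))$.
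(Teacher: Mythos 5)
Your overall route---Aldous's criterion in $D([0,T],\mathcal{D}({\rA}^{-\varrho}))$ for $\varrho>1$, marginal tightness from the $\rH$-bounds of Lemma \ref{lem LDP stochastic 01} together with compactness of $\rH\hookrightarrow\mathcal{D}({\rA}^{-\varrho})$, and term-by-term control of the increments in \eqref{eqn-C3-b}---is the same in spirit as the argument the paper delegates to \cite[Lemma 4.5]{Zhai-Zhang}, and your treatment of the Stokes, bilinear, forcing and control-drift terms is fine (for the last one, (\ref{eq3 small {A}}) with the \emph{first} power of $L_3$ is exactly what is needed and is valid). The genuine gap is your ``key technical point'': the claimed $L_3^2$-analogue of (\ref{eq3 small {A}}), i.e.\ $\sup_{\phi\in S^N}\int_A\int_{\rZ}L_3^2\,\phi\,\nu(dz)\,ds\to0$ as $\Leb_{[0,T]}(A)\to0$, is false in general when $L_3$ is unbounded. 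Take $\phi=1+c\,\mathbf{1}_{A\times\{L_3>M\}}$ with $c=e^{M^2}$ and $\Leb_{[0,T]}(A)$ so small that $\Leb_{[0,T]}(A)\,\nu(L_3>M)\,\bigl((1+c)\log(1+c)-c\bigr)\leq N$; then $\phi\in S^N$, while $\int_A\int_{\rZ}L_3^2\phi\,\nu\,ds\geq M^2\,c\,\Leb_{[0,T]}(A)\,\nu(L_3>M)\gtrsim N$ for every $M$, so the supremum does not shrink with $\Leb_{[0,T]}(A)$. Your own sketch already shows the obstruction: the entropy term in the Young/variational bound is only $\leq N$ and does not decrease with $\Leb_{[0,T]}(A)$, and you cannot rescale it away because the exponential moment of $L_3^2$ is available only for the fixed small exponent $\delta$ from $\mathcal{H}$; this is precisely why (\ref{eq3 star3}) asserts boundedness of the quadratic quantity, not smallness on small time sets.

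Fortunately the gap is local and easily repaired: for the compensated Poisson term you do not need smallness in $\delta$ at all. Pathwise, $\int_{\tau}^{\tau+\theta}\int_{\rZ}L_3^2\,\phieps_{{\eps}_k}\,\nu(dz)\,ds\leq C_{3,N}$ by (\ref{eq3 star3}), uniformly in the stopping time $\tau$ and in $\theta$, so your isometry bound gives, after Lemma \ref{lem LDP stochastic 01} and Chebyshev, a bound of the form $C_N\,{\eps}_k/\eta^2$ independent of $\delta$, which tends to $0$ as $k\to\infty$. Since tightness of a sequence is unaffected by finitely many of its members (each single law is tight on the Polish space $D([0,T],\mathcal{D}({\rA}^{-\varrho}))$), it suffices to verify the Aldous condition for $k\geq k_0$ with ${\eps}_{k_0}$ small, choosing $\delta$ only to make the four non-martingale terms small; the same remark disposes of the restriction ${\eps}\leq{\eps}_N$ implicit in your use of Lemma \ref{lem LDP stochastic 01}. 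With this modification your argument goes through.
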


\vskip 0.3cm

Next, consider a family $\varphi_{\eps}$, $\eps\in (0,1]$, of $\mathcal{U}^N$-valued processes, for some fixed $N\in\mathbb{N}$.
For each $\eps$, let $Y^{\eps}$ be the unique solution of the following (auxiliary) stochastic Langevin equation:
\begin{equation}\label{eqn-Y^eps}
Y^{\eps}(t)=\int_0^t{\rA}Y^{\eps}(s)\,ds + {\eps} \int_0^t\int_{{\rZ}}G(X^{\eps}(s-),z)\widetilde{N}^{{\eps}^{-1}\phiepsold}(dz,ds).
\end{equation}

In this situation, we have  the following.
\begin{lemma}\label{lem LDP stochastic 05}
In the above framework, if $\eta>0$, then
\begin{equation}\label{eqn-limit}
\lim_{{\eps}\to 0}\Q\Big(\sup_{t\in[0,T]}\|Y^{\eps}(t)\|^2_{\rV}+\int_0^T\|Y^{\eps}(s)\|^2_{\mathcal{D}({\rA})}\,ds\geq\eta\Big)
=
0.
\end{equation}

\end{lemma}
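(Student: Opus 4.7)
The plan is to derive a quantitative estimate, tending to $0$ as $\eps\to 0$, for
\[
\mathbb{E}\Bigl[\sup_{t\in[0,T]}\|Y^\eps(t\wedge\tau_{\eps,M})\|^2_{\rV} + \int_0^{T\wedge\tau_{\eps,M}}\|Y^\eps(s)\|^2_{\mathcal{D}(\rA)}\,ds\Bigr],
\]
and then to combine it with the stopping-time tail estimate given by Corollary \ref{cor-Cheb}, where $\tau_{\eps,M}$ is the stopping time introduced in \eqref{eqn-tau_eps,M}. The source of smallness is the scaling of the noise: the compensated PRM $\widetilde N^{\eps^{-1}\varphi_{\eps}}$ has intensity $\eps^{-1}\varphi_{\eps}(s,z)\nu(dz)\,ds$ while the integrand carries a prefactor $\eps$, so the predictable quadratic variation of the resulting martingale scales linearly in $\eps$.

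Applying It\^o's formula to $\|Y^\eps(\cdot)\|^2_{\rV}$ (using the identification $\rV=\mathcal{D}(\rA^{1/2})$, so that $\langle \rA Y^\eps,Y^\eps\rangle_{\rV}=\|Y^\eps\|^2_{\mathcal{D}(\rA)}$) yields, on the stopped interval $[0,T\wedge\tau_{\eps,M}]$, an identity
\[
\|Y^\eps(t)\|^2_{\rV} + 2\int_0^{t}\|Y^\eps(s)\|^2_{\mathcal{D}(\rA)}\,ds \;=\; J^{(1)}_\eps(t) + J^{(2)}_\eps(t),
\]
with
\[
J^{(1)}_\eps(t)=2\eps\!\int_0^t\!\!\int_{\rZ}\!\langle Y^\eps(s-), G(X^\eps(s-),z)\rangle_{\rV}\,\widetilde N^{\eps^{-1}\varphi_{\eps}}(dz,ds),\;\; J^{(2)}_\eps(t)=\eps^2\!\int_0^t\!\!\int_{\rZ}\|G(X^\eps(s-),z)\|^2_{\rV}\,N^{\eps^{-1}\varphi_{\eps}}(dz,ds).
\]
The compensator of $J^{(2)}_\eps$ equals $\eps\int_0^t\!\int_{\rZ}\|G(X^\eps,z)\|^2_{\rV}\varphi_{\eps}(s,z)\nu(dz)\,ds$; using \textbf{(LDP-02)}, the bound \eqref{eq3 star3} with $i=2$, and Lemma \ref{lem LDP stochastic 02} applied to $\mathbb{E}\sup_t\|X^\eps(t\wedge\tau_{\eps,M})\|^2_{\rV}$, one obtains $\mathbb{E}J^{(2)}_\eps(T\wedge\tau_{\eps,M})\leq C'_{N,M}\,\eps$. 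For $J^{(1)}_\eps$ I would invoke the Burkholder--Davis--Gundy inequality, pull out $\sup_s\|Y^\eps\|_{\rV}$ pathwise from the predictable quadratic variation, and apply Young's inequality exactly as in \eqref{eq3 sto lem J1} to produce a term of the form $C\eps^{1/2}\mathbb{E}\sup_t\|Y^\eps(t\wedge\tau_{\eps,M})\|^2_{\rV}$ plus a term bounded by $C_{N,M}\eps^{1/2}$. Absorbing the former into the left-hand side for $\eps$ small enough yields
\[
\mathbb{E}\Bigl[\sup_{t\in[0,T]}\|Y^\eps(t\wedge\tau_{\eps,M})\|^2_{\rV} + \int_0^{T\wedge\tau_{\eps,M}}\|Y^\eps(s)\|^2_{\mathcal{D}(\rA)}\,ds\Bigr] \;\leq\; \widetilde C_{N,M}\,\eps^{1/2}.
\]

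To conclude \eqref{eqn-limit}, I would use the splitting
\[
\Q\Bigl(\sup_{t}\|Y^\eps(t)\|^2_{\rV}+\int_0^T\|Y^\eps(s)\|^2_{\mathcal{D}(\rA)}\,ds\geq\eta\Bigr) \leq \Q(\tau_{\eps,M}<T) + \Q\Bigl(\{\tau_{\eps,M}\geq T\}\cap\{\,\cdots\geq\eta\}\Bigr).
\]
On $\{\tau_{\eps,M}\geq T\}$ the stopped and the unstopped $Y^\eps$ coincide on $[0,T]$, so Chebyshev controls the second term by $\widetilde C_{N,M}\eps^{1/2}/\eta$, while Corollary \ref{cor-Cheb} controls the first by $C_N/M$. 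Given $\delta>0$, I would first pick $M$ large so that $C_N/M<\delta/2$ (thereby fixing $\widetilde C_{N,M}$) and then pick $\eps$ small so that $\widetilde C_{N,M}\eps^{1/2}/\eta<\delta/2$. The delicate point is the absorption step for $J^{(1)}_\eps$: the BDG-generated contribution involves $\mathbb{E}\sup_t\|Y^\eps\|^2_{\rV}$ with a prefactor $\eps^{1/2}$ only because, via the linear growth bound \textbf{(LDP-02)}, the $\rV$-norm of $X^\eps$ enters the estimate and is uniformly controlled only on the stopped interval thanks to Lemma \ref{lem LDP stochastic 02}; without stopping, this feedback would destroy the argument.
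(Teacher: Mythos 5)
Your proposal is correct and follows essentially the same route as the paper: apply the It\^o formula to $\|Y^{\eps}\|_{\rV}^2$, split into the stochastic integral and the quadratic jump term, control the latter through its compensator and the former via the Burkholder--Davis--Gundy inequality with Young's inequality and absorption, use Lemma \ref{lem LDP stochastic 02} on the stopped interval to bound $\sup_t\|X^{\eps}(t\wedge\tau_{{\eps},M})\|^2_{\rV}$, and finish by the splitting over $\{\tau_{{\eps},M}<T\}$ and its complement with Chebyshev, choosing $M$ first and then ${\eps}$. The only cosmetic difference is your Young split, which yields a rate ${\eps}^{1/2}$ instead of the paper's ${\eps}$; both tend to zero, so the conclusion is unaffected.
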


\begin{proof} Fix $\eta>0$.
Suppose that we have proved that for every  $M>0$,
\begin{eqnarray}\label{eq3 lem 5 Y 00}
&&\hspace{-1truecm}\mathbb{E}\Big(\sup_{t\in[0,T\wedge\tau_{{\eps},M}]}\|Y^{\eps}(t)\|^2_{\rV}
+
\int_0^{T\wedge\tau_{{\eps},M}}\|Y^{\eps}(s)\|^2_{\mathcal{D}({\rA})}\,ds\Big)
\leq
 {\eps} C_N C_{N,M},\; {\eps}\in(0,{\eps}_{N,M}),
\end{eqnarray}
where the stopping time $\tau_{{\eps},M}$ is defined in \eqref{eqn-tau_eps,M} and
$C_{N,M}$ and ${\eps}_{N,M}$ are as they appeared in Lemma \ref{lem LDP stochastic 02}.

Then we can conclude the proof of the lemma as follows.
\vskip 0.3cm
First, we set
$$
\Lambda_\eta:=\Big\{\omega\in\Omega:\ \sup_{t\in[0,T]}\|Y^{\eps}(t)\|^2_{\rV}+\int_0^{T}\|Y^{\eps}(s)\|^2_{\mathcal{D}({\rA})}\,ds\geq \eta\Big\}.
$$
Hence, by Lemma \ref{lem LDP stochastic 01} and (\ref{eq3 lem 5 Y 00}), we infer that
for all $M>0, {\eps}\in(0,{\eps}_{N,M})$
\begin{eqnarray*}
 \Q(\Lambda_\eta)
&\leq&
   \Q(\tau_{{\eps},M}< T)
   +
   \Q\Big((\tau_{{\eps},M} \geq T)\cap \Lambda_\eta\Big)\nonumber\\
&\leq&
   \frac{C_N}{M}+\frac{{\eps} C_N C_{N,M}}{\eta} .
\end{eqnarray*}
which, by a standard argument, implies \eqref{eqn-limit}.
\dela{\begin{eqnarray*}
\lim_{{\eps}\to 0}\Q\Big(\sup_{t\in[0,T]}\|Y^{\eps}(t)\|^2_{\rV}+\int_0^T\|Y^{\eps}(s)\|^2_{\mathcal{D}({\rA})}\,ds\geq\eta\Big)
=
0,\ \ \forall \eta>0.
\end{eqnarray*}}

Thus, we only have to show inequality \eqref{eq3 lem 5 Y 00}.
Let us fix $M>0$.
 By {the It\^o} formula,
\begin{eqnarray*}
 &&\hspace{-1truecm}\|Y^{\eps}(t)\|^2_{\rV} + 2\int_0^t\|Y^{\eps}(s)\|^2_{\mathcal{D}({\rA})}\,ds\nonumber\\
&=&
  2{\eps} \int_0^t\int_{{\rZ}}\langle G(X^{\eps}(s-),z),Y^{\eps}(s-)\rangle_{\rV}\widetilde{N}^{{\eps}^{-1}\phiepsold}(dz,ds)
 \nonumber \\ && +
  {\eps}^2\int_0^t\int_{{\rZ}}\|G(X^{\eps}(s-),z)\|^2_{\rV}N^{{\eps}^{-1}\phiepsold}(dz,ds).
\end{eqnarray*}

By Assumption \ref{con LDP} and (\ref{eq3 star3}),
\begin{eqnarray*}
&&\hspace{-1.5truecm}\lefteqn{ {\eps}^2\mathbb{E}\Big(
    \int_0^{T\wedge\tau_{{\eps},M}}\int_{{\rZ}}\|G(X^{\eps}(s-),z)\|^2_{\rV}N^{{\eps}^{-1}\phiepsold}(dz,ds)
        \Big)}\nonumber\\
&\leq&
  {\eps}\mathbb{E}\Big(
     \int_0^{T\wedge\tau_{{\eps},M}}\int_{{\rZ}}\|G(X^{\eps}(s),z)\|^2_{\rV}\phiepsold(s,z)\nu(dz)\,ds
     \Big)\nonumber\\
&\leq&
  2{\eps}\mathbb{E}\Big(
     \int_0^{T\wedge\tau_{{\eps},M}}\int_{{\rZ}}(1+\|X^{\eps}(s)\|^2_{\rV})L_2^2(z)\phiepsold(s,z)\nu(dz)\,ds
     \Big)\nonumber\\
&\leq&
 {\eps} C_N\mathbb{E}\Big(\sup_{t\in[0,T\wedge\tau_{{\eps},M}]}\|X^{\eps}(t)\|^2_{\rV}\Big)
 +
 {\eps} C_N.
\end{eqnarray*}
Applying the Burkholder-Davis-Gundy inequality and (\ref{eq3 star3}) again,

\begin{eqnarray*}
&&\hspace{-1.0truecm}\lefteqn{
 2{\eps}\mathbb{E}\Big(\sup_{t\in[0,T\wedge\tau_{{\eps},M}]}
              \Big|\int_0^t\int_{{\rZ}}\langle G(X^{\eps}(s-),z),Y^{\eps}(s-)\rangle_{\rV}\widetilde{N}^{{\eps}^{-1}\phieps}(dz,ds)\Big|
            \Big)
            }
            \nonumber\\
&\leq&
{\eps} C\mathbb{E}\Big(
              \Big|\int_0^{T\wedge\tau_{{\eps},M}}\int_{{\rZ}} \|G(X^{\eps}(s-),z)\|^2_{\rV}\|Y^{\eps}(s-)\|^2_{\rV}N^{{\eps}^{-1}\phieps}(dz,ds)\Big|^{\frac{1}{2}}
            \Big)\nonumber\\
&\leq&
 \frac{1}{2}\mathbb{E}\Big(\sup_{t\in[0,T\wedge\tau_{{\eps},M}]}\|Y^{\eps}(t)\|^2_{\rV}\Big)
 +
 {\eps} C \mathbb{E}\Big(
     \int_0^{T\wedge\tau_{{\eps},M}}\int_{{\rZ}}\|G(X^{\eps}(s),z)\|^2_{\rV}\phieps(s,z)\nu(dz)\,ds
     \Big)\nonumber\\
&\leq&
   \frac{1}{2}\mathbb{E}\Big(\sup_{t\in[0,T\wedge\tau_{{\eps},M}]}\|Y^{\eps}(t)\|^2_{\rV}\Big)
 +
   {\eps} C_N\mathbb{E}\Big(\sup_{t\in[0,T\wedge\tau_{{\eps},M}]}\|X^{\eps}(t)\|^2_{\rV}\Big)
 +
 {\eps} C_N.
\end{eqnarray*}

Combining the above three estimates and Lemma \ref{lem LDP stochastic 02}, we deduce inequality \eqref{eq3 lem 5 Y 00}.

\vskip 0.2cm
The proof is complete.

\end{proof}

\subsection{The second continuity lemma}
\label{subsec-2nd continuity lemma}

This subsection is devoted to proving \textbf{Claim-LDP-2}.
That is,
we are now in position to give the proof of \textbf{Claim-LDP-2} formulated in the proof of Theorem \ref{thm-LDP}.
To proceed, we prove the following result.

\begin{proposition}[The second continuity lemma]\label{prop3 02}
Let $\varphi_{{\eps}_n}$, $\varphi\in \mathcal{U}^N$ be such that $\varphi_{{\eps}_n}$ converges in law to $\varphi$ as ${{\eps}_n}\to0$.
Then the sequence of processes
$$
\mathcal{G}^{\epsilon_n}({{\eps}_n} N^{{{\eps}_n}^{-1}\varphi_{{\eps}_n}})$$ converges in law on $\UT$ to a process $$\mathcal{G}^0(\varphi).
$$
\end{proposition}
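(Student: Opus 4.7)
My plan is to combine the a priori estimates and tightness results established in Lemmas \ref{lem LDP stochastic 01}--\ref{lem LDP stochastic 05} with the Skorokhod representation theorem in order to extract almost sure convergent subsequences, and then to pass to the limit in the controlled SPDE (\ref{eq3 prop2 00}) identifying the limit with $\mathcal{G}^0(\varphi)$ via Lemma \ref{lem3 LDP deter 1}. By Lemma \ref{lem LDP stochastic 00}, $X^{\eps_n}:=\mathcal{G}^{\eps_n}(\eps_n N^{\eps_n^{-1}\varphi_{\eps_n}})$ is the unique solution on $(\bar{\Omega},\G,\Gb,\Q)$ of
\begin{equation*}
\begin{split}
dX^{\eps_n}(t)+\rA X^{\eps_n}(t)\,dt+B(X^{\eps_n}(t))\,dt
&=f(t)\,dt+\int_{\rZ}G(X^{\eps_n}(t),z)(\varphi_{\eps_n}(t,z)-1)\nu(dz)\,dt\\
&\qquad+\eps_n\int_{\rZ}G(X^{\eps_n}(t-),z)\widetilde{N}^{\eps_n^{-1}\varphi_{\eps_n}}(dz,dt),
\end{split}
\end{equation*}
with $X^{\eps_n}(0)=u_0$. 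Writing $X^{\eps_n}=Z^{\eps_n}+Y^{\eps_n}$, where $Y^{\eps_n}$ is the Ornstein--Uhlenbeck-type process from Lemma \ref{lem LDP stochastic 05}, reduces the stochastic part of the equation to an object which vanishes in probability in $\Upsilon_T^V$ as $\eps_n\to 0$.

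Next, I would show that the joint family $\bigl(X^{\eps_n},Y^{\eps_n},\varphi_{\eps_n},N^{\eps_n^{-1}\varphi_{\eps_n}}\bigr)_{n\in\mathbb{N}}$ is tight in the product space $L^2([0,T],\rV)\cap D([0,T],\mathcal{D}(\rA^{-\varrho}))\times L^2([0,T],\mathcal{D}(\rA))\times S^N\times \MT$. Tightness of the first component is Lemma \ref{lem LDP stochastic 03} combined with Lemma \ref{lem LDP stochastic 04}; tightness of the second follows from Lemma \ref{lem LDP stochastic 05} (in particular the family is in fact convergent in probability to zero); tightness of the third is automatic since $S^N$ is a compact Polish space; tightness of the fourth is standard and is given in \cite{Budhiraja-Dupuis-Maroulas}. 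By the Skorokhod--Jakubowski representation theorem there exist a new probability space $(\tilde{\Omega},\tilde{\mathcal{F}},\tilde{\mathbb{P}})$, random variables $(\tilde X^{n},\tilde Y^{n},\tilde\varphi_{n},\tilde N^{n})$ and a limit $(\tilde X,0,\tilde\varphi,\tilde N)$, with the same laws as the originals and such that the convergence holds almost surely on the product topology. In addition, from the a priori bounds in Lemmas \ref{lem LDP stochastic 01}--\ref{lem LDP stochastic 02} and Fatou/uniform integrability, the limit $\tilde X$ lies in $L^\infty([0,T],\rH)\cap L^2([0,T],\rV)$ almost surely and an analogous bound in $\rV$ on the stopped version holds.

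I would then verify that $\tilde X(\omega)=\mathcal{G}^0(\tilde\varphi(\omega))$, $\tilde{\mathbb{P}}$-almost surely. Since $\tilde Y^{n}\to 0$ in $\Upsilon_T^V$, $\tilde\mathbb{P}$-a.s., the stochastic integral term drops out in the limit. The linear parts $\rA X^{\eps_n}$ and $f$ pass to the limit by weak compactness in $L^2([0,T],\rV)$. For the bilinear term $B(\tilde X^{n})$, strong convergence in $L^2([0,T],\rV)$ together with the estimate from Lemma \ref{lem B baisc prop} yields convergence in $L^1([0,T],\rV^\prime)$ along the lines of the argument used in the proof of Proposition \ref{prop3 01}. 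The nonlinear driver $\int_{\rZ}G(\tilde X^{n}(t),z)(\tilde\varphi_{n}(t,z)-1)\nu(dz)$ is handled exactly as in (\ref{eq 20180624-00})--(\ref{eq3 prop1 Xn lim last 03}): split the integrand according to whether $\|\tilde X^{n}(t)-\tilde X(t)\|_{\rV}$ exceeds a small threshold $\delta$ or not, use Assumption \ref{con LDP}, strong $L^2([0,T],\rV)$ convergence, and the uniform integrability estimate (\ref{eq3 small {A}}) available for every $h\in S^N$. Testing against a smooth $\rV$-valued function on $[0,T]$ and integrating by parts as in (\ref{eq3 prop1 Xn lim 00}) reveals that $\tilde X$ solves (\ref{eq3 LDP deter 00}) with control $\tilde\varphi$, hence $\tilde X=\mathcal{G}^0(\tilde\varphi)$ by uniqueness from Lemma \ref{lem3 LDP deter 1}. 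Translated back to the original space this gives convergence in law of $X^{\eps_n}$ to $\mathcal{G}^0(\varphi)$ in $L^2([0,T],\rV)\cap D([0,T],\mathcal{D}(\rA^{-\varrho}))$; upgrading to $\UT$ requires one extra argument using the uniform bound in $L^2([0,T],\mathcal{D}(\rA))\cap L^\infty([0,T],\rV)$ from Lemma \ref{lem LDP stochastic 02} together with a further Skorokhod-type extraction argument analogous to \cite[Lemma 4.5]{Zhai-Zhang}.

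The main obstacle I expect is twofold. First, passing to the limit in the controlled jump driver $\int_{\rZ}G(\tilde X^{n},z)(\tilde\varphi_{n}-1)\nu(dz)$ is subtle because the convergence of $\tilde\varphi_{n}\to\tilde\varphi$ in $S^N$ is only weak (in the sense of the topology on $S^N$) and the integrand is not in $C_c(\rZ_T)$ unless one carefully exploits the exponential integrability in Assumption \ref{con LDP} and the uniform estimate (\ref{eq3 small {A}}). Second, topologically the pathwise convergence after Skorokhod lives a priori only on the weaker space $L^2([0,T],\rV)\cap D([0,T],\mathcal{D}(\rA^{-\varrho}))$, while the target space is the stronger space $\UT=D([0,T],\rV)\cap L^2([0,T],\mathcal{D}(\rA))$; closing the gap requires combining the uniform a priori bound in $\Lambda_T(\rV)$ from Lemma \ref{lem LDP stochastic 02} with a tightness-in-$\UT$ argument along the lines of \cite[Lemma 4.5]{Zhai-Zhang}, together with the c\`adl\`ag nature of the limit that follows once the stochastic term vanishes. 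With these two points resolved, Claim-LDP-2 and hence Theorem \ref{th3 LDP} follow.
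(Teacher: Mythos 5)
Your architecture coincides with the paper's up to the identification step: use Lemma \ref{lem LDP stochastic 00} to realise $X^{\eps_n}=\mathcal{G}^{\eps_n}({\eps_n} N^{{\eps_n}^{-1}\varphi_{\eps_n}})$ as the solution of the controlled equation (\ref{eq3 prop2 00}), subtract the Ornstein--Uhlenbeck part $Y^{\eps_n}$ of Lemma \ref{lem LDP stochastic 05}, get tightness of $(X^{\eps_n},Y^{\eps_n},\varphi_{\eps_n})$ in the weak product space from Lemmas \ref{lem LDP stochastic 03}--\ref{lem LDP stochastic 04}, apply Skorokhod, and identify the a.s. limit as the solution of (\ref{eq3 LDP deter 00}) with control $\varphi_1$ by the Proposition \ref{prop3 01}-type argument and uniqueness from Lemma \ref{lem3 LDP deter 1}. (Carrying the random measures $N^{{\eps_n}^{-1}\varphi_{\eps_n}}$ in the tight family is unnecessary, since the stochastic term is disposed of entirely through $Y^{\eps_n}\to 0$.)

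The gap is in your final step. You propose to upgrade from convergence in $L^2([0,T],\rV)\cap D([0,T],\mathcal{D}(\rA^{-\varrho}))$ to convergence on $\UT$ by a tightness-in-$\UT$ argument ``along the lines of \cite[Lemma 4.5]{Zhai-Zhang}'' combined with the bound of Lemma \ref{lem LDP stochastic 02}. That mechanism does not work: the Aldous-type argument behind Lemma \ref{lem LDP stochastic 04} only yields tightness in $D([0,T],\mathcal{D}(\rA^{-\varrho}))$, and the estimates of Lemma \ref{lem LDP stochastic 02} hold only up to the stopping times $\tau_{\eps,M}$ and in any case give boundedness, not relative compactness, in $D([0,T],\rV)\cap L^2([0,T],\mathcal{D}(\rA))$ -- bounded sets of $\Lambda_T(\rV)$ are not compact in $\UT$. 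The paper closes this gap pathwise instead: on the Skorokhod space, $X^n_1-Y^n_1$ solves a deterministic controlled Navier--Stokes equation, so one reruns the $\rV$-level energy/Gronwall estimate from the end of the proof of Proposition \ref{prop3 01} (the chain (\ref{eq3 Zn 00})--(\ref{eq LDP claim 1}), using (\ref{eq3 star00})) for the difference $X^n_1-X_1$, exploiting that $Y^n_1\to 0$ in $\Upsilon^V_T$ a.s. and that Skorokhod provides strong $L^2([0,T],\rV)$ convergence, to obtain $\sup_{t\in[0,T]}\|X^n_1(t)-X_1(t)\|^2_{\rV}+\int_0^T\|X^n_1(s)-X_1(s)\|^2_{\mathcal{D}(\rA)}\,ds\to 0$, $\mathbb{P}^1$-a.s. (the needed regularity $X_1\in C([0,T],\rV)\cap L^2([0,T],\mathcal{D}(\rA))$ coming from Lemma \ref{lem3 LDP deter 1}). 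Convergence in law on $\UT$ then follows at once. This $\omega$-by-$\omega$ deterministic estimate is the missing ingredient in your proposal; without it, or with the tightness route you sketch, the claimed convergence on $\UT$ is not established.
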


\begin{proof}[Proof of Proposition \ref{prop3 02}]
Fix a natural number $N$,
a sequence ${{\eps}_n}$ such that ${{\eps}_n}\to0$,
and
a $\mathcal{U}^N$-valued sequence $\bigl\{\varphi_{{\eps}_n}\bigr\}_{n\in\mathbb{N}}$,
such that  $\varphi_{{\eps}_n}$ converges in law to $\varphi$ for some $\varphi\in  \mathcal{U}^N$.

\medskip

By Lemma \ref{lem LDP stochastic 00}, the process \[X^{{\eps}_n}:=\mathcal{G}^{\epsilon_n}({{\eps}_n} N^{{{\eps}_n}^{-1}\varphi_{{\eps}_n}})\]
 is the unique solution of problem (\ref{eq3 prop2 00}) with ${\eps}$ and $\varphi$ replaced by ${\eps}_n$ and $\varphi_{{\eps}_n}$ respectively.
 Recall that a process $Y^{\eps}$, for $\eps>0$, was defined in formula \eqref{eqn-Y^eps}.

By Lemmata \ref{lem LDP stochastic 03}, \ref{lem LDP stochastic 04}, and \ref{lem LDP stochastic 05},
\begin{itemize}
\item[(1)] the laws of the processes $\{X^{{\eps}_n}\}_{n\in\mathbb{N}}$ are tight on $L^2([0,T],\rV)\cap D([0,T],\mathcal{D}({\rA}^{-\varrho}))$;

\item[(2)] the sequence $\{Y^{{\eps}_n}\}_{n\in\mathbb{N}}$ converges in probability to $0$ in $\UT$.
\end{itemize}

Set
$$
\Gamma_T=\Big[L^2([0,T],\rV)\cap D([0,T],\mathcal{D}({\rA}^{-\varrho}))\Big]\otimes \UT\otimes S^N.
$$
Let $(X,0,\varphi)$ be any limit point of the tight family $\{(X^{{\eps}_n},Y^{{\eps}_n},\varphi_{{\eps}_n}),\ n\in\mathbb{N}\}$.
By the Skorokhod representation theorem, there exists a stochastic basis $(\Omega^1,\mathbb{F}^1,\mathbb{P}^1)$ and, on this basis,
$\Gamma_T$-valued random variables $(X_1,0,\varphi_1)$, $(X^{n}_1,Y^{n}_1,\varphi_1^n),\ n\in\mathbb{N}$ such that
\begin{itemize}
\item[(a)] $(X_1,0,\varphi_1)$ has the same law as $(X,0,\varphi)$;

\item[(b)] for any $n\in\mathbb{N}$, $(X^{n}_1,Y^{n}_1,\varphi_1^n)$ has the same law as $(X^{{\eps}_n},Y^{{\eps}_n},\varphi_{{\eps}_n})$;

\item[(c)] $\lim_{n\to\infty}(X^{n}_1,Y^{n}_1,\varphi_1^n)=(X_1,0,\varphi_1)$ in $\Gamma_T$, $\mathbb{P}^1$-a.s..
\end{itemize}

Because equations \eqref{eqn-C3} and \eqref{eqn-Y^eps} are satisfied by processes $(X^{{\eps}_n},Y^{{\eps}_n},\varphi_{{\eps}_n})$, we infer that $(X^{n}_1,Y^{n}_1,\varphi_1^n)$ satisfies
\begin{eqnarray}\label{query-01}
 &&\hspace{-0.4truecm}X^n_1(t)-Y_1^n(t)\nonumber\\
&&=
 u_0-\int_0^t{\rA}(X^n_1(s)-Y_1^n(s))\,ds
 \int_0^t\rB(X^n_1(s))\,ds + \int_0^tf(s)\,ds\nonumber\\
 &&\ \ +
 \int_0^t\int_{{\rZ}}G(X^n_1(s),z)(\varphi^n_1(s,z)-1)\nu(dz)\,ds,\ t\in[0,T].
\end{eqnarray}
Hence, by deterministic results and (b), we conclude that
\begin{eqnarray*}
&&\hspace{-1truecm}\mathbb{P}^1\Big(X^n_1-Y_1^n\in C([0,T],\rV)\cap L^2([0,T],\mathcal{D}({\rA}))\Big)\nonumber\\
&=&
 \Q\Big(X^{{\eps}_n}-Y^{{\eps}_n}\in C([0,T],\rV)\cap L^2([0,T],\mathcal{D}({\rA}))\Big)
=1.
\end{eqnarray*}

Since
\begin{eqnarray}\label{eq LDP Y}
\lim_{n\to 0}\Big(\sup_{t\in[0,T]}\|Y^n_1(t)\|^2_{\rV}+\int_0^T\|Y^n_1(s)\|^2_{\mathcal{D}({\rA})}\,ds\Big)=0,\ \ \mathbb{P}^1\text{-a.s.,}
\end{eqnarray}
by applying arguments similar to the proof of Proposition \ref{prop-1st continuity lemma}, we can show that $(X_1,\varphi_1)$ satisfies
\begin{eqnarray*}
X_1(t)&\!=\!&u_0-\int_0^t{\rA}X_1(s)\,ds - \int_0^t\rB(X_1(s))\,ds+ \int_0^tf(s)\,ds
\\
&&+ \int_0^t\int_{{\rZ}}G(X_1(s),z)(\varphi_1(s,z)-1)\nu(dz)\,ds.
\end{eqnarray*}
The maximal regularity property of the solutions to the
deterministic 2D Navier-Stokes equations,
taking into account Lemma \ref{lem3 LDP deter 1'},
imply that $\mathbb{P}^1$-a.s.,
 \[X_1\in C([0,T],\rV)\cap L^2([0,T],\mathcal{D}({\rA})).\]

By (\ref{eq LDP Y}) and (c), using arguments as in the proof of (\ref{eq LDP claim 1}), we get
\begin{eqnarray*}
\lim_{n\to 0}\Big(\sup_{t\in[0,T]}\|X^n_1(t)-X_1(t)\|^2_{\rV}+\int_0^T\|X^n_1(s)-X_1(s)\|^2_{\mathcal{D}({\rA})}\,ds\Big)=0,\ \ \mathbb{P}^1\text{-a.s..}
\end{eqnarray*}

Hence, by (\ref{eq3 define G0}), which is the definition of $\mathcal{G}^0$,
$$
X^{{\eps}_n}\text{ converges in law to }\mathcal{G}^0(\varphi),
$$
which implies the desired result.

\vskip 0.2cm
The proof of Proposition \ref{prop3 02} is complete.
\end{proof}

\begin{appendices}

\section{Poisson random measures}\label{sec-A-PRM}

Recall the following definition, which is taken from \cite[Definition I.8.1]{Ikeda-Watanabe}; see also \cite{BHZ}.
\begin{definition}\label{def-PRM-Y}
A time-homogeneous Poisson random measure on $\rY=\rZ\times [0,\infty)$ (i.e., a Poisson random measure on $\rY_T=[0,T] \times \rZ\times [0,\infty)$)
over probability space $(\bar{\Omega},\G,\Gb,\Q)$  with the intensity measure $\Leb_{[0,T]}\otimes \nu\otimes~\Leb_{[0,\infty)}$, is a measurable function  \[\eta: (\bar{\Omega},\mathcal{G})\to \mathcal{M}(\rY_T)=\MTb \]
satisfying the following conditions
\begin{enumerate}
\item \label{prm-i} for each $U\in \mathcal{B}([0,T]) \otimes \mathcal{B}(\rY) $,
 $\eta(U):=i_U\circ \eta : \bar{\Omega}\to \overline{\mathbb{N}}$ \footnote{$\overline{\mathbb{N}}:=\mathbb{N}\cup\{0\}\cup\{\infty\}$} is a Poisson random variable with parameter\footnote{If $\mathbb{E} \eta(U) = \infty$, then obviously $\eta(U)=\infty$ a.s..} $\mathbb{E}\eta(U)$;
\item \label{prm-ii} $\eta$ is independently scattered, \textit{i.e}., if the sets
$ U_j \in  \mathcal{B}([0,T]) \otimes \mathcal{B}(\rY)$, $j=1,\cdots, n$ are disjoint,  then the random
variables $\eta(U_j)$, $j=1,\cdots,n $ are mutually independent;
\item \label{prm-3} for all $U\in \mathcal{B}(\rY) $ and $I\in \mathcal{B}([0,T])$,
\[\mathbb{E}\big[\eta (I\times U)\big]=(\Leb_{[0,T]}\otimes~\nu \otimes \Leb_{[0,\infty)}) (I\times U)=\Leb_{[0,T]}(I)(\nu\otimes \Leb_{[0,\infty)})(U);\]
\item \label{prm-4} for each $U\in \mathcal{B}(\rY) $, the $\overline{\mathbb{N}}$-valued process
$$ (0,\infty) \times \Omega \ni (t,\omega) \mapsto \eta(\omega)\bigl(U \times (0,t]\bigr)$$
is $\mathbb{G}$-adapted, and its increments are independent of the past, \textit{i.e}., the increment between times $t$ and $s$, $t>s> 0$, are independent of the $\sigma$-field $\mathcal{G}_s$.
\end{enumerate}
\end{definition}
Similarly, we have

\begin{definition}\label{def-PRM-Z}
A time-homogeneous Poisson random measure on $\rZ$ (or Poisson random measure on $\rZ_T=[0,T] \times \rZ$)
over probability space $(\bar{\Omega},\G,\Gb,\Q)$  with the intensity measure $\Leb_{[0,T]} \otimes~\nu$, is a measurable function  \[\eta: (\bar{\Omega},\mathcal{G})\to \mathcal{M}(\rZ_T)=\MT \]
satisfying the following conditions
\begin{enumerate}
\item \label{prm-i} for each $U\in \mathcal{B}([0,T]) \otimes \mathcal{B}(\rZ) $,
 $\eta(U):=i_U\circ \eta : \bar{\Omega}\to \overline{\mathbb{N}} $ is a Poisson random variable with parameter $\mathbb{E}\eta(U)$;
\item \label{prm-ii} $\eta$ is independently scattered, \textit{i.e}., if the sets
$ U_j \in  \mathcal{B}([0,T]) \otimes \mathcal{B}(\rZ)$, $j=1,\cdots, n$ are disjoint,  then the random
variables $\eta(U_j)$, $j=1,\cdots,n $ are mutually independent;
\item \label{prm-3} for all $U\in \mathcal{B}(\rZ) $ and $I\in \mathcal{B}([0,T])$,
\[\mathbb{E}\big[\eta (I\times U)\big]=(\Leb_{[0,T]}\otimes~\nu) (I\times U)=\Leb_{[0,T]}(I)\nu(U);\]
\item \label{prm-4} for each $U\in \mathcal{B}(\rZ) $, the $\overline{\mathbb{N}}$-valued process
$$ (0,\infty) \times \Omega \ni (t,\omega) \mapsto \eta(\omega)\bigl(U \times (0,t]\bigr)$$
is $\mathbb{G}$-adapted, and its increments are independent of the past, \textit{i.e}., the increment between times $t$ and $s$, $t>s> 0$, are independent of  the $\sigma$-field $\mathcal{G}_s$.
\end{enumerate}
\end{definition}

\section{Proof of Lemma \ref{lem3 LDP deter 1}}\label{sec-B}

\renewcommand{\theequation}{B.\arabic{equation}}

This section is devoted to the proof of Lemma \ref{lem3 LDP deter 1} which, for the convenience of the reader, we state again.

\begin{lemma}\label{lem3 LDP deter 1'}
Assume that $N\in\mathbb{N}$. Then, for all $u_0\in \rV$, $f\in L^2([0,T],\rH)$, and $g\in S^N$, there exists a unique  solution $u^g \in C([0,T],\rV)\cap L^2([0,T],\mathcal{D}({\rA}))$ of Problem (\ref{eq3 LDP deter 00}). Moreover, for any $\rho>0$ and $R>0$,
there exists a positive constant $C_N=C_{N,\rho,R}$ such that for every $g \in S^N$ and all $u_0\in \rV$ and $f\in L^2([0,T],\rH)$, such that
$\Vert u_0\Vert_\rV \leq \rho $ and $\vert f \vert_{L^2([0,T];\rH)} \leq R$,
the following estimate is satisfied
\begin{eqnarray}\label{eq3 LDP deter pro}
             \sup_{t\in[0,T]}\|u^g(t)\|^2_{\rV}+\int_0^T\|u^g(t)\|^2_{\mathcal{D}({\rA})}\,dt\leq C_N.
\end{eqnarray}
\end{lemma}

\begin{proof}[Proof of Lemma \ref{lem3 LDP deter 1}]
Fix $N\in\mathbb{N}$, $u_0\in \rV$, $f\in L^2([0,T],\rH)$, and $g\in S^N$.
Define an auxiliary function
$$
F(t,y):=\int_{{\rZ}}G(y,z)(g(t,z)-1)\nu(dz), \;\; t \in [0,T],\; y\in \rV.
$$
 {By \textbf{Assumption \ref{con LDP}}, \addjzok{for every $t\in[0,T]$, $\hbar>0$,
and $y,y_1,\,y_2\in\rV$ with $\|y_1\|_\rV\vee\|y_2\|_\rV\leq \hbar$,}
\begin{eqnarray*}
\|F(t,y_1)-F(t,y_2)\|_{\rV} \leq \int_{\rZ}L_\hbar(z)|g(t,z)-1|\nu(dz)\|y_1-y_2\|_{\rV},
\end{eqnarray*}}
\deljz{for every $y\in\rV$,}
\begin{eqnarray*}
\|F(t,y)\|_{\rV} \leq \int_{\rZ}L_2(z)|g(t,z)-1|\nu(dz)(1+\|y\|_{\rV}),
\end{eqnarray*}
and \deljz{for every $y\in\rH$,}
\begin{eqnarray*}
\|F(t,y)\|_{\rH} \leq \int_{\rZ}L_3(z)|g(t,z)-1|\nu(dz)(1+\|y\|_{\rH}).
\end{eqnarray*}

By Lemma 3.4 in \cite{Budhiraja-Chen-Dupuis},
\begin{eqnarray}\label{eq3 star}
\dela{C_{i,N}}
C_{1,N}:=\max_{i=2,3}\sup_{g\in S^N}\int_0^T\int_{{\rZ}}L_i(z)|g(t,z)-1|\nu(dz)\,dt<\infty,
\end{eqnarray}
 {\addjzok{and, for every $\hbar>0$,
\begin{eqnarray}\label{eq3 star local}
C_{\hbar,N}:=\sup_{g\in S^N}\int_0^T\int_{{\rZ}}L_\hbar(z)|g(t,z)-1|\nu(dz)\,dt<\infty.
\end{eqnarray}}}

Combining the above five inequalities and using a argument similar to those for Theorems \ref{thm-initial data in V} {and \ref{thm-initial data in V local}}, we can deduce
 that there exists a unique solution $u^g \in C([0,T],\rV)\cap L^2([0,T],\mathcal{D}({\rA}))$ of Equation (\ref{eq3 LDP deter 00}).

Now we are ready to prove (\ref{eq3 LDP deter pro}). We begin with \emph{a priori} estimates in the space $\rH$.
\addjzok{Note that we only use the assumption ``Linear growth in $\rV$" and ``Linear growth in $\rH$" in \textbf{Assumption \ref{con LDP}} to get (\ref{eq3 LDP deter pro}).} By \textbf{Assumption \ref{con LDP}} and the Lions-Magenes lemma,
we have
\begin{eqnarray*}
&&\hspace{-1truecm}\lefteqn{|u^g(t)|^2_{\rH} + 2\int_0^t\|u^g(s)\|^2_{\rV}\,ds}\nonumber\\
&=&\!\!
  |u_0|^2_{\rH} + 2\int_0^t\langle f(s),u^g(s)\rangle_{\rH}\,ds
 +
  2\int_0^t\int_{{\rZ}}\langle G(u^g(s),z),u^g(s)\rangle_{\rH}(g(s,z)-1)\nu(dz)\,ds\nonumber\\
&\leq&\!\!
  |u_0|^2_{\rH}\!+\!\int_0^t\!\|u^g(s)\|^2_{\rV}\,ds\!+\!\int_0^t\!\|f(s)\|^2_{\rV^\prime}\,ds
\!+\!
  2\int_0^t\!(1\!+\!2|u^g(s)|^2_{\rH})\int_{{\rZ}}\!L_3(z)|g(s,z)\!-\!1|\nu(dz)\,ds.
\end{eqnarray*}
Hence,
\begin{eqnarray}\label{eq3 lem1 00}
&&\hspace{-1truecm}\lefteqn{|u^g(t)|^2_{\rH} + \int_0^t\|u^g(s)\|^2_{\rV}\,ds}\nonumber\\
&\leq&
 |u_0|^2_{\rH} + \int_0^T\|f(s)\|^2_{\rV^\prime}\,ds + 2\int_0^T\int_{{\rZ}}L_3(z)|g(s,z)-1|\nu(dz)\,ds\nonumber\\
 &&+
 4\,\int_0^t|u^g(s)|^2_{\rH}\int_{{\rZ}}L_3(z)|g(s,z)-1|\nu(dz)\,ds.
\end{eqnarray}

By applying Gronwall's lemma, we get
\begin{eqnarray*}
&&\hspace{-1truecm}\lefteqn{\sup_{t\in[0,T]}|u^g(t)|^2_{\rH} + \int_0^T\|u^g(t)\|^2_{\rV}\,dt}
\nonumber\\
&\leq&
  \Big(
   |u_0|^2_{\rH} + \int_0^T\|f(s)\|^2_{\rV^\prime}\,ds + 2\int_0^T\int_{{\rZ}}L_3(z)|g(s,z)-1|\nu(dz)\,ds
  \Big)\\
  &&\hspace{4truecm}\lefteqn{\cdot
   e^{4\int_0^T\int_{{\rZ}}L_3(z)|g(s,z)-1|\nu(dz)\,ds}.}
\end{eqnarray*}

Applying (\ref{eq3 star}), we get
\begin{eqnarray}\label{eq3 star2}
\sup_{g\in S^N}\Big(
               \sup_{t\in[0,T]}|u^g(t)|^2_{\rH} + \int_0^T\|u^g(t)\|^2_{\rV}\,dt
              \Big) \leq K_{N,H}    <\infty.
\end{eqnarray}

Now, consider that, by Assumption \ref{con LDP} and \cite[Lemma III.1.2]{Temam_2001},
in the space $\rV$, we have
\begin{eqnarray}\label{eq3 deter V 00}
&&\hspace{-1truecm}\lefteqn{\|u^g(t)\|^2_{\rV} + 2\int_0^t\|u^g(s)\|^2_{\mathcal{D}({\rA})}\,ds}\nonumber\\
&=&
 \|u_0\|^2_{\rV} + 2\int_0^t\langle \rB(u^g(s)),{\rA}u^g(s)\rangle_{\rH}\,ds
 +
 2\int_0^t\langle f(s),{\rA}u^g(s)\rangle_{\rH}\,ds\nonumber\\
 &&+
 2\int_0^t\int_{{\rZ}}\langle G(u^g(s),z),u^g(s)\rangle_{\rV}(g(s,z)-1)\nu(dz)ds\nonumber\\
&\leq&
 \|u_0\|^2_{\rV} + \int_0^t\|u^g(s)\|^2_{\mathcal{D}({\rA})}\,ds + C\int_0^t\|u^g(s)\|^4_{\rV}|u^g(s)|^2_{\rH}\,ds
 +
 2\int_0^t|f(s)|^2_{\rH}\,ds \nonumber\\
 &&+
 2\int_0^t(1+2\|u^g(s)\|^2_{\rV})\int_{{\rZ}}L_2(z)|g(s,z)-1|\nu(dz)\,ds.
\end{eqnarray}
Hence, we find that
\begin{eqnarray*}
&&\hspace{-1truecm}\lefteqn{\|u^g(t)\|^2_{\rV} + \int_0^t\|u^g(s)\|^2_{\mathcal{D}({\rA})}\,ds}\nonumber\\
&\leq&
  \|u_0\|^2_{\rV} + 2\int_0^T|f(s)|^2_{\rH}\,ds
  +
  2\int_0^T\int_{\rZ}L_2(z)|g(s,z)-1|\nu(dz)\,ds\nonumber\\
  &&+
  \int_0^t\|u^g(s)\|^2_{\rV}\Big(
                      C\|u^g(s)\|^2_{\rV}|u^g(s)|^2_{\rH}+4\int_{\rZ}L_2(z)|g(s,z)-1|\nu(dz)
                     \Big)
          ds.
\end{eqnarray*}
Therefore, by applying Gronwall's lemma, we deduce that
\begin{eqnarray*}
&&\hspace{-1.5truecm}\lefteqn{\sup_{t\in[0,T]}\|u^g(t)\|^2_{\rV} + \int_0^T\|u^g(s)\|^2_{\mathcal{D}({\rA})}\,ds}\nonumber\\
&\leq&
   \Big(
    \|u_0\|^2_{\rV} + 2\int_0^T|f(s)|^2_{\rH}\,ds + 2\int_0^T\int_{\rZ}L_2(z)|g(s,z)-1|\nu(dz)\,ds
   \Big)\\
   &&\cdot
   e^{C\int_0^T\|u^g(s)\|^2_{\rV}|u^g(s)|^2_{\rH}\,ds + 4\int_0^T\int_{\rZ}L_2(z)|g(s,z)-1|\nu(dz)\,ds}.
\end{eqnarray*}
Thus, in view of (\ref{eq3 star}) and (\ref{eq3 star2}),
we know that
\begin{eqnarray*}
\sup_{g\in S^N}\Big(\sup_{t\in[0,T]}\|u^g(t)\|^2_{\rV} + \int_0^T\|u^g(s)\|^2_{\mathcal{D}({\rA})}\,ds\Big)\leq K_{N,\rV}
<
\infty.
\end{eqnarray*}
This completes the proof of Lemma \ref{lem3 LDP deter 1}
\end{proof}

\end{appendices}

\end{document}